\documentclass[11pt]{article}

\usepackage[letterpaper, top=25.4mm, bottom=25.4mm, left=25.4mm, right=25.4mm, includefoot]{geometry}

\usepackage[utf8]{inputenc}
\usepackage[T1]{fontenc}
\usepackage{lmodern}
\usepackage{libertine}
\usepackage{comment}

\usepackage{titling}
\usepackage{xspace}
\usepackage{soul} 

\usepackage{ifthen}
\usepackage{textpos}

\usepackage{tikz}
\usepackage{xparse}

\newcommand{\colorname}[1]{%
  \ifnum#1=1 red\else%
    \ifnum#1=2 blue\else%
      \ifnum#1=3 green\else%
        \ifnum#1=4 magenta\else%
          \ifnum#1=5 orange\else%
            black%
          \fi%
        \fi%
      \fi%
    \fi%
  \fi%
}

\usepackage[inline]{enumitem}

\usepackage{dsfont}
\usepackage{setspace}

\usepackage{bm}
\usepackage{microtype}
\usepackage{amsmath}
\usepackage{amssymb}
\usepackage{amsfonts}
\usepackage{mathtools}
\usepackage[mathscr]{euscript}

\usepackage{thmtools} 
\usepackage{thm-restate}

\usepackage[hyphens]{url}
\usepackage{amsthm}

\usepackage{xcolor}
\usepackage{subcaption}
\usepackage{graphicx}
\usepackage{wrapfig}

\usepackage{hyperref}
\usepackage{aliascnt} 

\usepackage{cleveref}

\usepackage{nicefrac}
\usepackage[textwidth=2.2cm]{todonotes}
\usepackage{marginnote}

\colorlet{myGreen}{green!50!black}
\colorlet{myLightgreen}{green}
\colorlet{myRed}{red!90!black}
\definecolor{myBlue}{rgb}{0.25, 0.0, 1.0}
\definecolor{myLightBlue}{rgb}{0.39, 0.58, 0.93}
\colorlet{myViolet}{myBlue!55!myRed}
\definecolor{myOrange}{rgb}{1.0, 0.66, 0.07}

\definecolor{CornflowerBlue}{rgb}{0.39, 0.58, 0.93}
\definecolor{DarkGoldenrod}{rgb}{0.72, 0.53, 0.04}
\definecolor{BritishRacingGreen}{rgb}{0.0, 0.26, 0.15}
\definecolor{DarkMagenta}{rgb}{0.55, 0.0, 0.55}
\definecolor{AO}{rgb}{0.0, 0.5, 0.0}
\definecolor{BostonUniversityRed}{rgb}{0.8, 0.0, 0.0}
\definecolor{myRed}{rgb}{0.8, 0.0, 0.0}
\definecolor{DarkMidnightBlue}{rgb}{0.0, 0.2, 0.4}
\definecolor{DarkTangerine}{rgb}{1.0, 0.66, 0.07}
\definecolor{AppleGreen}{rgb}{0.55, 0.71, 0.0}
\definecolor{BrightUbe}{rgb}{0.82, 0.62, 0.91}
\definecolor{Amethyst}{rgb}{0.6, 0.4, 0.8}
\definecolor{DarkGray}{rgb}{0.52, 0.52, 0.51}
\definecolor{Gray}{rgb}{0.66, 0.66, 0.66}
\definecolor{BananaYellow}{rgb}{1.0, 0.88, 0.21}
\definecolor{Amber}{rgb}{1.0, 0.75, 0.0}
\definecolor{LightGray}{rgb}{0.83, 0.83, 0.83}
\definecolor{PrincetonOrange}{rgb}{1.0, 0.56, 0.0}
\definecolor{DeepCarrotOrange}{rgb}{0.91, 0.41, 0.17}
\definecolor{CarrotOrange}{rgb}{0.93, 0.57, 0.13}
\definecolor{MidnightBlue}{rgb}{0.1, 0.1, 0.44}
\definecolor{Magenta}{rgb}{0.50, 0.0, 0.50}
\definecolor{BrightPink}{rgb}{1.0, 0.0, 0.5}
\definecolor{BrilliantRose}{rgb}{1.0, 0.33, 0.64}
\definecolor{ChromeYellow}{rgb}{1.0, 0.65, 0.0}
\definecolor{HotMagenta}{rgb}{1.0, 0.11, 0.81}

\definecolor{DarkTangerine}{rgb}{1.0, 0.66, 0.07}
\definecolor{darkyellow}{rgb}{.7, .6, 0.0}
\definecolor{CornflowerBlue}{rgb}{0.39, 0.58, 0.93}
\definecolor{DarkGoldenrod}{rgb}{0.72, 0.53, 0.04}
\definecolor{BritishRacingGreen}{rgb}{0.0, 0.26, 0.15}
\definecolor{AO}{rgb}{0.0, 0.5, 0.0}
\definecolor{MidnightBlack}{rgb}{0.1,0.1,.34}
\definecolor{MidnightBlue}{rgb}{0.1,0.1,0.43}
\definecolor{Black}{rgb}{0,0, 0}
\definecolor{Blue}{rgb}{0, 0 ,1}
\definecolor{Red}{rgb}{1, 0 ,0}
\definecolor{White}{rgb}{1, 1, 1}
\definecolor{DeepMagenta}{rgb}{0.8, 0.0, 0.8}
\definecolor{grey}{rgb}{.6, .6, .6}
\definecolor{darkgrey}{rgb}{.33, .33, .33}
\definecolor{Mygreen}{rgb}{.0, .7, .0}
\definecolor{Yellow}{rgb}{.55,.55,0}
\definecolor{Mustard}{rgb}{1.0, 0.86, 0.35}
\definecolor{applegreen}{rgb}{0.55, 0.71, 0.0}
\definecolor{darkturquoise}{rgb}{0.0, 0.81, 0.82}
\definecolor{celestialblue}{rgb}{0.29, 0.59, 0.82}
\definecolor{green_yellow}{rgb}{0.68, 1.0, 0.18}
\definecolor{crimsonglory}{rgb}{0.75, 0.0, 0.2}
\definecolor{darkmagenta}{rgb}{0.30, 0.0, 0.30}
\definecolor{magenta}{rgb}{0.50, 0.0, 0.50}
\definecolor{internationalorange}{rgb}{1.0, 0.31, 0.0}
\definecolor{darkorange}{rgb}{1.0, 0.55, 0.0}
\definecolor{ao}{rgb}{0.0, 0.5, 0.0}
\definecolor{awesome}{rgb}{1.0, 0.13, 0.32}
\definecolor{darkcyan}{rgb}{0.0, 0.50, 0.50}
\definecolor{violet}{rgb}{0.93, 0.51, 0.93}
\definecolor{brown}{rgb}{0.65, 0.16, 0.16}
\definecolor{orange}{rgb}{1.0, 0.65, 0.0}
\definecolor{DarkGreen}{rgb}{0,.5,0}
\definecolor{BostonUniversityRed}{rgb}{0.8, 0.0, 0.0}

\vfuzz \hfuzz
\setlist[itemize]{topsep=0pt,partopsep=0pt,itemsep=0pt,parsep=0pt}
\setlist[itemize,1]{label={\small\textbullet}}
\setlist[itemize,2]{label={\tiny\textbullet}}
\setlist[itemize,3]{label=$\cdot$}
\setlist[enumerate]{topsep=0pt,partopsep=0pt,itemsep=0pt,parsep=0pt}
\setlist[enumerate,1]{label=\roman*)}
\setlist[enumerate,2]{label=\alph*)}
\setlist[enumerate,3]{label=\arabic*)}

\hypersetup{
colorlinks=true,
linkcolor=CornflowerBlue!65!black,
citecolor=CornflowerBlue!65!black,
urlcolor=CornflowerBlue!65!black,
bookmarksopen=true,
bookmarksnumbered,
bookmarksopenlevel=2,
bookmarksdepth=3
}

\newtheorem*{cmc*}{Coarse Menger's Conjecture}
\crefname{cmc*}{lemma}{Conjectures}
\crefformat{cmc*}{#2Coarse Menger's Conjecture~#1#3}
\Crefformat{cmc*}{#2Coarse Menger's Conjecture~#1#3}

\newcommand{\DeclareCleverTheorem}[4]{%
  \newaliascnt{#1}{#2}%
  \newtheorem{#1}[#1]{#3}%
  \aliascntresetthe{#1}
  \crefname{#1}{#3}{#4}%
  \crefformat{#1}{##2#3~##1##3}%
  \Crefformat{#1}{##2#3~##1##3}%
  \newtheorem*{#1*}{#3}
  \crefname{#1*}{#3}{#4}%
  \crefformat{#1*}{##2~#1##3}%
  \Crefformat{#1*}{##2~#1##3}%
}

\DeclareCleverTheorem{proposition}{environment}{Proposition}{Propositions}
\DeclareCleverTheorem{corollary}{environment}{Corollary}{Corollaries}
\DeclareCleverTheorem{theorem}{environment}{Theorem}{Theorems}
\DeclareCleverTheorem{conjecture}{environment}{Conjecture}{Conjectures}
\DeclareCleverTheorem{observation}{environment}{Observation}{Observations}
\DeclareCleverTheorem{example}{environment}{Example}{Examples}
\DeclareCleverTheorem{remark}{environment}{Remark}{Remarks}
\DeclareCleverTheorem{notation}{environment}{Notation}{Notations}
\DeclareCleverTheorem{question}{environment}{Question}{Questions}
\DeclareCleverTheorem{problem}{environment}{Problem}{Problems}
\DeclareCleverTheorem{definition}{environment}{Definition}{Definitions}
\DeclareCleverTheorem{lemma}{environment}{Lemma}{Lemmas}

\crefname{figure}{figure}{figures}
\crefformat{figure}{#2Figure~#1#3}
\Crefformat{figure}{#2Figure~#1#3}

\crefname{equation}{equation}{Equations}
\crefformat{equation}{#2Equation~#1#3}
\Crefformat{equation}{#2Equation~#1#3}

\crefname{chapter}{chapter}{chapters}
\crefformat{chapter}{#2Chapter~#1#3}
\Crefformat{chapter}{#2Chapter~#1#3}

\crefname{section}{section}{sections}
\crefformat{section}{#2Section~#1#3}
\Crefformat{section}{#2Section~#1#3}

\crefname{algorithm}{algorithm}{algorithms}
\crefformat{algorithm}{#2Algorithm~#1#3}
\Crefformat{algorithm}{#2Algorithm~#1#3}

\newtheorem{claim}{Claim}
\crefname{claim}{claim}{claims}
\crefformat{claim}{#2Claim~#1#3}
\Crefformat{claim}{#2Claim~#1#3}

\usetikzlibrary{calc}
\usetikzlibrary{fit}
\usetikzlibrary{decorations}
\usetikzlibrary{decorations.pathmorphing}
\usetikzlibrary{decorations.text}
\usetikzlibrary{shapes,hobby}
\usetikzlibrary{positioning}

\tikzset{
	position/.style args={#1:#2 from #3}{
		at=($(#3)+(#1:#2)$)
	}
}

\tikzset{
  v:main/.style = {draw, circle, scale=0.8, thick,fill=black,inner sep=0.7mm},
  v:ghost/.style = {inner sep=0pt,scale=1},
  >={latex},
  e:marker/.style = {line width=8.5pt,line cap=round,opacity=0.35,color=DarkGoldenrod},
  e:main/.style = {line width=1pt},
}

\RequirePackage{stmaryrd}
\usepackage{textcomp}
\DeclareUnicodeCharacter{2286}{\subseteq}
\DeclareUnicodeCharacter{2192}{\ifmmode\to\else\textrightarrow\fi}
\DeclareUnicodeCharacter{2203}{\ensuremath\exists}
\DeclareUnicodeCharacter{183}{\cdot}
\DeclareUnicodeCharacter{2200}{\forall}
\DeclareUnicodeCharacter{2264}{\leq}
\DeclareUnicodeCharacter{2265}{\geq}
\DeclareUnicodeCharacter{8614}{\mathbin{\mapsto}}
\DeclareUnicodeCharacter{8656}{\Leftarrow}
\DeclareUnicodeCharacter{8657}{\Uparrow}
\DeclareUnicodeCharacter{8658}{\Rightarrow}
\DeclareUnicodeCharacter{8659}{\Downarrow}
\DeclareUnicodeCharacter{8669}{\rightsquigarrow}
\newcommand{\eqdef}{\stackrel{{\scriptsize\rm def}}{=}}
\DeclareUnicodeCharacter{8797}{\eqdef}
\DeclareUnicodeCharacter{8870}{\vdash}
\DeclareUnicodeCharacter{8873}{\Vdash}
\DeclareUnicodeCharacter{22A7}{\models}
\DeclareUnicodeCharacter{9121}{\lceil}
\DeclareUnicodeCharacter{9123}{\lfloor}
\DeclareUnicodeCharacter{9124}{\rceil}
\DeclareUnicodeCharacter{2208}{\in}
\DeclareUnicodeCharacter{9126}{\rfloor}
\DeclareUnicodeCharacter{9655}{\triangleright}
\DeclareUnicodeCharacter{9665}{\triangleleft}
\DeclareUnicodeCharacter{9671}{\diamond}
\DeclareUnicodeCharacter{9675}{\circ}
\DeclareUnicodeCharacter{10178}{\bot}
\DeclareUnicodeCharacter{10214}{} 
\DeclareUnicodeCharacter{10215}{} 
\DeclareUnicodeCharacter{10229}{\longleftarrow}
\DeclareUnicodeCharacter{10230}{\longrightarrow}
\DeclareUnicodeCharacter{10231}{\longleftrightarrow}
\DeclareUnicodeCharacter{10232}{\Longleftarrow}
\DeclareUnicodeCharacter{10233}{\Longrightarrow}
\DeclareUnicodeCharacter{10234}{\Longleftrightarrow}
\DeclareUnicodeCharacter{10236}{\longmapsto}
\DeclareUnicodeCharacter{10238}{\Longmapsto} 
\DeclareUnicodeCharacter{10503}{\Mapsto}    
\DeclareUnicodeCharacter{10971}{\mathrel{\not\hspace{-0.2em}\cap}}
\DeclareUnicodeCharacter{65294}{\ldotp}
\DeclareUnicodeCharacter{65372}{\mid}

\newcommand{\N}{\mathbb{N}}

\newcommand{\rep}{\mathsf{rep}\xspace}

\newcommand{\bd}{\mathsf{bd}\xspace}

\newcommand{\remove}[1]{}

\newcommand{\belsf}{\downarrow}
\newcommand{\abvsf}{\uparrow}

\newcommand{\repr}{\mathsf{rep}}

\newenvironment{claimproof}[1][Proof.]{%
	\begin{proof}[#1]%
	}{%
	\end{proof}%
}

\newcommand{\Af}{\mathbb{A}}

\newcommand{\Hh}{\mathcal{H}}
\newcommand{\Ff}{\mathcal{F}}

\newcommand{\Bb}{\mathcal{B}}
\newcommand{\Pp}{\mathcal{P}}
\newcommand{\Qq}{\mathcal{Q}}
\newcommand{\Rr}{\mathcal{R}}
\newcommand{\Ll}{\mathcal{L}}
\newcommand{\Cc}{\mathcal{C}}
\newcommand{\Zz}{\mathcal{Z}}

\newcommand{\wh}[1]{\widetilde{#1}}

\newcommand{\Oh}{\mathcal{O}}

\newcommand{\holesf}{\mathsf{pocket}}
\newcommand{\exsf}{\mathsf{ex}}
\newcommand{\sisf}{\mathsf{si}}
\newcommand{\insf}{\mathsf{in}}
\newcommand{\outsf}{\mathsf{out}}

\newcommand{\FDisk}{\Theta}

\newcommand{\Ball}{\mathsf{Ball}}
\newcommand{\dist}{\mathsf{dist}}
\newcommand{\per}{\mathsf{per}}
\newcommand{\inte}{\mathsf{int}}
\newcommand{\exte}{\mathsf{ext}}
\newcommand{\ground}{\mathsf{ground}}

\newcommand{\rend}{\mathfrak{D}}

\renewcommand{\leq}{\leqslant}
\renewcommand{\geq}{\geqslant}

\renewcommand{\setminus}{-}

\newcommand{\link}{\mathsf{link}}
\newcommand{\sep}{\mathsf{separation}}
\newcommand{\adh}{\mathsf{adhesion}}
\newcommand{\cov}{\mathsf{cover}}
\newcommand{\vig}{\mathsf{vigilance}}
\newcommand{\depth}{\mathsf{depth}}

\newcommand{\sfh}{\mathsf{h}}
\newcommand{\sfc}{\mathsf{c}}

\title{A coarse Menger's Theorem for planar and bounded genus graphs\thanks{V.B. and M.P. were supported by the project BOBR that is funded from the European Research Council (ERC) under the European Union's Horizon 2020 research and innovation programme with grant agreement No. 948057. E.P. was supported by the ERC grant BUKA (No. 101126229).}}

\author{
Václav Blažej\\{\small University of Warsaw}\\\href{mailto:v.blazej@uw.edu.pl}{\small v.blazej@uw.edu.pl}
\and
Michał Pilipczuk\\{\small University of Warsaw}\\\href{mailto:michal.pilipczuk@mimuw.edu.pl}{\small michal.pilipczuk@mimuw.edu.pl}
\and
Evangelos Protopapas\\{\small University of Warsaw}\\\href{mailto:eprotopapas@mimuw.edu.pl}{\small eprotopapas@mimuw.edu.pl}
}

\date{}

\begin{document}
\maketitle

\begin{abstract}
	\noindent Menger's Theorem is a fundamental result in graph theory. It states that if in a graph $G$ with distinguished sets of terminal vertices $S$ and $T$ there are no $k$ pairwise vertex-disjoint $S$-$T$ paths, then there is a set of less than $k$ vertices that intersects every $S$-$T$ path. In this work, we give a coarse variant of this result for planar and bounded genus graphs. Precisely, we prove that for every surface $\Sigma$ there is a function $f\colon \N\times \N\to \N$ such that for every pair of integers $d,k\in \N$ and a $\Sigma$-embeddable graph $G$ with distinguished sets of terminal vertices $S$ and $T$, if $G$ does not contain a family of $k$ $S$-$T$ paths that are pairwise at distance larger than $d$, then there is a set $X$ consisting of at most $f(d,k)$ vertices of $G$ such that every $S$-$T$ path is at distance at most $d$ from a vertex of $X$.
	This partially answers questions of Nguyen, Scott, and Seymour~\cite{AS4:WeakMenger}, who proved that such a result cannot hold in general graphs.
	
    A key ingredient of our proof is a structure theorem from the developing ``colorful'' graph minor theory, where the focus is on studying the structure in a graph relative to some fixed subsets of annotated vertices. In our case, these annotated vertices are $S$ and $T$.
\end{abstract}

\begin{textblock}{20}(-1.6, 3.4)
	\includegraphics[width=35px]{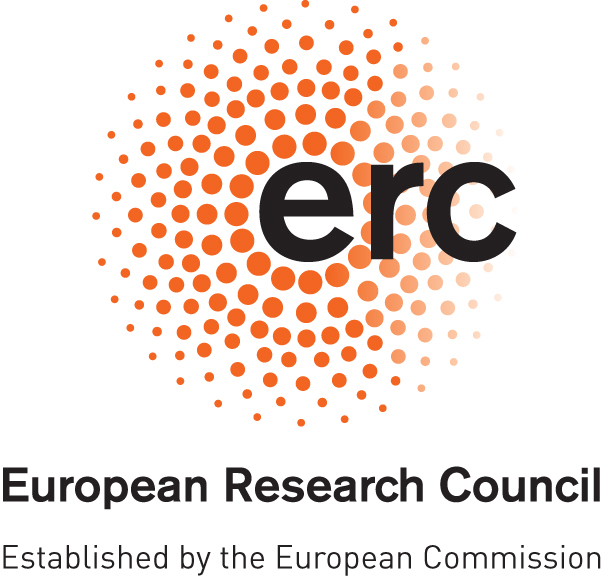}%
\end{textblock}
\begin{textblock}{20}(-1.6, 4.3)
	\includegraphics[width=35px]{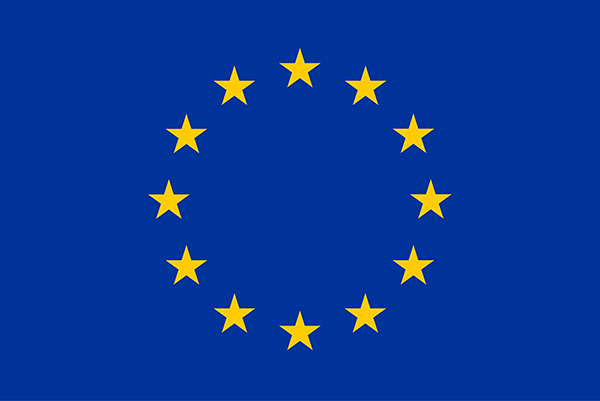}%
\end{textblock}


\thispagestyle{empty}

\newpage
\thispagestyle{empty}
\tableofcontents
\thispagestyle{empty}
\newpage

\setcounter{page}{1}

\section{Introduction}\label{sec:intro}

Menger's Theorem~\cite{Menger27} is one of the most fundamental results in graph theory. In the undirected setting, it states that given an undirected graph $G$ with prescribed vertex subsets $S,T\subseteq V(G)$, if one cannot find $k$ vertex-disjoint $S$-$T$-paths (i.e., paths connecting a vertex of $S$ with a vertex of $T$), then in fact this is because there is a small cut separating $S$ and $T$: a set $X\subseteq V(G)$ of size smaller than $k$ that intersects every $S$-$T$-path. The basic flow/cut dichotomy provided by Menger's Theorem led to countless duality results in structural graph theory, and particularly in Graph Minors theory. Typically, such duality results connect the non-existence of a certain obstruction with the existence of a decomposition. The obstructions are usually weaved from families of disjoint paths, whereas cuts can be used to decompose the graph.

Recently, there has been a rising interest in {\em{coarse graph theory}}: an attempt of building a structure theory for graphs understood as metric spaces, where the conditions that objects should be disjoint or intersecting are replaced by requiring them to be far or close, respectively. We refer to the foundational article by Georgakopoulos and Papasoglu~\cite{GeorgakopoulosP25} for a broad introduction to the topic, and to~\cite{CoarseBalSeps,AhnGHK25,GridCounter,FatK2t,SmallCounter,CoarseHalin,AlbrechtsenHJKW24,AlbrechtsenJKW25,BergerS24,BonnetLPP26,ChangCTZ25,ChepoiDNRV12,Davies25,DaviesHH25,DaviesHIM24,Distel25,DistelGHLM26,EsperetGG25,HatzelP25,Hickingbotham25,AS1:CoarseTw,AS2:pwQI,AS3:FatTree,AS4:WeakMenger,AS5:pw,AS6:disc,AS0:StrongMenger} for a selection of recent advances. In this spirit, the natural coarse analogue of Menger's Theorem would read as follows.

\begin{cmc*}
	There exist functions $f,g\colon \N^2\to \N$ such that the following holds. Let $G$ be a graph, $S,T\subseteq V(G)$ be vertex subsets, and $d,k\in \N$. Suppose that one cannot find $k$ $S$-$T$-paths in $G$ that are pairwise at distance more than $d$ apart. Then there is a vertex subset $X\subseteq V(G)$ with $|X|\leq f(d,k)$ such that every $S$-$T$-path in $G$ is at distance at most $g(d, k)$ from some vertex of $X$.
\end{cmc*}

Coarse Menger's Conjecture was proposed independently by Georgakopoulos and Papasoglu~\cite{GeorgakopoulosP25} and by Albrechtsen, Huynh, Jacobs, Knappe, and Wollan~\cite{AlbrechtsenHJKW24}. The original phrasing postulated that $f(d, k)= k-1$, as in the classic Menger's Theorem. We will call this formulation the {\em{Strong Coarse Menger's Conjecture}}, while the formulation above, which allows $|X|$ to be bounded by any function of $d$ and $k$, we call the {\em{Weak Coarse Menger's Conjecture}}.

Due to the fundamental role of Menger's Theorem in the Graph Minors project, establishing its coarse analogue has been recognized as a pivotal step towards lifting the advances of Graph Minors to the coarse setting. It therefore came as an immense disappointment that the Coarse Menger's Conjecture is actually \textbf{false}, both in the strong~\cite{AS0:StrongMenger} and in the weak formulation~\cite{AS4:WeakMenger}, and even for $d=2$ and $k=3$. However, the statement holds in a number of important cases:
\begin{itemize}
	\item Strong Coarse Menger's Conjecture holds for $k=2$~\cite{AlbrechtsenHJKW24,GeorgakopoulosP25}.
	\item Weak Coarse Menger's Conjecture holds for $d=1$ in graphs of bounded degree~\cite{HendreyNST24,GartlandKL23}, and more generally in graphs excluding a topological minor. 
	\item In graphs of bounded treewidth, a standard argument shows that Weak Coarse Menger's Conjecture does in fact hold. Somewhat surprisingly, Strong Coarse Menger's Conjecture fails already in graphs of treewidth $6$~\cite{AS0:StrongMenger}, but holds in graphs of bounded pathwidth~\cite{AS5:pw}.
\end{itemize}
Recently, Nguyen, Scott, and Seymour~\cite{AS4:WeakMenger} hypothesized that the Coarse Menger's Conjecture might hold in planar graphs, or even in graphs embeddable in a fixed surface, and possibly even in the strong formulation. They also asked whether the Weak Coarse Menger's Conjecture could be true in graphs excluding a fixed minor (the construction of \cite{AS0:StrongMenger} shows that one cannot hope for the strong formulation in this setting). Towards these goals, in~\cite{AS6:disc} they proved that Strong Coarse Menger's Conjecture indeed holds in planar graphs when all the vertices of $S\cup T$ lie on a single face.

\paragraph*{Our contribution.}

In this work, we establish that Weak Coarse Menger's Conjecture holds in planar graphs, and more generally, in surface-embeddable graphs.


\begin{restatable}{theorem}{planarCMC}\label{thm:main}
	For every surface $\Sigma$ there exists a function $f\colon \N^2\to \N$ such that the following holds. Let $G$ be a graph embeddable in $\Sigma$, $S,T\subseteq V(G)$ be vertex subsets, and $k,d\in \N$. Suppose that one cannot find $k$ $S$-$T$-paths in $G$ that are pairwise at distance more than $d$ apart. Then there is a vertex subset $X\subseteq V(G)$ with $|X|\leq f(k,d)$ such that every $S$-$T$-path in $G$ is at distance at most $d$ from some vertex of $X$.
\end{restatable}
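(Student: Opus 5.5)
We plan to exploit the dichotomy between bounded treewidth, where the statement is standard, and large treewidth, where a surface-embedded graph contains a large grid-like structure. The abstract indicates that the relevant structure is annotated: we need a ``colorful'' structure theorem measuring treewidth relative to $S\cup T$. The overall strategy is induction on $k$, with the surface genus as a secondary parameter, combined with a decomposition of $G$. We use a tree decomposition whose adhesions are bounded in a coarse sense (bounded radius or a bounded number of bounded-radius balls). Inside each bag, $S$ and $T$ interact in a controlled way: either few balls cover all $S$-$T$ paths through the bag, or the bag contains a large planar ``wall'' whose rows and columns are attached to $S$ and $T$ respectively.

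\textbf{Bounded case.} Suppose the annotated structure theorem yields a decomposition in which every bag contains only boundedly many ``far apart'' representatives of $S$ and $T$, meaning that $S\cap\beta(t)$ and $T\cap\beta(t)$ can each be covered by boundedly many balls of radius $\Oh(d)$. Then we run the standard bounded-treewidth argument. Take a family of $S$-$T$ paths and choose a node $t$ where a cut is needed. Either $k$ pairwise $d$-far paths can be routed through distinct child subtrees, or there is a node whose adhesion, extended by radius-$d$ balls, hits every remaining path. Repeating this along a bounded-depth recursion gives a set $X$ of size bounded by a function of $k$, $d$ and the structural parameters. The only subtlety is that the final hitting distance must be exactly $d$, not $g(d,k)$. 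To achieve it, we cover each radius-$R$ ball by few radius-$d$ balls. This step needs care, since on surfaces balls need not have bounded covering number. We will instead place $X$ on the vertices of the separators themselves, so that every path literally passes within distance $d$ of a chosen vertex. Concretely, we use the separators provided by the planar/surface structure, namely cycles of faces and vertices, whose lengths are bounded.

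\textbf{Unbounded case.} If the colorful structure theorem yields a large annotated grid, we want $k$ paths, pairwise more than $d$ apart, from $S$ to $T$. This needs a large grid, say of size $k\cdot\Oh(d)$, in which $S$ reaches one side through disjoint linkages and $T$ reaches another. We take every $(3d)$-th row; distances in a planar grid model are preserved up to the embedding, and face-width arguments handle surfaces. The danger is that shortcuts outside the grid, through handles or through the rest of the graph, bring distant rows close together. We therefore plan to first pass to a large-face-width region, or to cut along a short noncontractible curve, which reduces the genus and is handled by induction by adding its $\Oh(d)$-neighbourhood vertices to $X$. We then use planarity of the region to argue that any short path between two far-apart rows must cross the intermediate rows, contradicting the spacing.

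\textbf{Main obstacle.} I expect the main obstacle to be the metric control in this unbounded case. Making the annotated grid ``metrically faithful'', so that rows far apart in the grid are far apart in $G$ and the connectors to $S$ and $T$ do not collide, requires choosing the structure inside a region isolated by short cycles. My first attempt will be an iterative cleaning process: repeatedly find bounded-length separating cycles (in the radial graph) that separate $S$ from $T$. Each such cycle is coarsely hittable by few balls, which would give $X$ directly if fewer than some bounded number of nested cycles exist. Otherwise we obtain many nested cycles, and these themselves supply the $k$ far-apart paths via a planar duality between nested short cycles and long radial paths.
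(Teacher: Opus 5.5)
Your outer shell matches the paper: a tree-decomposition with vertex-bounded adhesions, a greedy bottom-up packing/hitting argument at the end, and cutting along short non-contractible nooses for surfaces. The local step inside a bag, however, has a genuine gap, and the case you call ``bounded'' is where most of the work lies. Knowing that $S\cap\beta(t)$ and $T\cap\beta(t)$ are each covered by few radius-$\Oh(d)$ balls says nothing about the $S$-$T$-paths passing through the bag. The bag has unbounded size and treewidth, so a path can leave the vicinity of $S$ and travel through a huge part of the bag before reaching $T$. No ``standard bounded-treewidth argument'' applies there.

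In the paper, localized terminals lie inside vortices. These admit only a linear decomposition of bounded \emph{adhesion} $w$, with unbounded bags, rooted on the boundary of the outer vortex disk. The missing idea is a harvesting argument along this decomposition:
\begin{itemize}
\item Sweep the decomposition and greedily take, between consecutive cut points, an $S$- (resp.\ $T$-)rooted path escaping the vortex that avoids the radius-$\lfloor d/2\rfloor$ balls around the two adhesion sets. The harvested paths are then automatically $d$-scattered inside the vortex.
\item If fewer than $k$ such paths are found, the adhesion sets used give $\Oh(kw)$ vertices whose radius-$\lfloor d/2\rfloor$ balls hit every escaping path.
\item A second harvest of $S$-$T$-paths inside the vortex leaves only paths confined to a single bag, and those are delegated to the children of the tree-decomposition.
\end{itemize}
This is what produces a bounded guard for a bag in which $S$ and $T$ are both localized.

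Several of your other steps rely on disjointness where scatteredness is needed.
\begin{itemize}
\item \textbf{Unbounded case.} Attaching $S$ and $T$ to the grid by disjoint linkages does not keep the connector paths more than $d$ apart outside the grid, and spacing the rows does not help with that. The paper handles this in two ways. Widespread terminals are hosted in pockets of flap segments; the rainbow together with the outer columns and rows of each segment forms a nest of order $d$, so contents of different pockets are $d$-hidden from each other. Paths harvested in a vortex are rerouted with the Golovach--Stamoulis--Thilikos linkage combing lemma so that near the vortex boundary they follow $d$-spaced rails of the vortex's nest. Only then can they be extended through the walloid while staying $d$-scattered.
\item \textbf{Fallback via nested cycles.} This does not work. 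A single short $S$-$T$-separating noose already gives $X$, and nested separating cycles do not yield $S$-$T$-paths at all. When no short separating noose exists, planar duality gives only many vertex-disjoint $S$-$T$-paths, not $d$-scattered ones. This gap between disjoint and scattered is exactly why the conjecture fails in general.
\item \textbf{Surfaces.} You cannot add the $\Oh(d)$-neighbourhood of a cut noose to $X$, since it is unbounded. You may add only the boundedly many noose vertices. The induction must then carry two conditions: the linkage avoids the radius-$\lfloor d/2\rfloor$ balls around these vertices, and hitting is required only for paths avoiding their radius-$d$ balls. This is what lets scatteredness in the cut graph transfer back to $G$.
\end{itemize}
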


Note that we obtain the same upper bound on the distance between $S$-$T$-paths and $X$ in the dual (cut) case, as the lower bound on the distance between $S$-$T$-paths in the primal (flow) case; or equivalently, we have $g(d,k)=d$. As for the function $f$ bounding the size of $X$, our proof shows that
\begin{align*}
    &f(d, k)\in d^{\Oh(1)} \cdot 2^{\Oh(k \log k)} & \qquad& \textrm{when }\Sigma\textrm{ is the sphere, and}\\
    &f(d, k)\in 2^{d^{\Oh(1)} \cdot 2^{\Oh((k+g) \log (k+g)))}} & \qquad& \textrm{otherwise, where }g\textrm{ denotes the genus of }\Sigma.
\end{align*}
Let us point out that our proof techniques cannot yield a bound that is subexponential in $k,$ as they rely on the \emph{Unique Linkage Theorem} of Robertson and Seymour~\cite{GraphMinorsXXI,AdlerKKLST2017Irrelevant}, for which already on planar graphs there is a construction that gives a single-exponential lower bound~\cite{AdlerK2019Alower}.

We also remark that recently, Davies~\cite{Davies25} and independently Chang, Conroy, Tan, and Zheng~\cite{ChangCTZ25} proved that string graphs (intersection graphs of connected subsets of the plane) are quasi-isometric to planar graphs. Since the assertion postulated by the Coarse Menger's Conjecture is preserved under quasi-isometry, we obtain the following.

\begin{corollary}
	The class of string graphs satisfies the Weak Coarse Menger's Conjecture.
\end{corollary}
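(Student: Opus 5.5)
The proof reduces the corollary to \cref{thm:main} for the sphere, using the result of Davies~\cite{Davies25} (see also~\cite{ChangCTZ25}) that every string graph is quasi-isometric to a planar graph with uniform constants. The point to show is that the Weak Coarse Menger's property survives a quasi-isometry, at the price of enlarging both the distance parameter and the function~$g$.

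\textbf{Setup.} Fix a string graph $H$, sets $S,T\subseteq V(H)$, and $d,k\in\N$. Take a planar graph $G$ and maps $\varphi\colon V(H)\to V(G)$ and $\psi\colon V(G)\to V(H)$ with absolute constants $L\geq 1$ and $C\geq 0$ such that:
\begin{itemize}
\item both maps are $(L,C)$-quasi-isometric embeddings;
\item $\dist_H(\psi(\varphi(v)),v)\leq C$ for every $v\in V(H)$.
\end{itemize}
Put $S'=\varphi(S)$ and $T'=\varphi(T)$, and choose $d'=L(d+2r+C)$ with $r=\Oh(L+C)$ fixed below.

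\textbf{Step 1 (flow transfer, contrapositive).} Suppose $G$ contains $S'$-$T'$ paths $P_1,\dots,P_k$ that are pairwise at distance more than $d'$.
\begin{itemize}
\item For each $P_i$, apply $\psi$ to its vertices. Consecutive images are at distance at most $L+C$ in $H$, so join them by geodesics.
\item At the ends, prepend a geodesic from $s\in S$ to $\psi(\varphi(s))$ and append one from $\psi(\varphi(t))$ to $t\in T$; each has length at most $C$.
\item Shortcut the resulting walk to an $S$-$T$ path $Q_i$. Then $Q_i$ lies in the $r$-neighbourhood of $\psi(V(P_i))$.
\end{itemize}
Since $\dist_H(\psi(V(P_i)),\psi(V(P_j)))\geq d'/L-C$, we get $\dist_H(Q_i,Q_j)\geq d'/L-C-2r>d$. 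So $H$ would contain $k$ $S$-$T$ paths pairwise more than $d$ apart. Hence, under the hypothesis on $H$, $G$ has no $k$ $S'$-$T'$ paths pairwise at distance more than $d'$.

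\textbf{Step 2 (applying the theorem).} \cref{thm:main} for the sphere gives $X'\subseteq V(G)$ with $|X'|\leq f(k,d')$ such that every $S'$-$T'$ path in $G$ is within distance $d'$ of $X'$. Set $X=\psi(X')$, so $|X|\leq f(k,d')$.

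\textbf{Step 3 (cut transfer).} Let $Q$ be any $S$-$T$ path in $H$.
\begin{itemize}
\item Map $Q$ by $\varphi$ and join consecutive images by geodesics of length at most $L+C$ in $G$. This gives an $S'$-$T'$ walk, which contains an $S'$-$T'$ path $P$.
\item $P$ has a vertex within $d'$ of some $x'\in X'$. That vertex is within $L+C$ of $\varphi(q)$ for some $q\in V(Q)$.
\item Hence $\dist_G(\varphi(q),x')\leq d'+L+C$. Using the quasi-inverse, $\dist_H(q,\psi(x'))\leq L(d'+L+C)+2C$.
\end{itemize}
This is $\Oh(d)$ because $L$ and $C$ are absolute, so the Weak Coarse Menger's Conjecture holds for string graphs with $f_H(d,k)=f(k,\Oh(d))$ and $g_H(d,k)=\Oh(d)$.

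\textbf{Main obstacle.} The only delicate point is bookkeeping. The constants of the quasi-isometry must be uniform over all string graphs, which is exactly what~\cite{Davies25} provides. The parameter $d'$ must be chosen so that the thickening by geodesics in Step~1 does not destroy the separation between the paths; the choice $d'=L(d+2r+C)$ above is made for this purpose.
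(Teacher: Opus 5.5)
Your proposal is correct and follows the paper's argument. The paper derives the corollary in one line from the quasi-isometry of string graphs to planar graphs with uniform constants (Davies; Chang, Conroy, Tan, and Zheng) together with the remark that the weak coarse Menger property is preserved under such quasi-isometries, and your Steps 1--3 correctly verify that preservation. (Only cosmetic fixes are needed: the last inequality in Step~1 should use the strict bound coming from $\dist_G(P_i,P_j)>d'$; $d'$ should be rounded up to an integer; and $g_H$ comes out as $\Oh(d+1)$ rather than $\Oh(d)$, which is harmless in the weak formulation.)
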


Finally, we remark that our proof of \cref{thm:main} can be easily turned into a fixed-parameter algorithm that given $G$, $S$, $T$, $k$, and $d$, returns one of the outcomes in time $\Oh_{k,d}(\|G\|^{\Oh(1)})$; here, $\|G\|$ is the number of edges of $G$ and the $\Oh_{k,d}(\cdot)$-notation hides factors that depend on $k$ and $d$. However, if one considers the two computational problems:
\begin{itemize}
	\item Given $G,S,T,k,d$, are there $k$ $S$-$T$-paths in $G$ that are pairwise at distance more than $d$ apart?
	\item Given $G,S,T,\ell,d$, is there a set $X\subseteq V(G)$ with $|X|\leq \ell$ such that every $S$-$T$-path in $G$ is at distance at most $d$ from some vertex of $X$?
\end{itemize}
then both these problems can be expressed by sentences of logic $\mathsf{FO}+\mathsf{sdp}$, proposed by Golovach, Stamoulis, and Thilikos~\cite{GolovachST23}, of length $\Oh_{k,d}(1)$ and $\Oh_{\ell,d}(1)$, respectively. Golovach et al. proved that the model-checking problem for $\mathsf{FO}+\mathsf{sdp}$ on graphs embeddable on a fixed surface is fixed-parameter tractable: given a graph $G$ embeddable on a surface $\Sigma$ and a sentence $\varphi$ of $\mathsf{FO}+\mathsf{sdp}$, it can be decided in time $\Oh_{\varphi,\Sigma}(\|G\|^2)$ whether $\varphi$ holds in $G$. By applying this to the sentences expressing the problems above, we conclude that they can be solved in time $\Oh_{k,d,\Sigma}(\|G\|^2)$ and $\Oh_{\ell,d,\Sigma}(\|G\|^2)$, respectively, on graphs embeddable on $\Sigma$.

\section*{Overview of the approach} 

We now give a brief and informal overview of the techniques used to establish \cref{thm:main}.
Throughout most of the overview we focus on the case of planar graphs; that is, when $\Sigma$ is the sphere. The nature of our techniques makes this the main case of interest, while the extension to more complicated surfaces is a technically non-trivial, but conceptually rather natural lift. This structure is also reflected in the later section: throughout most of the paper we work towards the proof of the planar case, which is concluded in \cref{sec:planar}, and then in \cref{sec:genus} we lift all the tools to graphs embeddable in more general surfaces. 

Hence, from now on we assume we are working with a graph $G$ embedded on the sphere, sets of vertices $S,T\subseteq V(G)$ which we shall call {\em{terminals}}, and parameters $d,k \in \N$. Our goal is to either find a {\em{$d$-scattered $S$-$T$-linkage}} of order $k$ in $G$ --- a family of $k$ $S$-$T$-paths that are pairwise at distance more than $d$ from each other --- or a set of $f(d,k)$ balls of radius $d$ in $G$ whose union intersects every $S$-$T$-path.

The main idea of the proof is that for each $Z\in \{S,T\}$, there will be a fundamental case distinction: whether $Z$ is ``localized'' or ``widespread'' on the embedding of $G$. Informally, by ``localized'' we mean that all the vertices of $Z$ are close to a single face of $G$, or maybe to a bounded number of faces of $G$. And by ``widespread'' we mean that the vertices of $Z$ can be found on many different faces of $G$ that are far from each other. The intuition is that if $Z$ is ``widespread'', then it should be very easy to route a scattered linkage to $Z$, essentially by routing it to different vertices of $Z$ that are pairwise far from each other on the embedding. On the other hand, if $Z$ is ``localized'', then we can analyze the local structure around the few faces to which $Z$ is close using standard packing/hitting arguments, developed for various Erd\H{o}s-P\'osa duality statements in the theory of Graph Minors.

\paragraph*{Bidimensionality.}

However, how do we quantify whether a set $Z\subseteq V(G)$ is ``localized'' or ``widespread'' in a planar graph~$G$? The answer comes via a parameter {\em{bidimensionality}}, introduced by Thilikos and Wiederrecht~\cite{ThilikosW2024Bidimensionality} in the context of the project of constructing an annotated, or colorful counterpart of the theory of Graph Minors. In essence, this project is about studying the topological structure in graphs relative to some set of annotated vertices $Z\subseteq V(G)$, or to several such annotated sets $Z_1,\ldots,Z_t\subseteq V(G)$. In our case, $S$ and $T$ would be the two colors of annotations.

Formally, the {\em{bidimensionality}} of a set of vertices $Z$ in a graph $G$ is the largest $k$ such that $G$ contain a minor-model of a $k\times k$ grid that is {\em{$Z$-rooted}}: the model (branch set) of every vertex of the grid contains a vertex of $Z$. An important example is the following: If $G$ is planar and all the vertices of $Z$ lie on one face, then the bidimensionality of $Z$ in $G$ is at most $2$. More generally, the following statement is roughly\footnote{The right-to-left implication is true. To make the left-to-right implication also true, one needs to add some additional assumption on the connectivity of the graph; for instance, that there is no separation of small order with both sides being large.} true: If $G$ is planar, then a set of vertices $Z\subseteq V(G)$ has bounded bidimensionality if and only if $Z$ can be covered by a bounded number of bounded-radius balls in the face-vertex metric. (Here, the face-vertex metric is the metric on the vertices where vertices lying on the same face are considered to be at distance~$1$.) Therefore, this parameter is a perfect tool for distinguishing the ``localized'' and the ``widespread'' case. Different arguments will be used depending on whether the bidimensionality of $S$ and $T$ in $G$ is small or large, leading to four (up to symmetry) scenarios: small/small (2 variants), small/large, large/large. We remark that bidimensionality, as a parameter, actually never appears in our proof, but keeping it in mind is instructive for guiding the intuition.
We refer to \cite{ProtopapasTW2026Colorful,GorskyPW2026Quickly} for a more in-depth discussion on the general structure of graphs equipped with vertex subsets of bounded bidimensionality.

The main ``hammer'' we use in our proof of \cref{thm:main} is a powerful structure theorem due to Paul, Protopapas, Thilikos, and Wiederrecht~\cite{PaulPTW2024Obstructions}. This result, for a given graph $G$ that excludes a fixed minor, has large treewidth, and is equipped with annotations $Z_1,\ldots,Z_t$, exposes an ``infrastructure'' in $G$ that witnesses, for each set $Z_i$, whether $Z_i$ has small or large bidimensionality. The result is stated in full formality as \cref{lst_fi} and requires a substantial amount of preliminaries, but let us describe it informally now. 

The infrastructure takes the form of a {\em{walloid}}, presented in~\cref{sec:renditions}: a huge cylindrical wall in $G$ with various features attached to it. In the case when $G$ is planar, the walloid can be divided into {\em{flap segments}} and {\em{vortex segments}}. Each flap segment features a {\em{pocket}}: a disk placed at the top side of the segment that is enclosed from above by a ``rainbow'' consisting of many disjoint paths. Note that thus, the pockets are pairwise far from each other in the face-vertex metric. Each vortex segment has attached to its top side a large cylindrical grid (called the {\em{nest}}), whose inner disk is the {\em{vortex}} of the segment. We require that the subgraph drawn in the vortex, even together with the whole nest, has bounded pathwidth. The statement of \cref{lst_fi} could be now informally summarized as follows: If a planar graph $G$ has large treewidth, and $Z_1,\ldots,Z_t$ are sets of annotated vertices in $G$, then $G$ contains a large walloid with a bounded number of vortex segments such that for each $i\in \{1,\ldots,t\}$, one of the following holds:
	\begin{description}
		\item \quad{\bf{Large bidimensionality:}} $Z_i$ is featured in the pockets of many distinct flap segments.
		\item \quad{\bf{Small bidimensionality:}} $Z_i$ is entirely contained in the union of all the vortices.
	\end{description}

\paragraph*{Working with a walloid.} In the context of the proof of \cref{thm:main},
we apply \cref{lst_fi} with $Z_1=S$ and $Z_2=T$. If we obtain the large/large case, then we may immediately find a large $d$-scattered $S$-$T$-linkage just using the exposed infrastructure, by connecting various pockets through the walloid. Importantly, the scatteredness of the obtained linkage is also guaranteed by the infrastructure: the rows and columns of the walloid provide separation between the constructed paths, thus witnessing that they are far apart. 

For the small/large and the small/small case, we have to analyze the situation around vortices. For every vortex $\Delta$, we apply classic ``harvesting'' arguments, known from working with Erd\H{o}s-P\'osa problems, to prove that for each $Z\in \{S,T\}$, we can either find many $d$-scattered paths that start in $\Delta\cap Z$ and escape the vortex and its nest (we shall call them informally {\em{escaping paths}}), or we can hit all such escaping paths using a bounded number of balls of radius $d$. Similarly, we can either find many $d$-scattered $S$-$T$-paths residing inside the vortex and its nest, or we can hit all such paths using a bounded number of balls of radius $d$. In both these claims, it is crucial that the vortex has bounded pathwidth.

Since the number of vortex segments is bounded, we may take the family of all the balls constructed for all the vortices as the candidate hitting set for $S$-$T$-paths. There are now four scenarios when this set may still not hit all $S$-$T$-paths:
\begin{description}
	\item \textbf{Small/small case 1:} For some vortex $\Delta$, we managed to construct a large $d$-scattered linkage of escaping paths that start in $\Delta\cap S$, as well as a large $d$-scattered linkage of escaping paths that start in $\Delta\cap T$. But then we may connect those paths within the walloid to form a large $d$-scattered $S$-$T$-linkage.
	\item \textbf{Small/small case 2:}  For two different vortices $\Delta,\Delta'$, we managed to construct a large $d$-scattered linkage of paths escaping $\Delta$ that start in $\Delta\cap S$, as well as a large $d$-scattered linkage of paths escaping $\Delta'$ that start in $\Delta'\cap T$. Again, we may connect those paths within the walloid to form a large $d$-scattered $S$-$T$-linkage.
	\item \textbf{Small/large case:} For some vortex $\Delta$, we managed to construct a large $d$-scattered linkage of escaping paths that start in $\Delta\cap S$, whereas $T$ is featured in the pockets of multiple flap segments. Then we may connect the escaping paths to the pockets within the walloid in order to form a large $d$-scattered $S$-$T$-linkage.
	\item \textbf{Large/small case:} Symmetric to the previous case with the roles of $S$ and $T$ swapped.
\end{description}

\begin{figure}[ht]
    \centering
    \includegraphics[scale=1.15]{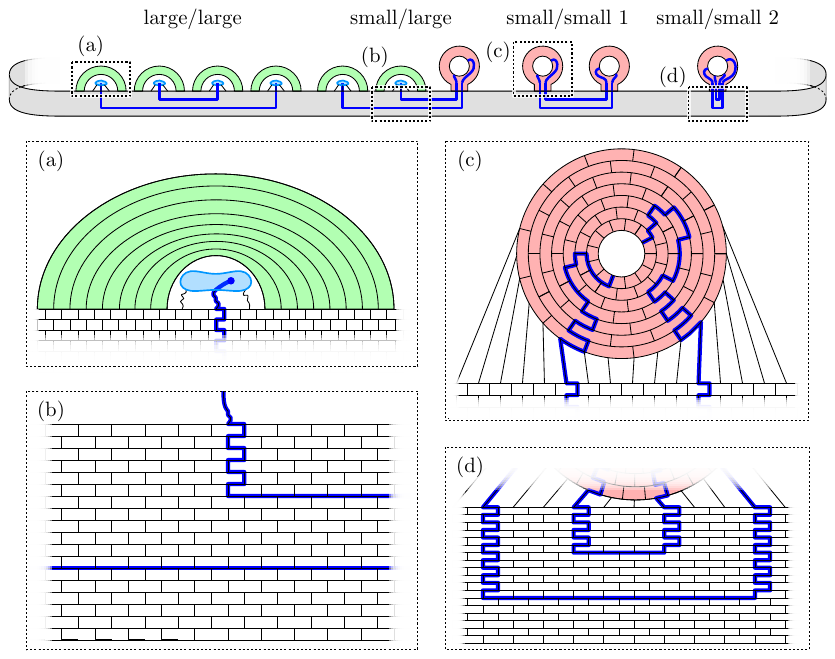}
    \caption{
        Overview of the case analysis in one walloid.
        (a) rainbow of a flap segment;
        (b) scattered $S$-$T$-linkage to another segment;
        (c) nest of a vortex segment;
        (d) scattered $S$-$T$-linkage in one vortex segment.
    }%
    \label{fig:intro}
\end{figure}

There is one important technical caveat. When connecting the linkages of escaping paths through the walloid, in order to ensure that the final linkage is $d$-scattered, we need to have a good control over how the escaping linkages exactly connect to the main cylindrical wall of the walloid. To achieve this, we use the Linkage Combing Lemma of Golovach, Stamoulis, and Thilikos~\cite{GolovachST23Combing}. Intuitively, this result allows us to assume that when an escaping linkage travels through the nest of the corresponding vortex, from some point on it agrees with the radial paths of the nest.

\paragraph*{Further complications.}

As the experienced reader probably suspects, in the description above we have cheated multiple times. The largest liberty we have taken is that \cref{lst_fi} does not really just assume that $G$ has large treewidth, but it provides a local decomposition of the graph relative to some tangle of huge order, controlled by a huge wall in the graph. And it is not really the case that vortices have bounded pathwidth, but they admit path decompositions of small adhesion where the bags can be large, but are separated from the tangle by small separators (consisting of the two adhesions incident to the bag). In fact, the reasoning presented above provides us with a certain Local Structure Theorem: we may either find a large $d$-scattered $S$-$T$-linkage in $G$, or a set consisting of a bounded number of radius-$d$ balls that hits every $S$-$T$-path, except possibly for those that are entirely hidden in the bags of vortices. By standard means, we then turn this Local Structure Theorem into a suitable Global Structure Theorem, which provides a tree decomposition of the whole graph where every bag is equipped with a hitting set (consisting of a bounded number of balls of radius $d$) for $S$-$T$-paths that are ``relevant'' for this bag. The final stroke in the proof of the planar case of \cref{thm:main} is a standard greedy bottom-up argument applied on this tree decomposition.

\paragraph*{Beyond planar graphs.} Finally, we discuss the techniques necessary to lift the reasoning presented above from the planar case to the case of graphs embeddable in some fixed surface $\Sigma$ of positive genus.

In case $G$ has an embedding on $\Sigma$ with large {\em{representativity}}, \cref{lst_fi} provides a walloid that additionally contains segments representing all the topological features (handles and cross-caps) of $\Sigma$.
These segments can be essentially ignored in the further analysis, and the whole argumentation goes through with minimal changes. In fact, in our proof we essentially show how to reduce this setting to the planar case, by finding a disk $\Delta$ in the surface that contains all the relevant features of the walloid, so that the whole reasoning can be performed within $\Delta$.

However, a priori we do not know that the embedding in the surface has large representativity. To deal with embeddings of small representativity, we inductively cut the embedding along non-contractible curves of small length, effectively splitting the graphs into multiple components, each embeddable on a surface of Euler-genus strictly less than that of $\Sigma.$
This approach will inevitably introduce a set of {\em{apices}} $A \subseteq V(G)$. The apices are not embedded on the surface and can have arbitrary neighborhoods within the vortices (and nowhere else), but their number $|A|$ is bounded.
Note that in principle, the existence of apices might influence the distances between the components from within the vortices, and even create short connections between two different vortices of the same component.
Apices have to be therefore treated carefully when analyzing the structure around every vortex, but at least they do not influence the distances outside the nests of the vortices.
However, at the end, since the number of apices is bounded, we may always just include the radius-$d$ balls around them in the constructed hitting set. Consequently, in all the harvesting arguments applied in the vicinity of the vortices, we only harvest paths disjoint from the balls around the apices.

Although this approach may appear conceptually simple at first glance, the technical challenges, particularly around the cutting of the surface, become somewhat complex.

\paragraph*{Independent work.} We have been informed by Chun-Hung Liu that he has independently obtained a proof of Weak Coarse Menger's Conjecture for $H$-minor-free graphs, for any fixed graph $H$; see~\cite{Liu26}.
To the best of our understanding, while there are large high-level similarities between the two approaches, Liu's proof appears to rely primarily on classical Graph Minor Structure Theory, whereas ours builds on ``modern'' techniques based on the bidimensionality of terminal sets.

\paragraph*{Acknowledgements.} The authors thank Jadwiga Czy\.zewska for inspiring discussions during the initial phases of this project.

\section{Preliminaries}\label{sec:prelims}

We use $\N$ to denote the set of non-negative integers, and for $c \in \N$, we write $\N_{\geq c}\coloneqq\{ x \in \N \mid x \geq c\}$.
For $a, b \in \N$, we denote by $[a, b]$ the set $\{ x \in \N \mid a \leq x \leq b\}$.
Notice that $[a, b]$ is empty when $b < a$.
For $c \in \N$, we use $[c]$ as a shorthand for $[1, c]$.

We use standard graph notation and terminology.
For a graph $G$, we denote $|G|\coloneqq |V(G)|$ and $\|G\|\coloneqq |E(G)|$.
Given a subgraph $H$ of $G$ and a vertex subset $X \subseteq V(G),$ we say that \emph{$H$ avoids $X$} if $V(H) \cap X = \emptyset.$
We extend this definition to a collection $\Hh$ of sugraphs of $G$ in the expected way: \emph{$\Hh$ avoids $X$} if for every $H \in \Hh,$ $H$ avoids $X.$
The distance (shortest path) metric in $G$ will be denoted by $\dist_G(\cdot,\cdot)$. We extend the distance metric to subgraphs of $G$ in the obvious way: for two subgraphs $H,H'\subseteq G$, we define $\dist_G(H,H')\coloneqq \min\{\dist_G(u,u')\colon u\in V(H), u'\in V(H')\}$. Finally, for a vertex $u$ and a distance parameter $d\in \N$, the radius-$d$ ball around $u$ in $G$ is the set
\[\Ball_G^d(u)\coloneqq \{v\in V(G)\mid \dist_G(u,v)\leq d\}.\]
We extend this notation to vertex subsets in $G$ by setting
\[\Ball_G^d(A)\coloneqq \bigcup_{u\in A}\Ball_G^d(u),\qquad \textrm{for }A\subseteq V(G).\]

Let us introduce some standard concepts from graph theory that will be used throughout the paper.

\paragraph{Separations and linkages.}

Let $G$ be a graph.
A \emph{separation} of $G$ is a pair $(A,B)$ of subsets of $V(G)$ such that $V(G)=A\cup B$ and there is no edge with one endpoint in $A\setminus B$ and the other in $B\setminus A$. The quantity $|A \cap B|$ is the \emph{order} of $(A, B)$.
Given sets $X, Y \subseteq V(G)$, a set $S$ is an \emph{$X$-$Y$-separator} if no component of $G-S$ contains both a vertex of $X$ and a vertex of $Y$.

For $x, y \in V(G),$ a path $P$ in $G$ is said to be an \emph{$x$-$y$-path} if the endpoints of $P$ are $x$ and $y.$
Similarly, for $X, Y \subseteq V(G),$ a path $P$ in $G$ is said to be an \emph{$X$-$Y$-path} if $P$ is an $x$-$y$-path for some $x\in X$ and $y\in Y.$
A \emph{linkage} in $G$ is a family of pairwise vertex-disjoint paths in $G$, and an \emph{$X$-$Y$-linkage} is one that consists of $X$-$Y$-paths.
The \emph{order} of a linkage $\Pp$ is the number $|\Pp|$ of paths in $\Pp.$ We sometimes slightly abuse the notation and we identify a linkage $\Pp=\{ P_1,P_2,\dots,P_k\}$ and the graph $\bigcup_{i\in[k]}P_i.$

We say that a path $P$ in $G$ is \emph{internally disjoint} from a set $X \subseteq V(G)$ if $V(P) \cap X$ does not contain any vertex that is not an endpoint of $P$.

\paragraph{Tree-decompositions.}

A \emph{rooted tree} is a tree with one distinguished vertex called the \emph{root}. This naturally imposes the parent/child and ancestor/descendant relations on the vertices of the tree. We follow the convention that whenever a rooted tree $T$ underlies a decomposition of some graph $G$, then the vertices of $T$ are called \emph{nodes}.

A \emph{rooted tree-decomposition} of a graph $G$ consists of a rooted tree $T$ and a bag function $\beta$ mapping every node $t$ of $T$ to its \emph{bag} $\beta(t)\subseteq V(G)$ so that the following conditions are satisfied:
\begin{itemize}
	\item for every edge $e$ of $G$, there exists a node $x$ of $T$ such that $\beta(x)$ contains both the endpoints of $e$; and
	\item for every vertex $u$ of $G$, the set of nodes of $T$ whose bags contain $u$ induces a non-empty, connected subtree of $T$.
\end{itemize}
The \emph{adhesion} of a tree decomposition $(T,\beta)$ is the maximum size of an intersection of two neighboring bags, that is, $\max_{st\in E(T)} \beta(s)\cap \beta(t)$.

\paragraph{Grids and walls.} 

An \emph{$(n \times m)$-grid} is the graph with vertex set $[n] \times [m]$ and edge set
$$\{ (i,j)(i,j+1) \colon i \in [n], j \in [m-1] \} \cup \{ (i,j)(i+1,j) \colon i \in [n-1], j \in [m] \}.$$
We call the path where vertices appear as $(i,1), (i,2), \ldots, (i,m)$ the \emph{$i$-th row} and the path where vertices appear as $(1,j), (2,j), \ldots, (n,j)$ the \emph{$j$-th column} of the grid.

Let  $t, z \in \N_{\geq 3}.$
An \emph{elementary $(t \times z)$-wall} is obtained from the $(t \times 2z)$-grid by removing a matching $M$ which contains all odd numbered edges of its odd numbered columns and all even numbered edges of its even numbered columns, and then deleting all vertices of degree one, see \cref{fig:wall:grid,fig:wall:wall}.
We say that a graph $G'$ is a \emph{subdivision} of a graph $G$ if it can be obtained from $G$ by iteratively replacing any edge $uv$ of $G$ by a $u$-$v$-path whose internal vertices are disjoint from $G$.
A \emph{$(t \times z)$-wall} is a subdivision of an elementary $(t \times z)$-wall.
By a wall $W$ in a graph $G$ we mean a wall $W$ that is a subgraph of $G$.

Notice that any $(t \times z)$-wall $W$ consists of $t$ pairwise-disjoint paths $P_1, \dots, P_t$ and $z$ pairwise-disjoint paths $Q_1, \dots, Q_z$ such that $W = \bigcup_{i\in[t]}P_i \cup \bigcup_{i\in[z]}Q_i,$ $P_i\cap Q_j$ is a non-empty path for all $i\in[t]$ and $j\in[z],$ each $P_i$ meets the $Q_j$s in order, and each $Q_j$ meets the $P_i$s in order; see \cref{fig:wall:paths}.
We call the $P_i$s the \emph{horizontal paths} of $W$ and the $Q_j$s the \emph{vertical paths} of $W.$

\begin{figure}[h]
    \centering
    \begin{subfigure}[t]{0.33\textwidth}
        \centering
        \includegraphics[scale=1.1,page=1]{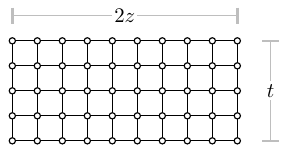}
        \caption{$(t \times 2z)$-elementary grid}\label{fig:wall:grid}
    \end{subfigure}
    \begin{subfigure}[t]{0.33\textwidth}
        \centering
        \includegraphics[scale=1.1,page=3]{wall.pdf}
        \caption{$(t \times z)$-elementary wall}\label{fig:wall:wall}
    \end{subfigure}
    \begin{subfigure}[t]{0.32\textwidth}
        \centering
        \includegraphics[scale=1.1,page=4]{wall.pdf}
        \caption{paths $P_i$ in red and $Q_j$ dotted}\label{fig:wall:paths}
    \end{subfigure}
    \caption{
        Elementary grid is used to create a $(t \times z)$-elementary wall that has horizontal paths $P_i$ and vertical paths $Q_j$.
        Grids and walls are any subdivisions of elementary grids and walls.
    }\label{fig:wall}
\end{figure}

\paragraph{Well-linked sets.}

A set $X \subseteq V(G)$ in a graph $G$ is \emph{well-linked} if for every separation $(A, B)$ of $G,$ we have that \[|A \cap X| \leq |A \cap B|\qquad\textrm{or}\qquad |B \cap X| \leq |A \cap B|.\]
Suppose $|X| \geq 3k$ for some $k\in \N$.
Then, given a separation $(A, B)$ of $G$ of order less than $k,$ we have that $|A\cap X|<k$, implying $|B\cap X|>2k$, or that $|B\cap X|<k$, implying $|A\cap X|>2k$. In the former case we say that $B$ is the \emph{$X$-majority side} of $(A,B)$, and in the latter case $A$ is the $X$-majority side. Note that these two conditions are mutually exclusive, so exactly one of them holds.

We also need to be able to detect a large wall in a graph that ``respects'' a given well-linked set of large order in the graph.
This was done by Thilikos and Wiederrecht~\cite{ThilikosW2024Excluding} by turning a proof of Kawarabayashi, Thomas, and  Wollan~\cite{KawarabayashiTW2020Quickly} into an algorithm.
We need the following definitions.

Given a $(t \times z)$-wall $W$ in a graph $G$, for some $t, z \in \N_{\geq 3},$ we can observe that for every separation $(A, B)$ of $G$ of order less than $\min \{ t, z \}$ there is a unique side, say $B,$ such that $B \setminus A$ contains the whole vertex set of both a vertical and a horizontal path of $W.$
We call this the \emph{$W$-majority side} of $(A, B).$
With this observation in mind, we say that a $(t \times z)$-wall $W$ of a graph $G$ is \emph{controlled} by a well-linked set $X$ of $G$ if for every separation $(A, B)$ of $G$ of order less than $\min \{ t, z \},$ $B$ is the $W$-majority side of $(A, B)$ if and only if $B$ is the $X$-majority side of $(A, B)$ as well.

With these definitions in place, we can state the result of Thilikos and Wiederrecht~\cite{ThilikosW2024Excluding}.

\begin{proposition}[Thilikos and Wiederrecht {\cite[Theorem 4.2.]{ThilikosW2024Excluding}}]\label{thm_algogrid}
	There exist constants $c,d\in \N$ and an algorithm that, given a graph $G$, an integer $k\geq 3$, and a well-linked set $X \subseteq V(G)$ of size at least $c\cdot k^{20}$, computes in time $2^{\Oh(k^{d})} \cdot |G|^2 \|G\| \log|G|$ a $(k\times k)$-wall $W$ in $G$ controlled by $X.$
\end{proposition}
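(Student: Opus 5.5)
The plan is to reduce the statement to two ingredients: a polynomial grid theorem made relative to $X$, and a Menger-type linkage argument that transfers majority sides from $X$ to the wall. Everything will be constructive.

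\emph{Step 1: build a wall inside the well-linked set.} Since $X$ is well-linked with $|X|\geq c\cdot k^{20}$, I would run the Kawarabayashi--Thomas--Wollan construction starting from $X$, in the polynomial-grid-theorem style of a path-of-sets system. Concretely:
\begin{itemize}
\item repeatedly split $X$ into pieces and use well-linkedness, via Menger's Theorem (computable by max-flow), to link the pieces with large disjoint linkages;
\item from these linkages obtain a polynomially large path-of-sets system;
\item extract from it a $(k'\times k')$-wall $W'$ with $k'$ polynomially larger than $k$.
\end{itemize}
The construction should also keep a linkage $\Pp$ of order polynomial in $k$ that joins $X$ to branch vertices of $W'$ lying in pairwise distinct rows and columns. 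Such a linkage comes for free, because the pieces of the path-of-sets system were themselves built out of $X$. The exponent $20$ in $c\cdot k^{20}$ is the loss incurred along this pipeline. Any steps that rely on brute-force search over bounded-size configurations are what produce the $2^{\Oh(k^{d})}$ factor; the flow computations give the $|G|^2\|G\|\log|G|$ factor.

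\emph{Step 2: prune $W'$ to a $(k\times k)$-wall $W$ and check control.} I would take $W$ to be a subwall of $W'$ that still sees many paths of $\Pp$, with their endpoints spread over distinct rows and columns of $W$. Now fix a separation $(A,B)$ of order less than $k$ in which $B$ is the $W$-majority side. Then:
\begin{itemize}
\item the minority side $A\setminus B$ can fully contain fewer than $k$ rows and fewer than $k$ columns of $W$;
\item hence all but $\Oh(k)$ of the spread-out endpoints of $\Pp$ on $W$ lie in $B\setminus A$, by the standard fact that a small separator cannot cut off many rows and columns of a wall;
\item at most $k-1$ paths of $\Pp$ meet $A\cap B$;
\item every remaining path starts in $B\setminus A$ and stays in $B\setminus A$, so its endpoint in $X$ lies in $B$.
\end{itemize}
Choosing $|\Pp|$ large enough (say at least $10k$) gives $|B\cap X|\geq k>|A\cap B|$. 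Since exactly one side is the $X$-majority side, $B$ must be that side. Both sides of a separation cannot be majority sides, so the equivalence in the definition of control follows.

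The main obstacle is Step 1. The wall extraction has to be done so that the spread-out linkage to $X$ survives every stage of the construction, and the polynomial bound $c\cdot k^{20}$ has to be preserved throughout. I would handle this by carrying the linkage along as an invariant of the path-of-sets system rather than trying to recover it after the wall has been built.
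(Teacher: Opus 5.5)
The paper gives no proof of this statement. It imports it as a black box from Thilikos and Wiederrecht~\cite[Theorem 4.2]{ThilikosW2024Excluding}, who turned an argument of Kawarabayashi, Thomas, and Wollan (resting on the polynomial grid theorem) into an algorithm. Measured against that, your Step~1 restates what has to be proved rather than proving it. Its two substantive claims are only asserted:
\begin{enumerate}
\item that $|X|\geq c\cdot k^{20}$ yields a $\mathrm{poly}(k)$-wall $W'$ together with a linkage $\Pp$ of order $\Omega(k)$ from $X$ to pairwise distinct rows of $W'$;
\item that this can be done deterministically in time $2^{\Oh(k^{d})}\cdot|G|^2\|G\|\log|G|$.
\end{enumerate}
The exponent $20$ and the $2^{\Oh(k^d)}$ factor are attributed to unspecified ``losses'' and ``brute-force steps'', not derived. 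The linkage in particular does not come for free. It is exactly what distinguishes ``$G$ contains a large wall'' from ``$G$ contains a wall controlled by $X$'': a wall sitting behind a separator of order less than $k$ from most of $X$ admits no such linkage. Grid extraction from a path-of-sets system also reroutes paths repeatedly, so you would have to prove that the rows of the final wall are still linked to $X$. As written, Step~1 assumes the conclusion.

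Step~2 also fails as stated. A $(k\times k)$-wall has only $k$ rows, so $W$ cannot carry $10k$ (or even $3k-2$) endpoints of $\Pp$ in pairwise distinct rows. Your own count of bad paths is up to $2s$, which for $s=k-1$ and $|\Pp|\leq k$ leaves no guarantee that $|B\cap X|>|A\cap B|$.

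The fix is to do the counting in $W'$ and then transfer it to $W$.
\begin{itemize}
\item Take $|\Pp|\geq 3k-2$ with endpoints in distinct rows of $W'$, and let $(A,B)$ have order $s<k$ with $B$ the $W'$-majority side.
\item At most $s$ paths of $\Pp$ meet $A\cap B$.
\item At most $s$ endpoints lie in $A\setminus B$: each such row crosses a column of $W'$ contained in $B\setminus A$, so it must meet $A\cap B$.
\item Hence at least $|\Pp|-2s>s$ paths lie entirely in $B\setminus A$. This gives $|B\cap X|>|A\cap B|$, so $B$ is the $X$-majority side.
\end{itemize}
You still need the easy but missing lemma that transfers this to $W$. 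Suppose the rows and columns of $W$ are subpaths of rows and columns of $W'$. Then the $W$- and $W'$-majority sides agree on every separation of order less than $k$. Indeed, a row of $W$ inside $B\setminus A$ crosses $k$ columns of $W'$, one of which avoids $A\cap B$ and therefore lies in $B\setminus A$; the same holds symmetrically for a column of $W$.
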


\section{Renditions of planar graphs in the sphere}\label{sec:renditions}

For notational simplicity we assume that every considered planar graph $G = (V, E)$ is associated with some fixed embedding in the sphere $\Sigma$.
Moreover, we do not distinguish between a vertex in $V$ and the point of $\Sigma$ used to draw it, or between an edge in $E$ and the arc in $\Sigma$ representing it in the embedding.

A \emph{closed} (respectively \emph{open}) \emph{disk} $\Delta$ in $\Sigma$ is any set homeomorphic to $\{ (x, y)\in \mathbb{R}^2 \mid x^{2} + y^{2} \leq 1\}$ (respectively, to $\{ (x, y)\in \mathbb{R}^2 \mid x^{2} + y^{2} < 1\}$).
Whenever we say disk without specifying whether it is open or closed, we always mean a closed disk.
For a disk $\Delta$, we write
\begin{itemize}
\item $\bd(\Delta)$ for the {\em{boundary}} of $\Delta$ in $\Sigma$;
\item $\inte(\Delta)\coloneqq \Delta\setminus \bd(\Delta)$ for the {\em{interior}} of $\Delta$; and
\item $\exte(\Delta)\coloneqq \Sigma\setminus \Delta$ for the {\em{exterior}} of $\Delta$. 
\end{itemize}

Any disk $\Delta \subseteq \Sigma$ we consider throughout the paper will also satisfy $\bd(\Delta) \cap E = \emptyset$; that is, the boundary of $\Delta$ intersects $G$ only in vertices.
We shall always assume this to be the case without any further clarification.

\paragraph{$\Sigma$-renditions.}

A \emph{$\Sigma$-rendition} of a planar graph $G = (V, E)$ is a set $\rend$ of disks in $\Sigma$ satisfying the following:
\begin{enumerate}
    \renewcommand{\theenumi}{\roman{enumi}} 
    \renewcommand{\labelenumi}{\theenumi)}
	\item The disks of $\rend$ have pairwise disjoint interiors.
	\item If $\Delta_{1}, \Delta_{2} \in \rend$ are distinct, then $\Delta_{1} \cap \Delta_{2} \subseteq V.$
	\item Every edge of $G$ (except possibly its endpoints) is a subset of the interior of one of the disks in $\rend.$
	\item\label{c:periphery} For every disk $\Delta \in \rend$, we have $|\bd(\Delta) \cap V| \leq 2.$
\end{enumerate}

The \emph{ground} vertices of $\rend$ are those vertices of $G$ that do not belong to the interior of any disk of $\rend.$
By $\ground(\rend)$ we denote the set of all ground vertices of $\rend$; the graph $G$ will be always clear from the context.
The disks of $\rend$ will be often called the \emph{cells} of $\rend$.
For a cell $\Delta\in \rend$, we define the {\em{periphery}} of $\Delta$ to be the set $\per(\Delta)\coloneqq \bd(\Delta)\cap V=\bd(\Delta)\cap \ground(\rend)$.
Thus, condition \ref{c:periphery} asserts that every cell has a periphery of size at most $2.$

\begin{figure}[ht]
	\centering
	\begin{subfigure}{0.32\textwidth}
		\centering
		\includegraphics[page=1,scale=1.1]{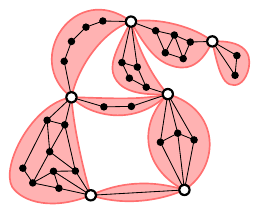}
		\caption{}\label{subfig:renditions_ok}
	\end{subfigure}
	\begin{subfigure}{0.32\textwidth}
		\centering
		\includegraphics[page=2,scale=1.1]{renditions.pdf}
		\caption{}\label{subfig:renditions_mistakes}
	\end{subfigure}
	\begin{subfigure}{0.32\textwidth}
		\centering
		\includegraphics[page=3,scale=1.1]{renditions.pdf}
		\caption{}\label{subfig:renditions_grounded}
	\end{subfigure}
	\caption{
		A graph $G$ drawn on the sphere $\Sigma$ and
		two families of closed disks in $\Sigma$.
		\textbf{(\subref{subfig:renditions_ok})}
		This family is a $\Sigma$-rendition of $G$.
		\textbf{(\subref{subfig:renditions_mistakes})}
		This family breaks every condition for being a $\Sigma$-rendition of $G$.
		\textbf{(\subref{subfig:renditions_grounded})}
        A $\rend$-aligned disk $\Delta$, a $\rend$-grounded red cycle, and a $\rend$-grounded blue dotted path.
	}%
	\label{fig:renditions}
\end{figure}

Observe that every planar graph $G$ has a trivial $\Sigma$-rendition where every vertex is a ground vertex and every edge belongs to its own cell with periphery of size $2$.

\medskip
For the remainder of this section, we fix a planar graph $G=(V,E)$ and 
 a $\Sigma$-rendition $\rend$ of $G$.

\paragraph{Aligned disks and grounded subgraphs.}

We call a disk $\Delta$ in $\Sigma$ \emph{$\rend$-aligned} if $\bd(\Delta) \cap V \subseteq \ground(\rend)$.
In particular, the cells of $\rend$ are $\rend$-aligned by definition.
Given a $\rend$-aligned disk $\Delta$, we define its \emph{periphery} as $\per(\Delta)\coloneqq \bd(\Delta)\cap V$ and we denote by $G \cap \Delta$ the subgraph of $G$ consisting of all the vertices and edges drawn in $\Delta$.

Let $Q \subseteq G$ be either a cycle or path in $G$.
We say that $Q$ is \emph{$\rend$-grounded} if either $Q$ is a path of positive length with both endpoints in $\ground(\rend)$, or $Q$ is a cycle that contains edges of $G \cap \Delta_{1}$ and $G \cap \Delta_{2}$ for at least two distinct cells $\Delta_{1}, \Delta_{2} \in \rend$.
A $2$-connected subgraph $H$ of $G$ is said to be \emph{$\rend$-grounded} if every cycle in $H$ is $\rend$-grounded.

\paragraph{Linear decompositions and depth.}

Let $\Delta$ be a $\rend$-aligned disk in $\Sigma$.
Further, let $\langle x_1, x_2, \dots, x_{n-1}, x_n\rangle$ be the enumeration of the vertices of $\per(\Delta)$ in the order of encountering them when traversing $\bd(\Delta)$ in the clockwise direction, with the first vertex $x_1$ chosen arbitrarily from $\per(\Delta)$. (Here, we assume that we have fixed an orientation of $\Sigma$.)

A \emph{linear decomposition} of $G \cap \Delta$ is a sequence $\langle B_1, B_2, \dots, B_{n-1}, B_n \rangle$ of sets such that
\begin{itemize}
	\item[(V1)] $B_i \subseteq V(G\cap \Delta)$ and $x_i \in B_i$ for all $i \in [n]$;
	\item[(V2)] $\bigcup_{i \in [n]} B_i = V(G \cap \Delta)$;
	\item[(V3)] for every edge $uv \in E(G \cap \Delta)$, there exists $i \in [n]$ such that $u,v \in B_i$; and
	\item[(V4)] for every vertex $v \in V(G \cap \Delta)$, the set $\{ i \in [n] \mid v \in B_i\}$ forms an interval in $[n]$.
\end{itemize}
The \emph{adhesion} of a linear decomposition is the quantity $\max\{ |B_i \cap B_{i+1}| : i \in[n-1] \}$.
The \emph{width} of a linear decomposition is the quantity $\max\{ |B_i| : i \in [n] \}$.
The \emph{depth} of $\Delta$ is the smallest integer $k$ such that $G \cap \Delta$ has a linear decomposition of adhesion at most $k$ and the \emph{width} of $\Delta$ is the smallest integer $k$ such that $G \cap \Delta$ has a linear decomposition of width at most $k$.

\paragraph{Radial linkages and (railed) nests.}

Finally, we introduce some terminology for grid-like objects. Let us fix two disks $\Delta^{\insf} \subseteq \Delta^{\outsf}$ in $\Sigma$, not necessarily $\rend$-aligned.

We say that a linkage $\Pp$ in $G$ is a \emph{$(\Delta^{\insf}, \Delta^{\outsf})$-radial linkage} if each path in $\Pp$ has one endpoint drawn in $\Delta^{\insf}$ and the other drawn in $\Sigma \setminus \inte(\Delta^{\outsf}),$ see \Cref{fig:radial_linkages}.
We moreover say that $\Pp$ is 
\begin{itemize}
	\item \emph{confined} if every path in $\Pp$ is disjoint from $\exte(\Delta^{\outsf})$, and
	\item \emph{fully confined} if it is confined and, furthemore, every path in $\Pp$ is disjoint from $\inte(\Delta^{\insf}).$
\end{itemize} 
Note that if $\Pp$ is confined, then one of the two endpoints of every path in $\Pp$ lies in $\bd(\Delta^{\outsf});$ and if $\Pp$ is fully confined, then additionally the other endpoint must lie in $\bd(\Delta^{\insf}).$ Finally, for a set $X\subseteq V(G)$, we say that a radial linkage $\Pp$ is {\em{$X$-rooted}} if for each $P\in\Pp$, the endpoint of $P$ in $\Delta^\insf$ belongs to $X$.

\begin{figure}[h]
    \centering
    \begin{minipage}{.45\textwidth}
        \centering
        \includegraphics[scale=1.1]{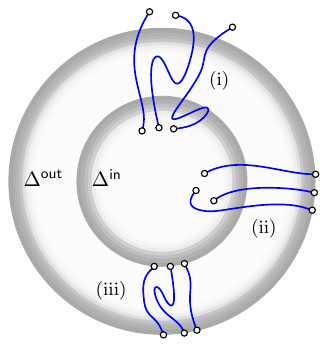}
        \captionof{figure}{
            (i) A $(\Delta^{\insf}, \Delta^{\outsf})$-radial linkage that is moreover,
            (ii) confined,
            (iii) fully confined.
        }\label{fig:radial_linkages}
    \end{minipage}
    ~~
    \begin{minipage}{.45\textwidth}
        \centering
        \includegraphics[scale=1.1]{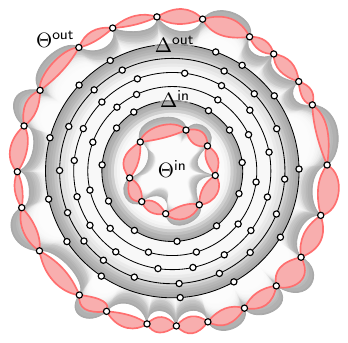}
        \captionof{figure}{
            A $(\Delta^{\insf}, \Delta^{\outsf})$-nest of order $5$ sandwiched by a pair of $\rend$-aligned disks $(\Theta^{\insf}, \Theta^{\outsf}).$
        }\label{fig:nest}
    \end{minipage}
\end{figure}

Next, a \emph{$(\Delta^{\insf}, \Delta^{\outsf})$-nest} of \emph{order} $s \in \N_{\geq 2}$ is a sequence $\Cc = (C_{1}, \ldots, C_{s})$ of pairwise disjoint cycles in $G$ such that for every $i \in [s]$, $C_{i}$ bounds a disk $\Delta_{i}$ in $\Sigma$ in such a way that $\Delta^{\insf} = \Delta_{1} \subseteq \Delta_{2} \subseteq \ldots \subseteq \Delta_{s} = \Delta^{\outsf}$, see \Cref{fig:nest}.
Note that we insist that $\Delta^{\insf}$ is bounded by the cycle $C_1$ and $\Delta^{\outsf}$ is bounded by the cycle $C_s$, so in this case $\Delta^{\insf}$ and $\Delta^{\outsf}$ will {\em{not}} be $\rend$-aligned.
Also, $\Delta^{\insf}$ is the disk bounded by $C_1$ that does not contain $C_s$ and $\Delta^{\outsf}$ is the disk bounded by $C_s$ that contains $C_1$, hence the pair $(\Delta^{\insf},\Delta^{\outsf})$ is uniquely defined by the nest $\Cc$.
Therefore, we often speak about nests without specifying the disks, as they are implied, and we call $\Delta^\insf$ and $\Delta^\outsf$ the {\em{inner disk}} and the {\em{outer disk}} of $\Cc$ respectively.

We say that a $(\Delta^{\insf}, \Delta^{\outsf})$-radial linkage $\Pp$ is \emph{orthogonal} to a $(\Delta^{\insf}, \Delta^{\outsf})$-nest $\Cc=(C_1,\ldots,C_s)$ if for every $P \in \Pp$ and every $i \in [s]$, $C_{i} \cap P$ consists of a single connected component (which is then a subpath of $C_i$ and a subpath of $P$).
Whenever $\Pp$ is orthogonal to $\Cc$, $\Cc$ is of order $s \in \N_{\geq 2}$, and $\Pp$ is of order $p \in \N_{\geq 1}$, we call the pair $(\Cc, \Pp)$ a \emph{$(\Delta^{\insf}, \Delta^{\outsf})$-railed nest} of \emph{order} $(s, p)$.
If $\Pp$ is (fully) confined, then we call $(\Cc, \Pp)$ \emph{(fully) confined} as well; and if $\Pp$ is $X$-rooted, then we call $(\Cc,\Pp)$ $X$-rooted as well.

It will be sometimes convenient to speak about railed nests delimited by $\rend$-aligned disks. For this, we consider the following definition: for a pair of $\rend$-aligned disks $\FDisk^\insf\subseteq \FDisk^\outsf$, we say that a $(\Delta^{\insf}, \Delta^{\outsf})$-railed nest $(\Cc,\Pp)$ is \emph{sandwiched} by $(\FDisk^\insf,\FDisk^\outsf)$ if
\begin{itemize}
	\item $\FDisk^\insf\subseteq \Delta^\insf$ and $\bd(\FDisk^{\insf}) \cap \bd(\Delta^{\insf}) = \emptyset$;
	\item $\Delta^\outsf\subseteq \FDisk^\outsf$ and $\bd(\Delta^{\outsf}) \cap \bd(\FDisk^{\outsf})=\emptyset$; and
	\item $\Pp$ is a $(\FDisk^\insf, \FDisk^\outsf)$-radial linkage.
\end{itemize}

\section{Structure relative to terminals}

\subsection{Segments and walloids}\label{sec:walloids}

We first revisit the definition of a wall in order to define a slightly more general notion. This notion will be one of the building blocks towards a generalized structure that can appropriately model the infrastructure we need for our $\Sigma$-renditions.

\paragraph{Wall segments.}

Let $r, t \in \N_{\geq 4}$.
An \emph{elementary $(r \times t)$-wall segment} is obtained from the $(r \times 2t)$-grid by removing the matching $M$ consisting of all odd-numbered edges of its odd-numbered columns and all even-numbered edges of its even-numbered columns, see \cref{fig:wall_segment}.
Notice that any elementary $(r \times t)$-wall segment $W_{1}$ consists of $r$ pairwise disjoint paths $P_{1}, \ldots, P_{r}$ --- the \emph{horizontal paths} of $W_{1}$ --- and $t$ pairwise disjoint paths $Q_{1}, \ldots, Q_{t}$ --- the \emph{vertical paths} of $W_{1}$ --- which are defined in the obvious way from the unique $(r \times t)$-wall subgraph $W$ of $W_{1},$ so that each $P_{i},$ $i \in [r],$ respectively $Q_{j},$ $j \in [r],$ contains the $i$-th horizontal path of $W,$ respectively the $j$-th horizontal path of $W.$
We refer to $P_{1}$ as the \emph{top} and to $P_{r}$ as the \emph{bottom} horizontal path of $W_{1}$ respectively.
The \emph{top} and \emph{bottom boundary vertices} of $W_{1}$ are the vertices of the original $(r \times 2t)$-grid that are incident to edges of $M$ and belong to the top and bottom path of $W_{1}$ respectively.
For $i \in [t]$, the $i$-th \emph{top boundary vertex} of $W_{1}$ is the single vertex in $P_{1} \cap Q_{i}$ that is matched in $M$ and the $i$-th \emph{bottom boundary vertex} of $W_{1}$ is the single vertex in $P_{r} \cap Q_{i}$ that is matched in $M$.
For $i \in [r]$, the $i$-th \emph{left boundary vertex} of $W_{1}$ is the vertex $(i, 1)$ and the $i$-th \emph{right boundary vertex} of $W_{1}$ is the vertex $(i, 2t)$, where we identify the vertex set of $W_1$ with $[r]\times [2t]$ naturally.

\paragraph{Annulus walls.}

Let $r, t \in \N_{\geq 4}$.
An \emph{elementary $(r \times t)$-annulus wall} $W$ is obtained from an elementary $(r \times t)$-wall segment $W_{1}$ by adding an edge connecting the $i$-th left and the $i$-th right boundary vertex of $W_{1}$, for each $i \in [r]$, see \cref{fig:annulus_segment}.
Notice that the top and bottom boundary vertices of $W$ are the top and bottom boundary vertices of $W_{1}$.
Moreover, the $t$ horizontal paths $P_{1}, \ldots, P_{r}$ of $W_{1}$ are completed into $r$ cycles $C_{1}, \ldots, C_{r}$ of $W$ such that $V(P_{i}) = V(C_{i})$, for each $i \in [r]$.
We call these cycles the \emph{base cycles} of $W$.
We call the cycle $C_{1}$ the \emph{inner cycle} and the cycle $C_{r}$ the \emph{outer cycle} of $W$ respectively.
An \emph{$(r \times t)$-annulus wall} is a subdivision of an elementary $(r \times t)$-annulus wall.
When $r = t$, we simply write \emph{$t$-annulus wall}.

\paragraph{Flap segments.}

Let $r, t \in \N_{\geq 4}$.
An \emph{elementary $(r \times t)$-flap segment} of \emph{arity} $q \in [3]$ is obtained from an elementary $(r \times (2t + q))$-wall segment $W_{1}$ with top boundary vertices $v_{1}, \ldots, v_{t}$, $t_{1}, \ldots, t_{q}$, $u_{1}, \ldots, u_{t}$ in left to right order by adding all the edges $\{ v_i u_{t-i+1}\mid i\in [t] \}$, see \cref{fig:flap_segment}.
An \emph{$(r \times t)$-flap segment} $W$ is a subdivision of an elementary $(r \times t)$-flap segment.

We call the vertices $t_{1}, \ldots, t_{q}$ the \emph{sockets} of $W$.
Notice that the added edges form a $\{v_{1}, \ldots, v_{t}\}$-$\{u_{1}, \ldots, u_{t}\}$-linkage in $W$ which we call the \emph{rainbow} of $W$.
Further, we define the \emph{pocket cycle} of $W$, denoted $C^\holesf_W$, as the cycle of $W$ that consists of the (subdivision of the) edge $v_tu_1$ from the rainbow and the $v_t$-$u_1$-path that is a subpath of the top path of $W_{1}$. The {\em{pocket}} of $W$, denoted $\Delta^\holesf_W$, is the face of $W$ bounded by $C^\holesf_W$.

\paragraph{Vortex segments.}

Let $r,t \in \N_{\geq 4}$.
An \emph{elementary $(r \times t)$-vortex segment} $W$ is obtained from the disjoint union of an elementary $(r \times t)$-wall segment $W_{1}$ with top boundary vertices $v_{1}, \ldots, v_{t}$ in left to right order, and an elementary $((t + 2) \times t)$-annulus wall $W_{2}$ with bottom boundary vertices $u_{1}, \ldots, u_{t}$ in left to right order, by making $v_{i}$ adjacent to $u_{i}$ for each $i \in [t]$, see \cref{fig:vortex_segment}.
An \emph{$(r \times t)$-vortex segment} $W$ is a subdivision of an elementary $(r \times t)$-vortex segment.

We refer to the inner (resp. outer) cycle of $W_{2}$ as the \emph{inner cycle} (resp. \emph{outer cycle}) of $W$.
Also, let 
\begin{itemize}
	\item $\Cc$ be the set of base cycles of $W_{2},$ excluding the two extremal ones: the inner and the outer cycle; and
	\item $\Rr$ be a set of paths, each obtained from a distinct column of $W_{2}$, say with endpoint $u_{i}$, by appending the (subdivision of the) edge $v_{i}u_{i}.$
\end{itemize} 
Clearly, $(\Cc, \Rr)$ is a railed nest of order $(t, t)$.
We call this nest the {\em{nest}} of $W$.

\begin{figure}[p]
    \newcommand{\herevspacing}{\par\vspace{3em}}
	\centering
	\begin{minipage}{.5\textwidth}
		\centering
		\includegraphics[page=1,scale=1.1]{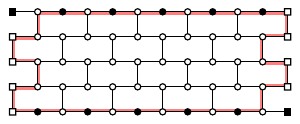}
		\captionof{figure}{A $(5 \times 6)$-wall segment}%
		\label{fig:wall_segment}
	\end{minipage}%
	\begin{minipage}{.5\textwidth}
		\centering
		\includegraphics[page=2,scale=1.1]{basic_segments.pdf}
		\captionof{figure}{A $(5 \times 6)$-annulus wall}%
		\label{fig:annulus_segment}
	\end{minipage}
    \herevspacing
	\centering
	\begin{minipage}{.6\textwidth}
		\centering
		\includegraphics[page=3,scale=1.1]{basic_segments.pdf}
		\captionof{figure}{A $(5 \times 4)$-flap segment of arity 3}%
		\label{fig:flap_segment}
	\end{minipage}%
	\begin{minipage}{.4\textwidth}
		\centering
		\includegraphics[page=4,scale=1.1]{basic_segments.pdf}
		\captionof{figure}{A $(4 \times 5)$-vortex segment}%
		\label{fig:vortex_segment}
	\end{minipage}
    \herevspacing
    \centering
    \includegraphics[page=1,scale=1.08]{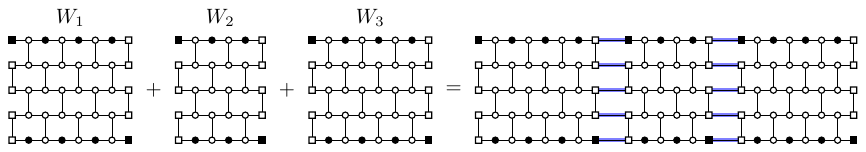}
    \caption{Concatenation of $W_1$, $W_2$, and $W_3$.}\label{fig:segment_concatenation}
    \herevspacing
    \centering
    \includegraphics[page=1,scale=1.08]{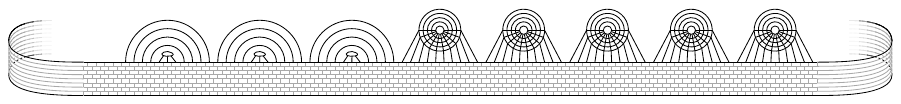}
    \caption{A $(9,11,3,5)$-walloid}\label{fig:walloid}
\end{figure}

\paragraph{Base, circumference, and home.}

We collectively refer to (elementary) wall, flap, and vortex segments as \emph{(elementary) $(r \times t)$-segments}.
In any segment $W$, the \emph{base} of $W$ is the subdivision of the wall segment $W_{1}$ (as denoted in the definition) that is a subgraph of $W$. The left and right boundary vertices of a segment $W$ are the left and right boundary vertices of its base. Note that an $(r \times t)$-segment has $r$ left boundary vertices and $r$ right boundary vertices.

Finally, for an elementary segment $W$ we define its {\em{circumference}} to be the cycle $C_W$ defined as follows:
\begin{itemize}
    \item If $W$ is a wall segment, then $C_W$ is the unique cycle contained in the union of: the left column of $W$, the right column of $W$, the top row of $W$, and the bottom row of $W$, see $C_W$ marked in \cref{fig:wall_segment}.
	\item If $W$ is a flap segment, then $C_W$ is the unique cycle contained in the union of: the left column of $W_1$, the right column of $W_1$, the bottom row of $W_1$, and the rainbow edge connecting the leftmost and the rightmost top boundary vertex of $W_1$, see \cref{fig:flap_segment}.
	\item If $W$ is a vortex segment, then $C_W$ is the unique cycle contained in the union of: the left column of~$W_1$, the right column of $W_1$, the bottom row of $W_1$, and the $4$-edge path connecting the leftmost and the rightmost top boundary vertex of $W_1$ whose internal vertices lie on the outer cycle of $W_2$, see \cref{fig:vortex_segment}.
\end{itemize}
Thus, in all cases, $C_W$ is a cycle contained in $W$ such that one of the disks bounded by $C_W$ contains all of $C_W$, possibly except for one of the left/right boundary vertices of $W_1$ and the incident edge.
We denote this disk by $\Delta_W$ and call it the {\em{home}} of the segment $W$.
We extend this terminology to non-elementary segments in the expected way, by applying the subdivision also to the circumference.

\paragraph{(Cylindrical) concatenation of segments.}

Let $r, t \in \N_{\geq 4}$ and $\ell \in \N_{\geq 1}$.
Given a sequence $W_{1}, \ldots, W_{\ell}$ of elementary $(r \times t)$-segments, their \emph{concatenation} is obtained from the disjoint union of $W_{1}, \ldots, W_{\ell}$ by adding an edge between the $j$-th right boundary vertex of $W_{i}$ and the $j$-th left boundary vertex of $W_{i+1}$, for all $i \in [\ell - 1]$ and $j \in [r]$.
Moreover, their \emph{cylindrical concatenation} is obtained from their concatenation by adding an edge between the $j$-th left boundary vertex of $W_{1}$ and the $j$-th right boundary vertex of $W_{\ell}$, for all $j \in [r]$, see \cref{fig:segment_concatenation}.
Notice that $W$ contains, as a subgraph, both the concatenation as well as the cylindrical concatenation of the bases of all the segments $W_1,\ldots,W_\ell$.
We refer to this annulus wall as the \emph{base annulus} of~$W$.
We extend these definitions to non-elementary segments in the expected way.

Let us note that the cylindrical concatenation of some collection of $(r\times t)$-segments, where $r,t\in \N_{\geq 4}$, is always a $3$-connected planar graph (or a subdivision of one).
Therefore, it has a unique embedding in the sphere $\Sigma$.

\paragraph{Walloids.}

Let $r,t \in \N_{\geq 4}$ and $a,b  \in \N$.
An \emph{elementary $(r, t, a,b)$-walloid} (\cref{fig:walloid}) is the cylindrical concatenation of $W_{0}, \ldots, W_{a+b}$, where 
\begin{itemize}
	\item $W_{0}$ is an elementary $(r \times r)$-wall segment,
	\item $W_{1}, \ldots, W_{a}$ are elementary $(r \times t)$-flap segments, and
	\item $W_{a + 1}, \ldots, W_{a+b}$ are elementary $(r \times t)$-vortex segments.
\end{itemize}
An \emph{$(r, t,a,b)$-walloid} is a subdivision of an elementary $(r, t,a,b)$-walloid. An \emph{(elementary) $(r,t)$-walloid} is an (elementary) $(r,t,a,b)$-walloid for some $a,b\in \N$.

We call $W_0,\ldots,W_{a+b}$ the \emph{segments} of $W$.
A set of segments of $W$ is \emph{consecutive} in $W$ if they appear consecutively in the above order.
For $i \in [a]$, we refer to $W_{i}$ as the \emph{$i$-th flap segment} of $W$ and for $j \in [b]$, we refer to $W_{a + j}$ as the \emph{$j$-th vortex segment} of $W$.
Note that in the absence of both flap and vortex segments, walloids are simply annulus walls. The {\em{breadth}} of a walloid is $b$, the number of vortex segments.

Similarly to walls, we define when a walloid is controlled by a well-linked set.
Let $G$ be a graph, $r,t \in \N_{\geq 4}$, and $W \subseteq G$ be an $(r, t)$-walloid.
As before, we may observe that for every separation $(A, B)$ of $G$ of order less than $r,$ there is a unique side, say $B,$ such that $B \setminus A$ contains the vertex set of both a cycle and a vertical path of the base annulus of $W.$
In this case, we call $B$ the \emph{$W$-majority side} of $(A, B).$
Given a well-linked set $X$ of $G$ of order at least $3r,$ we say that $W$ is \emph{controlled} by $X$ if for every separation $(A, B)$ of $G$ of order less than $r,$ $B$ is the $W$-majority side of $(A, B)$ if and only if $B$ is the $X$-majority side of $(A, B)$ as well.

Finally, given an $(r', t')$-walloid $W' \subseteq G$ such that $3\leq r' \leq r,$ we say that $W'$ is \emph{controlled} by $W$ if for every separation $(A, B)$ of $G$ of order less than $r',$ $B$ is the $W'$-majority side of $(A, B)$ if and only if $B$ is the $W$-majority side of $(A, B)$ as well.

\paragraph{Facial cycles of walloids.}

Let $W$ be an $(r,t,a,b)$-walloid, for some $r,t\in \N_{\geq 4}$ and $a,b\in \N$.
By definition, $W$ is a subdivision of a $3$-connected planar graph and therefore it has a unique (up to homeomorphism) embedding in $\Sigma$.
Let us classify the facial cycles (i.e., cycles bounding the faces) of this embedding.

A \emph{brick} of $W$ is any facial cycle of $W$ that contains at most $6$ degree-$3$ vertices of $W$ and is not an inner cycle of a flap or a vortex segment.
Among the facial cycles of $W$ that are not bricks, $a$ of them correspond to the inner cycles of the flap segments of $W$ and $b$ of them correspond to the inner cycles of the vortex segments of $W$. Note that for every segment $W'$ of $W$, the home of $W'$ contains all the bricks contained in $W'$ and all the faces of $W$ bounded by those bricks.

Two facial cycles of $W$ remain unclassified.
One of the two corresponds to the outer cycle of the base annulus of $W$ which we call the \emph{simple cycle} of $W$, denoted by $C^{\sisf}_{W}$. The only other unclassified facial cycle shall be called the \emph{exceptional cycle} of $W$ and denoted by $C^{\exsf}_{W}$. This is the only facial cycle that contains an endpoint from the top path of the base of every segment of $W$. The two faces of $W$ bounded by $C^{\sisf}_{W}$ and $C^{\exsf}_{W}$ will be denoted by $\Delta^{\sisf}_{W}$ and $\Delta^{\exsf}_{W}$, respectively; note that these are disks in $\Sigma$.

\subsection{Walloids in $\Sigma$-renditions}

Let $G$ be a planar graph and $\rend$ be a $\Sigma$-rendition of $G$.
Also, let $W$ be an $(r, t,a,b)$-walloid in $G$ that is $\rend$-grounded, for some $r,t\in \N_{\geq 4}$ and $a,b\in \N$. Recall that the segments of $W$ are $W_0,W_1,\ldots,W_{a+b}$, where $W_1,\ldots,W_a$ are flap segments and $W_{a+1},\ldots,W_{a+b}$ are the vortex segments.

Consider a flap segment $W_i$ of $W$, $i\in [a]$, say of arity $q \in [3]$. For brevity, we write $\Delta^\holesf_i\coloneqq \Delta^\holesf_{W_i}$ for the pocket of $W_i$.
We shall say that $W_i$ \emph{hosts} a cell $\Delta \in \rend$ if $|\per(\Delta)| = q$, $\Delta$ is drawn in the pocket $\Delta^{\holesf}_i$, and moreover, in $G$ there exists a confined $(\Delta, \Delta^{\holesf}_i)$-radial linkage $\Pp_i$  whose endpoints on $\bd(\Delta^{\holesf}_i)$ are the sockets of $W_i$, see \cref{fig:flap_details}.

\begin{figure}[h!]
	\centering
	\begin{minipage}{.58\textwidth}
		\centering
		\includegraphics[page=1,scale=1.10]{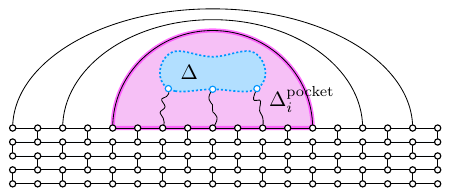}
		\caption{Flap segment $W_i$ of $W$ of arity $q=3$.}\label{fig:flap_details}
	\end{minipage}
	\begin{minipage}{.40\textwidth}
		\centering
		\includegraphics[page=2,scale=1.10]{flaps_vortices.pdf}
		\caption{Vortex segment $W_{a+j}$ of $W$.}\label{fig:vortex_details}
	\end{minipage}
\end{figure}

Next, consider the vortex segments $W_{a+1}, \ldots, W_{a+b}$; recall that $W_{a+j}$ is the $j$-th vortex segment, for $j\in [b]$.
Suppose there is a sequence of $\langle (\FDisk^{\insf}_{j},\FDisk^{\outsf}_{j})\colon j\in [b] \rangle$ of $\rend$-aligned disks such that, for each $j \in [b]$,
\begin{itemize}
	\item $\per(\FDisk^{\insf}_{j}) \subseteq V(C_{0})$, where $C_{0}$ is the inner cycle of the $j$-th vortex segment of $W$,
	\item $\per(\FDisk^{\outsf}_{j}) \subseteq V(C_{t + 1})$, where $C_{t + 1}$ is the outer cycle of the $j$-th vortex segment of $W$, and
	\item $\FDisk^{\insf}_{j} \subseteq \FDisk^{\outsf}_{j}$ and disks $\{\FDisk^{\outsf}_{j}\colon j\in [b]\}$ are pairwise disjoint.
\end{itemize}
We shall refer to $\FDisk^{\insf}_{j}$ as the \emph{inner vortex disk} and to $\FDisk^{\outsf}_{j}$ as the \emph{outer vortex disk} for the $j$-th vortex segment see \cref{fig:vortex_details}.
Note that the nest of the $j$-th vortex segment is by definition sandwiched by $(\FDisk^\insf_{j}, \FDisk^\outsf_{j}).$ The sequence $\langle (\FDisk^{\insf}_{j},\FDisk^{\outsf}_{j})\colon j\in [b] \rangle$ as above shall be called a \emph{sandwich sequence} for $W$ and we say that the walloid $W$ is \emph{$\rend$-well-grounded} if it admits a sandwich sequence.

We say that the walloid $W$ has \emph{depth} $d$ (with respect to the $\Sigma$-rendition $\rend$) if every vortex segment of $W$ has an outer vortex disk of depth $d.$
Whenever this is the case we shall implicitly assume that any outer vortex disk we choose is a disk of depth (at most) $d.$

A \emph{cell-coloring} of $\rend$ is a function $\chi \colon \rend \to \N_{\geq 1}.$
We define $\chi(\rend) \coloneqq \{ \chi(\Delta) \mid \Delta \in \rend \}$.
The maximum integer in $\chi(\rend)$ is the \emph{capacity} of $\chi$.

\paragraph{Tight renditions.}

We also require some additional connectivity properties for our renditions.
For this we introduce a couple of new definitions.

\medskip
Let $G$ be a planar graph and $\rend$ be a $\Sigma$-rendition of $G$.
We define the \emph{$\rend$-torso} of $G$ as the graph $T$ with vertex set being the ground vertices of $\rend,$ and $uv \in E(T),$ whenever $u$ and $v$ belong to the periphery of a common cell of $\rend.$
Notice that $T$ is clearly planar, and we implicitly consider the $\Sigma$-embedding for $T$ to be the one induced by the $\Sigma$-embedding of $G$ in the obvious way.

We say that $\rend$ is \emph{tight} if the following conditions are met:
\begin{enumerate}
	\item For all $\Delta \in \rend,$ there is an $x$-$y$ path in $G \cap \Delta$ between any two distinct vertices $x, y \in \per(\Delta).$
	\item The $\rend$-torso of $G$ is $2$-connected.
\end{enumerate}
Below we explain how one can obtain a tight $\Sigma$-rendition for a given planar graph in linear time.

\begin{observation}\label{obs:tight_rendition} There is an algorithm that, given a planar graph $G$ with a $t$-annulus wall $W \subseteq G,$ for some $t \in \N_{\geq 4}$, computes a tight $\Sigma$-rendition $\rend$ of $G$ with $W$ being $\rend$-grounded in time $\Oh(|G| + \|G\|).$
\end{observation}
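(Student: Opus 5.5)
The approach is to start from the trivial $\Sigma$-rendition and repeatedly coarsen it along separations of order at most $2$ that do not cut into the wall. Concretely, begin with $\rend_0$ in which every vertex is a ground vertex and every edge is its own cell with periphery of size $2$. Then $W$ is trivially $\rend_0$-grounded, since every path of $W$ has ground endpoints and every cycle of $W$ uses at least two edges, hence at least two cells. Only the two tightness conditions can fail, and each failure is repaired by merging cells.

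First, handle the cut-vertices and $2$-separations of $G$ relative to $W$. Since $W$ is a subdivision of a $3$-connected graph (an annulus wall with $t\geq 4$), for every separation $(A,B)$ of $G$ of order at most $2$ there is a unique side, say $B$, containing all branch vertices of $W$. On that side, $W$ meets $A\setminus B$ at most in the interior of a single subdivided edge between the at most two vertices of $A\cap B$.

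Take the block--cut tree of $G$ together with the SPQR-trees of its blocks. Each maximal part $A$ hanging off $W$ at a single vertex $x$ is collapsed into one cell with periphery $\{x\}$. Each maximal $2$-separation part $A$ with $A\cap B=\{x,y\}$ is collapsed into one cell with periphery $\{x,y\}$. We choose these parts so that $A\setminus B$ contains no branch vertex of $W$; if it contains a subdivision path of $W$, that is allowed, provided it contains only one such path. Planarity of $G$ lets us draw a disk around $G[A]$ meeting $G$ only at $x,y$. Taking maximal parts with respect to inclusion ensures the resulting disks have disjoint interiors. All of this is computable in linear time via Hopcroft--Tarjan block decomposition and a linear-time SPQR-tree construction, followed by a single traversal.

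Next, verify the two tightness conditions.
\begin{itemize}
\item \emph{Condition (i).} Each created cell with periphery $\{x,y\}$ is a component-side of a minimal $2$-separation, so it contains an $x$–$y$ path inside it. Single edges trivially contain such a path.
\item \emph{Condition (ii).} Suppose the torso had a cut-vertex $z$. The remaining ground graph would then split off a part not containing $W$'s majority, or a part attached via $z$ and one more peripheral vertex. This contradicts maximality of the collapsed parts.
\item \emph{Groundedness.} A path of $W$ through a $2$-cell still has ground endpoints, because the branch vertices of $W$ stay ground. A cycle of $W$ contains at least three branch vertices, so it visits at least two cells.
\end{itemize}

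The main obstacle is making the choice of collapsed parts canonical and laminar: nested or crossing $2$-separations (for example, at S-nodes of the SPQR-tree) must be merged into a single cell of periphery $2$ without ever swallowing a branch vertex of $W$ or producing two overlapping disks. I would handle this by rooting each SPQR-tree at the node containing the most branch vertices of $W$ and collapsing exactly the maximal subtrees whose virtual-edge side is free of branch vertices.
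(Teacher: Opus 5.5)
Your proposal is workable, but it takes a different and heavier route than the paper because it does more than the statement requires.

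\emph{What the paper does.} Tightness only asks for the $\rend$-torso to be $2$-connected, so only $1$-separations need to be removed. The paper starts from the trivial rendition on the component of $G$ containing $W$. It puts the remaining components of $G$ into a cell with empty periphery; your proposal omits this case. It then computes the blocks of the torso with Tarjan's algorithm. For every cut vertex $u$ of the unique block $B\supseteq W$, it replaces all cells inside a disk $\Delta_J$ by $\Delta_J$ itself, where $\Delta_J$ is bounded by a curve meeting the torso only at $u$ and contains everything hanging off $u$. Every new cell has periphery of size at most $1$, so condition (i) is vacuous. The torso becomes $B$, so condition (ii) is immediate. No $2$-separation is ever touched.

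\emph{What your route buys and costs.} By also collapsing $2$-separations you get a stronger output: the torso becomes the skeleton of an R-node, hence $3$-connected. The price is SPQR-trees and exactly the crossing/laminarity problem you flag as the main obstacle, which never arises in the paper's argument. Note that ``maximal with respect to inclusion'' alone does not give laminarity. $2$-separations can cross, and pendant parts at $x$ could be absorbed into several maximal parts at different pairs $\{x,y\}$, $\{x,z\}$.

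\emph{Your fix is sound.} Every $2$-separation leaves all branch vertices of $W$ outside $A\cap B$ on one side. A cycle skeleton (S-node) or a two-vertex skeleton (P-node) cannot host $\geq 4$ branch vertices of a $3$-connected graph. Hence there is an R-node whose skeleton contains all branch vertices, and the pertinent graphs of its virtual edges are internally disjoint. Handle the cut-vertex parts separately through the block--cut tree. Once you root at this R-node, you can replace your vague maximality argument for condition (ii) by the observation that the torso is exactly that $3$-connected skeleton.
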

\begin{proof} Let $C_{W}$ be the connected component of $G$ that contains $W.$
	We start from the trivial $\Sigma$-rendition $\rend$ of $G$ where every vertex of $C_{W}$ is a ground vertex and every edge of $C_{W}$ belongs to its own cell with periphery of size $2.$
	Next (by possibly modifying the $\Sigma$-embedding of $G$), we identify a disk $\Delta$ that contains the drawing of all other components of $G$ (if any) and only those, in a way that $\bd(\Delta) \cap G = \emptyset.$
	In particular this means that the periphery of $\Delta$ has size $0.$
	Clearly $\rend$ satisfies the first condition above and $W$ is $\rend$-grounded.
	Moreover, trivially, the $\rend$-torso $T$ of $G$ is connected but may not be $2$-connected.
	Suppose that $T$ is not $2$-connected, otherwise we conclude with $\rend.$

	We proceed to modify $\rend$ to the desired tight $\Sigma$-rendition as follows: Since $W$ is $\rend$-grounded, by property~i) above, and the definition of $T,$ there is a $t$-annulus wall $W' \subseteq T$ such that $W$ is a subdivision of $W'.$
	We compute the set $\mathcal{B}$ of blocks ($2$-connected components) of $T$ by using the classic linear-time algorithm of Tarjan~\cite{Tarjan1971Depth}.
	Since $W'$ is a subdivision of a $3$-connected graph it follows that there exists a unique block $B \in \mathcal{B}$ that contains $W'.$
	Let $u$ be a cut-vertex of $T$ that belongs to $B$ and $(X, Y)$ be a separation of $T$ such that $X \cap Y = \{ u \},$ $X \setminus \{ u \}$ contains the vertex set of the component of $T \setminus u$ that contains $V(B) \setminus \{ u \}$ while $Y \setminus \{ u \}$ contains the rest.
	By standard planarity arguments, it follows that there is a closed curve $J$ in $\Sigma$ that intersects the drawing of $T$ only at $u,$ bounds a $\rend$-aligned disk $\Delta_{J}$ internally disjoint from $X,$ disjoint from $\Delta,$ and such that every vertex of $Y$ is drawn in $\Delta_{J}.$

	We now may define $\rend'$ by replacing all cells of $\rend$ contained in $\Delta_{J}$ by $\Delta_{J},$ for all cut-vertices $u$ and closed curves $J$ as defined above.
	Clearly $\rend'$ now satisfies both tightness conditions and $W$ is $\rend'$-grounded as desired.
\end{proof}

The purpose of introducing these definitions is to import a powerful result from the work of Paul, Protopapas, Thilikos, and Wiederrecht \cite{PaulPTS2025LocalIndex, PaulPTW2024Obstructions}, presented as \cref{lst_fi} below.
The statement is an abbreviated version of Lemma 8.1 from \cite{PaulPTS2025LocalIndex}, adjusted for our setting. The intuition is the following: Given a planar graph $G$ with several types of terminals highlighted (in our case, the terminal sets $S$ and $T$ will make up two types of terminals), the statement provides an infrastructure in $G$ that exposes whether every terminal set is ``local'' or ``global,'' and how the terminal sets can be ``linked'' to each other in the embedding.

\begin{proposition}[{\cite[Lemma 8.1]{PaulPTS2025LocalIndex}}]\label{lst_fi} There exist functions $\mathsf{annulus}_{\ref{lst_fi}} \colon \N^{4} \to \N$, $\mathsf{breadth}_{\ref{lst_fi}} \colon \N^{2} \to \N$, and $\depth_{\ref{lst_fi}} \colon \N^{3} \to \N$ such that for all $r,t \in \N_{\geq 4}$ and $\ell, q \in \N_{\geq 1}$ the following holds.
	
Suppose $G$ is a planar graph with a tight $\Sigma$-rendition, $W \subseteq G$ is a $\rend$-grounded $\mathsf{annulus}_{\ref{lst_fi}}(r, t, \ell, q)$-annulus wall, and $\chi$ is a cell-coloring of $\rend$ of capacity at most $\ell.$

Then there exists a $\rend$-well-grounded $(r, t, a, b)$-walloid $W' \subseteq G$ for some $a \in \mathbb{N}$ and $b \leq \mathsf{breadth}_{\ref{lst_fi}}(\ell, q),$ with a sandwich sequence $\langle (\FDisk^{\insf}_{j},\FDisk^{\outsf}_{j})\colon j\in [b] \rangle$, such that the following conditions hold:
	\begin{enumerate}
		\item $W'$ is controlled by $W$;
		\item $W'$ has depth at most $\depth_{\ref{lst_fi}}(t, \ell, q)$ with respect to $\rend$; and
		\item For every $\alpha \in \chi(\rend)$, if $\rend_{\alpha} = \{ \Delta \in \rend \mid \chi(\Delta) = \alpha\}$, then either
		\begin{itemize}
			\item $\bigcup \rend_{\alpha} \subseteq \bigcup_{j \in [b]} \inte(\FDisk^{\insf}_{j})$; or
			\item there is a sequence of $q$ consecutive flaps segments of $W'$, each hosting a cell from $\rend_{\alpha}$.
		\end{itemize}
	\end{enumerate}
	Moreover, it holds that
	\begin{align*}
		\mathsf{annulus}_{\ref{lst_fi}}(r, t, \ell, q) \ &\in \ (r + (\ell q + 1)^{q \cdot 2^{\Oh(\ell)}} \cdot t)^{2^{\Oh(\ell)}}\\
		\mathsf{breadth}_{\ref{lst_fi}}(\ell, q) \ &\in \ q \cdot 2^{\Oh(\ell)}\text{, and}\\
		\depth_{\ref{lst_fi}}(t, \ell, q) \ &\in \ (\ell q + 1)^{q \cdot 2^{\Oh(\ell)}} \cdot t.
	\end{align*}
\end{proposition}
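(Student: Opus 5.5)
The plan is to derive \cref{lst_fi} from the cited Lemma~8.1 of~\cite{PaulPTS2025LocalIndex} by translating definitions. Since that lemma is stated for general surfaces and a more general notion of rendition, the real work is checking that its hypotheses and conclusions specialise correctly to the sphere. To make the plan self-contained, I also sketch how I would prove the statement directly, by induction on the capacity $\ell$ of $\chi$.

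\textbf{Step 1 (one colour).} Fix a colour $\alpha$ and look at the cells of $\rend_\alpha$ relative to the huge $\rend$-grounded annulus wall $W$. I would measure their spread in the face--vertex metric of the $\rend$-torso, restricted to the region controlled by $W$. There are two cases.
\begin{itemize}
\item \emph{Many far-apart cells.} Suppose we find many cells of $\rend_\alpha$ that are pairwise far apart and each connected to $W$ by $q$ disjoint paths. Tightness of $\rend$ supplies paths inside each cell between its periphery vertices. Menger's theorem, together with control by $W$, lets us route $q$ disjoint paths from each such cell to the wall. We then reroute the top rows of a subwall around each cell. This builds a rainbow over it, giving $q$ consecutive flap segments that host cells of colour $\alpha$.
\item \emph{Few clusters.} Otherwise, all cells of $\rend_\alpha$ lie within a bounded number of bounded-radius regions. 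Around each region we cut out a large nest taken from concentric cycles of $W$, with radial rails. We declare the inner disk a vortex, with inner and outer vortex disks chosen $\rend$-aligned on the inner and outer nest cycles.
\end{itemize}

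\textbf{Step 2 (bounded depth).} For each vortex disk I would show that its depth is bounded by a function of $t,\ell,q$. Suppose a $\rend$-aligned disk carrying a nest of order $t$ has large depth. Then the standard argument yields a large transaction across it. In the plane such a transaction is planar and, together with the nest, gives a large flat wall. Inside this flat wall we can either create further flap segments, which contradicts the clustering in the second case of Step~1, or shrink the vortex. Iterating this refinement a bounded number of times gives the depth bound $(\ell q+1)^{q2^{\Oh(\ell)}}t$.

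\textbf{Step 3 (all colours).} Process the colours one at a time, each time passing to a sub-walloid of the current walloid that is controlled by it. A colour already declared to be in the flap case stays in that case if we keep entire consecutive blocks of flaps. A colour in the vortex case stays inside the union of the vortex disks. Each round can multiply the number of vortices by $\Oh(q)$ and must shrink the wall polynomially. This gives the claimed $2^{\Oh(\ell)}$ towers in $\mathsf{annulus}$ and the bound $q\cdot 2^{\Oh(\ell)}$ on $\mathsf{breadth}$. Control of $W'$ by $W$ follows because every step takes a sub-walloid whose base annulus is built from rows and columns of the previous one.

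\textbf{Main obstacle.} I expect the hardest part to be Step~2 interleaved with Step~3: keeping the vortex depth bounded while later colours split or reshape the vortex disks, and keeping the vortices disjoint and sandwiched. For a fully rigorous proof I would not redo this. Instead I would invoke~\cite[Lemma~8.1]{PaulPTS2025LocalIndex} directly and verify three things: that its notion of \emph{tight} rendition matches ours (see \cref{obs:tight_rendition}), that flap hosting matches our confined radial linkage to the sockets, and that on the sphere no handle or crosscap segments occur.
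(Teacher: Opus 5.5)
Your main route, obtaining the statement from \cite[Lemma~8.1]{PaulPTS2025LocalIndex} by specialising its $\Sigma$-decompositions to planar $\Sigma$-renditions, is exactly what the paper does. The paper gives no independent proof, only this translation.

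Your checklist, however, omits the one translation point the paper explicitly addresses. The conclusion of Lemma~8.1 is phrased for a \emph{new, coarser} decomposition. In it, all cells inside each inner vortex disk $\FDisk^{\insf}_j$ are merged into a single vortex cell, whose periphery may be large, so it is not even a $\Sigma$-rendition in this paper's sense. By contrast, \cref{lst_fi} asserts $\rend$-well-groundedness, the depth bound, and the dichotomy for the classes $\rend_\alpha$ with respect to the \emph{original} $\rend$. So a literal invocation does not yet give the stated conclusion. You must observe, as the paper does, that this merging is a final step of the cited proof which can be omitted, so that everything holds for $\rend$ itself.

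Also, the relevant tightness check is that the cited notion (3-connectivity of the torsoid) coincides, for planar renditions with peripheries of size at most $2$, with the paper's two tightness conditions. \cref{obs:tight_rendition} only constructs a tight rendition; it does not perform this matching.

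Your Steps~1--3 are a heuristic, not a proof, and you are right not to rely on them. For instance, Step~1 uses $q$ both for the number of consecutive flap segments and for the order of the hosting linkage, which is the arity $|\per(\Delta)|$ (at most $2$ here). Step~2 also does not justify that a large transaction yields either colour-$\alpha$ flaps or a smaller vortex, nor does it derive the stated bounds. Since you fall back on the citation, this does not affect the plan.
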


Let us briefly explain how \cref{lst_fi} relates to \cite[Lemma 8.1]{PaulPTS2025LocalIndex}.
The $\Sigma$-renditions we use here, tailored to planar graphs, can be viewed as a special (and significantly simpler) instance of the general $\Sigma$-decompositions used in \cite{PaulPTS2025LocalIndex} for $H$-minor-free graphs.
This simplification suffices for our purposes, since we restrict attention exclusively to planar graphs.

In particular, the tightness condition required for $\Sigma$-decompositions in \cite{PaulPTS2025LocalIndex} is formulated in a more general way.
There, the authors introduce the notion of a torsoid associated with a $\Sigma$-decomposition and require it to be $3$-connected.
In the planar setting considered here, and under our notion of a $\Sigma$-rendition, this requirement coincides with our tightness condition.

Furthermore, the conclusion of \cite[Lemma 8.1]{PaulPTS2025LocalIndex} yields a new $\Sigma$-decomposition representing the given cell-coloring that is ``coarser'' than the original one.
Here, coarser means that all cells contained in the inner vortex disk of each vortex segment (in the sense of our definition of a walloid) are replaced by a single vortex cell, possibly with large periphery.

In our setting, we omit this cell-replacement step, so that the resulting $\Sigma$-decomposition coincides with the original one. This is consistent with the proof of \cite[Lemma 8.1]{PaulPTS2025LocalIndex} and entails no loss of generality.

\subsection{Classifying terminals}

We are now in the position to setup a lemma that describes the structure of a planar graph of large treewidth relative to a pair of prespecified sets of terminals. This is essentially a combination of all the tools mentioned above, in particular of \cref{lst_fi}, ready for the usage in later sections.

\medskip
Let $G$ be a planar graph, $\rend$ be a $\Sigma$-rendition of $G$, and $W$ be a $\rend$-grounded walloid in $G$.
Given a set $X \subseteq V(G)$, we define
$$\rend_{X} \coloneqq \{ \Delta \in \rend \mid \text{$G \cap \Delta$ contains an $X$-$\per(\Delta)$-path} \}.$$
Moreover, for $c\in \N$ and $\mathfrak{R}\subseteq \rend$, we say that $W$ \emph{$c$-represents} $\mathfrak{R}$ if there is a sequence of $c$ consecutive flap segments of~$W$, each hosting a cell belonging to $\mathfrak{R}.$
\Cref{fig:classifying_terminals} illustrates the following lemma.

\begin{lemma}\label{structure_relative_terminals}\ There exist functions $\mathsf{annulus}_{\ref{structure_relative_terminals}} \colon \N^3 \to \N,$ $\depth_{\ref{structure_relative_terminals}} \colon \N^{2} \to \N$ and $\mathsf{breadth}_{\ref{structure_relative_terminals}} \colon \N \to \N$ such that, for all $r,t \in \N_{\geq 4}$ and $q \in \N_{\geq 1}$ the following holds.

	Suppose $G$ is a planar graph with an $\mathsf{annulus}_{\ref{structure_relative_terminals}}(r, t, q)$-annulus wall $W \subseteq G$ and $S, T \subseteq V(G)$ is a pair of vertex subsets.
	Also, suppose $F \subseteq V(G)$ is a vertex subset of size strictly less than $q.$
	
	Then, there is a $\Sigma$-rendition $\rend$ of $G$ and a $\rend$-well-grounded $(r, t, a, b)$-walloid $W' \subseteq G$ for some $a \in \mathbb{N}$ and $b \leq \mathsf{breadth}_{\ref{structure_relative_terminals}}(q),$ with a sandwich sequence $\langle (\FDisk^{\insf}_{j},\FDisk^{\outsf}_{j})\colon j\in [b] \rangle$, such that the following conditions~hold:
	\begin{enumerate}
		\item $W'$ is controlled by $W$;
		\item $W'$ has depth at most $\depth_{\ref{lst_fi}}(t, q)$ with respect to $\rend$;
		\item $F \subseteq \bigcup_{j \in [b]} \inte(\Theta^\insf_j)$; and
		\item for each $\mathfrak{R} \in \{ \rend_{S} \cap \rend_{T}, \rend_{S} \setminus \rend_{T}, \rend_{T} \setminus \rend_{S} \}$, either $\bigcup \mathfrak{R} \subseteq \bigcup_{j \in [b]} \inte(\FDisk^{\insf}_{j})$ or $W'$ $q$-represents $\mathfrak{R}.$
	\end{enumerate}
	Moreover, it holds that
	\begin{align*}
		\mathsf{annulus}_{\ref{structure_relative_terminals}}(r, t, q) &\in (r + 2^{\Oh(q \log q)} \cdot t)^{\Oh(1)},\\
		\mathsf{breadth}_{\ref{structure_relative_terminals}}(q) &\in \Oh(q), \quad \text{and}\\
		\depth_{\ref{structure_relative_terminals}}(t, q) &\in 2^{\Oh(q \log q)} \cdot t.
	\end{align*}
\end{lemma}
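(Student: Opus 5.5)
The plan is to reduce to \cref{lst_fi} with a suitable tight rendition and cell-coloring of capacity $\ell=4$.

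\textbf{Setting up the rendition.} Apply \cref{obs:tight_rendition} to $G$ and $W$ to obtain a tight $\Sigma$-rendition $\rend_0$ in which $W$ is $\rend_0$-grounded. Before using this rendition, we must make sure that the vertices of $F$ cannot be hidden inside large cells that end up hosted by flap segments. So we refine the rendition as follows. For each $v\in F$ that lies in the interior of a cell, split that cell so that $v$ becomes a ground vertex. Concretely, we take the trivial rendition inside that cell, where every vertex is ground and every edge is its own cell, and then re-tighten as in the proof of \cref{obs:tight_rendition}.

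Simpler alternative: since tightness is only needed as a hypothesis, it may be easier to put $F$ into the coloring instead. In that case we do not alter the rendition.

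\textbf{Defining the coloring.} Define $\chi(\Delta)$ as follows:
\begin{itemize}
\item color $1$ if $\Delta\in\rend_S\cap\rend_T$;
\item color $2$ if $\Delta\in\rend_S\setminus\rend_T$;
\item color $3$ if $\Delta\in\rend_T\setminus\rend_S$;
\item color $4$ if $\Delta$ contains a vertex of $F$ (in its interior or periphery) and is not already colored;
\item color $5$ otherwise.
\end{itemize}
The capacity is at most $5$, a constant, and the walloid produced at the end ignores color $5$.

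\textbf{Invoking \cref{lst_fi}.} Apply \cref{lst_fi} with $\ell=5$ and with $q$ as given. Take $\mathsf{annulus}_{\ref{structure_relative_terminals}}(r,t,q)\coloneqq\mathsf{annulus}_{\ref{lst_fi}}(r,t,5,q)$, and define the breadth and depth functions analogously. Substituting $\ell=5$ gives $(\ell q+1)^{q2^{\Oh(\ell)}}=2^{\Oh(q\log q)}$, which yields the claimed bounds, and $\mathsf{breadth}\in\Oh(q)$.

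Items~1 and~2 transfer directly. Item~4 is item~3 of \cref{lst_fi} applied to colors $1$, $2$ and $3$. Here we note that $\rend_S$, $\rend_T$ and the notion of $q$-representing are defined exactly in terms of hosted cells of these color classes.

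\textbf{Main obstacle: item~3 for $F$.} For color $4$, \cref{lst_fi} gives two options. Either every color-$4$ cell lies inside $\bigcup_j\inte(\FDisk^\insf_j)$, which is what we want, or $q$ consecutive flap segments each host a distinct color-$4$ cell.

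To exclude the second option, we want each vertex of $F$ to be involved in only one hosted cell. After the refinement above, every $v\in F$ is ground. Then a color-$4$ cell contains $v$ only on its periphery.

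The delicate point is that a ground vertex $v$ on a periphery may lie on the peripheries of several cells. Those cells could be hosted by different flap segments. However, the pockets of distinct flap segments are disjoint disks, and a hosted cell lies in its pocket. A ground vertex on the periphery of a hosted cell would then have to lie in that pocket. The problem is that it could also lie on the boundary of the pocket, and so be shared between pockets.

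My first attempt is to instead color by whether the \emph{interior} of the cell contains a vertex of $F$, and separately ensure that $F$ consists of vertices inside cells. This can be arranged by replacing each $v\in F$ by attaching a private pendant cell containing a new copy of $v$. More simply, we can observe that cells contain distinct vertices of $F$ in their interiors because cell interiors are disjoint. Then each $v\in F$ lies in the interior of at most one cell. Hence at most $|F|<q$ cells receive color $4$, so $q$ consecutive flap segments cannot each host a distinct color-$4$ cell.

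To make this interior condition hold, I would modify the rendition locally around each $v\in F$ that is a ground vertex. We enlarge a small disk around $v$ into a cell containing $v$ in its interior, with periphery equal to two neighbors of $v$ on the torso. If this breaks the requirement that $W$ be grounded, we instead route around it using the slack in the annulus size.

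Checking that tightness survives this local modification is the step I expect to require the most care.
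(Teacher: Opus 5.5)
\textbf{Gap in item~3.} Your coloring gives priority to the three classes $\rend_S\cap\rend_T$, $\rend_S\setminus\rend_T$, $\rend_T\setminus\rend_S$. Color~$4$ therefore only collects cells that meet $F$ but lie outside $\rend_S\cup\rend_T$. A vertex $v\in F$ lying in a cell of color $1$, $2$ or $3$ is never controlled. For example, if $F\subseteq S$, every cell meeting $F$ is in $\rend_S$ and color~$4$ may be empty. \cref{lst_fi} may then choose its second option for color~$2$, $q$-representing $\rend_S\setminus\rend_T$. Nothing forces the particular cells containing vertices of $F$ into $\bigcup_j\inte(\FDisk^\insf_j)$, so item~3 cannot be derived. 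Modifying the rendition does not help, because the problem is the order of the colors, not whether $v$ is a ground vertex or an interior one.

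\textbf{The fix (the paper's argument).} Reverse the priority. Give color~$1$ to every cell $\Delta$ with $V(G\cap\Delta)\cap F\neq\emptyset$, and let colors $2,3,4$ be the three $S$/$T$ classes restricted to the remaining cells.
\begin{itemize}
\item Color~$1$ cannot be $q$-represented. A hosted cell is drawn in the pocket of its flap segment. Pocket cycles of distinct flap segments lie in different segments and are vertex-disjoint, so the closed pockets are pairwise disjoint. Hence $q$ segments hosting color-$1$ cells would need $q$ distinct vertices of $F$.
\item This same observation shows that your worry about a periphery vertex being shared between two pockets is unfounded. None of the rendition surgery, with its unverified effect on tightness and on $W$ staying grounded, is needed.
\item Every vertex lies in some cell of the rendition from \cref{obs:tight_rendition}. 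So all color-$1$ cells, and hence $F$, lie in $\bigcup_j\inte(\FDisk^\insf_j)$.
\item Item~4 then needs one extra line that your ordering got for free. Each $\mathfrak{R}$ is the union of its part of color $2$, $3$ or $4$ and a set of color-$1$ cells. If the former is $q$-represented, so is $\mathfrak{R}$. Otherwise both parts lie in $\bigcup_j\inte(\FDisk^\insf_j)$.
\end{itemize}
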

\begin{proof} The lemma follows as an easy application of \cref{lst_fi}.
	From \cref{obs:tight_rendition} we get a tight $\Sigma$-rendition $\rend$ of $G$ with $W$ being $\rend$-grounded.
	Next, we define a cell-coloring $\chi$ of $\rend$ of capacity at most $5$ so that for every cell $\Delta \in \rend$,
	\begin{itemize}
		\item $\chi(\Delta) = 1$ if $G \cap \Delta$ contains a vertex of $F$;
		\item $\chi(\Delta) = 2$ if $G \cap \Delta$ contains no vertex of $F$ and both an $S$-$\per(\Delta)$-path and a $T$-$\per(\Delta)$-path;
		\item $\chi(\Delta) = 3$ if $G \cap \Delta$ contains no vertex of $F$ and an $S$-$\per(\Delta)$-path but no $T$-$\per(\Delta)$-path;
		\item $\chi(\Delta) = 4$ if $G \cap \Delta$ contains no vertex of $F$ and a $T$-$\per(\Delta)$-path but no $S$-$\per(\Delta)$-path; and
		\item $\chi(\Delta) = 5$ otherwise.
	\end{itemize}
	Now the claim follows directly by applying \cref{lst_fi}, since $|F| < q$ and therefore, the set of cells $\{ \Delta \in \rend \mid \chi(\Delta) = 1 \}$ cannot be $q$-represented by $W'.$
\end{proof}

\begin{figure}[ht]
    \centering
    \includegraphics[scale=1.1]{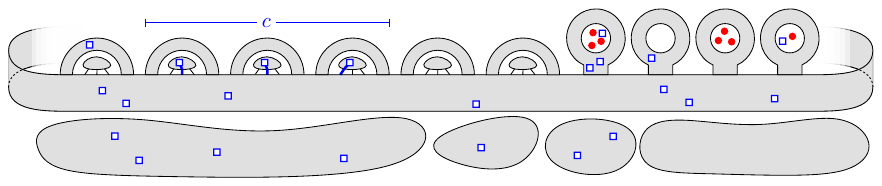}
    \caption{
        Considering $\mathfrak{R}$ of \Cref{structure_relative_terminals}, each set of cells is either entirely contained in the inner vortex disks (full red circles) or is $c$-represented by the walloid (empty blue squares).
    }%
    \label{fig:classifying_terminals}
\end{figure}

We note that the small set $F$ of ``forbidden'' vertices featured in the statement of \Cref{structure_relative_terminals} will not be of relevance in the planar case, but will turn useful in the lift to the setting of graphs embeddable in more complicated surfaces. This will be also the case in all the similar statements until \cref{sec:genus}.

\section{Finding scattered paths}\label{sec:finding}

The goal of this section is to identify several cases when the infrastructure provided by \cref{structure_relative_terminals} allows us to immediately find a large $S$-$T$-linkage consisting of paths that are far apart. We start with some terminology and simple observations.

For the remainder of this section, we fix a planar graph $G$, a rendition $\rend$ of $G$ in the sphere $\Sigma$, sets of terminals $S,T\subseteq V(G)$, a distance parameter $d\in \N,$ and a $\rend$-well-grounded $(r, t, a, b)$-walloid in $G,$ for some $r, t \in \N_{\geq 4},$ $a \in \N,$ and $b \in \N_{\geq 1}.$
The value of $r$ and $t$ will vary, depending on the requirements of each subsequent statement.
Also fix a sandwich sequence $\Zz$ for $W$ and a vertex subset $F \subseteq V(G)$ that is contained in the union of the interiors of the inner vortex disks in $\Zz$ of the vortex segments of $W.$

Suppose $\Hh$ is a family of subgraphs of $G$.
We shall say that $\Hh$ is \emph{$d$-scattered} in $G$ if for every two distinct subgraphs $H,H' \in \Hh$, we have $\dist_{G}(H,H') > d$. Note that $0$-scattered simply means pairwise disjoint. We say that a railed nest $(\Cc,\Pp)$ in $G$ is \emph{$d$-scattered} if both $\Cc$ and $\Pp$ are $d$-scattered (as families of subgraphs of $G$). Finally, we say that a set of integers $I$ is {\em{$d$-scattered}} if $|i-j|>d$ for all distinct $i,j\in I$.

The following definition will be useful for arguing scatteredness.
Let $W'$ be a segment of $W$. We call a subgraph $H$ of $G$ {\em{$d$-hidden}} in $W'$ if $H$ is entirely contained in the disk $\Delta_{W'}$ (the home of $W'$) and the distance in $G$ between $H$ and the circumference $C_{W'}$ is at least $d$. The following observation is nearly obvious.

\begin{observation}\label{obs:union-hidden}
	Let $W',W''$ be two distinct segments of $W$, and $H',H''$ be subgraphs of $G$ such that $H'$ is $d$-hidden in $W'$ and $H''$ is $d$-hidden in $W''$.
	Then $\{H',H''\}$ is $d$-scattered.
\end{observation}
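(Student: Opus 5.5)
We argue directly from the definitions, by contradiction. Suppose that $\dist_G(H',H'')\leq d$. Then there are vertices $u'\in V(H')$, $u''\in V(H'')$ and a $u'$-$u''$-path $P$ in $G$ of length at most $d$. By $d$-hiddenness, $u'$ lies in the home $\Delta_{W'}$ and is at distance at least $d$ from the circumference $C_{W'}$; likewise $u''$ lies in $\Delta_{W''}$ and is at distance at least $d$ from $C_{W''}$.

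The first step is the topological claim that $P$ must meet $C_{W'}\cup C_{W''}$. The homes of two distinct segments have disjoint interiors in the sphere: in the cylindrical concatenation each home sits inside its own segment, and they can share at most boundary points. Because $C_{W'}$ and $C_{W''}$ are cycles of $G$ and the embedding of $G$ extends that of $W$, any curve in $\Sigma$ drawn by $P$ that leaves $\Delta_{W'}$ must cross $\bd(\Delta_{W'})=C_{W'}$. Since edges are never cut by these boundary curves, such a crossing happens at a vertex of $C_{W'}$.

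\textbf{Case 1: $u'\notin C_{W'}$.} Then $u'\in\inte(\Delta_{W'})$. We check that $u''\notin\inte(\Delta_{W'})$. The vertex $u''$ lies in $\Delta_{W''}$. As already noted in the definition of the home, one left or right boundary vertex of $W''$ together with its incident edge may stick out of the home. So the only worry is whether $u''$ could still lie in $\inte(\Delta_{W'})$. This is ruled out because $\inte(\Delta_{W'})\cap\Delta_{W''}=\emptyset$. Hence $P$ passes through a vertex $x\in V(C_{W'})$, and this gives
$$\dist_G(u',C_{W'})\leq \dist_G(u',x)\leq |P|\leq d.$$
Equality everywhere would force $x=u''$. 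In that case $u''\in C_{W'}\cap\Delta_{W''}$, so $u''$ lies on the common boundary of the two homes and therefore on $C_{W''}$. That would give $\dist_G(u'',C_{W''})=0<d$, a contradiction as long as $d\geq 1$. Otherwise $x$ is an inner vertex of $P$, so $\dist_G(u',C_{W'})<d$, again a contradiction.

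\textbf{Case 2: $u'\in C_{W'}$.} Then $\dist_G(u',C_{W'})=0$, which contradicts hiddenness when $d\geq1$.

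The degenerate case $d=0$ just asks for disjointness. Here $0$-hidden only means $H'\subseteq\Delta_{W'}$ and $H''\subseteq\Delta_{W''}$. If the homes share boundary vertices, a common vertex of $H'$ and $H''$ seems possible, so the stated claim does not follow without extra care. I would treat $d\geq1$ as the real content, and for $d=0$ handle a shared vertex separately using the same distance argument.

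The main obstacle is making rigorous that two distinct homes intersect only in boundary vertices, i.e.\ that $\inte(\Delta_{W'})\cap\Delta_{W''}=\emptyset$. I would argue this from the unique embedding of the $3$-connected walloid: the homes are unions of closures of faces of $W$ that belong to distinct segments, and consecutive segments are joined only by the connecting edges between boundary vertices, which lie outside both homes.
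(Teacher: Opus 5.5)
For $d\geq 1$ your argument is sound, and it is a one-sided variant of the paper's proof. The paper notes that any path from $H'$ to $H''$ must meet both $C_{W'}$ and $C_{W''}$. The prefix ending at the first vertex on $C_{W'}$ has length at least $d$, and so does the suffix starting at the last vertex on $C_{W''}$. Since the two circumferences are disjoint, there is at least one further edge between them, so the path has length at least $2d+1$. You only use the exit through $C_{W'}$, which is why you need the extra subcase $x=u''$ to get a strict inequality.

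The genuine gap is the case $d=0$, which is part of the statement: $d\in\N$, and $0$-scattered means pairwise disjoint. You leave this case open and even suggest that $H'$ and $H''$ might share a boundary vertex of the two homes. Appealing to ``the same distance argument'' cannot settle this, because $0$-hiddenness carries no distance information at all.

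The missing observation, which the paper's proof relies on explicitly, is that distinct segments are vertex-disjoint subgraphs of $W$. The cylindrical concatenation is the disjoint union of the segments plus edges joining boundary vertices. Hence $C_{W'}$ and $C_{W''}$ are vertex-disjoint cycles, and their drawings do not meet. Together with your disjoint-interiors claim, this gives $\Delta_{W'}\cap\Delta_{W''}=\emptyset$. That settles $d=0$ at once and makes your subcase $x=u''$ vacuous. It also yields the paper's bound, since the first vertex of the path on $C_{W'}$ is distinct from, and strictly precedes, its last vertex on $C_{W''}$.
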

\begin{proof}
	Any path $Q$ in $G$ connecting a vertex of $H'$ with a vertex of $H''$ must necessarily intersect both $C_{W'}$ and $C_{W''}$. Since $H'$ is $d$-hidden in $W'$, the prefix of $Q$ until the first intersection with $C_{W'}$ must have length at least $d$. Similarly, the suffix of $Q$ from the last intersection with $C_{W''}$ also has length at least $d$. As $C_{W'}$ and $C_{W''}$ are disjoint, the length of $Q$ is at least $2d+1>d$. 
\end{proof}

Also, we state a useful criterion for arguing that a subgraph is hidden.

\begin{observation}\label{obs:nest-criterion}
	Let $W'$ be a segment of $W$ and $H$ be subgraph of $G$ that is entirely contained in $\Delta_{W'}$.
	Suppose there exists a nest $\Cc$ of order $d$, say with inner disk $\Delta^\insf$ and outer disk $\Delta^\outsf$, such that $\Delta^\outsf\subseteq \Delta_{W'}$ and $H\subseteq \inte(\Delta^\insf)$.
	Then $H$ is $d$-hidden in $W'$.
\end{observation}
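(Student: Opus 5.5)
The plan is to use the nest as a topological barrier and count the cycles that any escaping path must cross. The first condition of being $d$-hidden, $H\subseteq\Delta_{W'}$, is assumed, so only the distance bound $\dist_G(H,C_{W'})\geq d$ needs proof. Let $\Cc=(C_1,\ldots,C_d)$ with disks $\Delta^\insf=\Delta_1\subseteq\Delta_2\subseteq\cdots\subseteq\Delta_d=\Delta^\outsf$.

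Fix any path $Q$ in $G$ from a vertex $h\in V(H)$ to a vertex $c\in V(C_{W'})$. We need $\|Q\|\geq d$, where $\|Q\|$ denotes the number of edges of $Q$.

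\begin{itemize}
\item \emph{Start point.} Since $H\subseteq\inte(\Delta^\insf)$, the vertex $h$ lies in $\inte(\Delta_1)$, so it is strictly inside every $C_i$.
\item \emph{End point.} The circumference $C_{W'}$ is the boundary of $\Delta_{W'}$, and $\Delta^\outsf\subseteq\Delta_{W'}$. So $c$ lies in $\Sigma\setminus\inte(\Delta_i)$ for every $i$. Equivalently, $c$ is outside or on each $C_i$.
\item \emph{Crossing.} The drawing of $Q$ is a curve from $\inte(\Delta_i)$ to $\Sigma\setminus\inte(\Delta_i)$. Hence it meets $\bd(\Delta_i)=C_i$. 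Since the drawing is an embedding, edges meet cycles only at vertices, so $Q$ contains a vertex of $C_i$ for each $i\in[d]$.
\item \emph{Counting.} The cycles $C_1,\ldots,C_d$ are pairwise disjoint, so these $d$ vertices are distinct. The vertex $h$ is none of them, as it lies in $\inte(\Delta_1)$ while each $C_i$ lies in $\Sigma\setminus\inte(\Delta_1)$. Thus $Q$ has at least $d+1$ distinct vertices, and $\|Q\|\geq d$.
\end{itemize}

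Taking the minimum over all such $Q$ gives $\dist_G(H,C_{W'})\geq d$, as required.

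The only point needing care is the topological crossing step: that a path starting in the open disk and ending outside it must share a vertex with the bounding cycle. This follows from the Jordan curve theorem together with the fact that distinct edges of the embedding meet only at common endpoints. Degenerate cases cause no trouble. If $c$ happens to lie on $C_d$, it still counts as one of the $d$ crossing vertices. The nesting $\Delta_i\subseteq\Delta_{i+1}$ also guarantees that each $\inte(\Delta_i)$ contains $h$.
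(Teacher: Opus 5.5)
Your proof is correct and is essentially the paper's argument. Any path from $H$ to $C_{W'}$ must meet each of the $d$ pairwise disjoint cycles of $\Cc$, none of which touches $H$, so it has length at least $d$; you simply spell out the Jordan-curve and counting details that the paper leaves implicit. (One tiny imprecision: an edge of $Q$ may itself be an edge of some $C_i$, but then its endpoints are shared vertices, so your conclusion is unaffected.)
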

\begin{proof}
    Note that any path $Q$ connecting $H$ with $C_{W'}$ must intersect each of the cycles of $\Cc$. As these $d$ cycles are pairwise disjoint and disjoint from $H$, it follows that $Q$ has length at least~$d$.
\end{proof}

It follows that each vertex of $F$ (seen as a trivial subgraph of $G$) is $d$-hidden in a vortex segment of $W.$

\begin{observation}\label{obs:special_vertices_hidden}
	Let $h\leq t$ be a non-negative integer.
	Then, every vertex $u \in F$ is $h$-hidden in some vortex segment $W'$ of $W.$
	In particular, every vertex of~~$\Ball^{h}_{G}(u)$ is drawn in the disk bounded by the $h$-th innermost cycle of the nest of $W'$ that contains the inner vortex disk of $W'$ in $\Zz.$
\end{observation}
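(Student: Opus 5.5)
Fix $u\in F$. By the standing assumption of this section, $F$ is contained in the union of the interiors of the inner vortex disks of $\Zz$. So there is an index $j\in[b]$ with $u\in\inte(\FDisk^\insf_j)$; let $W'=W_{a+j}$ be the corresponding vortex segment. The plan is to apply \cref{obs:nest-criterion} to the trivial subgraph $H=\{u\}$, using the first $h$ cycles of the nest of $W'$, and then to read off the statement about the ball directly from the separation property of the nested cycles.

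Let $(\Cc,\Rr)$ be the nest of $W'$, with $\Cc=(C_1,\ldots,C_t)$ ordered from the innermost, and let $\Delta_i$ be the disk bounded by $C_i$ that contains the inner vortex disk. Since the nest is sandwiched by $(\FDisk^\insf_j,\FDisk^\outsf_j)$, we have $\FDisk^\insf_j\subseteq\Delta_1$ and $\bd(\FDisk^\insf_j)\cap\bd(\Delta_1)=\emptyset$. Hence $u\in\inte(\Delta_1)$, so $u$ does not lie on any $C_i$. Moreover, $\Delta_t\subseteq\FDisk^\outsf_j$, and by construction of the vortex segment the whole annulus wall, and thus $\Delta_t$, lies inside the home $\Delta_{W'}$. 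If $h=0$ there is nothing to prove beyond $u\in\Delta_{W'}$. Otherwise $h\le t$, so $(C_1,\ldots,C_h)$ is a nest of order $h$ with inner disk $\Delta_1\ni u$ and outer disk $\Delta_h\subseteq\Delta_{W'}$. \Cref{obs:nest-criterion} then gives that $u$ is $h$-hidden in $W'$. (If one insists on a nest having order at least $2$, the case $h=1$ follows from the same path-crossing argument directly.)

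For the second part, take $v\in\Ball^h_G(u)$ and a shortest $u$-$v$ path $Q$, which has at most $h$ edges. Suppose $v\notin\Delta_h$. The cycles $C_1,\ldots,C_h$ are pairwise disjoint and each separates $u\in\inte(\Delta_1)$ from the exterior of $\Delta_h$ in the plane. By the Jordan curve theorem, $Q$ must then meet each of them. Because $u$ lies on none of them, these are at least $h$ distinct vertices, all different from $u$, so $Q$ has at least $h$ edges. That alone permits $v\in C_h$, which does lie in $\Delta_h$. If $v$ is strictly outside $\Delta_h$, then $v$ is yet another vertex of $Q$, which forces at least $h+1$ edges, a contradiction. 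Hence $v\in\Delta_h$, the disk bounded by the $h$-th innermost nest cycle.

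The only delicate point is this off-by-one bookkeeping, together with checking that the sandwich conditions really place $u$ strictly inside $C_1$ and place $\Delta_h$ inside the home of $W'$.
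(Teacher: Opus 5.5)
Your proposal is correct and follows the paper's route. You place $u$ in $\inte(\FDisk^\insf_j)$ for some vortex segment $W'$, apply \cref{obs:nest-criterion} with the nest of $W'$ to get that $u$ is $h$-hidden, and deduce the ball containment because a $u$-$v$ path with at most $h$ edges cannot cross all $h$ pairwise disjoint nest cycles separating $u$ from the exterior of $\Delta_h$. The paper leaves that counting step implicit (``the second part of the claim then follows''), and your Jordan-curve argument with the off-by-one bookkeeping supplies exactly that detail.
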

\begin{proof}
	By assumption, $u$ is drawn in the interior of the inner vortex disk of some vortex segment $W'$ of $W.$
	Therefore, the fact that $u$ is $h$-hidden in $W'$ follows directly by applying the criterior of \cref{obs:nest-criterion} with the nest of $W'$ as input.
	The second part of the claim then follows.
\end{proof}

Finally, we introduce a useful definition for finding relevant infrastructure within the base annulus of a walloid.
A {\em{$d$-scattered shaft}} of order $s\in \N_{\geq 2}$ in $W$ is a nest $\Cc^\star$ consisting of cycles of the base annulus of $W$ that is $d$-scattered and does not contain any of the $d$ outermost cycles of $W$.
We require that the order of the cycles in $\Cc^\star$ matches the inner-to-outer order of the cycles of the base annulus of $W$, so that the inner disk of $\Cc^\star$ contains the exceptional disk $\Delta^\exsf_W$.

We note that in a large walloid we can always find a large shaft that avoids the vicinity of $F$.

\begin{observation}\label{obs:shaft}
	Suppose $s\in \N_{\geq 2}$ is such that $r \geq s(d+1)$.
	Then $W$ contains a $d$-scattered shaft $\Cc^\star$ of order $s$ that avoids $\Ball^{t}_{G}(F).$
\end{observation}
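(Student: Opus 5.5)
The plan is to take every $(d+1)$-st cycle of the base annulus, starting just inside the $d$ outermost ones, and to check three things: that the selection has $s$ cycles, that it is $d$-scattered, and that it avoids $\Ball^{t}_{G}(F)$. Let $C_1,\dots,C_r$ be the base cycles of $W$ in inner-to-outer order, so that $C_1$ is the innermost and $C_r$ is the outer (simple) cycle. Put $i_j \coloneqq r-d-(j-1)(d+1)$ for $j\in[s]$ and let $\Cc^\star$ be the cycles $C_{i_s},\dots,C_{i_1}$, listed from inner to outer. The smallest index is $i_s = r-d-(s-1)(d+1) = r - s(d+1)+1 \geq 1$ by the hypothesis $r\geq s(d+1)$. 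So all selected indices lie in $[1,r-d]$, and none of the $d$ outermost cycles is used. These cycles are nested: each bounds a disk containing the next inner one, and the innermost disk contains $\Delta^{\exsf}_W$. Hence $\Cc^\star$ is a nest with the required ordering.

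For scatteredness, I would argue that any path in $G$ between $C_{i}$ and $C_{i'}$ with $i<i'$ must meet every $C_{m}$ with $i<m<i'$. The reason is that $C_m$ separates the disk it bounds (which contains $C_i$) from its exterior (which contains $C_{i'}$), since all cycles are drawn in the sphere. Consecutive selected cycles have $d$ base cycles strictly between them, and these are pairwise disjoint and disjoint from the endpoints. So any connecting path has at least $d$ internal vertices, hence length at least $d+1>d$. This is the same counting as in \cref{obs:nest-criterion}.

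For avoidance of $\Ball^{t}_{G}(F)$, apply \cref{obs:special_vertices_hidden} with $h=t$ (allowed since $h\leq t$). Every vertex of $\Ball^{t}_{G}(u)$ for $u\in F$ is then drawn in the disk bounded by the $t$-th innermost cycle of the nest of some vortex segment $W'$. That disk lies inside the outer vortex disk $\FDisk^{\outsf}$ of $W'$, which is contained in the home $\Delta_{W'}$ and sits strictly above the top row of the base of $W'$. The base annulus cycles only pass through the bases of the segments. By the nesting guaranteed by the sandwich sequence and $\rend$-well-groundedness, the base cycles are therefore disjoint from this disk, and so $\Cc^\star$ avoids $\Ball^t_G(F)$.

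The step I expect to need the most care is this last topological claim: checking against the definition of a vortex segment that the nest disk is separated from every base cycle, i.e.\ that the base annulus lies entirely in the closure of the exterior of the vortex segment's outer cycle. The index arithmetic and the separation counting are routine.
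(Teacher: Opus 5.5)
Your proof is correct and is essentially the paper's: take every $(d+1)$-st base cycle, use the $d$ intermediate base cycles as separators to get $d$-scatteredness, and invoke \cref{obs:special_vertices_hidden} with $h=t$ to avoid $\Ball^{t}_{G}(F)$. The only difference is that you anchor the selection at $C_{r-d}$ and go inward, while the paper starts from $C_1$ and goes outward, which is immaterial. For the avoidance step you also do not need to go through $\FDisk^{\outsf}$ or the home: it suffices that the closed disk bounded by the outermost nest cycle of $W'$ misses the base annulus, which holds because the base annulus is joined to the outer cycle of $W'$ without touching the nest.
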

\begin{proof}
	Enumerating the cycles of the base annulus of $W$ as $C_1,\ldots,C_r$ from the innermost to the outermost, it suffices to take
	\[\Cc^\star\coloneqq (C_1,C_{(d+1)+1},C_{2(d+1)+1},\ldots,C_{(s-1)(d+1)+1}).\]
	Indeed, the nest $\Cc^\star$ defined in this way is $d$-scattered, because any path in $G$ connecting any two distinct cycles from $\Cc^\star$ must intersect, at an internal vertex, $d$ other cycles of $C_1,\ldots,C_r$, and therefore must have length larger than $d$.
	Note also that since $r\geq s(d+1)$, $\Cc^\star$ does not contain any of the cycles $C_{r-d+1},\ldots,C_r$.
	That $\Cc^\star$ avoids $\Ball^t_G(F)$ follows immediately from  \cref{obs:special_vertices_hidden}.
\end{proof}

\subsection{Scattered connectors}

We start by introducing a general object, a scattered connector, and arguing that a large scattered connector always contains a large scattered $S$-$T$-linkage. This observation will be later used repeatedly when analyzing the infrastructure provided by \cref{structure_relative_terminals}. In essence, we will find different scattered connectors depending on the possible alignments of the terminals.

Let $k\in \N_{\geq 1}$. A \emph{$d$-scattered $S$-$T$-connector} of \emph{order} $k$ in $G$ is a $d$-scattered confined railed nest $(\Cc,\Pp)$ of order $(k+2,2k)$ satisfying the following property: $\Pp$ can be partitioned into two sets $\Pp_S,\Pp_T$ consisting of $k$ paths each, each path in $\Pp_S$ has an endpoint in $\Delta^\insf\cap S$ and each path in $\Pp_T$ has an endpoint in $\Delta^\insf\cap T$; here, $\Delta^\insf$ is the inner disk of~$\Cc$. (In other words, $(\Cc,\Pp_S)$ is $S$-rooted and $(\Cc,\Pp_T)$ is $T$-rooted.)

\begin{lemma}\label{scattered_connector}
	If $G$ contains a $d$-scattered $S$-$T$-connector $(\Cc, \Pp)$ of order $k$, then $G$ contains a $d$-scattered $S$-$T$-linkage $\Qq$ of order $k$ such that $$\bigcup \Qq \subseteq \bigcup (\Cc \cup \Pp).$$
\end{lemma}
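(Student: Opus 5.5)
The plan is to route the paths of $\Qq$ in a non-crossing (``parenthesised'') fashion. Each path of $\Qq$ will consist of an initial segment of a rail of $\Pp_S$, an arc of one cycle of $\Cc$, and an initial segment of a rail of $\Pp_T$. Different paths of $\Qq$ will use different cycles.

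Write $\Cc=(C_1,\dots,C_{k+2})$ from inside to outside, with $C_i$ bounding $\Delta_i$. Every rail $P\in\Pp$ is orthogonal to $\Cc$, and its terminal endpoint lies in $\Delta^\insf=\Delta_1$. Hence for each $i$ the initial segment $P^{(i)}$ of $P$, running from its terminal endpoint up to and including $P\cap C_i$, lies in $\Delta_i$. It meets $C_{i'}$ only for $i'\le i$, and only in the single subpath $P\cap C_{i'}$.

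\textbf{Matching step.} The $2k$ rails meet $C_{k+2}$ in a cyclic order, and this gives a cyclic word with $k$ letters $S$ and $k$ letters $T$. I match greedily. While rails remain, pick two cyclically consecutive remaining rails with different labels; such a pair exists while both labels are present, and they stay balanced. Match them at step $j\in[k]$ and remove them. This produces a non-crossing matching. Moreover, at step $j$ there is one of the two arcs between the matched pair that contains only rails matched earlier.

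\textbf{Routing step.} The pair matched at step $j$, say $(P_S,P_T)$, is assigned the cycle $C_{j+1}$; this leaves $C_1$ and $C_{k+2}$ as buffers. Let $A_j$ be the arc of $C_{j+1}$ between $P_S\cap C_{j+1}$ and $P_T\cap C_{j+1}$ on the side containing only earlier-matched rails. Set
\[Q_j\coloneqq P_S^{(j+1)}\cup A_j\cup P_T^{(j+1)}.\]

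\textbf{Disjointness.} Take $j'<j$. Then $Q_{j'}\subseteq\Delta_{j'+1}$, while $A_j$ lies on $C_{j+1}$, outside $\Delta_{j'+1}$. The prefixes in $Q_j$ cross $C_{j'+1}$ only at rails not matched by step $j'$, and by the choice of $A_{j'}$ these crossings lie outside $A_{j'}$. Rails matched later stop at outer cycles, so they never reach $A_j$ from inside in a way that hits it except at their own crossings, and those lie outside $A_j$ by construction. So $\Qq=\{Q_1,\dots,Q_k\}$ is an $S$-$T$-linkage contained in $\bigcup(\Cc\cup\Pp)$.

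\textbf{Scatteredness (main obstacle).} Pieces of distinct $Q$'s that are both rail pieces, or both cycle pieces, lie on distinct rails or distinct cycles, so they are at distance more than $d$ because $(\Cc,\Pp)$ is $d$-scattered. The difficulty is a rail prefix of $Q_j$ against the arc $A_{j'}$ of $Q_{j'}$. Here the rail $P$ genuinely meets $C_{j'+1}$, just outside $A_{j'}$, so scatteredness of $\Cc$ and $\Pp$ alone does not suffice.

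To handle this I would use planarity. Consider the closed region $R$ bounded by $A_{j'}$, the segments of $P'_S$ and $P'_T$ between $C_1$ and $C_{j'+1}$, and the arc of $C_1$ between them (taking the arc on the same side). The rail $P$ lies outside $R$. Take a shortest path $Z$ of length at most $d$ from $A_{j'}$ to $P$. I would argue that $Z$ must leave $R$ through one of three routes:
\begin{itemize}
\item through $P'_S$ or $P'_T$, which gives $\dist(P',P)\le d$ and contradicts the scatteredness of $\Pp$;
\item through $C_1$, which gives $\dist(C_1,C_{j'+1})\le d$ and contradicts the scatteredness of $\Cc$;
\item by going outward across $C_{j'+2}$ before reaching $P$ near $A_{j'}$, which again gives $\dist(C_{j'+1},C_{j'+2})\le d$.
\end{itemize}
The delicate part is the case where $Z$ leaves $R$ directly through the arc $A_{j'}$ itself, outward. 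There I would use the cycle $C_{j'+2}$ together with the portions of $P'_S$ and $P'_T$ beyond $C_{j'+1}$ to close a second region, and repeat the argument. If this breaks, the fallback is to shorten each $A_j$ by trimming it to a subarc far from other rails, using the buffer cycles $C_1$ and $C_{k+2}$.
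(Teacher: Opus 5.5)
Your construction is sound: the greedy matching is non-crossing, each pair gets its own cycle among $C_2,\dots,C_{k+1}$, and the disjointness argument goes through. The gap is in the step you flag yourself, which you do not actually settle.

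\begin{itemize}
\item The region $R$ has $A_{j'}$ on its boundary. So a short path $Z$ from $A_{j'}$ to a later-matched rail $P$ can leave $R$ immediately through $A_{j'}$. Your third bullet (crossing $C_{j'+2}$) is not a way of leaving $R$ at all, so the case analysis for $R$ does not close.
\item The ``fallback'' of trimming $A_j$ is not available, because $A_j$ must join the two prescribed crossing points.
\item You also skip the other mixed case: a rail prefix of $Q_j$ with $j<j'$ against $A_{j'}$. This one is easy, since the prefix lies in $\Delta_{j+1}$ and any path from it to $C_{j'+1}$ meets $C_{j+1}$.
\end{itemize}

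As written, therefore, scatteredness of the rail/arc pairs is not proved.

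The argument closes if you use the enlarged region from the start instead of $R$. Let $R'$ be the disk bounded by the following pieces, all taken on the side of the rails matched before step $j'$:
\begin{itemize}
\item the arc of $C_1$ between $P'_S$ and $P'_T$;
\item the pieces of $P'_S$ and $P'_T$ between $C_1$ and $C_{j'+2}$;
\item the arc of $C_{j'+2}$ between $P'_S$ and $P'_T$.
\end{itemize}
Then $A_{j'}\subseteq R'$ and it touches $\bd(R')$ only at its endpoints. The rail $P$ is disjoint from $\bd(R')$: it avoids $P'_S,P'_T$, and it meets $C_1$ and $C_{j'+2}$ only on the complementary arcs. Being connected and reaching $\Delta_1$ away from the arc of $C_1$, $P$ lies outside $R'$.

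Now a path $Z$ of length at most $d$ from $A_{j'}$ to $P$ must contain a vertex of $\bd(R')$. A vertex on $P'_S\cup P'_T$ puts a rail at distance at most $d$ from $P$. A vertex on $C_1$ or $C_{j'+2}$ gives a path of length at most $d$ from $C_{j'+1}$ to a different cycle of $\Cc$. Both contradict scatteredness, and this is exactly where the two buffer cycles are used.

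With this fix, your proof is a valid alternative to the paper's. The paper never fixes a routing:
\begin{itemize}
\item It contracts each $C_i\cap P_j$ to a vertex and works in the graph $H$ formed by the $k$ non-extremal cycles and all rails.
\item It proves that any two non-incident segments of $H$ are more than $d$ apart, via a small cycle built from $C_{i\pm1}$, $P_{j-1}$ and $P_{j+2}$.
\item It then takes an arbitrary $S'$-$T'$-linkage of order $k$ in $H$ from classical Menger. Scatteredness is automatic, because distinct paths use non-incident segments.
\end{itemize}
Your route yields an explicit non-crossing linkage with one cycle per pair. The price is the matching/side bookkeeping and a region argument tailored to that routing. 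The paper's local segment lemma is independent of the routing, and in fact would certify your $\Qq$ directly.
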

\begin{proof}
	Let $(\Cc,\Pp)$ be a $d$-scattered $S$-$T$-connector of order $k$ in $G$, with a suitable partition $\Pp_S,\Pp_T$ of $\Pp$. 
	Let $\Cc=(C_0,C_1,\ldots,C_{k+1})$. Recall that the cycle $C_0$ bounds the inner disk $\Delta^\insf$ of $\Cc$. Recalling that $(\Cc,\Pp)$ is confined, let us enumerate the paths of $\Pp$ as $P_1,\ldots,P_{2k}$ according to the order of their endpoints lying on $C_{k+1}$, see \cref{fig:connector_to_linkage_naming}.
	We will follow the convention that $P_0=P_{2k}$, $P_{2k+1}=P_1$, and $P_{2k+2}=P_2$. Also, we let $S'$ and $T'$ be the endpoints of the paths of $\Pp_S$ and $\Pp_T$ that lie in $\Delta^\insf$, respectively. Thus, $S'\subseteq S$ and~$T'\subseteq T$.

	Recall that since $(\Cc,\Pp)$ is a railed nest, $\Pp$ is orthogonal to $\Cc$, hence $C_i\cap P_j$ is a path for every $i\in [0,k+1]$ and $j\in [2k]$. We observe that contracting each of these paths to a single vertex breaks neither the assumption that $(\Cc,\Pp)$ is a confined railed nest, nor the assumption that both $\Cc$ and $\Pp$ are $d$-scattered. Therefore, from now on we assume that for every $i\in [0,k+1]$ and $j\in [2k]$, $C_i\cap P_j$ consists of a single vertex $z_{i,j}$, see \cref{fig:connector_to_linkage_contraction}.
    We also define
	\[
		Z\coloneqq \{z_{i,j}\colon i\in [0,k+1], j\in [2k]\}\cup S'\cup T'
		\qquad\textrm{and}\qquad
		H\coloneqq \bigcup_{i\in [k]} C_i\;\cup \bigcup_{j\in [2k]} P_j.
	\]
	Let a {\em{segment}} be a path of positive length in $H$ whose both endpoints belong to $Z$ and whose internal vertices do not belong to $Z$. Note that there are no segments that are subpaths of $C_0$ or $C_{k+1}$, since these two cycles are {\em{not}} included in $H$. Note also that $H$ is the union of all the segments.
	
	\begin{figure}[h]
		\centering
		\begin{subfigure}[t]{0.36\textwidth}
			\centering
			\includegraphics[page=1,scale=1.1]{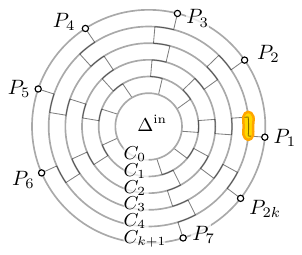}
			\caption{$d$-scattered $S$-$T$-connector of order $4$.}\label{fig:connector_to_linkage_naming}
		\end{subfigure}
		\begin{subfigure}[t]{0.36\textwidth}
			\centering
			\includegraphics[page=2,scale=1.1]{connector_to_linkage.pdf}
			\caption{$S$-$T$-connector after contraction}\label{fig:connector_to_linkage_contraction}
		\end{subfigure}
		\begin{subfigure}[t]{0.26\textwidth}
			\centering
			\includegraphics[page=4,scale=1.1]{connector_to_linkage.pdf}
			\caption{Non-incident segments are $d$-scattered}\label{fig:connector_to_linkage_segment_distances}
		\end{subfigure}
		\caption{
			Obtaining a $d$-scattered $S$-$T$-linkage from a given $d$-scattered $S$-$T$-connector.
		}\label{fig:connector_to_linkage}
	\end{figure}

	Call two segments {\em{incident}} if they share an endpoint. We observe that non-incident segments are actually far from each other.
	
	\begin{claim}\label{cl:frame}
		For any two non-incident segments $A,A'$, we have that $\{A,A'\}$ is $d$-scattered in $G$.
	\end{claim}
	\begin{claimproof}
		Suppose first that one of these segments is a subpath of some cycle of $\Cc$; say $A$ is the segment on $C_i$ connecting $z_{i,j}$ and $z_{i,j+1}$, for some $i\in [k]$ and $j\in [2k]$, see \cref{fig:connector_to_linkage_segment_distances}.
		Let $D$ be the cycle obtained by concatenating:
		\begin{itemize}
			\item the subpath of $P_{j-1}$ between $z_{i-1,j-1}$ and $z_{i+1,j-1}$;
			\item the subpath of $C_{i+1}$ between $z_{i+1,j-1}$ and $z_{i+1,j+2}$ containing $z_{i+1,j}$ and $z_{i+1,j+1}$;
			\item the subpath of $P_{j+2}$ between $z_{i+1,j+2}$ and $z_{i-1,j+2}$; and
			\item the subpath of $C_{i-1}$ between $z_{i-1,j+2}$ and $z_{i-1,j-1}$ containing $z_{i-1,j+1}$ and $z_{i-1,j}$.
		\end{itemize}
		We observe that any path in $G$ that connects $A$ with $D$ must have length more than~$d$. This is because such a path either connects $A\subseteq C_i$ with $C_{i-1}$ or $C_{i+1}$ --- and $\{C_{i-1},C_i,C_{i+1}\}\subseteq \Cc$ is $d$-scattered --- or intersects both $P_j$ and $P_{j-1}$, or both $P_{j+1}$ and $P_{j+2}$ --- and $\{P_{j-1},P_j,P_{j+1},P_{j+2}\}\subseteq \Pp$ is $d$-scattered. It now remains to note that since $A'$ is not incident to $A$, any path in $G$ connecting $A$ with $A'$ must necessarily intersect $D$.
		
		We are left with the case when both $A$ and $A'$ are subpaths of some paths from $\Pp$. Since $\Pp$ is $d$-scattered, we are immediately done unless $A$ and $A'$ are subpaths of the same path from $\Pp$, say $P_j$ for some $j\in [2k]$. Since $A$ and $A'$ are not incident, there is an index $i\in [k-1]$ such that each of the cycles $C_i$ and $C_{i+1}$ separates $A$ and $A'$. Then every path in $G$ connecting $A$ with $A'$ must intersect both $C_i$ and~$C_{i+1}$, and hence have length more than $d$ due to $\Cc$ being $d$-scattered.
	\end{claimproof}
	  
	 Next, we note that there is a large $S'$-$T'$-linkage  in $H$. This follows from the (classic) Menger's~Theorem.
	 
	 \begin{claim}\label{cl:easy-linkage}
	 	There is an $S'$-$T'$-linkage of order $k$ in $H$.
	 \end{claim}
 	\begin{claimproof}
 		Suppose otherwise.
 		By Menger's Theorem, we find a set $X\subseteq V(H)$ with $|X|<k$ that intersects every $S'$-$T'$-path in $H$. Since $|\Pp_S|=|\Pp_T|=k$, there is a path $P_S\in \Pp_S$ that is disjoint from $X$, a path $P_T\in \Pp_T$ that is disjoint from $X$, and a cycle $C\in \{C_i \mid i\in [k]\}$ that is disjoint from $X$. Then $P_S\cup C\cup P_T$ contains an $S'$-$T'$-path disjoint from $X$, a contradiction.
 	\end{claimproof}
	 
	Let $\Qq$ be the linkage provided by \cref{cl:easy-linkage}. Since $S'\cup T'\subseteq Z$ and $H$ is the union of all the segments, every path $Q\in \Qq$ is the concatenation of a collection of segments. And since the paths of $\Qq$ are pairwise disjoint, segments contained in different paths of $\Qq$ are not incident. It now follows from \cref{cl:frame} that $\Qq$ is $d$-scattered.
\end{proof}

\subsection{Scattered paths via represented terminals}

In this section, we discuss two cases when a large scattered $S$-$T$-linkage can be exposed immediately within the infrastructure provided by \cref{structure_relative_terminals}. This happens when the walloid either represents $\rend_S\cap \rend_T$, or represents both $\rend_S\setminus \rend_T$ and $\rend_T\setminus \rend_S$.

\paragraph{$\rend_{S} \cap \rend_{T}$ is represented.} We first examine the easy case  when the walloid provided by \cref{structure_relative_terminals} represents~$\rend_{S} \cap \rend_{T}$.

\begin{lemma}\label{lemma:union_represented}
	Suppose that $r, t \geq d + 1.$
	Suppose further that $W$ $k$-represents $\rend_{S} \cap \rend_{T}$, for some $k\in \N$.
	Then, $G$ contains a $d$-scattered $S$-$T$-linkage of order $k$ that avoids $\Ball^{d}_{G}(F).$
\end{lemma}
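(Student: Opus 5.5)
The plan is to build $k$ separate $S$-$T$-paths, one inside each of the $k$ consecutive flap segments $W_{i_1},\ldots,W_{i_k}$ that witness the $k$-representation of $\rend_S\cap\rend_T$. Each path will be $d$-hidden in its own segment. \cref{obs:union-hidden} then gives that the family is $d$-scattered, and the hiddenness also keeps the paths away from $\Ball^d_G(F)$.

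Fix one such flap segment $W'=W_{i_m}$, and let $\Delta\in\rend_S\cap\rend_T$ be the cell it hosts. By the definition of $\rend_S\cap\rend_T$, the graph $G\cap\Delta$ contains an $S$-$\per(\Delta)$-path $P_S$ and a $T$-$\per(\Delta)$-path $P_T$.

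If $P_S$ and $P_T$ share a vertex, then $P_S\cup P_T$ already contains an $S$-$T$-path inside $\Delta$. Otherwise they end at peripheral vertices $x,y\in\per(\Delta)$. If $x=y$ we are again done. If $x\neq y$, I would join $x$ to $y$ through the confined $(\Delta,\Delta^\holesf_{W'})$-radial linkage $\Pp_{i_m}$ guaranteed by hosting: follow the linkage paths from $x$ and from $y$ out to their sockets, then connect the two sockets along the top path of $W'$ between them. Simpler still, tightness of $\rend$ gives an $x$-$y$-path inside $G\cap\Delta$; but $\rend$ is fixed in this section and tightness is not assumed, so I would use the linkage route. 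Taking a path inside the union gives an $S$-$T$-path $Q_m$ contained in $\Delta^\holesf_{W'}$, together with at most the sockets and the part of the top path between them.

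Next I would show that $Q_m$ is $d$-hidden in $W'$. The key point is that the rainbow of $W'$, together with the top rows of the base of $W'$, supplies a nest around the pocket. Concretely, the $j$-th cycle is formed by the rainbow edge $v_{t-j+1}u_{j}$ (or a similar pairing) closed off by the subpath of the top path of the base. Pairing rainbow edges from the innermost outward gives $t\geq d+1$ nested disjoint cycles, all contained in the home $\Delta_{W'}$. The pocket, which contains $Q_m$, lies inside the innermost of them.

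There is a subtlety: these cycles share the top path, so they are not pairwise disjoint. To fix this, I would instead build the cycles using successive horizontal paths $P_1,\ldots,P_{d+1}$ of the base together with vertical paths. The $j$-th cycle would be rainbow edge $j$, then down along the vertical paths $Q_j$ and its mirror to row $j$, then along row $j$. These cycles are pairwise disjoint, and $r,t\geq d+1$ is exactly what is needed. Getting this nest right, including checking that $Q_m$ (which may touch the top path near the sockets) lies in the interior of the innermost cycle, is the main obstacle. If the sockets sit on the top path, I would either shift by one row or argue distance directly, counting that any path from $Q_m$ to $C_{W'}$ must cross $d$ disjoint cycles. \cref{obs:nest-criterion} then gives that $Q_m$ is $d$-hidden in $W'$.

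Finally, the flap segments are distinct, so \cref{obs:union-hidden} makes $\{Q_1,\ldots,Q_k\}$ $d$-scattered. For avoidance of $\Ball^d_G(F)$: each $u\in F$ is $d$-hidden in a vortex segment by \cref{obs:special_vertices_hidden}, and each $Q_m$ is $d$-hidden in a flap segment. So a path from $u$ to $Q_m$ has length at least $2d+1>d$, as in the proof of \cref{obs:union-hidden}. Hence each $Q_m$ avoids $\Ball^d_G(F)$.
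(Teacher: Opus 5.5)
Your proposal is correct and follows essentially the paper's argument: in each hosting pocket you assemble an $S$-$T$-path from the two cell paths, the hosting linkage and a piece of the pocket cycle, show it is $d$-hidden via \cref{obs:nest-criterion}, and then use \cref{obs:union-hidden} and \cref{obs:special_vertices_hidden} to get scatteredness and avoidance of $\Ball^{d}_{G}(F)$. The only difference is the choice of nest. The paper uses the outer layers of the flap segment: the $d$ outermost rainbow edges, the $d$ leftmost and rightmost columns, and the $d$ bottommost rows. Your pocket-hugging nest also works once you discard its innermost cycle, which is the pocket cycle itself and may be touched by $Q_m$. Keep instead the $d$ cycles through the rainbow edges $v_{t-1}u_2,\ldots,v_{t-d}u_{d+1}$ and rows $2,\ldots,d+1$ of the base; this is exactly where $r,t\geq d+1$ is used.
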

\begin{proof}
	Let $W_{1}, \ldots, W_{k}$ be consecutive flap segments of $W$ such that for each $i \in [k]$, $W_{i}$ hosts a cell $\Delta_{i} \in \rend_{S} \cap \rend_{T}$.
	Let $\Delta^{\holesf}_{i}\coloneqq \Delta^\holesf_{W_i}$ be the pocket of $W_i$ and let $\Pp_{i}$ be the confined $(\Delta_{i}, \Delta^{\holesf}_{i})$-radial linkage in $G$ witnessing that $W_i$ hosts $\Delta_i$.
	By combining
	\begin{itemize}
	\item the $S$-$\per(\Delta_{i})$ and $T$-$\per(\Delta_{i})$-paths in $G \cap \Delta_{i}$, existing by the definition of $\rend_{S}$ and~$\rend_{T}$,
	\item paths of the linkage $\Pp_i$, and
	\item a subpath of the pocket cycle $C^\holesf_{W_i}=\bd(\Delta^\holesf_i)$,	
	\end{itemize}
	we may construct an $S$-$T$-path $Q_i$ entirely contained in $G\cap \Delta^{\holesf}_{i}$.
	Observe also that due to $r>d$ and $t>d$, $Q_i$ is $d$-hidden in $W_i$. To see this, it suffices to apply the criterion from \cref{obs:nest-criterion}: a suitable nest of order $d$ can be constructed using the rainbow of $W_i$, the $d$ leftmost and the $d$ rightmost columns of the base of~$W_i$, and the $d$ bottommost rows of the base of $W_i$.
	Therefore, by \cref{obs:union-hidden} we conclude that $\{Q_i\colon i\in [k]\}$ is a $d$-scattered $S$-$T$-linkage of order $k$ in $G$.
	Since, each path $Q_{i}$ is $d$-hidden in a segment of $W$ that is not a vortex segment, \cref{obs:special_vertices_hidden} also implies that $\Qq$ avoids $\Ball^{d}_{G}(F)$.
\end{proof}

\paragraph{Both $\rend_{S} \setminus \rend_{T}$ and $\rend_{T} \setminus \rend_{S}$ are represented.}

Next, we examine the case when \cref{structure_relative_terminals} gives us a walloid that represents both $\rend_{S} \setminus \rend_{T}$ and $\rend_{T} \setminus \rend_{S}$. The main construction is captured in the following statement, which we extract separately for the purpose of future reusage.

\begin{lemma}\label{lem_s_represented}
	Suppose that $r, t \geq d + 1.$
	Suppose further that $W$ $k$-represents $\rend_{S} \setminus \rend_{T}$ for some $k\in \N$, and let $\Cc^\star$ be a $d$-scattered shaft in $W$ of order $s\in \N_{\geq 2}$. Then there is a~linkage $\Pp$ in $G$ such that
	\begin{itemize}
		\item $(\Cc^\star,\Pp)$ is an $S$-rooted $d$-scattered confined railed nest;
		\item every path of $\Pp$ is $d$-hidden in a different segment of $W$ that represents $\rend_S\setminus \rend_T$; and
		\item $\Pp$ avoids $\Ball^{d}_{G}(F)$.
	\end{itemize}
\end{lemma}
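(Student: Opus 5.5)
The construction mirrors the proof of \cref{lemma:union_represented}, except that each path now has to leave its pocket and reach the outer cycle of the shaft $\Cc^\star$ instead of closing up inside the pocket. Let $W_1,\ldots,W_k$ be the consecutive flap segments of $W$ hosting cells $\Delta_i\in\rend_S\setminus\rend_T$, and let $\Pp_i$ be the confined $(\Delta_i,\Delta^\holesf_i)$-radial linkages witnessing this. For each $i\in[k]$ I will build one path $P_i$ entirely inside the home $\Delta_{W_i}$. It starts with an $S$-$\per(\Delta_i)$-path in $G\cap\Delta_i$, which exists because $\Delta_i\in\rend_S$. It continues along a path of $\Pp_i$ to a socket of $W_i$, and from the socket it descends along a vertical path of the base of $W_i$ through all rows, down to the outer cycle of $\Cc^\star$. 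The vertical path I pick is a middle column lying directly below the sockets, so at least $d$ columns of the base separate it from the left and right sides of $W_i$; this is possible since $t\geq d+1$. I then truncate $P_i$ at its first intersection with the outermost cycle of $\Cc^\star$, so that $(\Cc^\star,\{P_i\})$ is confined. The $S$-endpoint lies in the pocket, and the pocket lies inside the inner disk of $\Cc^\star$, since that disk contains $\Delta^\exsf_W$ and the pockets sit on the exceptional side of the base annulus. So each $P_i$ is a $(\Delta^\insf,\Delta^\outsf)$-radial path.

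\textbf{Orthogonality.} Every cycle of $\Cc^\star$ is a base cycle of the annulus, and $P_i$ crosses it only along the chosen column. A column of the base meets each row in a single subpath and in order, so $C\cap P_i$ is one connected piece for every $C\in\Cc^\star$. The parts of $P_i$ inside the pocket and inside $\Delta_i$ lie strictly inside the innermost cycle of $\Cc^\star$ and therefore do not meet any cycle of $\Cc^\star$. Hence $(\Cc^\star,\Pp)$ with $\Pp=\{P_1,\ldots,P_k\}$ is an $S$-rooted confined railed nest.

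\textbf{Scatteredness.} $\Cc^\star$ is $d$-scattered by definition. For $\Pp$, I will show that each $P_i$ is $d$-hidden in $W_i$ and then apply \cref{obs:union-hidden}. The natural route is \cref{obs:nest-criterion}, with a nest built as in \cref{lemma:union_represented}: its cycles are formed from the $d$ leftmost and $d$ rightmost columns of the base of $W_i$, the bottom rows, and the rainbow.

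This is the main obstacle. Since $P_i$ runs down to a cycle of $\Cc^\star$, it may come close to the bottom rows, and $\Cc^\star$ avoids only the $d$ outermost cycles of $W$. I expect the resolution to be that exactly this margin makes the argument work. The $d$ outermost base cycles, together with the $d$ outer columns on each side and the $d$ outermost rainbow edges, form $d$ nested cycles that enclose $P_i$ and lie inside $\Delta_{W_i}$. Equivalently, and more directly, any path from $P_i$ to the circumference $C_{W_i}$ must cross $d$ disjoint columns or rows or rainbow edges, so it has length at least $d$. If the two-sided definition of hidden causes trouble, I will instead argue scatteredness of $P_i,P_j$ directly: any connecting path must cross $d$ boundary columns of $W_i$ and $d$ of $W_j$, or $d$ rows.

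\textbf{Avoiding $F$.} Finally, each $P_i$ is $d$-hidden in a flap segment, while every vertex of $\Ball^d_G(F)$ is drawn inside a vortex segment by \cref{obs:special_vertices_hidden}. Hence $\Pp$ avoids $\Ball^d_G(F)$.
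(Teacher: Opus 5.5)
Your proposal is correct and is essentially the paper's proof. The paper builds the same path $P_i$: the cell path, then the hosting path to a socket, then the socket's column of the base down to the first hit of the outermost cycle of $\Cc^\star$. It uses the same nest from the rainbow, the $d$ outer columns on each side and the $d$ bottom rows, which is usable exactly because a shaft avoids the $d$ outermost base cycles, and then concludes with \cref{obs:union-hidden} and \cref{obs:special_vertices_hidden}.

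One small imprecision: the sockets and the non-rainbow part of the pocket cycle lie on the top row of the base, i.e.\ on the innermost base cycle, and that cycle belongs to the shaft of \cref{obs:shaft}. So the pocket part of $P_i$ is not strictly inside $\Cc^\star$. Orthogonality is restored by cutting $P_i$ at its first contact with that row and continuing along the row to a socket's column; the paper also leaves this detail implicit.
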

\begin{proof}
	Let $W_{1}, \ldots, W_{k}$ be $k$ consecutive flap segments of $W$ such that for each $i \in [k]$, $W_{i}$ hosts a cell $\Delta_{i}\in \rend_S\setminus \rend_T$. For brevity, by $\Delta^\holesf_{i}\coloneqq \Delta^\holesf_{W_{i}}$ we denote the pocket of $W_{i}$. Let $\Pp_{i}$ be the confined $(\Delta_{i},\Delta_{i}^\holesf)$-radial linkage witnessing that $W_{i}$ hosts $\Delta_{i}$.
	
	We define $\Pp = \{ P_{1}, \ldots, P_{k} \}$ as follows.
	Let $C$ be the outermost cycle of $\Cc^\star$.
	For each $i \in [k]$, we define $P_{i}$ as the $S$-$V(C)$-path obtained as follows:
	\begin{itemize}
		\item Choose an $S$-$\per(\Delta_{i})$-path in $G \cap \Delta_{i}$, existing by the assumption that $\Delta_{i}\in \rend_S$, and
		\item extend this path to a top boundary vertex of $W_{i}$ via a path of $\Pp_{i}$, and then all the way through $W_i$ via the corresponding vertical path of the base of $W_{i}$ until it hits $C$ for the first time.
	\end{itemize}
	By construction it is clear that $(\Cc^\star,\Pp)$ is an $S$-rooted confined railed nest.

	Let us argue that $P_i$ is $d$-hidden in $W_i$. To see this, we may apply the criterion of \cref{obs:nest-criterion}, observing that a suitable nest of order $d$ can be constructed using the rainbow of $W_i$, the $d$ leftmost and the $d$ rightmost columns of the base of $W_i$, and the $d$ bottommost rows of the base of $W_i$. Note here that by the definition of a shaft, $C$ is not among the $d$ outermost cycles of the base annulus of $W$, hence $P_i$ does not intersect the $d$ bottommost rows of the base of $W_i$.
	
	It now follows from \cref{obs:union-hidden} that $\Pp$ is $d$-scattered.
	Moreover, since $P_{i}$ is $d$-hidden in $W_{i},$ and $W_{i}$ is not a vortex segment, \cref{obs:special_vertices_hidden}  also implies that $P_{i}$ avoids $\Ball^{d}_{G}(F)$.
	As $\Cc^\star$ is $d$-scattered by definition, $(\Cc^\star,\Pp)$ is an $S$-rooted $d$-scattered railed nest that satisfies all the required properties.
\end{proof}

We now use \cref{lem_s_represented} twice to complete this case.

\begin{lemma}\label{lem_st_both_represented}
	Let $k$ be a non-negative integer and suppose $r \geq (k + 2)(d + 1)$ and $t \geq d + 1.$
	Suppose further that $W$ $k$-represents both $\rend_{S} \setminus \rend_{T}$ and $\rend_{T} \setminus \rend_{S}.$
	Then $G$ contains a $d$-scattered $S$-$T$-linkage of order $k$ that avoids $\Ball^{d}_{G}(F).$
\end{lemma}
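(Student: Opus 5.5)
The plan is to build a $d$-scattered $S$-$T$-connector of order $k$ and then invoke \cref{scattered_connector}. Since $r \geq (k+2)(d+1)$, \cref{obs:shaft} (with $s = k+2$) gives a $d$-scattered shaft $\Cc^\star$ of order $k+2$ in $W$ that avoids $\Ball^{t}_{G}(F)$. Because $t \geq d+1$, this shaft also avoids $\Ball^{d}_{G}(F)$.

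Next, apply \cref{lem_s_represented} to $\Cc^\star$, once for $S$ and once, by symmetry, for $T$. This gives a linkage $\Pp_S$ of $k$ paths such that $(\Cc^\star,\Pp_S)$ is an $S$-rooted $d$-scattered confined railed nest. Each path of $\Pp_S$ is $d$-hidden in a distinct flap segment hosting a cell of $\rend_S\setminus\rend_T$, and $\Pp_S$ avoids $\Ball^d_G(F)$. Similarly we get $\Pp_T$, with $T$-rooted paths $d$-hidden in distinct flap segments hosting cells of $\rend_T\setminus\rend_S$.

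The key point is combining the two families. A flap segment hosts exactly one cell in the relevant sense, but a single segment could host one cell from each class. So I would first check that the flap segment used by a path of $\Pp_S$ differs from every flap segment used by a path of $\Pp_T$. If this cannot be guaranteed directly, I would argue that a segment hosting a cell in $\rend_S\setminus\rend_T$ cannot also host one in $\rend_T\setminus\rend_S$, because the hosted cell lies in the pocket, which is attached through the sockets. Alternatively, I would choose the hosted cell per segment and take $\Pp_S$ from one consecutive block and $\Pp_T$ from another, possibly shrinking to $\lceil k/2\rceil$ segments per side and rescaling. This step is the one I expect to be the main obstacle.

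Once the segments are distinct, \cref{obs:union-hidden} shows that any path of $\Pp_S$ and any path of $\Pp_T$ lie at distance more than $d$. Hence $\Pp \coloneqq \Pp_S\cup\Pp_T$ is $d$-scattered. Moreover, $(\Cc^\star,\Pp)$ is a confined railed nest of order $(k+2,2k)$: each path is orthogonal to $\Cc^\star$, since it runs along a vertical path of the base annulus, and the inner disk of $\Cc^\star$ contains all pockets. Here we use that the innermost shaft cycle is $C_1$, so the inner disk contains $\Delta^\exsf_W$ and thus the pockets.

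So $(\Cc^\star,\Pp)$ is a $d$-scattered $S$-$T$-connector of order $k$. \cref{scattered_connector} then yields a $d$-scattered $S$-$T$-linkage of order $k$ contained in $\bigcup(\Cc^\star\cup\Pp)$. Since both $\Cc^\star$ and $\Pp$ avoid $\Ball^d_G(F)$, so does this linkage.
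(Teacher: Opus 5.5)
Your proposal is the paper's proof: a $d$-scattered shaft from \cref{obs:shaft}, two applications of \cref{lem_s_represented} (once for $S$, once for $T$), \cref{obs:union-hidden} to show that $\Pp_S\cup\Pp_T$ is $d$-scattered and hence gives a connector, and then \cref{scattered_connector}. The paper does not argue separately that the segments carrying $\Pp_S$ and $\Pp_T$ are distinct, which is the step you flag as the obstacle; it takes this for granted when invoking \cref{obs:union-hidden}, so you may proceed the same way, but drop the halving fallback, since under the stated $k$-representation hypothesis it only yields a linkage of order about $k/2$.
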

\begin{proof}
	By \cref{obs:shaft}, we may find in $W$ a $d$-scattered shaft $\Cc^\star,$ which, since $t \geq d,$ consists of cycles disjoint from $\Ball^{d}_{G}(F).$
	Next, by applying \cref{lem_s_represented} twice --- once in the original form and once with the roles of $S$ and $T$ switched --- we find $d$-scattered linkages $\Pp_S$ and $\Pp_T$ such that $(\Cc^\star,\Pp_S)$ and $(\Cc^\star,\Pp_T)$ are $d$-scattered confined nests of order $(k+2,k)$ that are $S$- and $T$-rooted, respectively. \cref{lem_s_represented} also asserts that each path of $\Pp_S$ is disjoint from $\Ball^{d}_{G}(F)$ and $d$-hidden in a segment that represents $\rend_S\setminus \rend_T,$ while each path of $\Pp_T$ is also disjoint from $\Ball^{d}_{G}(F)$ and $d$-hidden in a segment that represents $\rend_T\setminus \rend_S$.
	Therefore, by \cref{obs:union-hidden} we conclude that $\Pp_S\cup \Pp_T$ is $d$-scattered, and hence $(\Cc^\star,\Pp_S\cup \Pp_T)$ is a $d$-scattered connector of order $k$. 

	Now, by definition of $\Cc^\star$ and the guarantees of \cref{lem_s_represented}, imply that the graph $H \coloneqq \bigcup (\Cc^\star \cup \Pp_{S} \cup \Pp_{T})$ is disjoint from $\Ball^{d}_{G}(F).$	
	It now remains to apply \cref{scattered_connector}.
\end{proof}

This concludes the cases where we find a scattered $S$-$T$-linkage directly from representation.

\subsection{Scattered paths in vortex segments}

Next, we examine under what circumstances we are able to extract a scattered $S$-$T$-linkage by partially finding it within the vortex~segments. We first need to formalize in what way a vortex segment may provide a partial linkage.

\paragraph{Consistency and partial linkages.}

Suppose for some $s \in \N_{\geq \max(2,d+1)},$ $G$ contains a railed nest $(\Cc, \Pp)$ of order $(s, s),$ sandwiched by $(\FDisk^{\insf}, \FDisk^{\outsf})$, where $\FDisk^{\insf} \subseteq \FDisk^{\outsf}$ is a pair of $\rend$-aligned disks. 
Let $\Cc = (C_1, \ldots, C_s)$ and enumerate the paths of $\Pp$ as $P_1, \ldots,P_s$ in the order of their endpoints on $C_s$.
For $i \in [s],$ let $\Delta_i$ be the disk bounded by $C_i$ that contains $\FDisk^\insf.$
Thus, $\FDisk^\insf\subseteq \Delta_1\subseteq \ldots \subseteq \Delta_s\subseteq \FDisk^\outsf$ and $\bd(\FDisk^\insf)\cap \bd(\Delta_1)=\bd(\FDisk^\outsf)\cap \bd(\Delta_s)=\emptyset$.

Let
\[\Af \coloneqq \FDisk^\outsf \setminus \inte(\Delta_{s - d}).\]
Note that $\Af$ is an annulus in $\Sigma$ that contains the $d+1$ outermost cycles of $\Cc$.
We say that a $(\FDisk^{\insf}, \FDisk^{\outsf})$-radial linkage $\Qq$ is \emph{$d$-consistent} with $(\Cc, \Pp)$ \emph{relative} to $(\FDisk^{\insf}, \FDisk^{\outsf})$ if \[\Qq \cap \Af = \bigcup_{i \in I} P'_{i},\]
where $I \subseteq [d + 1, s - (d + 1)]$ is a $d$-scattered set of indices such that $|I| = |\Qq|$ and $P'_{i}$ is the connected component of $P_{i} \cap \Af$ that is a $V(C_{s - d})$-$\per(\FDisk^\outsf)$-path, which due to orthogonality is uniquely defined.

Additionally, for a set of vertices $X\subseteq V(G)$, we say that a linkage $\Qq$ in $G$ is a \emph{$(d,(\Cc,\Pp),X)$-partial linkage} in $G$ \emph{relative} to $(\FDisk^{\insf}, \FDisk^{\outsf})$ if
\begin{itemize}
	\item $\Qq$ is an $X$-rooted confined $(\FDisk^{\insf}, \FDisk^{\outsf})$-radial linkage in $G$;
	\item $\Qq$ is $d$-scattered in $G \cap \FDisk^{\outsf};$ and
	\item $\Qq$ is $d$-consistent with $(\Cc,\Pp)$ relative to $(\FDisk^{\insf}, \FDisk^{\outsf}).$
\end{itemize}
Note that we require $d$-scatteredness only in the subgraph $G \cap \FDisk^\outsf$, and not in the whole graph. 

The following lemma shows how a partial linkage found within a vortex segment can be prolonged to a scattered railed nest that constitutes ``half'' of a scattered connector.

\begin{lemma}\label{scattered_railed_nest_from_vortex}
	Suppose $r, t \geq 2d + 1$ and $d' \leq d$ is a non-negative integer.
	Suppose further that $\Cc^\star$ is a $d$-scattered shaft in $W$ of order $s\in \N_{\geq 2}.$
	Let $W'$ be a vortex segment of $W,$ $(\Cc,\Pp)$ be the nest of $W',$ and $\FDisk^{\insf},\FDisk^{\outsf}$ be the inner and the outer vortex disk of $W'$ in $\Zz$, respectively.

	Suppose that for some $X \subseteq V(G),$  there is a $(d,(\Cc,\Pp),X)$-partial linkage $\Qq$ of order $p\in \N_{\geq 1}$ relative to $(\FDisk^{\insf}, \FDisk^{\outsf})$ that avoids $\Ball^{d'}_{G}(F).$
	Then in $G$ there is a $d$-scattered linkage $\Rr$ of order $p$ such that
	\begin{itemize}
		\item $(\Cc^\star,\Rr)$ is an $X$-rooted $d$-scattered confined railed nest of order $(s, p)$;
		\item every path of $\Rr$ is $d$-hidden in $W'$; and
		\item $\Rr$ avoids $\Ball^{d'}_{G}(F).$
	\end{itemize}
\end{lemma}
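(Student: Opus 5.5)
The plan is to prolong each path of $\Qq$ beyond $\FDisk^{\outsf}$ along the rails of the vortex segment $W'$ and then down through the base of $W'$ until it meets the outermost cycle $C$ of $\Cc^\star$, exactly as in the proof of \cref{lem_s_represented}. By $d$-consistency, there is a $d$-scattered index set $I\subseteq[d+1,s_0-(d+1)]$ with $|I|=p$, where $s_0=t$ is the order of the nest of $W'$. Each $Q\in\Qq$ coincides inside the annulus $\Af$ with the tail $P'_i$ of a rail $P_i$, $i\in I$, and ends at the vertex $P_i\cap\per(\FDisk^\outsf)$. By the definition of the nest of a vortex segment, $P_i$ is a column of the annulus wall $W_2$ followed by the edge $v_iu_i$. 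We therefore continue $Q$ from $u_i$ through $v_i$ along the $i$-th vertical path of the base of $W'$ until it first hits $C$. Call the resulting path $R_Q$ and set $\Rr=\{R_Q\colon Q\in\Qq\}$. Distinct $Q$ use distinct indices $i$, and the new pieces lie outside $\FDisk^\outsf$, so $\Rr$ is a linkage. It is $X$-rooted, and orthogonality to $\Cc^\star$ holds because each prolongation is a vertical path of the base, which meets every base cycle in a single subpath; the part inside $\FDisk^\outsf$ does not meet $\Cc^\star$ at all. Hence $(\Cc^\star,\Rr)$ is a confined railed nest of order $(s,p)$.

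Next I would show that each $R_Q$ is $d$-hidden in $W'$ using \cref{obs:nest-criterion}. A nest of order $d$ inside $\Delta_{W'}$ enclosing $R_Q$ can be assembled from three pieces: the $d$ leftmost and $d$ rightmost columns of the base of $W'$, the $d$ bottommost rows of the base (which $R_Q$ avoids, since $C$ is not among the $d$ outermost base cycles by the definition of a shaft), and, on top, the $d$ leftmost and $d$ rightmost rails of $W_2$ together with the $d$ outermost cycles of $W_2$. Because $I\subseteq[d+1,t-(d+1)]$, the path $R_Q$ uses none of these extremal rails. This is where $t\geq 2d+1$ is needed, so that the concentric cycles can be closed up. Since $\Qq$ lives in $\FDisk^\outsf$, it lies inside these cycles.

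Scatteredness is the main step. The paths of $\Rr$ are all hidden in the same segment, so \cref{obs:union-hidden} does not apply, and $\Qq$ is only $d$-scattered inside $G\cap\FDisk^\outsf$. Take a shortest path $Y$ in $G$ between $R_Q$ and $R_{Q'}$, and assume for contradiction that it has length at most $d$. Because $R_Q$ is $d$-hidden, $Y$ stays in $\inte(\Delta_{W'})$ away from $C_{W'}$. There are two cases.

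\begin{itemize}
\item If $Y$ leaves $\FDisk^\outsf$ or touches the prolonged parts, then $Y$ must cross between rails or columns indexed by $i$ and $i'$ with $|i-i'|>d$. Planarity separates them by $d$ intermediate rails and columns, and each of these has to be crossed at a distinct internal vertex, so $Y$ has length greater than $d$.
\item Otherwise $Y$ lies entirely in $G\cap\FDisk^\outsf$, and it is excluded by the assumed scatteredness of $\Qq$ in $G\cap\FDisk^\outsf$.
\end{itemize}

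The delicate point is making the crossing argument in the first case rigorous when $Y$ enters $\FDisk^\outsf$ partway. I would handle it by noting that the part of $R_Q$ outside $\Delta_{s_0-d}$ is exactly a rail or column, so near it the same separation by intermediate rails applies. A path entering $\inte(\Delta_{s_0-d})$ from outside must first cross the $d$ outer nest cycles contained in $\Af$.

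Scatteredness of $\Cc^\star$ is given by the definition of a shaft. Finally, $\Rr$ avoids $\Ball^{d'}_G(F)$: the part inside $\FDisk^\outsf$ is $\Qq$, which avoids it by assumption. The prolongation lies outside $\FDisk^{\outsf}$, while by \cref{obs:special_vertices_hidden} (with $h=d'\leq d\leq t$) every vertex of $\Ball^{d'}_G(F)$ lies inside the nests of the vortex segments.
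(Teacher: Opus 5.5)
Your proposal is correct and follows essentially the same route as the paper. It uses the same prolongation along the rails and base columns of $W'$, the same hiding nest for \cref{obs:nest-criterion}, and obtains scatteredness from the same two ingredients: insulation by the $d$ outermost cycles of $\Cc$, which reduces to $d$-scatteredness of $\Qq$ in $G\cap\FDisk^\outsf$, and separation by at least $d$ intermediate rails and columns. The difference is bookkeeping: the paper splits each $R_i$ into its deep part $Q_i'$, rail part $P_i'$ and prolongation $M_i$, and makes your ``planarity separates them'' step rigorous by closing those rails and columns into an explicit nest $\wh{\Cc}$ of order $d$ (using cycles of $\Cc$ just inside $\Af$ and the $d$ bottommost rows of the base), whereas you split according to whether the short connecting path leaves $\FDisk^\outsf$.
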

\begin{proof}
	Let $W_1$ be the $(r\times t)$-wall segment used to construct~$W'$.
	(That is, $W_1$ is the base of $W'.$)
	Also, let $\Cc=(C_1,\ldots,C_{t})$, so that $C_{1}$ bounds the inner disk of $\Cc$ whose interior contains $\FDisk^{\insf}$, and $C_{t}$ bounds the outer disk of $\Cc$ which is contained in the interior of $\FDisk^{\outsf},$ and let, for each $i \in [t],$ $\Delta_{i}$ denote the disk bounded by $C_i$ that contains $\FDisk^\insf.$
	Finally, let us enumerate $\Pp$ as $P_1,\ldots,P_t$ so that $P_i$ has an endpoint at the $i$-th top boundary vertex of $W_1$.
	
	As in the definition of a partial linkage, we define the annulus $\Af\coloneqq \FDisk^\outsf\setminus \inte(\Delta_{t-d}).$
	Thus, $\Af$ contains the $d+1$ outermost cycles of $\Cc$.
	Since $\Qq$ is a $(d,(\Cc,\Pp),X)$-partial linkage of order $p$, there is a $d$-scattered set $I\subseteq [d+1, t-(d+1)]$ such that $|I|=p$ and $\Qq$ can be enumerated as $\{Q_i\colon i\in I\}$ so that
	\[Q_i\cap \Af = P'_i,\qquad\textrm{for all }i\in I,\]
	where $P'_{i}$ denotes the connected component of $P_{i} \cap \Af$ that is a $V(C_{t - d})$-$\per(\FDisk^\outsf)$-path.
	Note here that by orthogonality, $P'_i$ is a uniquely defined subpath of $P_i.$
	Let $x_{i}$ denote the endpoint of $P'_{i}$ in $\per(\FDisk^\outsf).$
	
	Now, for every $i\in I$ we extend $Q_i$ to a path $R_i$, first using the subpath of $P_{i}$ connecting $x_{i}$ to its endpoint --- the $i$-th top boundary vertex of $W_{1},$ and then along the $i$-th column of $W_1$ until the first intersection with the outermost cycle of $\Cc^\star$; call it $C$. Let $M_i$ be the subpath of $R_i$ used for the extension; that is, $R_i$ is the concatenation of $Q_i$ and $M_i$.
	We define
	\[\Rr\coloneqq \{R_i\colon i\in I\}.\]
	
	Our goal now is to verify that the linkage $\Rr$ satisfies all the required properties.
	That $(\Cc^\star,\Rr)$ is a confined railed nest of order $(s,p)$ is clear from the construction.
	Also, since $\Qq$ is a $(p,(\Cc,\Pp),X)$-partial linkage, every path in $\Qq$ has an endpoint in $X \cap \FDisk^\insf$.
	So the same can be also said about every path in~$\Rr$, hence $(\Cc^\star,\Rr)$ is $X$-rooted.
	We are left with verifying that $\Rr$ is $d$-scattered and that every path of $\Rr$ is $d$-hidden in $W'$ and avoids $\Ball^{d'}_{G}(F).$
	We do this in the claims that follow.
	
	\begin{claim}
		For all distinct $i,j\in I$, we have $\dist_G(R_i,R_j)>d$.
	\end{claim}
	\begin{claimproof}
		Recall that $R_i$ is the concatenation of $Q_i$ and $M_i$. Further, $Q_i$ can be written as the concatenation of its prefix $Q_i'$ from the endpoint in $\FDisk^\insf$ until the first intersection with the cycle $C_{t+1-d}$, and its suffix $P_i'$. We decompose $R_j$ as the concatenation of $Q_j'$, $P_j'$, and $M_j$ analogously. We now argue that each of the subpaths $Q_i',P_i',M_i$ is at distance more than $d$ from each of the subpaths $Q_j',P_j',M_j$.
		
		Let us first argue that $\dist_G(Q_i',R_j)>d$. For contradiction, suppose $A$ is a path in $G$ of length at most $d$ that has one endpoint on $Q_i'$ and the other on $R_j$. Note that $Q_i'\subseteq \Delta_{t+1-d}$ by construction, hence due to the existence of the cycles $C_{t+1-d},\ldots,C_{t+1}$, $A$ must be entirely contained in $\FDisk^\outsf$. It follows that $A$ is actually a path of length at most $d$ connecting $Q_i'$ with $Q_j$ within $G\cap \FDisk^\outsf$. However, this is a contradiction with the assumption that $\Qq$ is $d$-scattered within $G\cap \FDisk^\outsf$ (following from $\Qq$ being a $(p,(\Cc,\Pp),X)$-partial~linkage sandwiched by $(\FDisk^{\insf}, \FDisk^{\outsf})$).
		
		A symmetric argument shows that $\dist_G(R_i,Q_j')>d$.
		
		We are left with proving that $\dist_G(P_i'\cup M_i,P_j'\cup M_j)>d$. Recall that since $I$ is $d$-scattered, we have $|i-j|>d$. Therefore, we may use
		\begin{itemize}
			\item the columns of $W_1$ with indices in $[i-d,i-1]\cup [i+1,i+d]$;
			\item the columns of $W_2$ with indices in $[i-d,i-1]\cup [i+1,i+d]$;
			\item the paths connecting the bottom boundary vertices of $W_2$ with the top boundary vertices of $W_1$, with indices in $[i-d,i-1]\cup [i+1,i+d]$;
			\item the cycles $C_{t+1-2d},\ldots,C_{t+1-d-1}$; and
			\item the $d$ bottommost rows of $W_1$;
		\end{itemize}
		to construct a nest $\wh{\Cc}$ of order $d$ such that the interior of the inner disk of $\wh{\Cc}$ contains $P_i'\cup M_i$, and the exterior of the outer disk of $\wh{\Cc}$ contains $P_j'\cup M_j$. Note here that as $\Cc^\star$ is a $d$-scattered shaft, none of the $d$ bottommost rows of $W_1$ intersects $M_i$.
		Since any path in $G$ connecting $P_i'\cup M_i$ and $P_j'\cup M_j$ must intersect each of the cycles of $\wh{\Cc}$ at an internal vertex, it follows that $\dist_G(P_i'\cup M_i,P_j'\cup M_j)>d$.
	\end{claimproof}
	
	\begin{claim}
		For every $i\in I$, the path $R_i$ is $d$-hidden in $W'$ and avoids $\Ball^{d'}_{G}(F).$
	\end{claim}
	\begin{claimproof}
		To see that every $R_{i}$ is $d$-hidden, we apply the criterion of \cref{obs:nest-criterion} while observing that a suitable nest of order $d$ can be constructed using 
		\begin{itemize}
			\item the $d$ leftmost and the $d$ rightmost columns of $W_1$;
			\item the $d$ leftmost and the $d$ rightmost columns of $W_2$;
			\item the $d$ outermost cycles of $\Cc$; and
			\item the $d$ bottommost rows of $W_1$.
		\end{itemize}
		Now, \cref{obs:special_vertices_hidden} implies that every $R_{i}$ avoids $\Ball^{d'}_{G}(u),$ for every $u \in F$ that is $d'$-hidden in a different vortex segment of $W$ than $W'.$
		If $u\in F$ is $d'$-hidden in $W',$ recall that $R_{i}$ is the concatenation of $Q_{i}$ and $M_{i},$ and that by assumption, $Q_{i}$ avoids $\Ball^{d'}_{G}(u).$
		So we only have to verify that $M_{i}$ also avoids $\Ball^{d'}_{G}(u).$
		For this, observe that by definition, $M_{i}$ is disjoint from $G \cap \inte(\Theta^\outsf)$ and that $\Delta_{d'} \subseteq \inte(\Theta^\outsf).$
		By \cref{obs:special_vertices_hidden} we have $\Ball^{d'}_{G}(u) \subseteq V(G \cap \Delta_{d'}),$ and therefore we conclude.
	\end{claimproof}
	
	Thus, the constructed linkage $\Rr$ satisfies all the required properties.
\end{proof}

With the previous lemma at hand we may now examine how we can combine partial linkages found in two distinct vortex segments, or between a vortex segment and terminals represented in flap segments, in order to find the desired $d$-scattered $S$-$T$-linkage.

\begin{lemma}\label{lem_s_or_t_represented_and_vortex}
	Suppose $k \geq 2$ is an integer and suppose $r \geq (k+2)(d+1)$ and $t \geq 2d + 1.$
	Assume that one of the following conditions holds:
	\begin{enumerate}
		\item For some vortex segment $W'$ of $W$, there is a $(d,(\Cc,\Pp),S\cup T)$-partial linkage $\Qq$ of order $2k,$ relative to $(\FDisk^{\insf}, \FDisk^{\outsf}),$ such that $k$ paths in $\Qq$ have an endpoint in $S\cap \FDisk^{\insf}$ and $k$ paths in $\Qq$ have an endpoint in $T \cap \FDisk^{\insf}$. Here, $(\Cc,\Pp)$ is the nest of $W',$ and $\FDisk^\insf,\FDisk^\outsf$ are the inner and the outer vortex disk of $W'$ in $\Zz$, respectively.\label{case:P2}
		\item There exist two distinct vortex segments $W_S$ and $W_T$ of $W$ such that there is a $(d,(\Cc_S,\Pp_S),S)$-partial linkage $\Qq_S$ of order $k,$ relative to $(\FDisk^{\insf}_{S}, \FDisk^{\outsf}_{S}),$ and a $(d,(\Cc_T,\Pp_T),T)$-partial linkage $\Qq_T$ of order $k,$ relative to $(\FDisk^{\insf}_{T}, \FDisk^{\outsf}_{T}).$ Here, for $A\in \{S,T\}$, $(\Cc_A,\Pp_A)$ is the nest of $W_A,$ and $\FDisk^\insf_{A},\FDisk^\outsf_{A}$ are the inner and the outer vortex disk of $W_{A}$ in $\Zz$, respectively.\label{case:PP}
		\item For some $\{A,B\}=\{S,T\}$, $W$ $k$-represents $\rend_{A}\setminus \rend_B$ and there exists a vortex segment $W_B$ of $W$ such that there is a $(d,(\Cc_B,\Pp_B),Y)$-partial linkage $\Qq_B$ of order $k,$ relative to $(\FDisk^{\insf}_{B}, \FDisk^{\outsf}_{B}).$ Here, $(\Cc_B,\Pp_B)$ is the nest of $W_B,$ and $\FDisk^\insf_{B},\FDisk^\outsf_{B}$ are the inner and the outer vortex disk of $W_{B}$ in $\Zz$, respectively.\label{case:RP}
	\end{enumerate}
	Then, $G$ contains a $d$-scattered $S$-$T$-linkage $\Qq$ of order $k$.
	Suppose further that $d' \leq d$ is a non-negative integer such that the partial linkages above avoid $\Ball^{d'}_{G}(F).$
	Then $\Qq$ also avoids $\Ball^{d'}_{G}(F).$
\end{lemma}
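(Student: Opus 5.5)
In each of the three cases, the plan is to build a $d$-scattered $S$-$T$-connector of order $k$ on a common shaft and then invoke \cref{scattered_connector}. First, since $r \geq (k+2)(d+1)$, \cref{obs:shaft} gives a $d$-scattered shaft $\Cc^\star$ of order $k+2$ in $W$ that avoids $\Ball^{t}_{G}(F)$. Because $t \geq d \geq d'$, the shaft also avoids $\Ball^{d'}_{G}(F)$. Every radial linkage we produce will be rooted in the appropriate terminal set and will form a confined railed nest with $\Cc^\star$.

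\textbf{Case~\ref{case:P2}.} Apply \cref{scattered_railed_nest_from_vortex} to $W'$ with $X = S\cup T$ and the partial linkage $\Qq$ of order $2k$. This gives a linkage $\Rr$ of order $2k$ such that $(\Cc^\star,\Rr)$ is a $d$-scattered confined railed nest, and $\Rr$ avoids $\Ball^{d'}_G(F)$. Each $R\in \Rr$ extends a path of $\Qq$, so it keeps that path's endpoint in $\FDisk^\insf$. We therefore split $\Rr$ into $\Rr_S$ and $\Rr_T$ of size $k$ each, according to whether that endpoint lies in $S$ or in $T$. Then $(\Cc^\star,\Rr_S\cup\Rr_T)$ is a $d$-scattered $S$-$T$-connector of order $k$.

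\textbf{Case~\ref{case:PP}.} Apply \cref{scattered_railed_nest_from_vortex} twice, once to $W_S$ with $X=S$ and once to $W_T$ with $X=T$. This yields $\Rr_S$ and $\Rr_T$ such that each family is individually $d$-scattered and confined on $\Cc^\star$. Every path of $\Rr_S$ is $d$-hidden in $W_S$, and every path of $\Rr_T$ is $d$-hidden in $W_T$. Since $W_S\neq W_T$, \cref{obs:union-hidden} shows that each path of $\Rr_S$ is at distance more than $d$ from each path of $\Rr_T$, so $\Rr_S\cup\Rr_T$ is $d$-scattered.

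\textbf{Case~\ref{case:RP}.} Here the set $Y$ in the statement is meant to be $B$. Use \cref{lem_s_represented}, applied to $A$ with the shaft $\Cc^\star$, to obtain $\Pp_A$: its paths are $d$-hidden in distinct flap segments and avoid $\Ball^{d}_G(F)\supseteq\Ball^{d'}_G(F)$. Use \cref{scattered_railed_nest_from_vortex} on $W_B$ to obtain $\Rr_B$, whose paths are $d$-hidden in the vortex segment $W_B$. Flap segments and vortex segments are distinct segments, so \cref{obs:union-hidden} again gives that $\Pp_A\cup\Rr_B$ is $d$-scattered.

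In all three cases we now have a $d$-scattered confined railed nest of order $(k+2,2k)$ in which $k$ paths are $S$-rooted and $k$ are $T$-rooted, that is, a $d$-scattered $S$-$T$-connector of order $k$. \Cref{scattered_connector} then produces a $d$-scattered $S$-$T$-linkage $\Qq$ of order $k$ contained in the union of the connector. Every constituent (the shaft and the rails) avoids $\Ball^{d'}_G(F)$, so $\Qq$ does as well.

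The step needing the most care is confirming that the combined rail families really form a railed nest orthogonal to $\Cc^\star$, and that rails coming from different sources meet $\Cc^\star$ in disjoint places. This holds because each rail runs along a distinct column of a distinct segment (or along distinct scattered columns of one vortex segment) and stops at the first intersection with the outermost cycle of $\Cc^\star$. One also has to check that rails from the same vortex segment are mutually scattered. In Case~\ref{case:P2} this comes directly from \cref{scattered_railed_nest_from_vortex}, since the single partial linkage $\Qq$ already supplies a $d$-scattered index set.
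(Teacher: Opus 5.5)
Your proof is correct and matches the paper's argument. Both take a common $d$-scattered shaft from Observation~\ref{obs:shaft}, produce rails via Lemma~\ref{scattered_railed_nest_from_vortex} (plus Lemma~\ref{lem_s_represented} in the third case), combine them via Observation~\ref{obs:union-hidden} into a $d$-scattered $S$-$T$-connector, and finish with Lemma~\ref{scattered_connector}. The only difference is cosmetic: in the first case the paper takes $X$ to be the set of endpoints of $\Qq$ in $\FDisk^\insf$ rather than $S\cup T$.
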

\begin{proof}
	By \cref{obs:shaft}, we may find a $d$-scattered shaft $\Cc^\star$ in $W$ that avoids $\Ball^{d'}_{G}(F).$
	
	We now analyze each of the cases separately.
	In each case, we show how to find a $d$-scattered $S$-$T$-connector of order $k$ where both its set of cycles and its set of paths avoid $\Ball^{d'}_{G}(F).$
	Then the conclusion follows by applying \cref{scattered_connector}.
	
	\medskip
	
	\noindent\textbf{Case i)} In this case, $W$, $\Cc^\star$, and $\Qq$ satisfy all requirements necessary to apply \cref{scattered_railed_nest_from_vortex}, with $X$ being the endpoints of the paths of $\Qq$ in $\FDisk^\insf$.
	This application yields a confined $d$-scattered railed nest $(\Cc^\star, \Rr)$ in $G$ of order $(k+2, 2k)$ such that $\Rr$ contains $k$ paths with an endpoint in $S \cap \FDisk^\insf$ and $k$ paths with an endpoint in $T \cap \FDisk^\insf,$ all avoiding $\Ball^{d'}_{G}(F).$
	This means that $(\Cc^\star, \Rr)$ is in fact a $d$-scattered $S$-$T$-connector of order $k$ disjoint from $\Ball^{d'}_{G}(F).$
	
	\medskip
	
	\noindent\textbf{Case ii)}
	Similarly to the previous case, we may apply \cref{scattered_railed_nest_from_vortex}
	to $W_S$, $\Cc^\star$, $\Qq_S$, and $X=S$, and to $W_T$, $\Cc^\star$, $\Qq_T$, and $X=T$.
	These two applications yield two $d$-scattered confined railed nests $(\Cc^\star, \Rr_{S})$ and $(\Cc^\star, \Rr_{T})$, both of order $(k + 2, k)$ and $S$- and $T$-rooted, respectively, such that $\Rr_S$ is $d$-hidden in $W_S,$ $\Rr_T$ is $d$-hidden in $W_T,$ and $\Rr_{S} \cup \Rr_{T}$ avoids $\Ball^{d'}_{G}(F).$
	Since $W_S$ and $W_T$ are distinct vortex segments, by \cref{obs:union-hidden} we conclude that $\Rr_S\cup \Rr_T$ is $d$-scattered as well.
	So $(\Cc^\star,\Rr_S\cup \Rr_T)$ is a $d$-scattered $S$-$T$-connector of order $k$ disjoint from $\Ball^{d'}_{G}(F).$
	
	\medskip
	
	\noindent\textbf{Case iii)} By symmetry, we may assume that $X=S$ and $Y=T$. Similarly as in the previous cases, we may apply \cref{scattered_railed_nest_from_vortex}
	to $W_T$, $\Cc^\star$, $\Qq_T$, and $X=T$. This application yields a $T$-rooted $d$-scattered confined railed nest $(\Cc^\star,\Rr_T)$ of order $(k+2, k)$ such that $\Rr_T$ is $d$-hidden in $W_T$ and avoids $\Ball^{d'}_{G}(F).$
	Next, since $W$ $k$-represents $\rend_S\setminus \rend_T$, we may apply \cref{lem_s_represented} to find an $S$-rooted $d$-scattered confined railed nest $(\Cc^\star,\Rr_S)$ of order $(k+2,k)$ such that every path of $\Rr_S$ is $d$-hidden in a flap segment that represents $\rend_S\setminus \rend_T$ and avoids $\Ball^{d'}_{G}(F)$ as well.
	From \cref{obs:union-hidden} we infer that in fact, $\Rr_S\cup \Rr_T$ is also $d$-scattered.
	So $(\Cc^\star,\Rr_S\cup \Rr_T)$ is a $d$-scattered connector of order $k$ where both $\Cc^\star$ and $\Rr_{S} \cup \Rr_{T}$ avoid $\Ball^{d'}_{G}(F)$ as desired.
\end{proof}

\section{Harvesting}

In this section we prove a ``local'' duality statement that applies to individual vortex segments. Roughly speaking, we show that within a vortex segment we can either construct a partial scattered linkage, say $X$-rooted for $X\in \{S,T\}$, or find a small number of balls of small radius that intersect any path that starts in a vertex of $X$ and escapes the vortex. Since the process of finding consecutive paths of a partial linkage resembles ``harvesting'' the paths, we call the corresponding statement the Harvesting Lemma.

For the remainder of this section we fix a planar graph $G$ with a vertex subset $F \subseteq V(G),$ sets of terminals $S,T\subseteq V(G)$, a rendition $\rend$ of $G$ in the sphere $\Sigma$, and distance parameter $d\in \N$.

\subsection{The Harvesting Lemma}

Let us start with some intuition.
Recall that thanks to \cref{structure_relative_terminals}, we may assume to be working with a large walloid $W \subseteq G$, of bounded breadth and depth.
Each vortex segment $W'$ of $W$ is equipped with its inner vortex disk $\FDisk^{\insf}$ and its outer vortex disk $\FDisk^{\outsf},$ and that $W$ has bounded depth means that there is a linear decomposition of bounded adhesion of the graph $G\cap \FDisk^\outsf$. Moreover, this decomposition is ``rooted'' at the vertices lying the boundary of $\FDisk^\outsf$. The harvesting lemma applies to such a situation abstractly, by providing a duality statement for any $\rend$-aligned disk $\Delta$ of bounded depth.

Besides harvesting $S$- or $T$-rooted paths that escape the vortex, we will also harvest $S$-$T$-paths that lie entirely in the vortex but interact non-trivially with the vortex's linear decomposition.
To facilitate this, we need one more definition. Let $\Delta$ be a $\rend$-aligned disk in $\Sigma$, and
let $\Bb = \langle B_{1}, \ldots, B_{n} \rangle$ be a linear decomposition of $G \cap \Delta$; we follow the convention that $B_{0} = B_{n + 1} = \emptyset.$
We say that a path $P \subseteq G$ is \emph{$\Bb$-dormant} if there exists $i \in [n]$ such that $V(P) \subseteq B_{i} \setminus (B_{i - 1} \cup B_{i + 1}).$

We are now ready to state and prove the Harvesting Lemma.

\begin{lemma}[Harvesting Lemma]\label{lem:harvesting_lemma} Let $k,w\in \N$. Let $\Delta$ be a $\rend$-aligned disk of depth at most $w\in \N;$ denote $H\coloneqq G\cap \Delta$.
	Then, there is a set $A\subseteq V(H)$ satisfying the following properties:
	\begin{itemize}
		\item we have $|A|\leq 2k(2w+1);$ and
		\item for each $X\in \{S,T\}$, if $H-(\Ball^{\lfloor \nicefrac{d}{2}\rfloor}_H(A) \cup \Ball^{\lfloor \nicefrac{d}{2} \rfloor}_{G}(F))$ contains an $X$-$\per(\Delta)$-path, then in fact there exists a $X$-$\per(\Delta)$-linkage $\Pp_{X}$ of order $k$ that is $d$-scattered in $H$ and avoids $\Ball^{\lfloor \nicefrac{d}{2} \rfloor}_{G}(F)$. Moreover, if this holds for both $S$ and $T$, then even $\Pp_S\cup \Pp_T$ is $d$-scattered in $H$.
	\end{itemize}
Further, if the graph $H-(\Ball^{\lfloor \nicefrac{d}{2}\rfloor}_H(A) \cup \Ball^{\lfloor \nicefrac{d}{2} \rfloor}_{G}(F))$ does not contain any $S$-$\per(\Delta)$-path, or it does not contain any $T$-$\per(\Delta)$-path, then one of the following conditions~holds:
\begin{itemize}
	\item there is a $d$-scattered $S$-$T$-linkage of order $k$ in $G$ that avoids $\Ball^{d}_{G}(F)$; or
	\item there is a set $A'\subseteq V(H)$ with $|A'|\leq 2kw$ such that every $S$-$T$-path in $$H-(\Ball^{d}_H(A) \cup \Ball^{\lfloor \nicefrac{d}{2}\rfloor}_H(A') \cup \Ball^{d}_{G}(F))$$ is $\Bb$-dormant.
\end{itemize}
\end{lemma}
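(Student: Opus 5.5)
Fix a linear decomposition $\Bb=\langle B_1,\ldots,B_n\rangle$ of $H$ of adhesion at most $w$, indexed by $\per(\Delta)=\langle x_1,\ldots,x_n\rangle$ in clockwise order. Write $\Ll_i\coloneqq B_i\cap B_{i+1}$ for the adhesion sets; each has size at most $w$, and each separates $H$ into a ``left'' part $\bigcup_{j\leq i}B_j$ and a ``right'' part $\bigcup_{j>i}B_j$. The plan is a greedy left-to-right sweep over the indices, as in Erd\H{o}s--P\'osa harvesting for bounded pathwidth. Throughout, we work in $H^\circ\coloneqq H-\Ball^{\lfloor d/2\rfloor}_G(F)$.

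\emph{First part: construction of $A$.} We scan $i=1,2,\ldots$ and keep a current left boundary $\ell$, initially $0$. For $X\in\{S,T\}$, let $i$ be the smallest index such that the graph induced by $B_{\ell+1}\cup\dots\cup B_i$, minus the balls of radius $\lfloor d/2\rfloor$ around $\Ll_\ell$ and around earlier chosen sets, contains an $X$-$\per(\Delta)$-path $P$. We then harvest $P$ and put $\Ll_{\ell}\cup\Ll_i\cup\{x_i\}$ into $A$; this adds at most $2w+1$ vertices. Then we set $\ell\coloneqq i$ and continue.

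We do this jointly for $S$ and $T$, stopping a colour once it has $k$ paths. In total we harvest at most $2k$ paths, which gives $|A|\leq 2k(2w+1)$. By minimality of $i$, the harvested path lives in bags strictly between two consecutive cut sets that were both put into $A$. Any short path between two harvested paths would have to pass through a separator $\Ll_\cdot\subseteq A$. Each harvested path avoids the $\lfloor d/2\rfloor$-ball around the previous separator, and the new separator's ball is carved out for the later paths. This is meant to give distance more than $d$ in $H$ (two half-radius balls). This argument needs care: the path may touch $\Ll_i$ itself, so we should harvest inside $B_{\ell+1}\cup\dots\cup B_i$ minus balls around $\Ll_\ell$. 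It also needs a hedge: enlarging $A$ by the neighbouring adhesion on both sides, and we keep that slack in the bound.

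If for some colour fewer than $k$ paths were harvested, the sweep reached the end. Then every $X$-$\per(\Delta)$-path meets $\Ball^{\lfloor d/2\rfloor}_H(A)\cup\Ball^{\lfloor d/2\rfloor}_G(F)$. The check here is that any path in $H^\circ$ avoiding the balls lies inside one window and would have been chosen. Since both colours are harvested in the same sweep with shared separators, $\Pp_S\cup\Pp_T$ is $d$-scattered.

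\emph{Second part.} Suppose, say, there is no $T$-$\per(\Delta)$-path after deleting the balls. We run a second sweep in $H-(\Ball^d_H(A)\cup\Ball^d_G(F))$, now harvesting $S$-$T$-paths that are not $\Bb$-dormant, i.e.\ paths that meet some adhesion set. Each such harvested path adds one separator, of at most $w$ vertices on each side, to $A'$. If we collect $k$ of them, they are pairwise $d$-scattered in $H$ by the same window argument.

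\emph{Main obstacle.} The hard step is upgrading the distance bound from $H$ to $G$. The plan is to use that $T$ cannot reach $\per(\Delta)$, together with the larger radius $d$ around $A$. Any short connection in $G$ leaving $\Delta$ must go through $\per(\Delta)$ and then come back. Such a route would give a $T$-$\per(\Delta)$-path, or it would enter a removed ball, whose radius-$d$ margin forbids paths of length at most $d$. I expect this to be the delicate part, and would first try to argue that every harvested path stays at distance more than $d/2$ from $\per(\Delta)$ in $H$.

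If fewer than $k$ paths are harvested, every remaining $S$-$T$-path avoids the separators, so it lies in $B_i\setminus(B_{i-1}\cup B_{i+1})$, i.e.\ it is dormant. That is the second outcome, with $|A'|\leq 2kw$. The avoidance of $\Ball^d_G(F)$ holds by construction.
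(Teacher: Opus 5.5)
Your overall plan matches the paper's: a greedy left-to-right sweep along $\Bb$ that puts adhesion sets into $A$, a second sweep for non-dormant $S$-$T$-paths, and an upgrade from $H$ to $G$ using the colour that cannot reach $\per(\Delta)$. Your suggested route for the upgrade is sound and equivalent to the paper's. If a harvested path $Q$ had $\dist_H(Q,\per(\Delta))\le\lfloor d/2\rfloor$, then $Q$ plus that short connection would contain a $T$-$\per(\Delta)$-path in $H$. That path must meet $\Ball^{\lfloor d/2\rfloor}_H(A)\cup\Ball^{\lfloor d/2\rfloor}_G(F)$, which puts $Q$ within distance $2\lfloor d/2\rfloor\le d$ of $A$ or $F$. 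Hence every $G$-connection between two harvested paths that leaves $H$ has length at least $2\lfloor d/2\rfloor+2>d$.

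The genuine gap is in the sweep itself, and your ``hedges'' do not repair it; there are two problems.

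\textbf{Scatteredness.} As specified, the window for choosing $i$ removes a $\lfloor d/2\rfloor$-ball only around the left separator $\Ll_\ell$ (and older sets). Take consecutive harvested paths $P$ (window $(\ell,i]$) and $P'$ (window $(i,i']$). Any connection between them passes through $\Ll_i$, but only $P'$ is forced to stay at distance $>\lfloor d/2\rfloor$ from it. Since $P$ may touch $\Ll_i$, you get distance $>\lfloor d/2\rfloor$, not $>d$. The paper instead chooses $i$ minimal such that $B_{\ell+1}\cup\dots\cup B_i$ minus $\Ball^{\lfloor d/2\rfloor}_H(\Ll_\ell\cup\Ll_i)\cup\Ball^{\lfloor d/2\rfloor}_G(F)$ contains a path of the required type, so both flanking separators get a margin.

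\textbf{Hitting set.} Your $A$ is wrong even after that fix. You add only $\Ll_\ell\cup\Ll_i\cup\{x_i\}$. Consider a path in the window $(\ell,i]$ that crosses $\Ll_{i-1}$, say from $S\cap(B_i\setminus B_{i-1})$ to $x_{i-1}$. It is not contained in the window for $i-1$, so minimality of $i$ says nothing about it, and it need not come near $A$. The paper adds $\Ll_{i-1}\cup\Ll_i\cup\{x_i\}$ in each round. Since $\Ll_\ell$ is already in $A$ from the previous round, this spends exactly the $2w+1$ budget. Now a path avoiding the balls also avoids the ball around $\Ll_{i-1}$. If it lay inside $B_{\ell+1}\cup\dots\cup B_{i-1}$, it would sit in the window for $i-1$, contradicting minimality. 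So it is confined to $B_i\setminus(\Ll_{i-1}\cup\Ll_i)$, whose only vertex of $\per(\Delta)$ is $x_i\in A$.

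\textbf{Second sweep.} The same two corrections are needed there. Carve balls around both flanking separators, and let $A'$ be the union of $\Ll_{q-1}\cup\Ll_q$ over the chosen indices $q$. Dormancy of the surviving $S$-$T$-paths then follows from the minimality argument: a surviving non-dormant path would be confined to $B_q\setminus(\Ll_{q-1}\cup\Ll_q)$, hence dormant. It does not follow merely from avoiding the separators in $A'$, since those are only some of the adhesion sets.
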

\begin{proof}
	Denote $d'\coloneqq \lfloor \nicefrac{d}{2}\rfloor$ and $\Omega\coloneqq \per(\Delta)$ for brevity. Since $\Delta$ has depth $w$, we may enumerate the vertices of $\Omega$ as $x_{1}, \ldots, x_{n}$ in the order of encountering them when traversing $\bd(\Delta)$ in the clockwise direction, so that there is a linear decomposition $\Bb = \langle B_{1}, \ldots, B_{n} \rangle$ of $H$ of adhesion at most $w$ satisfying $x_i\in B_i$ for each $i\in [n]$. 
	
	For all $i, j \in [0,n]$ with $i<j,$ we define the graph \[H[i, j] \coloneqq H\left[\bigcup_{\ell \in [i+1,j]} B_{\ell}\right].\]
	Also, we denote
	\[W_i\coloneqq B_i\cap B_{i+1},\qquad \textrm{for }i\in [0,n],\]
	where $B_0=B_{n+1}=\emptyset$ by convention.
	Note that we have $|W_i|\leq w$ for each $i\in [0,n]$.
	
	We now apply an iterative procedure that greedily harvests $S$-$\Omega$-paths and $T$-$\Omega$-paths along the linear decomposition $\Bb$. The procedure will construct the following objects:
	\begin{itemize}
		\item A sequence of indices $1\leq p_1< p_2<\ldots<p_m\leq n$. We follow the convention that $p_0=0$.
		\item For each $i\in [m]$, a symbol $\Xi_i\in \{S,T\}$.
		\item For each $i\in [m]$, a $\Xi_i$-$\Omega$-path $P_i$ contained in the graph $$H[p_{i-1},p_i]\setminus \big( \Ball^{d'}_H(W_{p_{i-1}}\cup W_{p_i}) \cup \Ball^{d'}_{G}(F) \big).$$
	\end{itemize}
	The procedure iterates through the consecutive indices $i=1,2,3,\ldots$. Suppose $p_1<p_2<\ldots<p_{i-1}$ have already been constructed. The next step, of constructing $p_i,\Xi_i,P_i$, is executed as follows:
	
	\medskip
	\noindent\textbf{Step 1, choosing the type of the next path:} If among $\Xi_1,\ldots,\Xi_{i-1}$ there are already $k$ symbols $S$ and $k$ symbols $T$, then finish the procedure by putting $m\coloneqq i-1$. If there are $k$ symbols $S$ but fewer than $k$ symbols $T$, set $X\coloneqq T$. Similarly, if there are $k$ symbols $T$ but fewer than $k$ symbols $S$, set $X\coloneqq S$. Finally, if among $\Xi_1,\ldots,\Xi_{i-1}$ there are fewer than $k$ symbols $S$ and fewer than $k$ symbols $T$, set $X\coloneqq S\cup T$.
	
	\medskip
	\noindent\textbf{Step 2, stopping condition:} If the graph $H[p_{i-1},n]\setminus (\Ball^{d'}_H(W_{p_{i-1}}) \cup \Ball^{d'}_{G}(F))$ contains no $X$-$\Omega$-path, then finish the procedure by setting $m\coloneqq i-1$.
	
	\medskip
	\noindent\textbf{Step 3, executing the harvest:} Otherwise, we let $p_i$ be the smallest index in $[p_{i-1}+1,n]$ satisfying the following: the graph $H[p_{i-1},p_i]\setminus (\Ball^{d'}_H(W_{p_{i-1}}\cup W_{p_i}) \cup \Ball^{d'}_{G}(F))$ contains an $X$-$\Omega$-path $P_i$. We set $\Xi_i\coloneqq S$ if $P_i$ is an $S$-$\per(\Delta)$-path, and $\Xi_i\coloneqq T$ if $P_i$ is a $T$-$\Omega$-path.
	
	\medskip
	We now analyze the outcome of the procedure. Let us denote: 
	\begin{align*}
		&I_S\coloneqq \{i\in [m]\mid \Xi_i=S\},& &\quad I_T\coloneqq \{i\in [m]\mid \Xi_i=T\},\\ &\Pp_S\coloneqq \{P_i\colon i\in I_S\},& &\quad \Pp_T\coloneqq \{P_i\colon i\in I_T\},
	\end{align*}
	and
	\begin{align*}
		&A\coloneqq \bigcup_{i=1}^m W_{p_i-1}\cup W_{p_i}\cup \{x_{p_i-1}\},\\
		&H_i\coloneqq H[p_{i-1},p_i]\setminus (\Ball^{d'}_H(W_{p_{i-1}}\cup W_{p_{i}}) \cup \Ball^{d'}_{G}(F)), \textrm{ for each }i\in [m].
	\end{align*}
	Note that by construction, $P_i\subseteq H_i$ for each $i\in [m]$.
	Let us first verify that $\Pp\coloneqq \Pp_S\cup \Pp_T$ is a $d$-scattered linkage.
	
	\begin{claim}\label{cl:harvest-scattered}
		$\Pp$ is $d$-scattered in $H$.
	\end{claim}
	\begin{claimproof}
		Consider any pair of paths $P_i,P_j$, $i,j\in [m]$, $i<j$. Let $R$ be any path in $H$ with one endpoint, say~$u$, on $P_i$, and the second, say $v$, on $P_j$. We need to prove that $R$ has length more than $d$. Note that we have $P_i\subseteq H_i$ and $P_j\subseteq H_j$. As $\Bb$ is a linear decomposition of $H$, there must be a vertex $u'\in V(R)\cap W_{p_{i}}$ and a vertex $v'\in V(R)\cap W_{p_{j-1}}$ so that $u,u',v',v$ appear in this order on $R$ (possibly $u'=v'$). Since $H_i$ is disjoint with $\Ball^{d'}_H(W_{p_i})$, the prefix of $R$ from $u$ to $u'$ has length more than~$d'$. Similarly, the suffix of $R$ from $v'$ to $v$ also has length more than $d'$. So the length of $R$ is at least $2d'+2$, which is larger than $d$.
	\end{claimproof}

	Next, it is clear from the construction that $|I_S|\leq k$ and $|I_T|\leq k$. So we have $m\leq 2k$, and hence
	\[|A|\leq 2k(2w+1).\]
	Finally, we verify that if we did not manage to harvest $k$ paths of a particular type, then the union of radius-$d'$ balls around the vertices of $A$ and $F$ actually hits all the paths of this type.
	
	\begin{claim}\label{cl:no-harvest-hitting}
		Suppose $|\Pp_S|<k$. Then $\Ball^{d'}_H(A) \cup \Ball^{d'}_G(F)$ intersects every $S$-$\Omega$-path in $H$.
	\end{claim}
	\begin{claimproof}
		Suppose for contradiction that in the graph $H-(\Ball^{d'}_H(A) \cup \Ball^{d'}_G(F))$ there is an $S$-$\Omega$-path, say~$Q$. Observe that since $\bigcup_{i\in [m]} W_i$ is contained in $A$, $\Bb$ is a linear decomposition of $H$, and $Q$ is connected, $Q$ must be entirely contained in one of the following graphs:
		\[H_1,H_2,\ldots,H_m,H[p_m,n]\setminus (\Ball^{d'}_H(W_{p_m}) \cup \Ball^{d'}_G(F)).\]
		Since $|I_S|=|\Pp_S|<k$, the procedure did not stop in \textbf{Step 1} due to finding $k$ $S$-$\Omega$-paths and $k$ $T$-$\Omega$-paths, hence it stopped in \textbf{Step 2}. Moreover, as $|I_S|<k$, in this step we have $X=S$ or $X=S\cup T$. This implies that the graph $H[p_m,n]\setminus (\Ball^{d'}_H(W_{p_m}) \cup \Ball^{d'}_G(F))$ does not contain any $S$-$\Omega$-path. Hence $Q\subseteq H_i$ for some $i\in [m]$.
		
		By the minimality of the choice of $p_i$ in \textbf{Step 2}, it cannot happen that $Q$ is entirely contained in the graph $H[p_{i-1},p_i-1]\setminus (\Ball^{d'}_H(W_{p_{i-1}}\cup W_{p_{i}-1}) \cup \Ball^{d'}_G(F))$. Since $W_{p_{i}-1}\subseteq A$ and $Q$ is disjoint with $A$, $Q$ must be entirely contained in the graph $H[B_{p_i}]\setminus (\Ball^{d'}_H(W_{p_{i}-1}\cup W_{p_{i}}) \cup \Ball^{d'}_G(F))$. However, since $\Bb$ is a linear decomposition of $H$, the only vertex of $\Omega$ that may possibly belong to $B_{p_i}\setminus (W_{p_{i}-1}\cup W_{p_{i}})$ is~$x_{p_i}$. So $x_{p_i}$ must be the endpoint of $Q$ in $\Omega$, but $x_{p_i}\in A$; a contradiction.
	\end{claimproof}
	
	It now follows from \cref{cl:harvest-scattered}, \cref{cl:no-harvest-hitting} applied to $S$ and $T$, and \cref{cl:no-harvest-hitting} with the roles of $S$ and $T$ swapped, that $A$ satisfies all the required properties. Indeed, if $\Ball^{d'}_H(A)\cup \Ball^{d'}_G(F)$ does not intersect every $S$-$\Omega$-path, then by \cref{cl:no-harvest-hitting} we must have $|\Pp_S|=k$; and similarly for $T$-$\Omega$-paths.
	
	\medskip
	
	We are left with proving the last part of the lemma statement. By symmetry, assume that $\Ball^{d'}_H(A) \cup \Ball^{d'}_G(F)$ intersects all $S$-$\Omega$-paths in $H$. Call an $S$-$T$-path in $H$ {\em{invading}} if it is not $\Bb$-dormant and is disjoint with $\Ball^{d}_H(A) \cup \Ball^{d}_G(F)$. We first harvest invading paths.
	
	\begin{claim}\label{cl:harvest-invading}
		One of the following conditions holds:
		\begin{itemize}
			\item There is a linkage $\Qq$ of order $k$ that is $d$-scattered in $H$ and consists of invading paths.
			\item There is a set $A'\subseteq V(H)$ with $|A'|\leq 2kw$ such that $\Ball^{d'}_H(A')$ intersects every invading path in $H$.
		\end{itemize}
	\end{claim}
	\begin{claimproof}
		We perform a similar harvesting procedure as before, just this time we harvest invading paths. That is, we iteratively construct indices $0=q_0<q_1<q_2<\ldots$ according to the following rule: Once $q_0,\ldots,q_{i-1}$ are constructed, we pick $q_i$ as the smallest index in $[q_{i-1}+1,n]$ such that the graph $H[q_{i-1},q_{i}]\setminus \Ball^{d'}_H(W_{q_{i-1}}\cup W_{q_{i}})$ contains an invading path, say $Q_i$; or we finish the procedure if $H[q_{i-1},q_n]\setminus \Ball^{d'}_H(W_{q_{i-1}})$ does not contain any invading path. If the procedure managed to perform $k$ iterations and thus construct $k$ invading paths $Q_1,\ldots,Q_k$, then the same reasoning as in the proof of \cref{cl:harvest-scattered} shows that the linkage $\Qq\coloneqq \{Q_1,\ldots,Q_k\}$ is $d$-scattered in $H$, and thus constitutes a valid first outcome. On the other hand, if the procedure stopped after constructing $q_1,\ldots,q_{m'}$ for some $m'<k$, then the same reasoning as in the proof of \cref{cl:no-harvest-hitting} shows that $A'\coloneqq \bigcup_{i=1}^{m'} W_{q_i-1}\cup W_{q_i}$  has the property that $\Ball^{d'}_{H}(A')$ intersects every invading path. As $|A'|\leq 2kw$ due to $m'<k$, the set $A'$ constitutes a valid second~outcome.
	\end{claimproof}
	
	Now, if applying \cref{cl:harvest-invading} yields the second outcome, then by the definition of an invading path, every $S$-$T$-path that is left in $H\setminus (\Ball^{d}_H(A)\cup \Ball^{d'}_H(A')\cup \Ball^{d}_G(F))$ must be $\Bb$-dormant; so $A'$ satisfies the property postulated in the lemma statement. We are left with proving that if we obtain the first outcome --- an $S$-$T$-linkage $\Qq$ that is $d$-scattered in $H$ and consists of $k$ invading paths --- then in fact $\Qq$ is also $d$-scattered~in~$G$.
	
	Towards a contradiction, suppose in $G$  there is a path $R$ of length at most $d$ that connects two distinct paths $Q,Q'\in \Qq$. Let $u$ be the endpoint of $R$ on $Q$ and $v$ be the endpoint of $R$ on $Q'$. Since $\Qq$ is $d$-scattered in $H$, it cannot be that $R$ is entirely contained in $H$. Let then $u',v'$ be the first and the last intersection of $R$ with $\Omega$ when traversing $R$ in the direction from $u$ to $v$. Thus, $u,u',v',v$ appear in this order on $R$. Since the length of $R$ is at most $d$, either the prefix of $R$ from $u$ to $u'$ or the suffix of $R$ from $v'$ to $v$ has length at most $d'$; by symmetry, assume the former and call this prefix $R[u,u']$. Note that $Q\cup R[u,u']$ contains an $S$-$\Omega$-path entirely contained in $H$. Since $\Ball^{d'}_H(A) \cup \Ball^{d'}_G(F)$ must intersect this $S$-$\Omega$-path, and $R[u,u']$ has length at most $d'$, we conclude that either $\dist_H(Q,A)\leq d'+d'\leq d$ or that $\dist_{G}(Q,F) \leq d' + d' \leq d$. This is a contradiction with $Q$ being invading.
\end{proof}

\subsection{The Combing Lemma}

Recall that one of the outcomes of the harvesting lemma (\cref{lem:harvesting_lemma}) to the outer vortex disk $\FDisk^{\outsf}$ for a vortex segment $W,$ is a $d$-scattered $S$- and/or $T$-rooted linkage $\Pp$ in $G \cap \FDisk^{\outsf}.$
Our ultimate goal is to utilize $\Pp$ in combination with the tools developed in \cref{sec:finding}, particularly \cref{lem_s_or_t_represented_and_vortex} to find a $d$-scattered $S$-$T$-linkage in $G.$

However, in its current state, $\Pp$ cannot be used towards this as harvesting alone provides no control over the endpoints of $\Pp$ in $\per(\FDisk^{\outsf}),$ nor towards the consistency of $\Pp$ with the nest of $W,$ say $(\Cc, \Rr).$
For this, we need a tool that allows us to reroute the paths in $\Pp$ in order to obtain a $(d, (\Cc, \Rr, S \cup T))$-partial linkage $\Pp'$ in $G \cap \FDisk^{\outsf}.$
This will be done by utilizing the bidimensional infrastructure provided by the railed nest $(\Cc, \Rr)$ in order to ``comb'' the paths, through the desired paths of $\Rr.$

For this purpose, we import the linkage combing lemma from the work of Golovach, Stamoulis, and Thilikos~\cite{GolovachST23Combing}.
To state this result, we first present some additional definitions.

\medskip
We say that two linkages $\Ll$ and $\Rr$ in a planar graph $G$ are \emph{equivalent} if they have the same pairs of endpoints: $$\big\{ \{ s, t \} \mid \text{$\Ll$ contains an $s$-$t$-path} \big\} = \big\{ \{ s, t \} \mid \text{$\Rr$ contains an $s$-$t$-path} \big\}.$$
Further, let $p\in \N_{\geq 3}$ be odd and let $s\in [p]$ be odd as well. Suppose $(\Cc,\Pp)$ is a fully confined $(\Delta^\insf, \Delta^\outsf)$-railed nest of order $(p,q)$ in $G$, for some disk $\Delta^\insf\subseteq \Delta^\outsf$ and $q\in \N$. Enumerate the cycles of $\Cc$ as $C_1,\ldots,C_p$ from the innermost to the outermost, and the paths of $\Pp$ as $P_1,\ldots,P_q$ according to the order of their endpoints on $\bd(\Delta^\outsf)$. For a subset of indices $I \subseteq [q],$ we shall say that a linkage $\Ll$ in $G$ is \emph{$(s, I)$-combed} in $(\Cc, \Pp)$ if $$\bigcup \Ll \cap \Af_s \subseteq \bigcup_{i \in I} P_{i},$$
where $\Af_s$ is the annulus with boundary $C_{m+1+t}\cup C_{m+1-t}$, where $m,t$ are integers such that $p=2m+1$ and $s=2t+1$.

What follows is an abbreviated version of \cite[Corollary 2]{GolovachST23Combing} for planar graphs.

\begin{proposition}[Corollary 2, \cite{GolovachST23Combing}]\label{linkage_combing} There exist functions $f_{\ref{linkage_combing}}, g_{\ref{linkage_combing}} \colon \N^{2} \to \N$ such that the following holds. Suppose $d,k\in \N$, $s \in \N_{\geq 1}$ is odd, and $H$ is a planar graph drawn on the sphere $\Sigma$. Suppose further that 
	\begin{itemize}
		\item $\Delta^{\insf}$ and $\Delta^{\outsf}$ are two disks in $\Sigma$ such that $\Delta^{\insf} \subseteq \Delta^{\outsf}$;
		\item $(\Cc, \Pp)$ is a fully confined $(\Delta^{\insf}, \Delta^{\outsf})$-railed nest in $H$ of order $(p, q),$ where $p = f_{\ref{linkage_combing}}(d, k) + s$ is odd and $q \geq \frac{2d + 5}{2} \cdot g_{\ref{linkage_combing}}(d, k);$
		\item $\Ll$ is a $d$-scattered linkage in $H$ of order $k$ such that for every path $P\in \Ll$, both endpoints of $P$ are drawn in $\Upsilon\coloneqq \inte(\Delta^\insf)\cup \exte(\Delta^\outsf)$; and
		\item $I \subseteq [q]$ is a subset of indices such that $|I| > g_{\ref{linkage_combing}}(d, k) \cdot (d + 1).$
	\end{itemize}
	Then $H$ contains a $d$-scattered linkage $\Ll'$ equivalent to $\Ll$ such that \[\bigcup \Ll' \cap \Upsilon \subseteq \bigcup \Ll \cap \Upsilon\] and $\Ll'$ is $(s, I)$-combed in $(\Cc, \Pp).$
	Moreover, \[f_{\ref{linkage_combing}}(d, k) = \Oh((g_{\ref{linkage_combing}}(d, k))^{2})\qquad \textrm{and} \qquad g_{\ref{linkage_combing}}(d, k) = d \cdot 2^{\Oh(k)}.\]
\end{proposition}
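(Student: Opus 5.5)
The plan is to prove the statement by a rerouting-and-minimality argument inside the annulus $\Delta^\outsf\setminus \inte(\Delta^\insf)$, which is the region the railed nest $(\Cc,\Pp)$ controls. All modifications happen inside this annulus, so $\bigcup \Ll'\cap \Upsilon\subseteq \bigcup\Ll\cap \Upsilon$ holds automatically. Since every endpoint lies in $\Upsilon$, any such rerouting keeps the endpoint pairs and hence gives a linkage equivalent to $\Ll$. Among all $d$-scattered linkages $\Ll'$ that are equivalent to $\Ll$ and agree with $\Ll$ outside the open annulus, I would fix one minimising a potential. The first choice of potential is the number of edges of $\bigcup\Ll'$ inside the annulus that do not lie on $\bigcup\Pp$, with ties broken by the total number of intersections with $\bigcup\Cc$.

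\textbf{Step 1 (few crossings).} I would show that for such a minimal $\Ll'$, the intersection of $\bigcup\Ll'$ with the middle sub-annulus of $\Cc$ has at most $g=g_{\ref{linkage_combing}}(d,k)=d\cdot 2^{\Oh(k)}$ components, each a ``traversal'' from one side to the other. The argument splits into two cases.
\begin{itemize}
\item \emph{U-turns.} If a path of $\Ll'$ enters a deep layer of the nest and returns (a U-turn), then between it and the inner side there are many cycles of $\Cc$ avoided by the other paths. We shortcut along such a cycle, choosing it at least $d+1$ layers away from every other path so that scatteredness survives. This decreases the potential, contradicting minimality.
\item \emph{Too many traversals.} If the number of genuine traversals exceeds $2^{\Oh(k)}$ per ``homotopy class'', I would use the planar unique-linkage/irrelevant-cycle phenomenon to find a cycle of $\Cc$ whose removal from the picture can be compensated by rerouting. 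Here the $d$-thickness forces one extra factor $d$, because we need $d$ consecutive free cycles to preserve $d$-scatteredness. This is the source of the single-exponential bound.
\end{itemize}

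\textbf{Step 2 (assigning rails).} Let the traversals be $T_1,\dots,T_{g'}$ with $g'\le g$, listed in their cyclic order around the annulus. Since $|I|>g(d+1)$, we can pick indices $i_1<\dots<i_{g'}$ in $I$ that are pairwise more than $d$ apart, i.e.\ a $d$-scattered subset. The condition $q\ge \frac{2d+5}{2}g$ leaves room between consecutive chosen rails. We then reroute each traversal in two layers of cycles.
\begin{itemize}
\item On the outer side of $\Af_s$, each $T_j$ follows a cycle of $\Cc$ from its crossing point to the rail $P_{i_j}$, runs along $P_{i_j}$ through $\Af_s$, and on the inner side moves back along another cycle to its original continuation.
\item Planarity and the preserved cyclic order mean a cyclic shift suffices. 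Each traversal needs its own block of $\Oh(d)$ cycles on either side so that the horizontal detours are pairwise more than $d$ apart and far from the rails they cross.
\end{itemize}
This needs $\Oh(g)$ blocks of $\Oh(g)$ cycles each, which accounts for $f_{\ref{linkage_combing}}=\Oh(g^2)$ cycles outside the $s$ cycles of $\Af_s$. Scatteredness of the new linkage then follows exactly as in the segment argument of \cref{cl:frame}: distinct pieces are separated either by $d$ cycles of $\Cc$ or by $d$ rails of $\Pp$.

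\textbf{Main obstacle.} I expect Step 1 to be the hard part: bounding the number of traversals by $d\cdot 2^{\Oh(k)}$ while every shortcut preserves $d$-scatteredness in $H$, not merely disjointness. I would first try to adapt the Adler et al.\ planar irrelevant-vertex proof of the unique linkage theorem. Paths are treated as thickened by radius $\lfloor d/2\rfloor$ balls, so that $d$-scatteredness becomes disjointness of these thickenings, and the topological argument is run on them.
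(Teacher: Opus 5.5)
The paper does not prove this proposition; it imports it as a black box from Golovach, Stamoulis, and Thilikos. Your sketch therefore has to stand alone as a proof of their theorem, and its load-bearing step is missing.

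Everything hinges on Step~1, the bound of $d\cdot 2^{\Oh(k)}$ on the number of traversals. There you only say you would ``use the planar unique-linkage/irrelevant-cycle phenomenon''. That phenomenon (Adler et al.) concerns a nest with all terminals on one side, and it yields a linkage avoiding the inner disk. Here terminals lie on both sides of the annulus, so traversals joining inner and outer pieces of $\Ll$ cannot be eliminated. The ``homotopy classes'' you would count are neither defined nor shown to be boundedly many.

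Your set-up makes this harder rather than easier. By minimising only over linkages that agree with $\Ll$ outside the open annulus, you forbid discarding any excursion of $\Ll$ into $\inte(\Delta^\insf)$ or $\exte(\Delta^\outsf)$. Every such excursion must then be reconnected through the annulus, and your U-turn shortcut cannot be applied to these excursions without leaving your class. The inclusion $\bigcup\Ll'\cap\Upsilon\subseteq\bigcup\Ll\cap\Upsilon$ does not ``hold automatically''. It is the statement's permission to drop pieces of $\Ll$, and deciding which pieces must survive is part of the real content.

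Finally, thickening paths into $\lfloor d/2\rfloor$-balls does not produce a planar disjoint-paths instance on which a topological argument can run. Such balls need not be disks, and they change as soon as a path is rerouted, so disjointness of the old balls says nothing about the new linkage.

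The steps you do spell out also fail as written.

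\emph{The potential.} It is not monotone under your own move. A U-turn is replaced by an arc of a cycle of $\Cc$, and that arc's edges lie off $\bigcup\Pp$ except on the short subpaths $C_i\cap P_j$. So replacing a U-turn that mostly runs along rails by a long cycle arc increases the number of off-rail edges, and ``contradicting minimality'' is unjustified. A potential counting edges off $\bigcup\Cc$ would at least be monotone under such shortcuts.

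\emph{Step~2.} The horizontal detours along cycles of $\Cc$ pass through the region where the retained original portions of the other traversals still run. Those portions are arbitrary paths of $H$ and may be adjacent to vertices of the detour's arc, so a detour can be at distance $1$ from another path of $\Ll'$. Nothing in the nest structure prevents this. The segment argument of \cref{cl:frame} works only when every piece lies on $\bigcup(\Cc\cup\Pp)$, which is false for the retained portions. Your $\Oh(d)$-cycle blocks separate detours from each other and from rails, not from the retained traversals.

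So neither the bound $g_{\ref{linkage_combing}}(d,k)=d\cdot 2^{\Oh(k)}$ nor the $d$-scatteredness of $\Ll'$ is established. You would need two things you do not yet have:
\begin{itemize}
\item a genuine argument bounding the unavoidable traversals in a two-sided annulus, while allowing pieces of $\Ll$ in $\Upsilon$ to be discarded;
\item a normalisation of those traversals near $\Af_s$ under which the rerouting is provably $d$-scattered.
\end{itemize}
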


\begin{figure}[ht]
    \centering
	\begin{subfigure}{0.3\textwidth}
		\centering
        \includegraphics[page=1,scale=1.2]{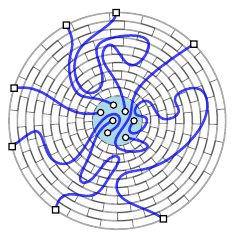}
		\caption{The $X$-rooted $d$-scattered confined radial linkage $\Pp$.}\label{subfig:combing_initial}
	\end{subfigure}
	~~
	\begin{subfigure}{0.3\textwidth}
		\centering
        \includegraphics[page=2,scale=1.2]{combing.pdf}
		\caption{The linkage $\Pp'$ combed in $(\Cc', \Rr')$.}\label{subfig:combing_combed}
	\end{subfigure}
	~~
	\begin{subfigure}{0.3\textwidth}
		\centering
        \includegraphics[page=3,scale=1.2]{combing.pdf}
		\caption{The linkage $\Qq$ combed and extended through the rails of $(\Cc, \Rr)$.}\label{subfig:combing_final}
	\end{subfigure}
    \caption{An illustration of how we obtain the $(d, (\Cc, \Rr), X)$-partial linkage $\Qq$ in the proof of \cref{combing_lemma}.}
    \label{fig:combing}
\end{figure}

Everything is set in place for the proof of the Combing Lemma.
Recall that we have fixed in the context: a planar graph $G$, its $\Sigma$-rendition $\rend$, and the distance parameter $d\in \N$.

\begin{lemma}[Combing Lemma]\label{combing_lemma} There exist functions $f_{\ref {combing_lemma}}, g_{\ref{combing_lemma}} \colon \N^{2} \to \N$ such that the following holds.
	Let $k\in \N$.
	Suppose $(\Cc, \Rr)$ is  a railed nest in $G$ of order $(p, q),$ where $p = f_{\ref{combing_lemma}}(d, k) \geq d$ and $q \geq g_{\ref{combing_lemma}}(d, k),$ and $(\Cc,\Rr)$ is sandwiched by $(\FDisk^{\insf}, \FDisk^{\outsf})$ for a pair of $\rend$-aligned disks $\FDisk^{\insf} \subseteq \FDisk^{\outsf}.$

	Suppose further that for some set $X\subseteq V(G)$, there exists a confined $(\FDisk^{\insf}, \FDisk^{\outsf})$-radial linkage $\Pp$ of order $k$ that is $X$-rooted and $d$-scattered in $G\cap \FDisk^\outsf$.

	Then in $G$ there is a $(d, (\Cc, \Rr), X)$-partial linkage $\Qq$ of order $k$ relative to $(\FDisk^\insf, \FDisk^\outsf).$ Moreover, we have $$\bigcup \Qq \cap \Delta_{d} \subseteq \bigcup \Pp \cap \Delta_{d},$$
	where $\Delta_d$ is the disk bounded by the $d$th innermost cycle of $\Cc$ that contains $\FDisk^\insf$.
	Finally, \[f_{\ref{combing_lemma}}(d, k) = d^{2} \cdot 2^{\Oh(k)}\qquad\textrm{and}\qquad g_{\ref{combing_lemma}}(d, k) = d^{3} \cdot 2^{\Oh(k)}.\]
\end{lemma}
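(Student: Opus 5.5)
The plan is to apply \cref{linkage_combing} to a sub-railed-nest of $(\Cc,\Rr)$ and then re-extend the combed paths outward along rails of $\Rr$, so that their intersection with the outer annulus $\Af$ is exactly a prescribed scattered set of rail segments.

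\textbf{Step 1: set up the sub-nest.} Write $\Cc=(C_1,\ldots,C_p)$ from the innermost cycle outward, and let $\Delta_i$ be the disk bounded by $C_i$ that contains $\FDisk^\insf$. Choose an odd $p'=f_{\ref{linkage_combing}}(d,k)+s$ with $s=1$ (adjusted by one if needed for parity), and take the cycles $C_{d+1},\ldots,C_{d+p'}$ as a nest $\Cc'$. Set $p\coloneqq p'+2d+O(1)$, so that at least $d+1$ cycles of $\Cc$ remain outside $\Cc'$, forming $\Af$. Let $\Rr'$ consist of the subpaths of the rails of $\Rr$ between $C_{d+1}$ and $C_{d+p'}$. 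By orthogonality these subpaths are well defined, and $(\Cc',\Rr')$ is a fully confined railed nest of order $(p',q)$.

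\textbf{Step 2: choose the target index set.} Let $I\subseteq[d+1,q-(d+1)]$ be $d$-scattered with $|I|>g_{\ref{linkage_combing}}(d,k)(d+1)$. This is possible provided $q\geq (d+1)^2 g_{\ref{linkage_combing}}(d,k)+2d+2$, which gives $g_{\ref{combing_lemma}}=d^3 2^{\Oh(k)}$. The parameter $f_{\ref{combing_lemma}}$ is then $\Oh(g^2)+\Oh(d)=d^2 2^{\Oh(k)}$.

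\textbf{Step 3: comb.} Apply \cref{linkage_combing} inside $H\coloneqq G\cap\FDisk^\outsf$, with $\Ll$ taken to be the paths of $\Pp$ truncated at their first intersection with $C_{d+p'+1}$. Their endpoints lie in $\inte(\Delta_{d+1})\cup\exte(\Delta_{d+p'})$, and the truncation is still $d$-scattered in $H$. The proposition yields an equivalent linkage $\Ll'$ that is $d$-scattered in $H$, agrees with $\Ll$ inside $\Delta_{d}\subseteq\Upsilon$ (which gives the ``moreover'' containment), and is $(1,I)$-combed. Then on the middle cycle of $\Cc'$ each path of $\Ll'$ runs along a rail $R_i$ with $i\in I$.

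\textbf{Step 4: re-extend.} Cut each path of $\Ll'$ at the middle cycle and extend it outward along its rail $R_i$ up to $\per(\FDisk^\outsf)$, discarding the old outer part. Equivalence guarantees that each path still starts in $X\cap\FDisk^\insf$. The $k$ paths use $k$ distinct indices $i_1,\ldots,i_k\in I$ by disjointness, so the new linkage $\Qq$ meets $\Af$ exactly in the segments $P'_{i_j}$. This gives $d$-consistency and confinement, and $\Qq$ is $X$-rooted.

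\textbf{Step 5 (main obstacle): scatteredness in $H$ after re-extension.} The inner parts remain $d$-scattered by \cref{linkage_combing}. Two outer rail parts with indices differing by more than $d$ are separated by at least $d$ intermediate rails together with cycles of the nest. The remaining case is a short path from an inner part of one path to the outer rail part of another. Here I would use the $d$-scatteredness of the combed portion: within the combed annulus each path coincides with its rail, so a short connection from the rail part of $Q_j$ could be shortened to reach the combed portion of $Q_j$, contradicting $d$-scatteredness of $\Ll'$. Alternatively, one can take the combed annulus of thickness $s=2d+1$, so that any connection must cross $d$ cycles, each of which the paths meet only on rails of $I$. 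For that reason I would actually take $s=2d+1$, which keeps $f_{\ref{combing_lemma}}=d^2 2^{\Oh(k)}$.
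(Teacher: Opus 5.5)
Your proposal is correct and is essentially the paper's proof: take the sub-nest starting at $C_{d+1}$, so that $\Delta_d\subseteq\Upsilon$ yields the ``moreover'' containment. Then apply \cref{linkage_combing} with $s=2d+1$ and a $d$-scattered $I\subseteq[d+1,q-(d+1)]$, cut each combed path at the central cycle, re-extend it along its rail to $\per(\FDisk^\outsf)$, and check scatteredness via the same three cases: both points on combed parts, insulation by the $d$ cycles of the outer half of the combed annulus, and $d$ separating rails on each side.

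Your final choice $s=2d+1$ is the right one, since the ``shortening'' argument you sketch for $s=1$ is not justified. In the write-up, state explicitly that the rail segment between the central cycle and the outer boundary of the combed annulus coincides with the combed path, which is what lets the first case cover it. Truncating $\Pp$ and ending $\Cc'$ before $\Af$ are harmless deviations from the paper.
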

\begin{proof} We begin by defining the functions $f_{\ref{combing_lemma}}$ and $g_{\ref{combing_lemma}}.$
	Let
	\begin{align*}
		f_{\ref{combing_lemma}}(d, k) \ &\coloneqq \ d + f_{\ref{linkage_combing}}(d, k) + 2d + 1\\
		g_{\ref{combing_lemma}}(d, k) \ &\coloneqq \ \nicefrac{2d + 5}{2} \cdot g_{\ref{linkage_combing}}(d, k) \cdot (d + 1) + 2d+2.
	\end{align*}
	The claimed asymptotic bounds on $f_{\ref{combing_lemma}},g_{\ref{combing_lemma}}$ then follow directly from the bounds of \cref{linkage_combing}.
	
	Let us start by providing some intuition.
	As the linkage we are looking for need only be $d$-scattered within $H \coloneqq G \cap \FDisk^{\outsf},$ our proof will only focus on $H.$
	To find the desired linkage, we apply \cref{linkage_combing} to $\Pp,$ and subsequently we alter the resulting linkage, to obtain a linkage that is $d$-consistent with $(\Cc, \Rr)$ relative to $(\FDisk^{\insf}, \FDisk^{\outsf}).$
	What we have to show is that the altered linkage remains $d$-scattered.
	We now dive into the details.
	
	Let $C_{1}, \ldots, C_{p}$ be the cycles of $\Cc$ ordered from innermost to outermost, so that $\Delta_d$ is bounded by $C_d$. Let $\Cc' \coloneqq (C_{d + 1}, \ldots, C_{p}).$
	Thus $C_{d + 1}$ bounds a disk $\Delta^{\insf}$ that contains $\FDisk^{\insf}$ and avoids $C_{p},$ and $C_{p}$ bounds a disk $\Delta^{\outsf}$ that contains $\Delta^{\insf}$ and is contained in $\FDisk^\outsf$.
	We let $\Af \coloneqq \Delta^\outsf\cap (\Sigma\setminus \inte(\Delta^\insf)),$ i.e, $\Af$ is the annulus with boundary $C_{d + 1} \cup C_p.$
	Also let $\Upsilon \coloneqq \Sigma \setminus \Af.$
	
	We define \[\Rr' \coloneqq \{ R \cap \Af \colon R \in \Rr \}.\]
	Since $\Rr$ is orthogonal to $\Cc,$ $R \cap \Af$ is connected for every $R \in \Rr,$ and therefore $\Rr'$ is a linkage in $H$ of order $q$ as well.
	In particular, we may now observe that $(\Cc', \Rr')$ is a fully confined $(\Delta^\insf, \Delta^\outsf)$-railed nest in $H$ of order $(p', q)$, where $p'\coloneqq p-d$.
	By the assumptions on $p$ and $q,$ this nest satisfies the prerequisites of \cref{linkage_combing} for $s=2d+1$.
	Moreover, since $\Pp$ is a $(\FDisk^{\insf}, \FDisk^{\outsf})$-radial linkage, we have that both endpoints of each path in $\Pp$ are drawn in $\Upsilon,$ as desired for the application of \cref{linkage_combing}.
	Let $(R_{1}, \ldots, R_{q})$ be an enumeration of $\Rr$ that respects the ordering of their endpoints along $\bd(\FDisk^{\outsf}).$
	We may now apply \cref{linkage_combing} to the graph $H,$ the nest $(\Cc',\Rr')$, and the linkage $\Pp,$ with $s=2d+1$ and $I \subseteq [d+1, q - (d + 1)]$ chosen maximally so that it is $d$-scattered.
	Observe that $q$ is assumed to be large enough so that $|I| > g_{\ref{linkage_combing}}(k,d) \cdot (d + 1),$ as required.
	Let $\Pp'$ be the outcome of this application.
	
	Noting that $p'$ is odd due to $f_{\ref{linkage_combing}}(d,k)$ being always even, we let $m$ be the integer such that $p' = 2m + 1.$
	Consider the nest $\Cc^\star = (C_{m+1}, \ldots, C_{m+d+1}),$ and let $\Delta^{-}$ and $\Delta^+$ be the inner and the outer disk of this nest, respectively. Note that the nest $\Cc^\star$ is entirely contained in the annulus $\Af_{2d+1}$, as defined in the definition of a combed linkage for the nest $(\Cc',\Rr')$.
	Also, consider the annuli $\Af' \coloneqq \Delta^{+} \setminus \mathsf{int}(\Delta^{-}),$ $\Af'' \coloneqq \FDisk^{\outsf} \setminus \inte(\Delta^{-}),$ $\Af''' \coloneqq \FDisk^{\outsf} \setminus \inte(\Delta^{+}),$ and note that $\Af' \cup \Af''' = \Af''.$

	Fix a path $P \in \Pp'.$
	Observe that since $\Pp'$ is $(2d+1,I)$-combed in $(\Cc',\Rr')$, there is a unique connected component of $P \cap \Delta^{-}$ that contains the endpoint of $P$ drawn in $\FDisk^{\insf}.$
	Let $P^\star$ be this component.
	Also, since $\Rr$ is orthogonal to $\Cc,$ for every path $R \in \Rr,$ there is a unique connected component of $R \cap \Af''$ that is a $\per(\Delta^{-})$-$\per(\FDisk^{\outsf})$-path.
	Let $R^{\star}$ be this component.

	Since $\Pp'$ is $(2d + 1, I)$-combed in $(\Cc', \Rr')$, for every $P\in \Pp'$ there is a unique path $R_P \in \Rr$ such that $P^\star \cap R_P^\star$ is non-empty and it is in fact a (possibly length-$0$) subpath of $C_{m + 1}.$
	We define \[\Qq \coloneqq \{ P^\star \cup R^\star_{P} \mid P \in \Pp' \}\] and claim that $\Qq$ is the desired $(d, (\Cc, \Rr), X)$-partial linkage relative to $(\FDisk^{\insf}, \FDisk^{\outsf})$.
	That $\Qq$ is a confined $(\FDisk^{\insf}, \FDisk^{\outsf})$-radial linkage in $H$ of order $k$ follows directly by construction.
	Moreover, since $I$ has been chosen to be $d$-scattered, it also follows that $\Qq$ is $d$-consistent with $(\Cc, \Rr)$ relative to $(\FDisk^{\insf}, \FDisk^{\outsf}).$ Let us verify the compatibility with the disk $\Delta_d$.

	\begin{claim} We have that $$\bigcup \Qq \cap \Delta_{d} \subseteq \bigcup \Pp \cap \Delta_{d}.$$
	\end{claim}
	\begin{claimproof} By the guarantees of \cref{combing_lemma}, we have that $\bigcup \Pp' \cap \Upsilon \subseteq \bigcup \Pp \cap \Upsilon.$
		Moreover, by the definition of $\Qq,$ we have that $\bigcup \Qq \cap \Delta^{-} \subseteq \bigcup \Pp' \cap \Delta^{-}.$
		Then, since $\Delta_{d} \subseteq \Upsilon$ and $\Delta_{d} \subseteq \Delta^{-},$ we conclude that $\bigcup \Qq \cap \Delta_{d} \subseteq \bigcup \Pp \cap \Delta_{d}$ as desired.
	\end{claimproof}

	We now see each path $Q = P^\star \cup R^\star_{P} \in \Qq$ as the concatenation of three paths $Q^{1},$~$Q^{2},$ and $Q^{3}$, defined as follows:
	$Q^{1} = P^\star,$ $Q^2\coloneqq R^\star_{P} \cap \Af'$, and $Q^{3} = R^\star_{P} \cap \Af'''$. Note that $Q^1\cap Q^2$ is a subpath of $C_{m+1}$, and $Q^2\cap Q^3$ is a subpath of $C_{m+d+1}$.  

	It remains to show that $\Qq$ is $d$-scattered in $H$.

	\begin{claim} For any two distinct paths $Q_{1}, Q_{2} \in \Qq,$ we have $\dist_{H}(Q_{1}, Q_{2}) > d.$
	\end{claim}
	\begin{claimproof} Consider any $u$-$v$-path $Z$ in $H$, for any choice of $u \in V(Q_{1})$ and $v \in V(Q_{2}).$
		What we have to show is that the length of $Z$ is larger than $d.$
		We distinguish three cases up to symmetry.

	\medskip\noindent
	\textbf{Case 1:} Assume that $u \in V(Q^{1}_{1} \cup Q^{2}_{1})$ and that $v \in V(Q^{1}_{2} \cup Q^{2}_{2}).$
	In this case, by definition, both $u$ and $v$ are vertices that come from distinct paths of $\Pp'.$
	Since $\Pp'$ is $d$-scattered in $H,$ our claim follows.
	
	\medskip\noindent
	\textbf{Case 2:} Assume that $u \in V(Q^{1}_{1})$ and that $v \in V(Q^{3}_{2}) \setminus V(Q^{2}_{2}).$
	In this case $u$ is drawn in $\Delta^{-}$ while $v$ is drawn in $\Af''' \setminus \bd(\Delta^{+}).$
	Moreover, every cycle of $\Cc^\star$ is drawn in $\Af'.$
	This implies that all the cycles of $\Cc^\star$, except possibly $C_{m+1}$, separate $u$ from $v.$
	Since $\Cc^\star$ is a nest of order $d+1$, and $Z$ necessarily intersects every cycle of $\Cc^\star\setminus \{C_{m+1}\}$ at an internal vertex, the length of $Z$ is larger than $d.$
	
	\medskip\noindent
	\textbf{Case 3:} Assume that $u \in V(Q_{1}) \setminus V(Q^{1}_{1})$ and that $v \in V(Q^{3}_{2}) \setminus V(Q^{2}_{2}).$
	First of all, observe that similarly to the previous case, since $v$ is drawn in $\Af''' \setminus \bd(\Delta^{+}),$ the existence of $\Cc^\star$ shows that the length of $Z$ is larger than $d$ whenever $Z$ contains a vertex drawn in $\Delta^{-}.$
	Hence we may assume $Z$ to be fully drawn within $\Af''.$
	Now, if $i \in [q]$ is the index in the enumeration of $\Rr$ so that $Q_{1} \setminus Q^{1}_{1} \subseteq R_{i},$ since $I$ is chosen to be $d$-scattered, the paths $R_{i - 1 - d}, \ldots, R_{i - 1}$ and $R_{i + 1} \ldots, R_{i + 1 + d}$ are all disjoint from $Q^{3}_{2},$ and in fact must separate $u$ from $v$ in $G \cap \Af''.$
	Since in both directions there are at least $d$ paths which $Z$ must intersect at an internal vertex, we conclude that the length of $Z$ is larger than $d.$
	\end{claimproof}

	This concludes the proof.
\end{proof}

\section{The Local Structure Theorem}\label{sec:local}

In this section we prove the central decomposition result, describing the structure in the graph relative to a large well-linked set.

\begin{theorem}[Local Structure Theorem]\label{local_structure_2} There exist functions $\link_{\ref{local_structure_2}}, \sep_{\ref{local_structure_2}}, \cov_{\ref{local_structure_2}} \colon \N^{3} \to \N$ such that for all integers $d,k \geq 0$ and $q \geq 1,$ the following holds.
If $G$ is a planar graph with a vertex subset $F \subseteq V(G)$ of size less than $q$, a well-linked set $X\subseteq V(G)$ of size $\link_{\ref{local_structure_2}}(d, k, q)$ and $S, T \subseteq V(G)$, then one of the following conditions holds:
	\begin{enumerate}
		\item $G$ contains a $d$-scattered $S$-$T$-linkage of order $k$ that avoids $\Ball^{\lfloor \nicefrac{d}{2} \rfloor}_{G}(F)$; or
		\item there exists a non-empty set $\{ (A_{i}, B_{i}) \colon i \in [n] \}$ of separations of $G,$ for some $n \in \N_{\geq 1},$ such that
		\begin{itemize}
			\item for every $i \in [n],$ $|A_{i} \cap B_{i}| \leq \sep_{\ref{local_structure_2}}(d, k, q)$;
			\item for every $i \in [n],$ $B_{i}$ is the $X$-majority side of $(A_{i}, B_{i});$ and
			\item for all distinct $i,j \in [n],$ we have $(A_{i} \setminus B_{i}) \cap (A_{j} \setminus B_{j}) = \emptyset.$
		\end{itemize}
		Further, assuming that $$A \coloneqq \bigcup_{i \in [n]} A_{i} \quad\text{and}\quad B \coloneqq V(G) \setminus \bigcup_{i \in [n]} (A_{i} \setminus B_{i}),$$ there exists a set $Y \subseteq A \cap B$ of size at most $\cov_{\ref{local_structure_2}}(d, k, q)$ such that every $S$-$T$-path in $G$ that avoids $\Ball^{d}_{G}(F)$ and intersects $B$ also intersects $\Ball^{d}_{G[A]}(Y).$
	\end{enumerate}
Moreover, assuming that $k' \coloneqq \max\{ k, q \}$ it holds that
	$$\link_{\ref{local_structure_2}}(d, k, q) \in d^{\Oh(1)} \cdot 2^{\Oh(k' \log k')} \quad\text{and}\quad \sep_{\ref{local_structure_2}}(d, k, q), \cov_{\ref{local_structure_2}}(d, k, q) \in d^{3} \cdot 2^{\Oh(k' \log k')}.$$
\end{theorem}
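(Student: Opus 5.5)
We first turn the large well-linked set $X$ into a large annulus wall and then apply the structural machinery built so far. By \cref{thm_algogrid}, with $|X|=\link_{\ref{local_structure_2}}(d,k,q)$ chosen polynomially larger than the required wall size, we get a large wall controlled by $X$. From it we extract an $\mathsf{annulus}_{\ref{structure_relative_terminals}}(r,t,q)$-annulus wall $W$, which is still controlled by $X$. The parameters are chosen as follows:
\begin{itemize}
\item $t\ge \max\{2d+1,\ f_{\ref{combing_lemma}}(d,2k)+g_{\ref{combing_lemma}}(d,2k)\}$, so that every vortex nest is large enough for the Combing Lemma;
\item $r\ge (k+2)(d+1)$.
\end{itemize}
We then apply \cref{structure_relative_terminals} with $S,T,F$. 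This yields a rendition $\rend$ and a well-grounded walloid $W'$ controlled by $W$ (hence by $X$). Its breadth is $b=\Oh(q)$ and its depth is $w\le 2^{\Oh(q\log q)}t$, and each of the classes $\rend_S\cap\rend_T$, $\rend_S\setminus\rend_T$, $\rend_T\setminus\rend_S$ is either $q$-represented or buried in inner vortex disks. Taking $q'\coloneqq\max\{k,q\}$ in place of $q$ makes every "represented" class $k$-represented. With these choices the announced bounds $d^{\Oh(1)}2^{\Oh(k'\log k')}$ come out.

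The case analysis runs as follows.
\begin{itemize}
\item If $\rend_S\cap\rend_T$ is represented, \cref{lemma:union_represented} gives outcome~(i).
\item If both differences are represented, \cref{lem_st_both_represented} gives outcome~(i).
\item Otherwise, for every vortex segment $W_j$ we apply the Harvesting Lemma (\cref{lem:harvesting_lemma}) to the outer vortex disk $\FDisk^\outsf_j$ with parameter $2k$. This gives a set $A_j$ of size $\Oh(kw)$.
\begin{itemize}
\item Suppose that for some $j$ both $S$ and $T$ escape $\Ball^{\lfloor d/2\rfloor}(A_j\cup F)$. Then we get scattered linkages $\Pp_S\cup\Pp_T$ in $G\cap\FDisk^\outsf_j$. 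The Combing Lemma turns them into a partial linkage, and its guarantee $\bigcup\Qq\cap\Delta_d\subseteq\bigcup\Pp\cap\Delta_d$ keeps them away from $\Ball^{\lfloor d/2\rfloor}(F)$. Case~\ref{case:P2} of \cref{lem_s_or_t_represented_and_vortex} then gives outcome~(i).
\item Suppose $S$ escapes at $j$ and $T$ escapes at some $j'\ne j$. Case~\ref{case:PP} of \cref{lem_s_or_t_represented_and_vortex} applies.
\item Suppose $S$ escapes at some vortex and $\rend_T\setminus\rend_S$ is represented. Case~\ref{case:RP} of \cref{lem_s_or_t_represented_and_vortex} applies.
\end{itemize}
\end{itemize}
When all these fail, we additionally use the second part of the Harvesting Lemma in each vortex. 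This either gives outcome~(i) directly, or gives $A'_j$ such that every surviving $S$-$T$-path inside $G\cap\FDisk^\outsf_j$ is dormant.

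We then build the separations. For each vortex $j$ and each bag $B_i$ of its linear decomposition, define a separation whose small side is the part of $G\cap\FDisk^\outsf_j$ formed by the vertices of $B_i$ not lying in the adjacent adhesions $W_{i-1},W_i$. Its separator is $W_{i-1}\cup W_i\cup\{x_i\}$, of size at most $2w+1$. Every non-ground cell of $\rend$ outside the vortex disks also gives a separation with separator $\per(\Delta)$, of size at most $2$. The small sides are pairwise disjoint. The large side is the $X$-majority side, because it contains most of $W'$ and $W'$ is controlled by $X$. Set $Y$ to be the union of all $A_j$, all $A'_j$, and (as needed) the peripheries and adhesions through which paths are routed. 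Its size is $\Oh(b\cdot kw)=d^3 2^{\Oh(k'\log k')}$, which matches $\cov$.

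The main obstacle is the final verification: every $S$-$T$-path $P$ avoiding $\Ball^d(F)$ and meeting $B$ must come within distance $d$ in $G[A]$ of $Y$. Our plan is to argue by cases on where $P$'s endpoints lie.
\begin{itemize}
\item If an endpoint lies in an inner vortex disk and $P$ leaves $\FDisk^\outsf_j$, then some prefix is an $X$-$\per$-path in $H=G\cap\FDisk^\outsf_j$, and it is hit by $\Ball_H^{\lfloor d/2\rfloor}(A_j)\subseteq\Ball^d_{G[A]}(Y)$.
\item If $P$ stays inside the vortex disk, it is either dormant, and then it is contained in a small side and does not meet $B$, or it is hit via $A'_j$ or $A_j$.
\item The remaining case is a terminal that is neither in a vortex nor represented. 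This is impossible by item~(iv) of \cref{structure_relative_terminals}, except for terminals in cells where only one of $S,T$ is represented while the other is confined to vortices. There we use that the confined class cannot escape, which pins $P$ into a vortex again.
\end{itemize}
Subtle points are the distinction between balls in $H$ and in $G[A]$, and the handling of $F$ via \cref{obs:special_vertices_hidden}. We expect these bookkeeping steps to need the most care.
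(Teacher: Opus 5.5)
Your plan follows the paper's proof. It builds the wall via \cref{thm_algogrid}, applies \cref{structure_relative_terminals} with parameter $\max\{k,q\}$, and uses the two representation lemmas. It then harvests and combs in each vortex to trigger the three cases of \cref{lem_s_or_t_represented_and_vortex}, finds a class $Z$ confined to inner vortex disks that escapes no vortex, and finishes with the bag separations (separators $W_{i-1}\cup W_i\cup\{x_i\}$) and the dormancy argument. Your extra separations for non-vortex cells are harmless. One parameter must be added: $r>2w+1$ (the paper takes $r\geq \sep_{\ref{local_structure_2}}(d,k,q)+1$). Control of $W'$ by $X$ only orients separations of order less than $r$, and your majority-side argument for the bag separations relies on it.
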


Before we proceed with the proof, let us discuss the functions $\link_{\ref{local_structure_2}}, \sep_{\ref{local_structure_2}},$ and $\cov_{\ref{local_structure_2}}.$

First we introduce an auxiliary function $$\mathsf{t}(d, k) \coloneqq \max\{ f_{\ref{combing_lemma}}(d, 2k), g_{\ref{combing_lemma}}(d, 2k), 2d + 1 \} \in d^{3} \cdot 2^{\Oh(k)},$$ which determines the order required for the railed nests of the vortex segments of the walloid obtained from \cref{structure_relative_terminals}.
This choice ensures that we have sufficient infrastructure to apply the Combing Lemma (\cref{combing_lemma}) as well as \cref{lemma:union_represented,lem_st_both_represented,lem_s_or_t_represented_and_vortex} later in the proof.

Also, to force the vertices of $F$ to be confined within the inner vortex disks of the vortex segments we will apply \cref{structure_relative_terminals} with the representation parameter being $k' \coloneqq \max\{ k, q \}.$

This requirement influences the adhesion of the linear decompositions admitted by the outer vortex disk of each vortex segment, which is set to $$\depth_{\ref{structure_relative_terminals}}(\mathsf{t}(d, k), k') \in d^{3}  \cdot 2^{\Oh(k' \log k')}.$$

Consequently, the order of separation required to separate each bag of the linear decomposition from the remainder of the graph is $$\sep_{\ref{local_structure_2}}(d, k, q) \coloneqq 2 \cdot \depth_{\ref{structure_relative_terminals}}(\mathsf{t}(d, k), k') + 1 \in d^{3}  \cdot 2^{\Oh(k' \log k')}.$$

Next, we define a second auxiliary function $$\mathsf{r}(d, k, q) \coloneqq \max\{ \sep_{\ref{local_structure_2}}(d, k, q) + 1, (k + 2)(d + 1) \} \in d^{3} \cdot 2^{\Oh(k' \log k')},$$
which determines both the number of cycles in the base annulus of the resulting walloid and the order of its wall segment.
This parameter must be sufficiently large to allow the application of \cref{lem_s_or_t_represented_and_vortex}.
In addition, it must ensure that every separation induced by a bag of the linear decomposition of a vortex segment is oriented by the walloid away from the bag, which corresponds to the $X$-majority side of that separation.

For these conditions to hold, the size of $X$ must satisfy $$\link_{\ref{local_structure_2}}(d, k, q) \coloneqq c \cdot (3 \cdot \mathsf{annulus}_{\ref{structure_relative_terminals}}(\mathsf{r}(d, k), \mathsf{t}(d, k), k'))^{20} \in d^{\Oh(1)} \cdot 2^{\Oh(k' \log k')},$$ where $c$ is the constant from \cref{thm_algogrid}.

Finally, the size of the hitting set obtained follows from the bounds in \cref{lem:harvesting_lemma}, yielding $$\cov_{\ref{local_structure_2}}(d, k, q) \coloneqq 2k (3 \cdot \depth_{\ref{structure_relative_terminals}}(\mathsf{t}(d, k), k') + 1) \cdot \mathsf{breadth}_{\ref{structure_relative_terminals}}(k') \in d^{3} \cdot 2^{\Oh(k' \log k')},$$ where the depth of the walloid is determined by the requirements described above.

The claimed asymptotic bounds follow directly from the bounds on the corresponding functions.

\begin{proof}[Proof of \cref{local_structure_2}] First, assume that $G$ is not connected.
	By the definition of well-linkedness it follows that there is a unique connected component $C$ of $G$ such that $V(C)$ is the $X$-majority side of the separation $(V(G) \setminus V(C), V(C))$ of $G,$ which is of order $0.$
	Clearly $(V(G) \setminus V(C), V(C))$ satisfies all required assumptions independent of the set of separations of $C$ we may choose in the future.
	So we may freely add $(V(G) \setminus V(C), V(C))$ to the constructed family of separations and assume hereafter without loss of generality that $G$ is connected.

	Let $t \coloneqq \mathsf{annulus}_{\ref{structure_relative_terminals}}(\mathsf{r}(d, k), \mathsf{t}(d, k), k').$
	Our proof starts with an application of \cref{thm_algogrid} with $X$ and $t.$
	As a result, we obtain a $(3t \times 3t)$-wall $W^0$ in $G$ controlled by $X.$
	From $W^{0},$ we obtain a $(t \times t)$-annulus wall $W^{1}$ in $G$ as follows.
	We define the set $\Cc$ of the $t$ base cycles of $W^{1}$ by iteratively peeling off $t$ layers of $W^0$ starting with its perimeter.
	Notice that $\bigcup \Cc$ intersects all but $t$ many vertical paths of $W^0.$
	By trimming these vertical paths to have their endpoints on the inner and outer cycle of $\Cc$ we obtain the desired $t$ radial paths that together with $\Cc$ make up $W^{1}.$
	Clearly, $W^{1}$ is controlled by $W^0$ and hence also controlled by~$X.$

	With $W^1$ in hand, we may invoke \cref{structure_relative_terminals}, and obtain a $\Sigma$-rendition $\rend$ of $G$ and a $\rend$-well-grounded $(\mathsf{r}(d, k, c), \mathsf{t}(d, k), a, b)$-walloid $W \subseteq G$ for some $a \in \N$ and $b \in [\max \{1, \mathsf{breadth}_{\ref{structure_relative_terminals}}(k') \}],$ along with a sandwich sequence $\langle\FDisk^{\insf}_{j},\FDisk^{\outsf}_{j}\colon j\in [b]\rangle$ such that the following conditions hold:
	\begin{enumerate}
		\item $W$ is controlled by $W^1$ (and therefore by $X$ as well);
		\item $W$ has depth at most $\depth_{\ref{structure_relative_terminals}}(\mathsf{t}(d, k), k')$ with respect to $\rend;$
		\item $F \subseteq \bigcup_{j \in [b]} \inte(\Theta^\insf_j)$; and
		\item for each $\mathfrak{R} \in \{ \rend_{S} \cap \rend_{T}, \rend_{S} \setminus \rend_{T}, \rend_{T} \setminus \rend_{S} \},$ either $$\bigcup \mathfrak{R} \subseteq \bigcup_{j \in [b]} \mathsf{int}(\FDisk^{\mathsf{in}}_{j})$$ or $W$ $k$-represents $\mathfrak{R}$ (since $k' \geq k$).
	\end{enumerate}

	Let $w \coloneqq \depth_{\ref{structure_relative_terminals}}(\mathsf{t}(d, k), k').$
	For each $j \in [b],$ let $W_{j}$ denote the $j$-th vortex segment of $W,$ so that $\FDisk^\insf_j,\FDisk^\outsf_j$ are the inner and the outer vortex disk of $W_j$, respectively. Let $H_{j} \coloneqq G \cap \FDisk^{\outsf}_{j},$ $(\Cc_{j}, \Pp_{j})$ be the nest of $W_{j},$ and $\Delta_{j}$ be the disk bounded by the $d$-th innermost cycle of $\Cc_{j}.$
	Also, let $\langle x^{j}_{1}, \ldots, x^{j}_{n_{j}} \rangle$ be the enumeration of the vertices in $\per(\FDisk^{\outsf}_{j})$ in the order of encountering them when traversing $\bd(\FDisk^{\outsf}_{j})$ in the clockwise direction, with the first vertex $x^{j}_{1}$ chosen arbitrarily from $\per(\FDisk^{\outsf}_{j})$, and $\Bb_{j} = \langle B^{j}_{1}, \ldots, B^{j}_{n_{j}} \rangle$ be a linear decomposition of $H_{j}$ of adhesion at most $w$ such that $x^{j}_{i} \in B^{j}_{i}$ for all $i \in [n_{j}]$. Such a decomposition  exists because $W$ has depth at most $w.$
	We also make the convention that $B^{j}_{0} = B^{j}_{n_{j} + 1} = \emptyset.$

	Let $d' \coloneqq \lfloor \nicefrac{d}{2} \rfloor.$
	Suppose further that $G$ contains no $d$-scattered $S$-$T$-linkage of order $k$ that avoids $\Ball^{d'}_{G}(F)$.
	In what follows we show that we may always conclude with the second outcome of our claim.

	\begin{claim}\label{claim:local_1} If for some $Z \in \{ S, T \},$ $W$ does not $k$-represent $\rend_{Z},$ then $$Z \subseteq \bigcup_{j \in [b]} \inte(\FDisk^{\insf}_{j}).$$
		Moreover, for at least one $Z \in \{ S, T \},$ $W$ does not $k$-represent $\rend_{Z}.$
	\end{claim}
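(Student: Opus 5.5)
The plan is to treat the two assertions of the claim separately. Both reduce to bookkeeping on the trichotomy $\rend_S\cap\rend_T$, $\rend_S\setminus\rend_T$, $\rend_T\setminus\rend_S$ from condition~4 of \cref{structure_relative_terminals}, combined with the linkage-finding lemmas of \cref{sec:finding}.

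\emph{First assertion.} Fix $Z\in\{S,T\}$, let $Z'$ be the other set, and suppose $W$ does not $k$-represent $\rend_Z$. Since $\rend_Z=(\rend_S\cap\rend_T)\cup(\rend_Z\setminus\rend_{Z'})$, any $k$ consecutive flap segments hosting cells of either subfamily would also witness that $W$ $k$-represents $\rend_Z$. So $W$ $k$-represents neither subfamily, and condition~4 gives
\[\bigcup\rend_Z\subseteq\bigcup_{j\in[b]}\inte(\FDisk^\insf_j).\]
It then remains to show that every $z\in Z$ lies in some cell of $\rend_Z$. I would argue this using connectivity of $G$, which we assumed without loss of generality.
\begin{itemize}
\item If $z$ is a ground vertex, take any cell $\Delta$ containing an edge incident to $z$. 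Then $z\in\per(\Delta)$, and the trivial path shows $\Delta\in\rend_Z$.
\item If $z$ lies in $\inte(\Delta)$ for a cell $\Delta$, then $\Delta$ cannot contain all of $G$, because $W$ is $\rend$-grounded and so $W$ is not inside a single cell. Hence a path from $z$ to a vertex outside $\Delta$ exists. Its prefix up to the first vertex of $\bd(\Delta)$ lies in $G\cap\Delta$ and ends in $\per(\Delta)$, since cells meet the rest of $G$ only in peripheral vertices. Thus $\Delta\in\rend_Z$.
\end{itemize}
Either way $z\in\bigcup\rend_Z$, and hence $Z\subseteq\bigcup_j\inte(\FDisk^\insf_j)$.

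\emph{Second assertion.} Suppose for contradiction that $W$ $k$-represents both $\rend_S$ and $\rend_T$. The case split is as follows.
\begin{itemize}
\item If $W$ $k$-represents $\rend_S\cap\rend_T$, then \cref{lemma:union_represented} applies, because $\mathsf{r}(d,k,q),\mathsf{t}(d,k)\geq d+1$. It yields a $d$-scattered $S$-$T$-linkage of order $k$ avoiding $\Ball^d_G(F)\supseteq\Ball^{d'}_G(F)$, contradicting our standing assumption.
\item Otherwise, by condition~4, every cell of $\rend_S\cap\rend_T$ lies in $\bigcup_j\inte(\FDisk^\insf_j)$. A cell hosted by a flap segment lies in that segment's pocket. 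The pocket is contained in the home of a flap segment, which is disjoint from the inner vortex disks, since those sit inside the homes of the vortex segments. Hence no hosted cell belongs to $\rend_S\cap\rend_T$. So the $k$ consecutive flap segments witnessing representation of $\rend_S$ in fact host cells of $\rend_S\setminus\rend_T$, and likewise for $T$. Thus $W$ $k$-represents both $\rend_S\setminus\rend_T$ and $\rend_T\setminus\rend_S$. Because $\mathsf{r}(d,k,q)\geq(k+2)(d+1)$ and $\mathsf{t}(d,k)\geq d+1$, \cref{lem_st_both_represented} again produces a $d$-scattered $S$-$T$-linkage of order $k$ avoiding $\Ball^{d}_G(F)$, the same contradiction.
\end{itemize}

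The step I expect to need the most care is the topological fact in the second case, that pockets of flap segments are disjoint from the inner vortex disks. I would derive it from the definitions of the home $\Delta_{W'}$ and of the sandwich sequence: $\per(\FDisk^\insf_j)$ lies on the inner cycle of the $j$-th vortex segment, and $\FDisk^\outsf_j$ is inside that segment's home. Homes of distinct segments have disjoint interiors in the unique embedding of the walloid.
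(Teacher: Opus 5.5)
Your proposal is correct and follows the paper's proof. For the first part, the paper also combines connectivity (so every terminal lies in a cell of $\rend_Z$) with condition~iv). For the second part, it also reduces to either $W$ $k$-representing $\rend_S\cap\rend_T$ or $W$ $k$-representing both differences, and then invokes \cref{lemma:union_represented} and \cref{lem_st_both_represented}. Your version additionally spells out two steps the paper leaves implicit. These are the cell-membership argument for terminals, and the disjointness of flap pockets from inner vortex disks, which justifies that dichotomy.
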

	\begin{claimproof} 	By definition of the sets $\rend_{S}$ and $\rend_{T}$ and our assumption that $G$ is connected, every vertex of $S$ and $T$ is drawn in a cell of $\rend_{S}$ and $\rend_{T}$ respectively.
	Therefore, condition iv) of the application of \cref{structure_relative_terminals} implies the first part of the claim.
		
	Assume towards contradiction that the second part of the claim is false for both $\rend_{S}$ and $\rend_{T}.$
	Then, condition iv) of the application of \cref{structure_relative_terminals} above, implies that either $W$ $k$-represents $\rend_{S} \cap \rend_{T},$ or $W$ $k$-represents both $\rend_{S} \setminus \rend_{T}$ and $\rend_{T} \setminus \rend_{S}.$
	We treat these two cases separately.

	\medskip\noindent
	\textbf{$W$ $k$-represents $\rend_{S} \cap \rend_{T}$:} By definition $\mathsf{t}(d, k) \geq d + 1.$
	Hence, by applying \cref{lemma:union_represented} to $W,$ we infer that there exists a $d$-scattered $S$-$T$-linkage of order $k$ in $G$ that avoids $\Ball^{d'}_{G}(F)$; this contradicts our assumptions.

	\medskip\noindent
	\textbf{$W$ $k$-represents $\rend_{S} \setminus \rend_{T}$ and $\rend_{T} \setminus \rend_{S}$:} Similarly, $\mathsf{t}(d, k) \geq d + 1$ and $\mathsf{r}(d, k, c) \geq (k + 2)(d + 1)$ by definition.
	Hence, by applying \cref{lem_st_both_represented} to $W,$ we infer that there exists a $d$-scattered $S$-$T$-linkage of order $k$ in $G$ that avoids $\Ball^{d'}_{G}(F)$; this contradicts our assumptions.
	\end{claimproof}

	\begin{claim}\label{claim:local_2} For every $j \in [b],$ there exists a set $I_{j} \subseteq V(H_{j})$ with the following properties:
		\begin{itemize}
			\item we have $|I_{j}| \leq 2k(3w + 1)$;
			\item there exists $Z \in \{ S, T \}$ such that $$Z \subseteq \bigcup_{j \in [b]} \inte(\FDisk^{\insf}_{j}),$$ and for all $j \in [b],$ the graph $H_{j} - (\Ball^{d}_{H_{j}}(I_{j}) \cup \Ball^{d}_{G}(F))$ contains no $Z$-$\per(\FDisk^{\outsf}_{j})$-path; and
			\item every $S$-$T$-path in $H_{j} \setminus (\Ball^{d}_{H_{j}}(I_{j}) \cup \Ball^{d}_{G}(F))$ is $\Bb_{j}$-dormant.
		\end{itemize}
	\end{claim}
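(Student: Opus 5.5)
We will obtain each $I_j$ by applying the Harvesting Lemma (\cref{lem:harvesting_lemma}) to the outer vortex disk $\Delta\coloneqq \FDisk^\outsf_j$. This disk has depth at most $w$, and we take $H\coloneqq H_j$ and $\Bb\coloneqq \Bb_j$. The lemma produces a set $A_j$ with $|A_j|\le 2k(2w+1)$. When its second part applies, it also produces a set $A'_j$ with $|A'_j|\le 2kw$. We set $I_j\coloneqq A_j\cup A'_j$, so that $|I_j|\le 2k(3w+1)$. Since $d'\le d$, balls of radius $d$ around $I_j$ contain the radius-$d'$ balls appearing in the lemma. Hence every conclusion stated there with radius $d'$ remains true after we enlarge to $\Ball^d_{H_j}(I_j)\cup\Ball^d_G(F)$.

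\textbf{Choice of $Z$.} By \cref{claim:local_1}, at least one of $S,T$, say $Z$, is not $k$-represented, and therefore $Z\subseteq\bigcup_j\inte(\FDisk^\insf_j)$. We need to show, for a suitable choice of $Z$, that no $H_j$ contains a $Z$-$\per(\FDisk^\outsf_j)$-path in $H_j-(\Ball^{d'}_{H_j}(A_j)\cup\Ball^{d'}_G(F))$. Suppose instead that this survival happens for $S$ in some $H_j$ and for $T$ in some $H_{j'}$. The harvesting lemma then gives a $k$-path $S$-$\per$-linkage in $H_j$ and a $k$-path $T$-$\per$-linkage in $H_{j'}$, each $d$-scattered in its $H$ and each avoiding $\Ball^{d'}_G(F)$. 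If $j=j'$, the union of the two linkages is also $d$-scattered.

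\textbf{Main step: turning surviving linkages into a contradiction.} We apply the Combing Lemma (\cref{combing_lemma}) with parameter $2k$ if $j=j'$, and with $k$ to each linkage otherwise. This is allowed because the nest has order $\mathsf t(d,k)\ge f_{\ref{combing_lemma}}(d,2k),g_{\ref{combing_lemma}}(d,2k)$. The outcome is partial linkages relative to $(\FDisk^\insf_j,\FDisk^\outsf_j)$. Each partial linkage agrees with the harvested linkage inside $\Delta_d$, and outside $\Delta_d$ it runs along rails of the nest. Such rails are far from $F$ by \cref{obs:special_vertices_hidden}, so the partial linkages still avoid $\Ball^{d'}_G(F)$. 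Case \ref{case:P2} or \ref{case:PP} of \cref{lem_s_or_t_represented_and_vortex} (with $r\ge (k+2)(d+1)$ and $t\ge 2d+1$) then yields a $d$-scattered $S$-$T$-linkage avoiding $\Ball^{d'}_G(F)$, contradicting our assumption.

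There is one subtlety: the surviving terminal type may be the one that is represented, say $T$ survives while $S$ is the unrepresented type. If $T$ is also unrepresented, the argument above applies. If $T$ is $k$-represented, then it represents $\rend_T\setminus\rend_S$ or $\rend_S\cap\rend_T$. In the first case we use case \ref{case:RP} together with the surviving $S$-linkage. The second case was already excluded in \cref{claim:local_1}. So whenever some vortex has a surviving $S$-escaping path, $T$ cannot also survive anywhere (or be represented), and we take $Z$ to be a type that survives nowhere. The role of $Z$ requires care here. We choose $Z$ so that it has no survivors in any $H_j$. Such a $Z$ must be unrepresented: if $Z$ were represented, the other type would be unrepresented, would lie in the vortices, and would survive in some vortex, and case \ref{case:RP} would give a contradiction. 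Checking this case analysis is routine bookkeeping.

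\textbf{Dormancy.} Once $Z$ has no survivors in $H_j$, the harvesting lemma's final dichotomy applies. Its first outcome, a $d$-scattered $S$-$T$-linkage avoiding $\Ball^d_G(F)$, is excluded by assumption. So we obtain $A'_j$, and every $S$-$T$-path in $H_j-(\Ball^d(A_j)\cup\Ball^{d'}(A'_j)\cup\Ball^d_G(F))$ is $\Bb_j$-dormant. This remains true after enlarging to radius $d$ around $I_j$. For vortices where the harvesting lemma's hypothesis would require the other type, we still invoke it through $Z$. I expect the main obstacle to be the combing step, namely checking that the combed partial linkages satisfy all the hypotheses of \cref{lem_s_or_t_represented_and_vortex}, including avoidance of $F$.
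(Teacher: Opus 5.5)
Your proposal is correct and follows essentially the same route as the paper. You apply the Harvesting Lemma in each $\FDisk^\outsf_j$. You then use \cref{claim:local_1}, the Combing Lemma, and cases~\ref{case:P2}, \ref{case:PP}, \ref{case:RP} of \cref{lem_s_or_t_represented_and_vortex} to show that some type $Z\subseteq\bigcup_j\inte(\FDisk^\insf_j)$ survives in no $H_j$, and take $I_j=A_j\cup A'_j$ from the second part of the lemma. The only difference is organisational: you split on a non-surviving type, where the paper splits on which of $\rend_S\setminus\rend_T$ and $\rend_T\setminus\rend_S$ is represented. One small correction: the combed linkage does not only follow rails outside $\Delta_d$ (it also follows the rerouted linkage from \cref{linkage_combing}). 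Avoidance of $\Ball^{d'}_G(F)$ still holds, for two reasons. \cref{obs:special_vertices_hidden} places $\Ball^d_G(u)$ inside $\Delta_d$ for every $u\in F$ in that vortex. The partial linkage is confined to $\FDisk^\outsf_j$, so it misses the balls around $F$-vertices of the other vortices.
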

	\begin{claimproof} Recall that for every $j \in [b],$ $\FDisk^{\outsf}_{j}$ is a $\rend$-aligned disk of depth at most $w.$
		Hence we may apply \cref{lem:harvesting_lemma} to each $\FDisk^{\outsf}_{j},$ $j \in [b],$ with $k$ and $w,$ and with distance parameter $d,$ in order to obtain a set $I^{1}_{j} \subseteq V(H_{j})$ of size at most $2k(2w + 1)$ such that the following two conditions are satisfied. Let $$H'_{j} \coloneqq H_{j} \setminus (\Ball^{d'}_{H_{j}}(I^{1}_{j}) \cup \Ball^{d'}_{G}(F)).$$
		Then:
		\begin{enumerate}
			\item\label{c:EP} For each $X \in \{ S, T \},$ if the graph $H'_{j}$ contains an $X$-$\per(\FDisk^{\outsf}_{j})$-path, then in fact there exists an $X$-$\per(\FDisk^{\outsf}_{j})$-linkage $\Pp^{j}_{X}$ of order $k$ that is $d$-scattered in $H_{j}$ and avoids $\Ball^{d'}_{G}(F).$
			Moreover, if this holds for both $S$ and $T,$ then even $\Pp_{S}^{j} \cup \Pp_{T}^{j}$ is $d$-scattered in $H_{j}.$
			\item\label{c:noTwo} Further, if the graph $H'_{j}$ does not contain any $S$-$\per(\FDisk^{\outsf}_{j})$-path or any $T$-$\per(\FDisk^{\outsf}_{j})$-path, since we assume that $G$ contains no $d$-scattered $S$-$T$-linkage of order $k$ that avoids $\Ball^{d'}_{G}(F)$, then there is a set $I^{2}_{j} \subseteq V(H_{j})$ with $|I^{2}_{j}| \leq 2kw$ such that every $S$-$T$-path in $$H_{j} \setminus \big((\Ball^{d}_{H_{j}}(I^{1}_{j}) \cup \Ball^{d'}_{H_{j}}(I^{2}_{j})) \cup \Ball^{d}_{G}(F)\big)$$ is $\Bb_{j}$-dormant.
		\end{enumerate}

		Whenever for some $j \in [b]$ and $X \in \{ S, T\},$ $H'_{j}$ contains no $X$-$\per(\FDisk^{\outsf}_{j})$-path, we follow the convention that $\Pp^{j}_{X} \coloneqq \emptyset.$
		We distinguish cases.

		\medskip\noindent
		\textbf{$W$ $k$-represents $\rend_{S} \setminus \rend_{T}$:} By \cref{claim:local_1} it must be that $W$ does not $k$-represent $\rend_{T}$ and as a result, $T \subseteq \bigcup_{j \in [b]} \inte(\FDisk^{\insf}_{j}).$ Under these assumptions we show that for all $j \in [b],$ the graph $H'_{j}$ contains no $T$-$\per(\FDisk^{\outsf}_{j})$-path.

		Assume towards contradiction that for some $j \in [b],$ $\Pp^{j}_{T} \neq \emptyset.$
		Since $T \cap V(H_{j}) \subseteq \inte(\FDisk^{\insf}_{j}),$ condition~\ref{c:EP} above guarantees that $\Pp^{j}_{T}$ is a $T$-rooted $(\FDisk^{\insf}, \FDisk^{\outsf})$-radial linkage of order $k$ that is $d$-scattered in $H_{j}$ and avoids $\Ball^{d'}_{G}(F).$
		By the definition of vortex segments, $(\Cc_{j}, \Pp_{j})$ is a railed nest in $G$ sandwiched by $(\FDisk^{\insf}_{j}, \FDisk^{\outsf}_{j}).$
		Also, by assumption, the order of $(\Cc_{j}, \Pp_{j})$ is $(\mathsf{t}(d, k), \mathsf{t}(d, k)),$ which suffices to call \cref{combing_lemma} with $(\Cc_{j}, \Pp_{j}),$ $T,$ $\Pp^{j}_{T},$ and distance parameter $d.$
		This application yields as a result a $(d, (\Cc_{j}, \Pp_{j}), T)$-partial linkage $\Rr_{j}$ of order~$k$ relative to $(\FDisk^{\insf}_{j}, \FDisk^{\outsf}_{j})$ such that $$\bigcup \Rr_{j} \cap \Delta_j \subseteq \bigcup \Pp^j_T \cap \Delta_j.$$
		Moreover, by \cref{obs:special_vertices_hidden}, we have that for every vertex $u \in F,$ there is an index $j_{u} \in [b],$ such that $\Ball^{d}_{G}(u) \subseteq \Delta_{j_{u}}.$
		Now, if $j_{u} = j,$ since the paths of $\Rr$ agree with the paths of $\Pp^j_T$ within $\Delta_{j}$ and $\Pp^j_T$ avoids $\Ball^{d'}_{G}(u),$ then so does $\Rr_{j}.$
		In case $j_{u} \neq j,$ the same follows easily, since by definition, $\Rr_{j}$ is disjoint from $G \cap \Theta^\outsf_{j_{u}}$ (and therefore from $G \cap \Delta_{j_{u}}$ as well).

		As a final step, since $\mathsf{t}(d, k) \geq 2d + 1$ and $\mathsf{r}(d, k) \geq (k + 2)(d + 1),$ our setting satisfies case \ref{case:RP} of \cref{lem_s_or_t_represented_and_vortex}, and therefore we may conclude with a $d$-scattered $S$-$T$-linkage of order $k$ in $G$ that avoids $\Ball^{d'}_{G}(F),$ which of course contradicts our assumptions.
		Hence, we conclude that for every $j \in [b],$ the graph $H'_{j}$ contains no $T$-$\per(\FDisk^{\outsf}_{j})$-path, which in addition to the fact that $T \subseteq \bigcup_{j \in [b]} \inte(\FDisk^{\insf}_{j})$ satisfies the second part of our claim.

		Moreover, by condition \ref{c:noTwo} above we also get that every $S$-$T$-path in $H_{j} \setminus \big((\Ball^{d}_{H_{j}}(I^{1}_{j}) \cup \Ball^{d'}_{H_{j}}(I^{2}_{j})) \cup \Ball^{d}_{G}(F)\big)$ is $\Bb_{j}$-dormant and we may conclude the proof of the claim with the set $I_{j} \coloneqq I^{1}_{j} \cup I^{2}_{j}.$

		\medskip\noindent
		\textbf{$W$ $k$-represents $\rend_{T} \setminus \rend_{S}$:} This case is symmetric to the first one and we argue as before by swapping the roles of $S$ and $T.$

		\medskip\noindent
		\textbf{$W$ $k$-represents neither $\rend_{S}\setminus \rend_{T}$ nor $\rend_{T}\setminus \rend_S$:} In this case \cref{claim:local_1} guarantees that $S \cup T \subseteq \bigcup_{j \in [b]} \inte(\FDisk^{\insf}_{j}).$
		Now, for every $j \in [b]$ such that $\Qq_{j} \coloneqq \Pp^{j}_{S} \cup \Pp^{j}_{T} \neq \emptyset,$ condition \ref{c:EP} above implies that $\Qq_{j}$ is a $(S \cup T)$-rooted $(\FDisk^{\insf}, \FDisk^{\outsf})$-radial linkage of order $k$ or $2k$ that is $d$-scattered in $H_{j}.$
		As before, we may apply \cref{combing_lemma} with $(\Cc_{j}, \Pp_{j}),$ $S \cup T,$ $\Qq_{j}.$
		This application yields as a result a $(d, (\Cc_{j}, \Pp_{j}), S \cup T)$-partial linkage $\Rr_{j}$ of order $|\Qq_{j}| \in \{ k, 2k \}$ relative to $(\FDisk^{\insf}, \FDisk^{\outsf}).$
		Moreover, as in the case above, we get that the paths of $\Rr$ agree with the paths of $\Qq_{j}$ within $\Delta_j,$ and since $\Qq_{j}$ avoids $\Ball^{d'}_{G}(F),$ then so does $\Rr_{j}.$

		Now there are two subcases to consider.
		If there exists $j \in [b]$ such that both $\Pp^{j}_{S} \neq \emptyset$ and $\Pp^{j}_{T} \neq \emptyset,$ since the paths of $\Rr$ agree with the paths of $\Qq_{j}$ within $\Delta_j,$ we have that the endpoints of $\Qq_{j}$ in $\FDisk^{\insf}$ are precisely the endpoints of $\Rr_{j}$ in $\FDisk^{\insf}$.
		Therefore, $k$ paths in $\Rr_{j}$ have an endpoint in $S \cap \FDisk^{\insf}_{j}$ and $k$ paths in $\Rr_{j}$ have an endpoint in $T \cap \FDisk^{\insf}_{j}.$
		Then, since $\mathsf{t}(d, k) \geq 2d + 1$ and $\mathsf{r}(d, k) \geq (k + 2)(d + 1),$ our setting satisfies case \ref{case:P2} of \cref{lem_s_or_t_represented_and_vortex}, and hence we may conclude with a $d$-scattered $S$-$T$-linkage of order $k$ in $G$ that avoids $\Ball^{d'}_{G}(F),$ which once more contradicts our assumptions.
		Therefore, we may assume that for every $j \in [b],$ either $\Pp^{j}_{S} = \emptyset$ or $\Pp^{j}_{T} = \emptyset.$

		Now, assume that there exist two distinct indices $j, j' \in [b]$ such that $\Pp^{j}_{S} \neq \emptyset$ and $\Pp^{j}_{T} \neq \emptyset.$
		In this case, case \ref{case:PP} of \cref{lem_s_or_t_represented_and_vortex} applies, and we may once more conclude with a $d$-scattered $S$-$T$-linkage of order $k$ that avoids $\Ball^{d'}_{G}(F),$ and obtain a contradiction.

		This implies that there exists $Z \in \{ S, T \}$ such that for every $j \in [b],$ $\Pp^{j}_{Z} = \emptyset,$ which by definition, implies that the graph $H'_{j}$ contains no $Z$-$\per(\FDisk^{\outsf}_{j})$-path.
		Additionally recall that $Z \subseteq \bigcup_{j \in [b]} \inte(\FDisk^{\insf}_{j})$ as desired.

		In this case, by condition \ref{c:noTwo} above, we also get that every $S$-$T$-path in $H_{j} \setminus (\Ball^{d}_{H_{j}}(I^{1}_{j}) \cup \Ball^{d'}_{H_{j}}(I^{2}_{j}) \cup \Ball^{d}_{G}(F))$ is $\Bb_{j}$-dormant and we may conclude the proof of the claim with the set $I_{j} \coloneqq I^{1}_{j} \cup I^{2}_{j}.$
	\end{claimproof}

	Let $Y \coloneqq \bigcup_{j \in [b]} I_{j}$ and $H \coloneqq \bigcup_{j \in [b]} H_{j}.$

	\begin{claim}\label{claim:local_3} Every $S$-$T$-path in $G - (\Ball_{H}^{d}(Y) \cup \Ball_{G}^{d}(F))$ is $\Bb_{j}$-dormant for some $j \in [b].$
	\end{claim}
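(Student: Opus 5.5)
Let $P$ be an $S$-$T$-path in $G$ avoiding $\Ball^{d}_{H}(Y)\cup\Ball^{d}_{G}(F)$. By \cref{claim:local_2} fix $Z\in\{S,T\}$ with $Z\subseteq\bigcup_{j\in[b]}\inte(\FDisk^{\insf}_j)$ such that no $H_j-(\Ball^{d}_{H_j}(I_j)\cup\Ball^{d}_G(F))$ contains a $Z$-$\per(\FDisk^{\outsf}_j)$-path. The plan is to show that $P$ lies entirely inside a single $H_j$, and then to invoke the third bullet of \cref{claim:local_2} for that $j$.

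\textbf{Step 1: the $Z$-end of $P$ is in some $H_j$.} Let $z$ be the endpoint of $P$ in $Z$. Since $z\in\inte(\FDisk^{\insf}_j)$ for some $j$, we have $z\in V(H_j)$. Consider the maximal subpath $P_0$ of $P$ that starts at $z$ and stays in $G\cap\FDisk^{\outsf}_j$ until it first reaches $\per(\FDisk^{\outsf}_j)$. This is well defined, because $\FDisk^{\outsf}_j$ is $\rend$-aligned and no edge crosses its boundary. Every edge leaving $H_j$ is incident to the periphery, so if $P$ ever leaves $V(H_j)\setminus\per(\FDisk^{\outsf}_j)$, then $P_0$ is a $Z$-$\per(\FDisk^{\outsf}_j)$-path in $H_j$.

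\textbf{Step 2: $P_0$ avoids the balls inside $H_j$, a contradiction.} Here I compare balls in $H_j$ with balls in $H$: since $Y\supseteq I_j$, any vertex within distance $d$ of $I_j$ in $H_j$ is also within distance $d$ of $Y$ in $H$, because $H_j\subseteq H$ and distances only shrink in the larger graph. So $\Ball^{d}_{H_j}(I_j)\subseteq\Ball^{d}_H(Y)$. Hence $P_0$ avoids $\Ball^{d}_{H_j}(I_j)\cup\Ball^{d}_G(F)$, which contradicts the choice of $Z$. Therefore $P$ is entirely contained in $H_j$. Strictly speaking, $P$ avoids the periphery vertices as well, except possibly a single endpoint; the case where the $Z$-vertex itself lies on the periphery is degenerate, as $Z$ lies in the interior of $\FDisk^{\insf}_j$. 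If instead the other endpoint lies on $\per(\FDisk^{\outsf}_j)$, the same argument still applies, since reaching the periphery already gives the forbidden $Z$-$\per$-path.

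\textbf{Step 3: conclude dormancy.} Now $P$ is an $S$-$T$-path in $H_j$ avoiding $\Ball^{d}_{H_j}(I_j)\cup\Ball^{d}_G(F)$, again using the ball inclusion from Step 2. The third bullet of \cref{claim:local_2} then says $P$ is $\Bb_j$-dormant.

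The main (mild) obstacle is Step 1: making rigorous that a path starting inside $\FDisk^{\outsf}_j$ and not contained in $H_j$ must pass through $\per(\FDisk^{\outsf}_j)$, while still yielding a path within $H_j$ itself. This follows because $H_j=G\cap\FDisk^{\outsf}_j$ includes all edges drawn in the disk and the boundary meets $G$ only in periphery vertices. The remaining steps are just the monotonicity of balls under passing to supergraphs.
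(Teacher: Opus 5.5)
Your proof is correct and follows the paper's argument. You place the $Z$-endpoint of $P$ inside some $H_j$. You then use the exclusion of $Z$-$\per(\FDisk^{\outsf}_j)$-paths from \cref{claim:local_2}, together with $\Ball^{d}_{H_j}(I_j)\subseteq\Ball^{d}_{H}(Y)$, to force $P\subseteq H_j$, and finish with the dormancy bullet of \cref{claim:local_2}. You also make explicit a step the paper leaves implicit: why leaving $H_j$ requires passing through $\per(\FDisk^{\outsf}_j)$, which holds because $\FDisk^{\outsf}_j$ is $\rend$-aligned.
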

	\begin{claimproof} Consider any $S$-$T$-path $P$ in $G - (\Ball_{H}^{d}(Y) \cup \Ball^{d}_{G}(F)).$
		By \cref{claim:local_2} we have that there exists $Z \in \{ S, T \}$ such that $Z \subseteq \bigcup_{j \in [b]} \inte(\FDisk^{\insf}_{j}).$
		This clearly implies that the endpoint of $P$ in $Z$ belongs to one of $H_{j},$ $j \in [b].$
		Now one of the following two assertions must be true.
		Either $P$ is an $S$-$T$-path in $H_{j}$, or $P \cap (G - V(H_j)) \neq \emptyset,$ which implies that there exists a subpath $P'$ of $P$ that is a $Z$-$\per(\FDisk^{\outsf}_{j})$-path in $H_{j}.$
		However, observe that by \cref{claim:local_2}, the latter cannot be the case as then $P'$ must either intersect $\Ball^{d}_{G}(F)$ or $\Ball^{d}_{H_{j}}(I_{j}) \subseteq \Ball^{d}_{H}(Y).$
		Therefore, we conclude that $P$ is $\Bb_{j}$-dormant as desired.
	\end{claimproof}

	We are now in position to define the desired set of separations.
	For every $j \in [b]$ and every~$i \in [n_{j}],$ let \[Y^{j}_{i} \coloneqq (B^{j}_{i - 1} \cap B^{j}_{i}) \cup (B^{j}_{i} \cap B^{j}_{i + 1}) \cup \{ x^{j}_{i} \}.\]
	By the definition of a linear decomposition, it follows that $(B^{j}_{i}, V(G) \setminus (B^{j}_{i} \setminus Y^{j}_{i}))$ defines a separation of $G$ with $B^{j}_{i} \cap (V(G) \setminus (B^{j}_{i} \setminus Y^{j}_{i})) = Y^{j}_{i}.$
	We claim that $\{ (B^{j}_{i}, V(G) \setminus (B^{j}_{i} \setminus Y^{j}_{i})) \colon j \in [b], i \in [n_{j}] \}$ is the desired set of separations.
	We proceed to verify each of the desired properties in the order they are listed in the statement of the theorem.
	\begin{enumerate}
		\item Since $\Bb_{j}$ has adhesion at most $w,$ it follows that $|Y^{j}_{i}| \leq 2w + 1 \leq \sep_{\ref{local_structure_2}}(d, k)$, as desired.
		\item Let $W'$ be the base annulus of $W$ which is controlled by $X,$ since $W$ is controlled by $X.$ By the definition of a walloid, it follows that $W'$ is disjoint from $H.$ Since the order of $W'$ is larger than $\sep_{\ref{local_structure_2}}(d, k),$ the $W'$-majority side of $(B^{j}_{i}, V(G) \setminus (B^{j}_{i} \setminus Y^{j}_{i}))$ is well-defined and by our previous observation it is the side $V(G) \setminus (B^{j}_{i} \setminus Y^{j}_{i}).$ Since $W'$ is controlled by $X,$ it follows that $V(G) \setminus (B^{j}_{i} \setminus Y^{j}_{i})$ is the $X$-majority side of $(B^{j}_{i}, V(G) \setminus (B^{j}_{i} \setminus Y^{j}_{i}))$, as desired.
		\item For any two distinct $i, j \in [b],$ we have $H_{i} \cap H_{j} = \emptyset$ because the disks $\FDisk^\outsf_i$ and $\FDisk^\outsf_j$ are disjoint. Therefore, it suffices to check that for all $j \in [b]$ and two distinct $i, i' \in [n_{j}],$ we have $(B^{j}_{i} \setminus Y^{j}_{i}) \cap (B^{j}_{i'} \setminus Y^{j}_{i'}) = \emptyset.$ This follows directly by the definition of a linear decomposition.
	\end{enumerate}

	Finally, let $B \coloneqq V(G) \setminus (V(H) \setminus \bigcup_{j \in [b], i \in [n_{j}]} Y^{j}_{i}).$
	We also have to argue that every $S$-$T$-path in $G$ that avoids $\Ball^{d}_{G}(F)$ and intersects $B$ also intersects $\Ball^{d}_{H}(Y).$
	This follows directly from \cref{claim:local_3} and the definition of $B.$
	With this our proof is complete.
\end{proof}

\section{The Global Structure Theorem}\label{sec:global}

In this section we use the Local Structure Theorem (\cref{local_structure_2}) to construct a global tree decomposition of the graph whose bags exhibit the described local structure. This part of the reasoning is standard and conceptually dates back to the classic work of Robertson and Seymour in the Graph Minors series.

For the sake of modularity, we choose to capture the output of our Global Structure Theorem in the following definition.
Suppose $G$ is a graph.
A \emph{guarded tree-decomposition} of $G$ is a triple $(T,\beta,\gamma)$ such that
\begin{itemize}
	\item $(T,\beta)$ is a rooted tree-decomposition of $G$;
	\item $\gamma$ is a mapping that assigns each node $t\in V(T)$ its \emph{guard} $\gamma(t)$ so that $\gamma(t)\subseteq \beta(t)$ and if $t$ is not the root of $T$, then $\gamma(t)\supseteq \beta(t)\cap \beta(t')$, where $t'$ is the parent of $t$ in $T$.
\end{itemize}
The \emph{adhesion} of $(T,\beta,\gamma)$ is the adhesion of $(T,\beta)$, and the \emph{vigilance} of $(T,\beta,\gamma)$ is the maximum size of a guard, that is, $\max_{t\in V(T)} |\gamma(t)|$.
Now, if $d\in \N$ and $\Ff$ is a family of connected subgraphs of $G$, then we shall say that $(T,\beta,\gamma)$ \emph{distance-$d$ guards} $\Ff$ if the following condition holds: for every $H\in \Ff$ and node $t\in V(T)$, if $H$ intersects $\beta(t)$, then $H$ also intersects $\Ball_G^d(\gamma(t))$.
We will typically use this definition when $\Ff$ is the family of all $S$-$T$-paths in $G$, for some terminal sets $S,T\subseteq V(G)$; in this case we will simply say that $(T,\beta,\gamma)$ distance-$d$ guards all $S$-$T$-paths in $G$.

With this definition in place, our Global Structure Theorem reads as follows.

\begin{theorem}[Global Structure Theorem]\label{global_structure} There exist functions $\vig_{\ref{global_structure}},$ $\adh_{\ref{global_structure}} \colon \N^{3} \to \N$ such that for integers $d,k \geq 0$ and $q \leq 1$ the following holds.
	Suppose $G$ is a planar graph with a vertex subset $F \subseteq V(G)$ of size strictly less than $q$ and suppose $S, T \subseteq V(G).$
	Then, either
	\begin{itemize}
		\item $G$ contains a $d$-scattered $S$-$T$-linkage of order $k$ that avoids $\Ball^{\lfloor \nicefrac{d}{2} \rfloor}_{G}(F)$; or
		\item $G$ admits a guarded tree-decomposition $(T, \beta, \gamma)$ of adhesion at most $\adh_{\ref{global_structure}}(d, k, q)$ and vigilance at most $\vig_{\ref{global_structure}}(d, k, q)$ that distance-$d$ guards all $S$-$T$-paths in $G$ that avoid $\Ball^{d}_{G}(F).$
	\end{itemize}
	Moreover, it holds that
	$$\adh_{\ref{global_structure}}(d, k, q), \vig_{\ref{global_structure}}(d, k, q) \in d^{\Oh(1)} \cdot 2^{\Oh(\max\{ k, q \} \log \max \{ k, q \})}.$$
\end{theorem}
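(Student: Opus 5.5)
We prove the Global Structure Theorem by the standard Robertson--Seymour recursive construction, with the Local Structure Theorem (\cref{local_structure_2}) as the engine. Suppose $G$ has no $d$-scattered $S$-$T$-linkage of order $k$ avoiding $\Ball^{\lfloor d/2\rfloor}_G(F)$. Fix $\theta\coloneqq \link_{\ref{local_structure_2}}(d,k,q)$ and $s\coloneqq\sep_{\ref{local_structure_2}}(d,k,q)$. The stronger statement we prove by induction on $|V(G)|$ concerns a pair $(G,Z)$ with $Z\subseteq V(G)$ and $|Z|\le 3\theta+s$. It asserts that $G$ has a guarded tree-decomposition of adhesion at most $3\theta+s$ and vigilance at most $3\theta+s+\cov_{\ref{local_structure_2}}(d,k,q)$ that distance-$d$ guards the relevant $S$-$T$-paths. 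In addition, the root bag contains $Z$ and the root guard contains $Z$. The theorem is the case $Z=\emptyset$.

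The absence of the linkage is inherited by subgraphs only in a weakened sense, because distances in subgraphs can only grow. We therefore keep the ambient graph $G$ fixed throughout. We recurse on vertex sets $A_i$ together with the separator $A_i\cap B_i$, and we measure all balls in $G$. Alternatively, the Local Structure Theorem may be applied directly to $G$ with the well-linked set chosen inside the current part; its hypothesis is about $G$ and the separations are of $G$. Since $\Ball^d_{G[A]}(Y)\subseteq\Ball^d_G(Y)$, guarding with $G$-balls is fine. Paths meeting $\Ball^d_G(F)$ are excluded from $\Ff$, so $F$ only needs to be carried along.

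The induction step splits on the interface $Z$. If $|Z|<3\theta$, we add vertices to $Z$ until its size is $\min(|V|,3\theta)$ and recurse. If no separation of the current part $G'$ of order less than $\theta$ splits $Z$ in a balanced way, then $Z$ contains a well-linked set $X$ of size $\theta$. The standard argument gives this: a set of size at least $3\theta$ that is not $\theta$-linked, in the sense of balanced separations, yields the balanced separation case below. Otherwise we may assume some separation $(A,B)$ of order less than $\theta$ has $|A\cap Z|,|B\cap Z|$ both large. In that case we create a root node with bag $Z\cup(A\cap B)$ and guard equal to the whole bag, which has size at most $|Z|+\theta$, and recurse on $(G[A],(Z\cap A)\cup(A\cap B))$ and on the $B$ side. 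The new interfaces have size at most $|Z|-\theta+\theta$, which stays bounded, and the recursion progresses.

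In the well-linked case we apply \cref{local_structure_2} to $X$. The first outcome is excluded. The second outcome gives separations $(A_i,B_i)$ of order at most $s$ whose $X$-majority side is $B_i$, together with $Y$. We make a node with bag $\beta=B\cup Z$ and guard $\gamma=Z\cup Y$. A path of $\Ff$ meeting $\beta\setminus Z$ meets $B$, hence meets $\Ball^d_G(Y)$, and a path meeting $Z$ meets $\gamma$ trivially. For each $i$ we recurse on $G[A_i]$ with interface $(A_i\cap B_i)\cup(Z\cap A_i\setminus B_i)$. Because $B_i$ is the $X$-majority side and $X\subseteq Z$, the part $|Z\cap(A_i\setminus B_i)|$ is less than $s$ plus the small minority, so the new interface has size at most $2s<3\theta$. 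Since $B_i$ contains more than $2\theta$ vertices of $X$, each $A_i$ is a proper subgraph, which gives progress. The pieces $A_i\setminus B_i$ are disjoint, so gluing the child decompositions at their roots yields a valid tree-decomposition. The adhesions are the interfaces, and vigilance is bounded by $|Z|+\cov_{\ref{local_structure_2}}$. All bounds are of the form $d^{\Oh(1)}2^{\Oh(k'\log k')}$.

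The main obstacle is the bookkeeping of the majority side. We must ensure that the portion of $Z$ lying strictly inside $A_i$ is small, so that child interfaces do not grow, and we must do this with $\theta$ chosen compatibly with the bound $\sep_{\ref{local_structure_2}}<\theta$. We would try first to choose the thresholds $|Z|\le 3\theta$ and $s\le\theta$ and verify the arithmetic $|Z\cap(A_i\setminus B_i)|<\theta$ directly from the definition of $X$-majority.
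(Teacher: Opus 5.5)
Your architecture matches the paper's: an interface-based Robertson--Seymour recursion, with the ambient graph kept fixed and the Local Structure Theorem applied to $G$. However, two steps do not go through as written.

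\textbf{The well-linked case.} You first extract a well-linked $X\subseteq Z$ of size $\theta$ from the absence of balanced separations of order $<\theta$. This is asserted, not proved (your justification just restates the case split), and it is not the standard dichotomy.

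More seriously, once $X\subsetneq Z$, your bound on the child interfaces fails. The $X$-majority property together with well-linkedness only gives $|A_i\cap X|\le|A_i\cap B_i|\le s$. It says nothing about the up to $|Z|-|X|$ vertices of $Z\setminus X$ that may lie in $A_i\setminus B_i$. So ``at most $2s$'' is unjustified; also, ``$B_i$ contains more than $2\theta$ vertices of $X$'' is impossible when $|X|=\theta$. The correct count in your setting is only $|(A_i\cap B_i)\cup(Z\setminus B_i)|\le s+|Z|-|X\cap B_i|\le |Z|-|X|+2s$. That needs extra threshold tuning and still rests on the unproved extraction.

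The missing idea is to use the whole interface as the well-linked set. The paper keeps the interface at size exactly $3\ell+1$, with $\ell$ at least the link, separation and cover bounds of the Local Structure Theorem, and asks whether $Z$ itself is well-linked in $G$.
\begin{itemize}
\item If not, a witnessing separation $(A',B')$ of any order has $|Z\cap A'|,|Z\cap B'|>|A'\cap B'|$. Hence $|(A'\cap B')\cup(Z\cap A')|=|A'\cap B'|+|Z|-|Z\cap B'|<|Z|$, and symmetrically, so both corner separations are strictly smaller and you recurse on them.
\item If yes, apply the theorem with $X=Z$. Now the majority orientation gives $|Z\cap A_i|\le|A_i\cap B_i|\le \ell$ directly, so every child interface has size at most $2\ell$.
\end{itemize}

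\textbf{Restricting to the current part.} The separations $(A_i,B_i)$ and the set $Y$ are separations and vertices of the ambient $G$, but you must decompose only the current part. Vortex bags may lie in regions already handled elsewhere. So your bag $B\cup Z$ and your recursion on $G[A_i]$ reach outside the current part, breaking the tree-decomposition axioms, and the guard $Z\cup Y$ need not lie in a bag of the current part.

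The paper uncrosses with the current separation $(A,B)$:
\begin{itemize}
\item it uses $A_i'\coloneqq A_i\cap B$ and $B_i'\coloneqq B_i\cup A$, of order at most $2\ell$ by the bound above;
\item the root bag is $(A\cap B)\cup\bigl(B\setminus\bigcup_i(A_i'\setminus B_i')\bigr)$;
\item the root guard is $(Y\cap\bigcup_i A_i')\cup(A\cap B)$.
\end{itemize}
It then argues that a path meeting the root bag and avoiding $\Ball^d_G(A\cap B)$ lies in $G[B\setminus A]$. Such a path cannot be within distance $d$ of a vertex of $Y\setminus B$, since that short connection would have to cross $A\cap B$.

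Finally, padding $Z$ does not shrink the part. Your induction should therefore be on $|V(G')\setminus Z|$ (the paper's $|B\setminus A|$), not on the size of the part.
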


To facilitate induction, we instead prove the following slightly stronger result, which clearly implies \cref{global_structure} by considering the separation $(A,B)=(\emptyset,V(G))$.
The asymptotic bound on $\adh_{\ref{global_structure}}$ and $\vig_{\ref{global_structure}}$ follows from the definition of $\link_{\ref{global_structure_induction}}$ below.

	\begin{theorem}\label{global_structure_induction} There exists a function $\link_{\ref{global_structure_induction}} \colon \N^{3} \to \N$ such that for all integers $d,k \geq 0$ and $q \geq 1$ the following holds.
		Suppose $G$ is a planar graph with a vertex subset $F$ of size strictly less than $q,$ suppose $S, T \subseteq V(G),$ and suppose $(A, B)$ is a separation of $G$ of order at most $3 \cdot \link_{\ref{global_structure_induction}}(d, k, q) + 1.$
		Then, either
		\begin{itemize}
			\item $G$ contains a $d$-scattered $S$-$T$-linkage of order $k$ that avoids $\Ball^{\lfloor \nicefrac{d}{2} \rfloor}_{G}(F)$; or
			\item $G[B]$ admits a guarded tree-decomposition $(T,\beta,\gamma)$ of adhesion at most $6 \cdot \link_{\ref{global_structure_induction}}(d, k, q)- 1$ and vigilance at most $6 \cdot \link_{\ref{global_structure_induction}}(d, k, q) - 1$ such that if $r$ is the root of $T$, then $A \cap B\subseteq \gamma(r)$, and for every node $t$ of~$T$, every $S$-$T$-path in $G$ that avoids $\Ball^{d}_{G}(F)$ and intersects $\beta(t)$ also intersects $\Ball_G^d(\gamma(t)).$
		\end{itemize}
	\end{theorem}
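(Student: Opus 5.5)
We prove the statement by induction on $|B\setminus A|$, following the classical Robertson--Seymour scheme for turning a local structure theorem into a tree-decomposition. Set $\ell\coloneqq\link_{\ref{global_structure_induction}}(d,k,q)$, defined as $\link_{\ref{local_structure_2}}(d,k,q)$ enlarged so that $\ell\geq \sep_{\ref{local_structure_2}}(d,k,q)+1$ and $\ell$ dominates $\cov_{\ref{local_structure_2}}(d,k,q)$ up to a constant factor. Throughout we assume that $G$ has no $d$-scattered $S$-$T$-linkage of order $k$ avoiding $\Ball^{\lfloor d/2\rfloor}_G(F)$, and we aim for the second outcome.

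\textbf{Base case.} Suppose $|B|\leq 6\ell-1$. We take a single-node decomposition with $\beta(r)=\gamma(r)=B$. It trivially guards, since every path meeting $B$ meets $\gamma(r)$.

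\textbf{Enlarging a small adhesion.} Suppose next that $|A\cap B|<3\ell+1$ while $B\setminus A$ is large. We add an arbitrary vertex of $B\setminus A$ to $A$, giving a new separation $(A',B)$ with $A'=A\cup\{v\}$ and a strictly smaller $B\setminus A'$. We apply induction and then reuse the resulting decomposition. The guard of the root grows by one, which stays within the vigilance bound. So we may assume $|A\cap B|=3\ell+1$.

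\textbf{Case 1: $Z\coloneqq A\cap B$ is not well-linked in $G[B]$.} Then there is a separation $(C,D)$ of $G[B]$ of order less than $\ell$ with $|C\cap Z|,|D\cap Z|>|C\cap D|$. For each side we consider the separation of $G$ whose small side is $A\cup C$ (respectively $A\cup D$) and whose big side is $D$ (respectively $C$); its order is at most $|D\cap Z|+|C\cap D|\leq 3\ell+1$. We apply induction to both sides, which is valid because each big side loses a vertex of $Z$. We glue the two decompositions under a new root $r$ with $\beta(r)=\gamma(r)=Z\cup(C\cap D)$. This set has size at most $4\ell\leq 6\ell-1$, so guarding at $r$ is trivial, and the adhesions are at most $3\ell+1$.

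\textbf{Case 2: $Z$ is well-linked in $G[B]$.} We apply \cref{local_structure_2} to $G[B]$ with the well-linked set $Z$. Since $|Z|=3\ell+1$, we first shrink $Z$ to exactly $\link_{\ref{local_structure_2}}$ vertices, which keeps it well-linked. The first outcome would give a scattered linkage in $G[B]$, but distances in $G[B]$ are at least those in $G$, so scatteredness transfers only in the wrong direction. This is the main obstacle. To handle it, we apply \cref{local_structure_2} to $G$ itself, using $Z$ as a well-linked set in $G$; well-linkedness in $G[B]$ implies well-linkedness in $G$ for separations respecting $(A,B)$, and this point needs care. We then restrict the resulting separations to $B$.

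In the second outcome we obtain separations $(A_i,B_i)$ of order at most $\sep_{\ref{local_structure_2}}<\ell$, each with $Z$-majority side $B_i$, so $|A_i\cap Z|<\ell$. We also obtain a set $Y$ of size at most $\cov_{\ref{local_structure_2}}$. The root bag is $\beta(r)=B^\ast\cup Z$, where $B^\ast\coloneqq V(G)\setminus\bigcup_i(A_i\setminus B_i)$ intersected with $B$. The root guard is $\gamma(r)=Z\cup Y$. Guarding at $r$ follows from the last property of \cref{local_structure_2}, because a ball in $G[A]$ is contained in a ball in $G$ and the vertices of $Z$ guard themselves.

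For each $i$ with $A_i\setminus B_i$ nonempty, we apply induction to the separation of $G$ with big side $A_i\cap B$ and separator $(A_i\cap B_i)\cup(A_i\cap Z)$. This separator has size at most $2\ell\leq 3\ell+1$, and the big side is strictly smaller because $B_i\setminus A_i$ contains most of $Z$. We attach each child decomposition below $r$. The adhesion is then at most $2\ell$, but the root bag may be large, so bounding the vigilance is the important quantity here. The guard condition is inherited at the children, and at the root it holds by construction since $\gamma(r)$ has size at most $3\ell+1+\cov\leq 6\ell-1$.
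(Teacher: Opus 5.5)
Your overall scheme matches the paper's proof. You use the same induction with the two principal cases, split along a witness of non-well-linkedness, and otherwise apply the Local Structure Theorem to $G$ itself (not $G[B]$, correctly) and recurse into the corner separations $(B_i\cup A, A_i\cap B)$ below a new root. The transfer of well-linkedness from $G[B]$ to $G$ that you flag as needing care is immediate: restrict any separation of $G$ to $B$.

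\textbf{Gap: the Case 2 root guard.} Your root guard $\gamma(r)=Z\cup Y$ is not admissible. $Y$ comes from the Local Structure Theorem applied to all of $G$, so it may contain vertices of $A\setminus B$. But a guarded tree-decomposition of $G[B]$ needs $\gamma(r)\subseteq\beta(r)\subseteq B$. This is not a corner case: if every $A_i$ lies in $A\setminus B$ (the paper's subcase ``$X'$ is empty''), then $Y\subseteq A\setminus B$ entirely.

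The missing step, which the paper supplies at the end of its Case 2, is to take $\gamma(r)\coloneqq Z\cup(Y\cap B)$ and argue as follows. Suppose an $S$-$T$-path $P$ avoiding $\Ball^d_G(F)$ meets $\beta(r)$ but not $\Ball^d_G(Z)$. Then $P$ is connected, meets $B$, and avoids $Z=A\cap B$, so $P\subseteq G[B\setminus A]$. Any path of length at most $d$ from a vertex of $Y\setminus B$ to $P$ must therefore pass through $Z$, contradicting $\dist_G(P,Z)>d$. Hence the vertex of $Y$ witnessing the hit lies in $Y\cap B$.

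\textbf{Two justifications that are wrong as written (the conclusions survive).}

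First, in Case 1 nothing gives a violating separation of order less than $\ell$. Non-well-linkedness only yields $|C\cap D|<\min\{|C\cap Z|,|D\cap Z|\}$, which permits order up to $|Z|-2=3\ell-1$, and then your inequality $|D\cap Z|+|C\cap D|\leq 3\ell+1$ fails. The correct counts are:
\begin{itemize}
\item for each child, $|(Z\cap D)\cup(C\cap D)|=|C\cap D|+|Z\setminus C|<|Z|$;
\item for the root, $|Z\cup(C\cap D)|\leq 2|Z|-2-|C\cap D|$, which is at most $6\ell-1$ (when $C\cap D=\emptyset$ it is just $|Z|=3\ell+1$).
\end{itemize}

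Second, your progress arguments (``the big side misses a vertex of $Z$'') show that $|B|$ decreases in Cases 1 and 2, not $|B\setminus A|$, since vertices of $Z$ lie in $A$. The enlargement step, by contrast, decreases $|B\setminus A|$ but not $|B|$. So either induct lexicographically on $(|B|,|B\setminus A|)$, or argue as the paper does: the child separations have order less than $3\ell+1$, so they are handled by the base case or the enlargement step, which strictly decreases $|B\setminus A|$.
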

	\begin{proof} We first define the function $\link_{\ref{global_structure_induction}}$ as follows:
		\begin{align*}
			\link_{\ref{global_structure_induction}}(d, k, q) \coloneqq~&\max\{ \mathsf{link}_{\ref{local_structure_2}}(d, k, q), \sep_{\ref{local_structure_2}}(d, k, q), \cov_{\ref{local_structure_2}}(d, k, q) \}\\
			\in~&d^{\Oh(1)} \cdot 2^{\Oh(\max \{ k, q \} \log \max \{ k, q \})}.
		\end{align*}

		Let us assume that $G$ contains no $d$-scattered $S$-$T$-linkage of order $k$ that avoids $\Ball^{\lfloor \nicefrac{d}{2} \rfloor}_{G}(F).$
		We proceed by induction on $|B \setminus A|$ and start by discussing two principal cases.

		\medskip\noindent
		\textbf{Principal case 1:} Assume that $|B| < 3 \cdot \link_{\ref{local_structure_2}}(d, k, q) + 1.$
		Then, we can define $T$ to consist of a single root node $r$ and set $\gamma(r)\coloneqq \beta(r) \coloneqq B$.
		Therefore, from now on we may  assume that $|B| \geq 3 \cdot \link_{\ref{local_structure_2}}(d, k, q) + 1.$

		\medskip\noindent
		\textbf{Principal case 2:} Assume that $|A \cap B| < 3 \cdot \link_{\ref{local_structure_2}}(d, k, q) + 1.$
		In this case we can choose any vertex $u \in B \setminus A$ (which exists since we assume $B \geq 3 \cdot \link_{\ref{local_structure_2}}(d, k, q) + 1$) and set $A' \coloneqq A \cup \{ u \},$ thereby achieving $|B \setminus A'| < |B \setminus A|.$
		Clearly $(A', B)$ is a separation of $G$ and hence we conclude by induction.
		Therefore, from now on we may also assume that $|A \cap B| = 3 \cdot \link_{\ref{local_structure_2}}(d, k, q) + 1.$

		\medskip
		Now we are in one of two cases.
		Either $A \cap B$ is well-linked in $G,$ or otherwise, there exists a separation of $G$ witnessing that this is not the case.
		We treat these two cases separately.

		\smallskip\noindent
		\textbf{Case 1:} There exists a separation $(A', B')$ of $G$ witnessing that $A \cap B$ is not well-linked in $G.$

		By the definition of well-linkedness, we have that
		\begin{equation}\label{eq:antiwelllinked}
			|(A \cap B) \cap A'| > |A' \cap B'|\quad \textrm{and} \quad|(A \cap B) \cap B'| > |A' \cap B'|.
		\end{equation}
		We get
		\begin{align*}
			&|(A' \cap B') \cup (A \cap B \cap A')| & \text{partitions into $A' \cap B'$ and $A \cap B \cap A' \setminus B'$}\\
			=~~&|A' \cap B'| + |A \cap B \cap (A' \setminus B')| & \text{$A \cap B \cap (A' \setminus B')$ and $A \cap B \cap B'$ partition $A \cap B$} \\
			=~~&|A' \cap B'| + |A \cap B| - |A \cap B \cap B'| & \text{due to \eqref{eq:antiwelllinked}} \\
			<~~&|A \cap B|.
		\end{align*}
		Taking the above and applying it again with $A'$ and $B'$ swapped we get
		\begin{equation}\label{eq:splitsidesaresmall}
			|(A' \cap B') \cup (A \cap B \cap A')| < |A \cap B| \quad\text{and}\quad |(A' \cap B') \cup (A \cap B \cap B')| < |A \cap B|.
		\end{equation}
		Based on this fact, in the claim that follows we argue that both corner separations $(A \cup B', B \cap A')$ and $(A \cup A', B \cap B')$ have order less than $|A \cap B|.$

		\begin{claim} $|(A \cup B') \cap (B \cap A')| < |A \cap B|$ and $|(A \cup A') \cap (B \cap B')| < |A \cap B|.$
		\end{claim}
		\begin{claimproof}
			We observe that
			\[
				(A' \cap B') \cup (A \cap B \cap A') \supseteq (B \cap A' \cap B') \cup (A \cap B \cap A') = (B \cap A') \cap (A \cup B'),
			\]
			and by substituting in \eqref{eq:splitsidesaresmall} we get $|(B \cap A') \cap (A \cup B')| < |A \cap B|$.
			Similarly, we have
			\[
				(A' \cap B') \cup (A \cap B \cap B') \supseteq (B \cap A' \cap B') \cup (A \cap B \cap B') = (A' \cup A) \cap (B \cap B'),
			\]
			and by substituting in \eqref{eq:splitsidesaresmall} we get $|(A \cup A') \cap (B \cap B')| < |A \cap B|$.
		\end{claimproof}

		Now $G$ along with $(A \cup B') \cap (B \cap B')$ and $G$ along with $(A \cup A') \cap (B \cap B')$ satisfy one of the two principal cases.
		Therefore by induction, we obtain a guarded tree-decomposition $(T_{1}, \beta_{1},\gamma_1)$ for $G[B \cap A']$ and a guarded tree-decomposition $(T_{2}, \beta_{2},\gamma_2)$ for $G[B \cap B'],$ both satisfying the second outcome of the theorem.
		We may now obtain a suitable guarded tree-decomposition $(T, \beta,\gamma)$ of $G[B]$ by taking the union of $T_{1}$ and $T_{2},$ introducing a new root node $r,$ making the roots of $T_1$ and $T_2$ children of $r$, setting $\beta(t) \coloneqq \beta_{1}(t)$ and $\gamma(t)\coloneqq \gamma_1(t)$ for every $t \in V(T_{1})$ and $\beta(t) \coloneqq \beta_{2}(t)$, $\gamma(t)\coloneqq \gamma_2(t)$ for every $t \in V(T_{2}),$ and finally setting $\gamma(r)\coloneqq \beta(r) \coloneqq (A \cap B) \cup (A' \cap B' \cap B).$
		Notice that $(T,\beta,\gamma)$ is indeed a guarded tree-decomposition of~$G[B].$
		In particular, by the definition of well-linkedness we have that $|A' \cap B'| \leq |A \cap B| - 2.$
		Therefore, $|\gamma(r)|=|\beta(r)| \leq |A \cap B| + |A \cap B'| \leq 6 \cdot \link_{\ref{local_structure_2}}(d, k, q) - 1,$ as claimed.

		\smallskip\noindent
		\textbf{Case 2:} $A \cap B$ is well-linked in $G.$

		Since $|A \cap B| > \mathsf{link}_{\ref{local_structure_2}}(d, k),$ we can apply \cref{local_structure_2} with $G,$ $F,$ and $A \cap B$ and obtain as a result a non-empty set $\{ (A_{i}, B_{i}) \mid i \in [n] \}$ of separations of $G,$ for some $n \in \N_{\geq 1},$ such that
		\begin{itemize}
			\item for every $i \in [n],$ $|A_{i} \cap B_{i}| \leq \sep_{\ref{local_structure_2}}(d, k, q) \leq \link_{\ref{global_structure_induction}}(d, k, q)$;
			\item for every $i \in [n],$ $B_{i}$ is the $(A \cap B)$-majority side of $(A_{i}, B_{i})$; and
			\item for all distinct $i,j \in [n],$ we have $(A_{i} \setminus B_{i}) \cap (A_{j} \setminus B_{j}) = \emptyset.$
		\end{itemize}
		Further, assuming that $$X \coloneqq \bigcup_{i \in [n]} A_{i} \quad\text{and}\quad Y \coloneqq V(G) \setminus \bigcup_{i \in [n]} (A_{i} \setminus B_{i}),$$ there exists a set $Z \subseteq X \cap Y$ of size at most $\cov_{\ref{local_structure_2}}(d, k, q) \leq \link_{\ref{global_structure_induction}}(d, k, q)$ such that every $S$-$T$-path in $G$ that avoids $\Ball^{d}_{G}(F)$ and intersects $Y$ also intersects $\Ball^{d}_{G[X]}(Z).$
		
		Our goal here is only to decompose $G[B].$
		Therefore, we first replace each separation $(A_{i}, B_{i})$ with the corner separation of $(A, B)$ and $(A_{i}, B_{i})$ whose ``small'' side (with respect to the well-linked set $A \cap B$) is fully contained in $B.$
		For every $i \in [n],$ we define \[A'_{i} \coloneqq A_{i} \cap B\quad \textrm{and}\quad B'_{i} \coloneqq B_{i} \cup A.\]
		Clearly $(A'_{i}, B'_{i})$ is a separation of $G.$
		In the claim below we argue that the order of $(A'_{i}, B'_{i})$ stays small.

		\begin{claim}\label{claim:global_1} For every $i \in [n],$ we have $|A'_{i} \cap B'_{i}| \leq 2 \cdot \link_{\ref{global_structure_induction}}(d, k, q).$
		\end{claim}
		\begin{claimproof} Since $B$ is the $(A \cap B)$-majority side of $(A_{i}, B_{i})$, it follows that \[|A_{i} \cap (A \cap B)| \leq |A_{i} \cap B_{i}| \leq \link_{\ref{global_structure_induction}}(d, k, q).\]
			Therefore we have that $$|(A_{i} \cap (A \cap B)) \cup (A_{i} \cap B_{i})| \leq 2 \cdot \link_{\ref{global_structure_induction}}(d, k, q).$$
			Now we have the following inclusions:
			\begin{align*}
				(A_{i} \cap (A \cap B)) \cup (A_{i} \cap B_{i}) \supseteq (A_{i} \cap (A \cap B)) \cup (A_{i} \cap B_{i} \cap B) &=\\
				(A_{i} \cap B) \cap (B_{i} \cup A) &= A'_{i}\cap B'_{i}.
			\end{align*}
			Therefore, we deduce that $|A'_{i} \cap B'_{i}| \leq 2 \cdot \link_{\ref{global_structure_induction}}(d, k, q).$
		\end{claimproof}

		Let $$X' \coloneqq \bigcup_{i \in [n]} A'_{i}, \quad Y' \coloneqq B \setminus \bigcup_{i \in [n]} (A'_{i} \setminus B'_{i}), \quad\textrm{and}\quad Z' \coloneqq (Z \cap X') \cup (A \cap B).$$
		We proceed to distinguish two cases.

		\medskip\noindent
		\textbf{$X'$ is empty:} In this case, we claim that we may conclude with a trivial guarded tree-decomposition $(T, \beta,\gamma)$ consisting of a single root node $r$ with bag $\beta(r) \coloneqq B$ and guard $\gamma(r)\coloneqq A \cap B$, which is of size $3 \cdot \link_{\ref{global_structure_induction}}(d, k, q) + 1$ and therefore within the claimed bounds.

		To see that this is the case, observe that by the definition of $X',$ if $X' = \emptyset,$ then for every $i \in [n]$ we have $A_{i} \subseteq A \setminus B.$
		However, this means that $Z \subseteq A \setminus B$ as well.

		Now, if there is an $S$-$T$-path, say $P,$ that avoids $\Ball^{d}_{G}(F)$ and intersects $B,$ then since $B \subseteq Y,$ it must be that $P$ is at distance at most $d$ from some vertex $z \in Z,$ which is a vertex of $A \setminus B.$
		Since $A \cap B$ separates $z$ from any vertex of~$B,$ it must be that $\Ball^{d}_{G}(A \cap B)$ also intersects $P,$ as desired.

		\medskip\noindent
		\textbf{$X'$ is non-empty:} In this case, without loss of generality we may assume that for every $i \in [n],$ $A'_{i} \neq \emptyset,$ as otherwise we may simply discard any separations with $A'_i=\emptyset$ as there is nothing to decompose further.

		Now, \cref{claim:global_1} implies that $G$ along with each $(A'_{i}, B'_{i})$ satisfy one of the two principal cases.
		Therefore by induction, we obtain a guarded tree-decomposition $(T_{i}, \beta_{i},\gamma_i)$ of $G[A'_{i}] = G[A_{i} \cap B]$ satisfying the second outcome of the theorem.
		We may now obtain a suitable guarded tree-decomposition $(T, \beta,\gamma)$ of $G[B]$ by taking the union of all $T_{i},$ introducing a new root node $r,$ making $r$ the parent of the root of each~$T_i$, setting $\beta(t) \coloneqq \beta_{i}(t)$ and $\gamma(t)\coloneqq \gamma_i(t)$ for every $t \in V(T_{i}),$ and finally setting $\beta(r) \coloneqq (A \cap B) \cup Y'$ and~$\gamma(r)\coloneqq Z'$.

		It follows directly from the construction that $(T, \beta,\gamma)$ is indeed a guarded tree-decomposition of $G[B]$ and $\gamma(r)\supseteq A\cap B$ as required.
		In particular, \cref{claim:global_1}, the fact that $|A \cap B| = 3 \cdot \link_{\ref{global_structure_induction}}(d, k) + 1,$ and the inductive assumptions on each $(T_{i}, \beta_{i},\gamma_i),$ imply that the adhesion of $(T, \beta,\gamma)$ is at most $6 \cdot \link_{\ref{global_structure_induction}}(d, k, q) - 1,$ which satisfies the claimed bound. Also, since $|Z'| \leq |Z| + |A \cap B| \leq 4 \cdot \link_{\ref{global_structure_induction}}(d, k, q) + 1,$ the vigilance of $(T, \beta,\gamma)$ is within the claimed bound.

		What remains to show is that $\gamma(r)=Z'$ satisfies the desired condition that $\Ball^d_G(\gamma(r))$ intersects all $S$-$T$-paths that avoid $\Ball^{d}_{G}(F)$ and intersect $\beta(r)$. We verify this in the claim below.

		\begin{claim} There is no $S$-$T$-path in $G$ that avoids $\Ball^{d}_{G}(Z),$ that would avoid $\Ball^{d}_{G}(Z')$ and intersect $\beta(r).$
		\end{claim}
		\begin{claimproof} Assume towards contradiction that there exists such an $S$-$T$-path in $G,$ say $P.$
			Since $P$ intersects $\beta(r)$ and $A \cap B \subseteq \gamma(r),$ it follows that $P$ is a path in $G[B]$ that avoids $\Ball^{d}_{G}(A \cap B).$
			However, we know that $\Ball^{d}_{G}(Z)$ must intersect $P$ and since every vertex of $Z$ that is a vertex of $B$ actually belongs to $\gamma(r),$ it must be that $\Ball^{d}_{G}(u)$ intersects $P,$ for some vertex $u \in Z \setminus B.$
			But this is clearly a contradiction since $A \cap B$ separates $u$ from $P$ in $G,$ and $\Ball^{d}_{G}(A \cap B)$ does not intersect $P.$
		\end{claimproof}

		With this our proof concludes.
	\end{proof}

\section{Weak Coarse Menger's Conjecture in planar graphs}\label{sec:planar}

We now have all ingredients necessary to prove the Weak Coarse Menger's Conjecture in planar graphs.
The proof will be a standard greedy bottom-up procedure employed on the tree-decomposition provided by \cref{global_structure}. In fact, we prove a more general statement that applies to any family of connected subgraphs.

\begin{theorem}\label{thm:tree-EP}
	Suppose $G$ is a graph, $\Hh$ is a family of connected subgraphs of $G$, and $(T,\beta,\gamma)$ is a guarded tree-decomposition of $G$ of vigilance $\ell$ that distance-$d$ guards $\Hh$, for some $\ell,d\in \N$. Then for any $k\in \N$, there exists one of the following objects:
	\begin{itemize}
		\item a subfamily $\Hh'\subseteq \Hh$ of size $k$ that is $d$-scattered in $G$; or
		\item a set $X\subseteq V(G)$ of size at most $\ell(k-1)$ such that $\Ball_G^d(X)$ intersects every member of $\Hh$. 
	\end{itemize}
\end{theorem}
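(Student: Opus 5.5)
We argue by induction on $k$, using the standard greedy bottom-up procedure on the rooted tree $T$. For $k=0$ the empty family is $d$-scattered, and for $k=1$ either $\Hh$ is empty (take $X=\emptyset$) or any single member works. In general, if $\Hh=\emptyset$ we output $X=\emptyset$. Otherwise, for each $H\in\Hh$ let $t_H$ be the node of $T$ closest to the root among those whose bag meets $V(H)$. Since $H$ is connected, the nodes whose bags meet $H$ form a connected subtree of $T$, so $t_H$ is well defined. We choose $H_0\in\Hh$ whose node $t\coloneqq t_{H_0}$ is as deep as possible, i.e.\ no $H\in\Hh$ has $t_H$ a strict descendant of $t$.

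\textbf{Key step.} Let $T_t$ be the subtree of $T$ rooted at $t$ and $V_t\coloneqq\bigcup_{s\in V(T_t)}\beta(s)$. We set
\[\Hh_1\coloneqq\{H\in\Hh \mid H \text{ avoids } \Ball^d_G(\gamma(t))\}.\]
We claim that every $H\in\Hh_1$ is at distance more than $d$ from $V_t\setminus\gamma(t)$, and in particular more than $d$ from $H_0$. The argument runs as follows.
\begin{itemize}
\item If $H$ intersected $\beta(t)$, then the guarding property would force $H$ to meet $\Ball^d_G(\gamma(t))$, which is excluded. So $H$ misses $\beta(t)$.
\item If $H$ intersected $V_t$, then, being connected and missing $\beta(t)$, $H$ would lie entirely in $V_t\setminus\beta(t)$. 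Then $t_H$ would be a strict descendant of $t$, contradicting the choice of $t$. Hence $H$ misses $V_t$.
\item If $t$ is not the root, the separator between $V_t$ and the rest of $G$ is $\beta(t)\cap\beta(t')\subseteq\gamma(t)$, where $t'$ is the parent of $t$. If $t$ is the root, then $V_t=V(G)$ and $\Hh_1=\emptyset$.
\end{itemize}
Now take a path of length at most $d$ from $V(H_0)\setminus\gamma(t)$ to $H$. It starts in $V_t$ and ends outside $V_t$, so it must pass through $\gamma(t)$. But then $H$ lies within distance $d$ of $\gamma(t)$, a contradiction. If $H_0$ meets $\gamma(t)$ itself, then $H_0$ is at distance $0$ from $\gamma(t)$ and the same conclusion holds directly from the definition of $\Hh_1$. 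Either way, $\dist_G(H_0,H)>d$ for all $H\in\Hh_1$.

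\textbf{Recursion.} We apply induction with $k-1$ to $\Hh_1$, using the same decomposition; it still distance-$d$ guards the subfamily $\Hh_1$.
\begin{itemize}
\item If we obtain a $d$-scattered subfamily $\Hh'\subseteq\Hh_1$ of size $k-1$, then $\Hh'\cup\{H_0\}$ is $d$-scattered of size $k$, by the claim above.
\item Otherwise we obtain $X_1$ with $|X_1|\le\ell(k-2)$ such that $\Ball^d_G(X_1)$ hits every member of $\Hh_1$. Setting $X\coloneqq X_1\cup\gamma(t)$ gives $|X|\le\ell(k-1)$, and $\Ball^d_G(X)$ hits every member of $\Hh$: those in $\Hh_1$ via $X_1$, and all others via $\gamma(t)$ by definition.
\end{itemize}

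The main delicate point is the separation claim. It requires checking carefully that the vertices of $V_t$ outside $\gamma(t)$ are separated from $V(G)\setminus V_t$ by the adhesion to the parent, which is contained in $\gamma(t)$. This is what makes the deepest choice of $t$ essential.
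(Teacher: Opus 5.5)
Your proof is correct and is essentially the paper's greedy bottom-up argument: choosing $H_0$ whose topmost node $t$ is as deep as possible, charging the guard $\gamma(t)$, and discarding every member within distance $d$ of $\gamma(t)$ is exactly what the paper does at a node $t$ when $G[\beta(T^{\belsf}_t)]$ minus the accumulated balls still contains a member of $\Hh$. The only difference is bookkeeping: you induct on $k$ with a global deepest-node choice, whereas the paper runs a structural induction over subtrees (processing children one at a time with a growing forbidden set $A$) and produces a scattered subfamily together with a set of exactly $|\Hh'|$ guarded nodes in one pass.
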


In our case, we will take $\Hh$ to be the family of all $S$-$T$-paths in $G$, for given terminal sets $S,T\subseteq V(G)$.
Towards the proof of \cref{thm:tree-EP}, we argue the following technical statement.

\begin{lemma}\label{lemma:packing_hitting_dec} Let $d\in \N$.
    Suppose $G$ is a graph, $\Hh$ is a family of connected subgraphs of $G$, and $(T, \beta,\gamma)$ is a guarded tree-decomposition of $G$ that distance-$d$ guards $\Hh$.
    Then, there exist the following two~objects:
    \begin{itemize}
        \item a subfamily $\Hh'\subseteq \Hh$ that is $d$-scattered in $G$, and
        \item a set $N \subseteq V(T)$ with $|N| = |\Hh'|$ such that every member of $\Hh$ intersects the set
        $$\bigcup_{t \in N} \Ball^{d}_{G}(\gamma(t)).$$
    \end{itemize}
\end{lemma}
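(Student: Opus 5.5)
We prove \cref{lemma:packing_hitting_dec} by induction on $|V(T)|$ (equivalently, on $|\Hh|$ together with the size of the tree), using the standard greedy bottom-up procedure. If $\Hh$ is empty, we take $\Hh'=N=\emptyset$. Otherwise, for every $H\in\Hh$ let $t_H$ be the node of $T$ closest to the root whose bag intersects $H$. Since $H$ is connected, the set of nodes whose bags meet $H$ induces a connected subtree of $T$, and $t_H$ is its unique topmost node. Choose $H_0\in\Hh$ for which $t_0\coloneqq t_{H_0}$ is as deep as possible, meaning no other $t_H$ is a proper descendant of $t_0$. Put $t_0$ into $N$ and $H_0$ into $\Hh'$.

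Next, let $T_0$ be the subtree of $T$ rooted at $t_0$, and let $\Hh_1$ be the members of $\Hh$ that avoid $\Ball^d_G(\gamma(t_0))$. We make two observations.
\begin{itemize}
\item Every $H\in\Hh_1$ avoids all bags of $T_0$. If $H$ met some bag in $T_0$, then either $t_H$ lies in $T_0$, or $H$ meets $\beta(t_0)$ by connectivity of the subtree of nodes meeting $H$. Maximal depth of $t_0$ forces $t_H=t_0$ in the first case, so in both cases $H$ meets $\beta(t_0)$. The guarding property then says $H$ meets $\Ball^d_G(\gamma(t_0))$, contradicting $H\in\Hh_1$.
\item Every $H\in\Hh_1$ is at distance more than $d$ from $H_0$. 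The graph $H_0$ lives in $G_0\coloneqq G[\bigcup_{t\in V(T_0)}\beta(t)]$, because its topmost node is $t_0$. By the first observation, $H$ lives in $G-(V(G_0)\setminus\beta(t_{0}')\cap\beta(t_0))$, where $t_0'$ is the parent of $t_0$. So any path from $H_0$ to $H$ must pass through the adhesion $\beta(t_0)\cap\beta(t_0')\subseteq\gamma(t_0)$. If such a path had length at most $d$, its final segment from a guard vertex to $H$ would have length at most $d$, so $H$ would meet $\Ball^d_G(\gamma(t_0))$, a contradiction. If $t_0$ is the root, then $\Hh_1=\emptyset$ and there is nothing to check.
\end{itemize}

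To recurse, delete the subtree $T_0$: restrict to $T'\coloneqq T-V(T_0)$, the graph $G'\coloneqq G[\bigcup_{t\in V(T')}\beta(t)]$, and the family $\Hh_1$. The family $\Hh_1$ consists of connected subgraphs of $G'$, and $(T',\beta,\gamma)$ is a guarded tree-decomposition of $G'$ that distance-$d$ guards $\Hh_1$.

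One subtlety is that distances should be measured in $G$, not $G'$. To handle this, I would phrase the induction hypothesis for the fixed ambient graph $G$. The hypothesis concerns a subtree of $T$ closed under taking parents, together with a family of connected subgraphs each contained in the union of that subtree's bags, where the guarding property is measured with balls in $G$. The argument above goes through verbatim in this form, since guard balls and scatteredness are both taken in $G$.

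Induction then yields $\Hh_1'$ and $N_1$. We set $\Hh'\coloneqq\Hh_1'\cup\{H_0\}$ and $N\coloneqq N_1\cup\{t_0\}$. Members of $\Hh\setminus\Hh_1$ are hit by $\Ball^d_G(\gamma(t_0))$, and members of $\Hh_1$ are hit by the balls indexed by $N_1$. The family $\Hh'$ is $d$-scattered by the second observation. The sizes match because each step adds exactly one node and one subgraph.

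The main point needing care is the second observation, namely that every path from $H_0$ to $H$ must pass through $\gamma(t_0)$. This uses the tree-decomposition separation property together with the guard containing the parent adhesion. The observation also needs to correctly handle vertices of $H$ lying in $\beta(t_0)\cap\beta(t_0')$; this case is excluded because such a vertex would already meet the guard ball.
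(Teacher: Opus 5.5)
Your proof is correct and is essentially the paper's argument. The paper runs the same greedy bottom-up selection, organized as a recursion over the children of each node that accumulates a forbidden set $A$ of guard balls, whereas you repeatedly pick a deepest topmost node $t_0$ and delete its subtree. In both versions the decisive step is the same: the parent adhesion $\beta(t_0)\cap\beta(t_0')$ lies in $\gamma(t_0)$, so every member avoiding $\Ball^d_G(\gamma(t_0))$ is at distance more than $d$ from $H_0$.

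Your reformulation over parent-closed subtrees, with distances measured in the ambient graph $G$, also works. It needs one small check that you did not spell out: a member avoiding the bags of the remaining subtree below $t_0$ must also avoid every bag of the full subtree of $T$ below $t_0$. This follows from the connectivity of the set of nodes whose bags contain a given vertex.
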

\begin{proof} 
	Let $r$ be the root of $T$.
	For every non-root node $t$ of $T$, say with parent $t'$, by $T^{\abvsf}_{t}$ and $T^{\belsf}_{t}$ we denote the components of $T - tt'$ that contains $t'$ and $t$, respectively.
    We also make the convention that $T^{\abvsf}_{r} = \emptyset$ and $T^{\belsf}_{r} = T$.
    Moreover, for every subtree $S$ of $T$ we define $$\beta(S) \coloneqq \bigcup_{t \in V(S)} \beta(t).$$
    Our goal is to prove the following stronger claim by induction over the size of $T^{\belsf}_{t}.$

    \medskip\noindent
    \textbf{Inductive claim:} Given a vertex subset $A \subseteq V(G)$ and a node $t \in V(T),$ there exist the following objects:
    \begin{enumerate}
        \item a subfamily $\Hh_t\subseteq \Hh$ that is $d$-scattered in $G$ and such that $H\subseteq G[\beta(T^{\belsf}_{t})] - A$ for each $H\in \Hh_t$;~and
        \item a set $N_{t} \subseteq V(T^{\belsf}_{t})$ with $|N_{t}| = |\Hh_{t}|$ such that 
        \begin{enumerate} 
        	\item for every $H\in \Hh_t$, there exists $t' \in N_{t}$ satisfying $V(H) \subseteq \beta(T^{\belsf}_{t'})$, and 
        	\item every $H\in \Hh$ that is entirely contained in $G[\beta(T^{\belsf}_{t})]\setminus A$ intersects $\bigcup_{t' \in N_{t}} \Ball^{d}_{G}(\gamma(t')).$
    	\end{enumerate}
	\end{enumerate}

    \medskip\noindent
    \textbf{Base:} Assume that $t \in V(T)$ is a leaf node of $T.$
    In case that $G[\beta(t)] - A$ does not contain any subgraph of $\Hh$, there is nothing to show and we may conclude with $\Hh_{t} = N_{t} = \emptyset.$
    Otherwise, set $\Hh_{t} \coloneqq \{ H \},$ where $H$ is an arbitrarily chosen subgraph of $\Hh$ contained in $G[\beta(t)] - A,$ and set $N_{t} \coloneqq \{ t \}.$
    By the assumptions on $(T, \beta,\gamma)$ the base case follows.

    \medskip\noindent
    \textbf{Inductive step:} For the inductive step we may assume that $t \in V(T)$ is a non-leaf node of $T.$
    Suppose that $(t_{1}, \ldots, t_{n})$ is an enumeration of the children of $t$.

    We first describe a procedure that for every $i \in [n]$, constructs a subfamily $\Hh_{i}\subseteq \Hh$ and a set $N_{i} \subseteq V(T^{\belsf}_{t_{i}})$ that satisfy the inductive claim for $t_i$ and some carefully chosen $A_{i}$.
    The construction of the sets $A_{i},$ $i \in [n],$ will guarantee that the union of all the families $\Hh_{i},$ $i \in [n],$ remains $d$-scattered in $G.$

    Suppose for some index $i\in [n],$ we have already constructed $A_{j}, \Hh_{j},$ and $N_{j},$ for all $j\in [i-1]$
    The next step is executed as follows. First, we set
    \[A_i\coloneqq A\cup \bigcup_{j\in [i-1]}\bigcup_{t'\in N_j} \Ball_G^d(\gamma(t')).\]
    Next, noting that $T^{\belsf}_{t_{i}} \subsetneq T^{\belsf}_{t},$ we apply the inductive claim to $t_{i}$ and $A_{i},$ thus obtaining a subfamily $\Hh_i\subseteq \Hh$ and a set $N_i\subseteq V(T^{\belsf}_{t_{i}})$ such that
    \begin{itemize}
    	\item every member of $\Hh_i$ is entirely contained in $G[\beta(T^{\belsf}_{t'})]\setminus A_i$ for some $t'\in N_i$; and
    	\item $|N_i|=|\Hh_i|$; and
    	\item every member of $\Hh$ that is entirely contained in $G[\beta(T^{\belsf}_{t_i})]\setminus A_i$ intersects $\bigcup_{t'\in N_i} \Ball^d_G(\gamma(t'))$.
    \end{itemize}
	By applying this procedure for consecutive $i=1,2,\ldots,n$ we eventually construct sets $A_i$, $\Hh_i$, and $N_i$ for all $i\in [n]$.
    Finally, we set $$\Hh'_{t} \coloneqq \bigcup_{i \in [n]} \Hh_{i},\quad N'_{t} \coloneqq \bigcup_{i \in [n]} N_{i}, \quad\text{and}\quad A_{n+1}\coloneqq A\cup \bigcup_{t'\in N_t'} \Ball_G^d(\gamma(t')).$$
    Let us verify the properties of this construction.

    \begin{claim}\label{claim:aux_1} For any two indices $i,j \in [n],$ with $i<j$, if $H_{i} \in \Hh_{i}$ and $H_{j} \in \Hh_{j},$ then every path in $G$ connecting a vertex of $H_{i}$ and a vertex of $H_{j}$ has length more than $d.$
        Moreover, $$|\Hh'_{t}| = \sum_{i \in [n]} |\Hh_{i}| \quad\text{and}\quad |N'_{t}| = |\Hh'_{t}|.$$
    \end{claim}
    \begin{claimproof} To see this, let $t' \in N_{t_{i}}$ be the node of $T_{t_{i}}$ such that $H_{i} \subseteq G[\beta(T^{\belsf}_{t'})]$, which is guaranteed to exist by induction.
    Now, by the definition of a tree-decomposition, we have that $(\beta(T^{\belsf}_{t'}), \beta(T^{\abvsf}_{t'}))$ is a separation of $G$ with $\beta(T^{\belsf}_{t'}) \cap \beta(T^{\abvsf}_{t'}) = \beta(t') \cap \beta(t''),$ where $t''$ is the parent of $t'$.
    Moreover, by the construction we have $H_{j} \subseteq G[\beta(T^{\abvsf}_{t'})] - \Ball^{d}_{G}(\beta(t') \cap \beta(t'')).$
    This immediately implies that $H_{i}$ and $H_{j}$ are at distance more than $d$ in $G.$
	
    Since all the members of $\Hh$ collected in distinct families $\Hh_i$ are pairwise at distance more than $d$ from each other, in particular the families $\Hh_i$ are pairwise disjoint. Therefore, $|\Hh'_{t}| = \sum_{i \in [n]} |\Hh_{i}|.$
    As sets $N_{t_i}$ are pairwise disjoint by construction, it also follows that $|N'_{t}| = |\Hh'_{t}|.$
    \end{claimproof}

    The fact that for every subgraph $H \in \Hh'_{t}$ there exists $t' \in N'_{t}$ such that $H \subseteq G[\beta(T^{\belsf}_{t'})]$ clearly follows from the construction.
    Now there are two cases to distinguish.

    \medskip\noindent
    \textbf{Case 1: $G[\beta(T^\belsf_{t})] - A_{n + 1}$ contains no member of $\Hh$.} In this case, we may conclude with $\Hh_{t} \coloneqq \Hh'_{t}$ and $N_{t} \coloneqq N'_{t}.$
    This follows directly from \cref{claim:aux_1}.

    \medskip\noindent
    \textbf{Case 2: $G[\beta(T^\belsf_{t})] - A_{n + 1}$ contains some member of $\Hh$.} In this case, we claim that we may conclude with $\Hh_{t} \coloneqq \Hh'_{t} \cup \{ H \},$ where $H$ is an arbitrarily chosen member of $\Hh$ that is contained in $G[\beta(T^{\belsf}_{t})] - A_{n + 1},$ and $N_{t} \coloneqq N'_{t} \cup \{ t \}.$

    We first show that the graph $$G[\beta(T^{\belsf}_{t})] - (A_{n + 1} \cup \Ball^{d}_{G}(\gamma(t)))$$ does not contain any subgraph $H\in\Hh$.
    By the assumption that $(T, \beta,\gamma)$ distance-$d$ guards $\Hh$, it suffices to show that any such subgraph $H$ would need to intersect $\beta(t).$
    To see this, observe that if $H$ did not intersect $\beta(t)$, then by the definition of a tree-decomposition and since $H$ is connected, there must exist an index $i \in [n]$ such that $H \subseteq G[\beta(T^{\belsf}_{t_{i}})].$
    However, by the induction assumption, we know that $G[\beta(T^{\belsf}_{t_{i}})] - A_{i+1}$ contains no member of $\Hh$.
    This is clearly a contradiction.

    Finally, in addition to \cref{claim:aux_1}, we have to argue that for every subgraph $J \in \Hh_{i},$ for some $i \in [n],$ the distance between any vertex of $J$ and any vertex of $H$ is more than $d.$
    This follows by an argument similar to that used in the proof of \cref{claim:aux_1}.
    By construction, we know that there exists $t' \in N_{i}$ such that $J \subseteq G[\beta(T^{\belsf}_{t'})].$
    The definition of a tree-decomposition implies that $(\beta(T^{\belsf}_{t'}), \beta(T^{\abvsf}_{t'}))$ is a separation of $G$ with $\beta(T^{\belsf}_{t'}) \cap \beta(T^{\abvsf}_{t'}) = \beta(t') \cap \beta(t''),$ where $t''$ is the parent of $t'$.
    Since $H$ is connected and disjoint from $\Ball^{d}_{G}(\beta(t') \cap \beta(t''))$ (this follows from $\beta(t') \cap \beta(t'')\subseteq \gamma(t')$ and $t'\in N'_t$), and therefore in particular from $\beta(t') \cap \beta(t'')$ as well, it must be that $H \subseteq G[\beta(T^{\abvsf}_{t'})] - \Ball^{d}_{G}(\beta(t') \cap \beta(t'')),$ since as in the previous paragraph, it cannot be that $H \subseteq \beta(T^{\belsf}_{t'}).$
    This readily implies that $H$ and $J$ are at distance more than $d.$

    \medskip
    Our proof now concludes by calling upon the inductive claim with the root node $r$ and $A$ being the empty set.
\end{proof}

With the technical statement established, \cref{thm:tree-EP} follows as an easy consequence.

\begin{proof}[Proof of \cref{thm:tree-EP}]
	We apply \cref{lemma:packing_hitting_dec} and thus obtain a $d$-scattered subfamily $\Hh'\subseteq\Hh$ and a set $N\subseteq V(T)$ with $|N|=|\Hh'|$ such that every member of $\Hh$ intersects $\bigcup_{t\in N}\Ball_G^d(\gamma(t))$. If $|\Hh'|\geq k$, then any subfamily of $\Hh'$ of size $k$ constitutes a valid first outcome. Otherwise we have $|\Hh'|\leq k-1$, and therefore $X\coloneqq \bigcup_{t\in N} \gamma(t)$ has size at most $(k-1)\ell$. Hence $X$ constitutes a valid second outcome.
\end{proof}

With \cref{thm:tree-EP} in place, we may now complete the proof of the planar case of \cref{thm:main}.

\begin{theorem}\label{menger_planar}
  There exists a function $f_{\ref{menger_planar}}\colon \N^2\to \N$ such that for all $d, k\in \N$ the following holds.
Let $G$ be a planar graph and let $S,T\subseteq V(G)$ be vertex subsets.
Suppose that one cannot find $k$ $S$-$T$-paths in $G$ that are pairwise at distance more than $d$ apart.
Then there is a vertex subset $X\subseteq V(G)$ with $|X|\leq f_{\ref{menger_planar}}(k,d)$ such that every $S$-$T$-path in $G$ is at distance at most $d$ from some vertex of $X$.

Moreover, we have that $f_{\ref{menger_planar}}(d, k)\in d^{\Oh(1)}\cdot 2^{\Oh(k\log k)}$.
\end{theorem}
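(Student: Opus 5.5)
The plan is to combine \cref{global_structure} with \cref{thm:tree-EP}, using an empty forbidden set. We apply \cref{global_structure} to $G$, $S$, $T$, with parameters $d$ and $k$, taking $F\coloneqq\emptyset$ and $q\coloneqq 1$. The statement is written with the typo ``$q\leq 1$'', but it should be read as $q\geq 1$, and $q=1$ is in either case the value allowed by $|F|<q$.

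With $F=\emptyset$, every ball $\Ball^{\lfloor d/2\rfloor}_G(F)$ and $\Ball^d_G(F)$ is empty. So the avoidance conditions are vacuous, and the theorem yields one of two outcomes. The first is a $d$-scattered $S$-$T$-linkage of order $k$. This is a family of $k$ $S$-$T$-paths pairwise at distance more than $d$, which contradicts the hypothesis of \cref{menger_planar}. The second is a guarded tree-decomposition $(T,\beta,\gamma)$ of $G$ with vigilance $\ell\coloneqq\vig_{\ref{global_structure}}(d,k,1)$ that distance-$d$ guards all $S$-$T$-paths of $G$.

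Hence we may assume the second outcome. We invoke \cref{thm:tree-EP} with $\Hh$ the family of all $S$-$T$-paths in $G$; this family consists of connected subgraphs, as that theorem requires. Its first outcome is a $d$-scattered subfamily of size $k$, which again is excluded by hypothesis. Its second outcome is a set $X$ with $|X|\leq \ell(k-1)$ such that $\Ball^d_G(X)$ meets every $S$-$T$-path. That is exactly the statement that every $S$-$T$-path is at distance at most $d$ from some vertex of $X$.

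For the size bound, set $f_{\ref{menger_planar}}(d,k)\coloneqq (k-1)\cdot\vig_{\ref{global_structure}}(d,k,1)$. The bound $\vig_{\ref{global_structure}}(d,k,1)\in d^{\Oh(1)}\cdot 2^{\Oh(\max\{k,1\}\log\max\{k,1\})}$ then gives $f_{\ref{menger_planar}}(d,k)\in d^{\Oh(1)}\cdot 2^{\Oh(k\log k)}$, since the extra factor $k-1$ is absorbed into $2^{\Oh(k\log k)}$.

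The degenerate cases should be checked separately. For $k=0$ the hypothesis is unsatisfiable, since the empty family always exists. For $k=1$ the hypothesis says there are no $S$-$T$-paths at all, so $X=\emptyset$ works.

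The only real obstacle is bookkeeping rather than mathematics. One must confirm that specializing to $F=\emptyset$ genuinely removes all the ``avoids $\Ball(F)$'' qualifiers. One must also confirm that ``distance-$d$ guards'' in \cref{global_structure} matches the hypothesis of \cref{thm:tree-EP} verbatim, with the decomposition covering all of $G$, i.e.\ the separation $(\emptyset,V(G))$ in \cref{global_structure_induction}.
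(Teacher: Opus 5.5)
Your proposal is correct and is exactly the paper's argument. Apply \cref{global_structure} with $F=\emptyset$ (so $q=1$) and rule out its linkage outcome by hypothesis; then feed the resulting guarded tree-decomposition into \cref{thm:tree-EP} with $\Hh$ the family of all $S$-$T$-paths, whose packing outcome is likewise excluded. Your explicit bound $f(d,k)=(k-1)\cdot\vig_{\ref{global_structure}}(d,k,1)$, and your remarks on the typo $q\leq 1$ and the degenerate cases $k\in\{0,1\}$, are fine additions.
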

\begin{proof}
	It suffices to apply \cref{global_structure} with $F = \emptyset$ and feed the obtained guarded tree-decomposition to \cref{thm:tree-EP}, with $\Hh$ being the family of all $S$-$T$-paths in $G$. Note that by the assumption, none of these applications may result in exposing a $d$-scattered $S$-$T$-linkage of order $k$.
\end{proof}

\section{Weak Coarse Menger's Conjecture in surface-embeddable graphs}\label{sec:genus}

In this section we lift the approach presented in the previous sections to graphs embeddable on surfaces more complicated than the sphere. 

\subsection{Preliminaries on surface-embedded graphs}

We first need to establish the relevant terminology and basic results about embeddings of graphs in surfaces. We mostly follow the presentation from the classic book of Mohar and Thomassen~\cite{MoharT01Graphs}, to which we refer an interested reader for more details.

\paragraph{Surfaces.} In this paper, by a \emph{surface} we mean an arc-connected and compact $2$-dimensional manifold without a boundary. In a \emph{boundaried surface} we additionally allow the manifold to have a boundary consisting of a finite number of arc-connected components called \emph{cuffs}, each homeomorphic to the circle $S^1$. The \emph{closing} of a boundaried surface $\Sigma$ is the surface $\Sigma'$ obtained by gluing an open disk to every cuff of $\Sigma$.

Given a pair of integers $(\sfh, \sfc) \in \N \times \N,$ we define $\Sigma^{(\sfh, \sfc)}$ to be the surface obtained from the sphere by adding $\sfh$ handles and $\sfc$ crosscaps.
If $\sfc = 0$ the surface $\Sigma^{(\sfh, \sfc)}$ is \emph{orientable}, and it is \emph{non-orientable} otherwise. It is well-known that every surface without a boundary is homeomorphic to $\Sigma^{(\sfh,\sfc)}$, for some $(\sfh, \sfc) \in \N \times \N$. 
We define the \emph{genus} of a surface $\Sigma$ as $2\sfh + \sfc,$ where $\Sigma^{(\sfh, \sfc)}$ is the surface to which $\Sigma$ is homeomorphic. The genus of a boundaried surface is defined as the genus of its closing.

\paragraph{Drawing a graph on a surface.}
A \emph{drawing} on a surface $\Sigma$ is a triple $\Gamma = (U, V, E)$ such that
\begin{itemize}
    \item $V$ and $E$ are finite;
    \item $V \subseteq U \subseteq \Sigma$ and $e\subseteq U\subseteq \Sigma$ for each $e\in E$;
    \item $V \cup \bigcup E = U$ and $V \cap \bigcup E = \emptyset$;
    \item for every $e \in E,$ we have that $e = h([0, 1]) \setminus \{ h(0), h(1) \},$ where $h \colon [0, 1] \to U$ is a homeomorphism from the unit interval $[0,1]\subseteq \mathbb{R}$ onto its image satisfying $h(0), h(1) \in V$; and
    \item if $e, e' \in E$ are distinct, then $|e \cap e'|$ is finite.
\end{itemize}
We call the set $V,$ which we often denote by $V(\Gamma),$ the \emph{points} of $\Gamma$, and the set $E,$ which we often denote by $E(\Gamma),$ the \emph{arcs} of $\Gamma.$
If $G$ is a graph such that $V(\Gamma)$ and $E(\Gamma)$ naturally correspond to $V(G)$ and $E(G)$ respectively, we say that $\Gamma$ is a \emph{drawing} of $G$ on $\Sigma$.
In case no two arcs of $E(\Gamma)$ have a common point, we say that $\Gamma$ is an \emph{embedding} of $G$ on $\Sigma$.
In this case, the connected components of $\Sigma \setminus U$ are the \emph{faces} of $\Gamma.$

\paragraph{Representativity.}

A (simple) curve on a surface $\Sigma$ is a homeomorphic image of the circle $S^1$ in $\Sigma$. A curve is called \emph{contractible} if it is continuously deformable to a single point within $\Sigma$, and \emph{non-contractible} otherwise. Note that, if $\gamma$ is a non-contractible curve on $\Sigma,$ then $\Sigma \setminus \gamma$ consists of one or two connected components, each being a surface with a boundary whose genus is strictly smaller than the genus of $\Sigma.$

Given an embedding $\Gamma=(U,V,E)$ of a graph $G$ on a surface $\Sigma,$ a \emph{noose} is a curve on $\Sigma$ that meets $U$ only at the points of $V$. The \emph{representativity} of $\Gamma$ is the minimum number of points of $V$ that lie on any non-contractible noose.
Note that when $\Sigma$ is a sphere, then there are no non-contractible nooses and the representativity of every embedding is infinite.

\paragraph{Highly representative embeddings.}

We are going to need the following two results regardings surface embeddings of graphs with large representativity.

\medskip
The first result, due to Seymour and Thomas~\cite{SeymourT1996Uniqueness} (see also~\cite{Thomassen1990Embeddings,Mohar95Uniqueness,RobertsonV1990Embeddings}), says that highly representative embeddings of $3$-connected graphs are unique.

\begin{proposition}[\cite{SeymourT1996Uniqueness}]\label{prop:representativity_implies_uniqueness}
    For all non-negative integers $g$ and $r$ the following holds.
    Suppose $G$ is a $3$-connected graph and $\Gamma$ is an embedding of $G$ on a surface $\Sigma$ of genus $g$ with representativity $r.$
    If $$r \geq \frac{100 \log g}{\log \log g} \quad \text{(or $r \geq 100$ if $g \leq 2$)},$$ then $G$ cannot be embedded on a surface of genus smaller than $g$ and every embedding of $G$ on $\Sigma$ is homeomorphic~to~$\Gamma.$
\end{proposition}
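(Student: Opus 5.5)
This statement is the Seymour–Thomas uniqueness theorem, which the paper imports rather than proves. My plan therefore follows the classical route of Robertson–Vitray and Seymour–Thomas. It has two parts.

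\textbf{Part 1 (minimum genus).} Suppose $G$ also embeds on a surface $\Sigma'$ of smaller genus, via an embedding $\Gamma'$. I would compare the facial walks of $\Gamma$ and $\Gamma'$.

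The key input is a structural fact about large representativity. When the representativity of $\Gamma$ is at least $3$ and $G$ is $3$-connected, every facial walk of $\Gamma$ is an induced, non-separating cycle. Conversely, every induced non-separating cycle that is "short" relative to the representativity must be facial. This is Whitney-type reasoning: a non-facial induced non-separating cycle yields a non-contractible noose through few vertices.

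From this I would argue that the faces of $\Gamma'$ are mostly faces of $\Gamma$. An Euler-characteristic count, $|V|-|E|+|F| = 2-g$, then forces $\Gamma'$ to have at least as many faces as $\Gamma$. If not, $\Gamma'$ would contain a face whose boundary closes a non-contractible noose of $\Gamma$ of length below $r$.

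\textbf{Part 2 (uniqueness on $\Sigma$).} Let $\Gamma'$ be another embedding of $G$ on $\Sigma$. I would show that every facial cycle of $\Gamma'$ is facial in $\Gamma$. Two embeddings with identical facial walks are homeomorphic, by the standard rotation-system or face-gluing argument.

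Concretely, take a face $f$ of $\Gamma'$ that is not a face of $\Gamma$, and consider its boundary cycle $C$ inside $\Gamma$. Since $C$ is not facial in $\Gamma$, there is a bridge on each side. One can then route a noose through few vertices of $C$ and the adjacent faces that is non-contractible in $\Gamma$. This contradicts the representativity bound provided $|C|$ is controlled.

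\textbf{Main obstacle.} The hard step is the quantitative control. Faces of $\Gamma'$ can be long, so the argument cannot rely on short facial cycles. The $\log g/\log\log g$ threshold comes from a counting argument. One shows there are many pairwise "far" disjoint non-contractible cycles in $\Gamma$, using a large face-width grid-like structure. In $\Gamma'$, each such cycle must either keep its homotopy behaviour or create a new short noose. Counting, via Euler's formula, how many distinct homotopy classes can be forced gives the bound.

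To carry this out I would first try the planarizing-subgraph approach. Find in $\Gamma$ a subgraph $K$, a union of $g$ well-separated non-contractible cycles whose complement is a disk. Show that $K$ has an essentially unique embedding, using its $3$-connected augmentations. Then extend the uniqueness to all of $G$ via bridges of $K$, each of which is planar and attaches inside a unique face.
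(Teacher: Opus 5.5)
The paper does not prove this statement; it imports it from Seymour and Thomas. Judged on its own, your sketch has a genuine gap at exactly the step that carries the content of the theorem.

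The two facts you start from are correct. For a $3$-connected graph, representativity at least $3$ makes $\Gamma$ polyhedral, so its facial walks are induced non-separating cycles. And a cycle $C$ with $|C|<r$ is contractible in $\Gamma$: push it into adjacent faces to get a homotopic noose through at most $|C|$ vertices. Hence an induced non-separating cycle of length less than $r$ bounds a face of $\Gamma$.

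But both facts concern $\Gamma$ and \emph{short} cycles. The competing embedding $\Gamma'$ carries no representativity assumption: its facial walks need not be cycles, need not be induced or non-separating, and can be arbitrarily long. So the claim that the faces of $\Gamma'$ are ``mostly faces of $\Gamma$'' has no support. In Part~2, the noose routed along a bad face of $\Gamma'$ need not meet fewer than $r$ vertices. That face boundary need not even be contractible in $\Gamma$, in which case the ``bridge on each side'' picture does not apply. Your own caveat ``provided $|C|$ is controlled'' is precisely the unproved part.

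The Euler-characteristic step in Part~1 also runs the wrong way. If $\Gamma'$ lies in a surface of smaller Euler genus, Euler's formula already gives $\Gamma'$ strictly more faces than $\Gamma$. So ``at least as many faces'' is automatic and contradicts nothing. What must be shown is a constraint in the opposite direction. For example, show that every facial walk of $\Gamma'$ is $\mathbb{Z}_2$-null-homologous in $\Gamma$; a dimension count in the cycle space then forces $g'\geq g$. For facial walks of length less than $r$ this is immediate, and the entire difficulty lies with the long ones.

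Your ``main obstacle'' paragraph does not supply this. Homotopy classes of cycles of $G$ under $\Gamma$ and under $\Gamma'$ live on different surfaces and cannot be compared, so ``keeps its homotopy behaviour'' has no content. No count is actually set up either. The threshold $\log g/\log\log g$ has to come out of a genuine quantitative argument with two parts:
\begin{enumerate}
\item bound, in terms of $g$, how far $\Gamma'$ can deviate from $\Gamma$, even along arbitrarily long faces;
\item convert any deviation into a non-contractible noose of $\Gamma$ through fewer than $r$ vertices.
\end{enumerate}
That is the missing idea. A natural entry point: cut a facial walk of $\Gamma'$ at the vertices where, read in $\Gamma$, it passes from one face of $\Gamma$ to another. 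Between such vertices it runs along a single face of $\Gamma$. So the walk is homotopic in $\Gamma$ to a closed curve meeting $G$ only at these switch vertices, and the task becomes controlling them.

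The planarizing-subgraph fallback does not close the gap either. If the cycles of $K$ are pairwise disjoint, $\Sigma\setminus K$ cannot be a single disk once $g\geq 2$, since a graph whose complement is one open disk must be connected. Moreover, showing that $K$ sits in $\Gamma'$ as it does in $\Gamma$, with every $K$-bridge landing in the corresponding face, is the original uniqueness problem restated. A union of cycles has many inequivalent embeddings, and any rigidity of $K$ can only come from the rest of $G$.
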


The second result from the work of Gorsky, Seweryn, and Wiederrecht~\cite{GorskySW2025Polynomial} is about finding minors in graphs with surface embeddings of large representativity.

\begin{proposition}[{\cite[Theorem 15.11]{GorskySW2025Polynomial}}]\label{prop:representativity_gives_minor}
	There is a polynomial function $\repr_{\ref{prop:representativity_gives_minor}} \colon \N\times \N\to \N$ such that for all $g,t\in \N$ the following holds.
    Every graph that has an embedding with representativity at least $\repr_{\ref{prop:representativity_gives_minor}}(g, t)$ on a surface $\Sigma$ of genus at most $g$ contains, as a minor, every graph on at most $t$ vertices that can be embedded~on~$\Sigma.$
\end{proposition}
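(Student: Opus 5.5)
This statement is \cref{prop:representativity_gives_minor}, which the paper imports from Gorsky, Seweryn, and Wiederrecht~\cite{GorskySW2025Polynomial}; within the paper it would be cited rather than reproved. If I had to prove it, I would follow the classical route of Robertson and Seymour (Graph Minors VII), with care throughout to keep every dependence polynomial.

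\textbf{Step 1: reduce to a grid-like target.} Every graph on at most $t$ vertices embeddable in $\Sigma$ is a minor of a canonical ``$\Sigma$-grid'' of size polynomial in $t$ and $g$. This grid is a large annulus wall, or a planar grid, with $\sfh$ handle gadgets and $\sfc$ crosscap gadgets attached along one boundary cycle. It therefore suffices to find this single canonical graph as a minor.

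\textbf{Step 2: find a large planar wall.} Large representativity forces large face-width, and hence large treewidth. Concretely, I would use the radial graph and the fact that representativity $\rho$ yields $\Omega(\rho)$ pairwise disjoint homotopic non-contractible cycles. Combining this with a planar-type argument inside a disk region gives a $(\rho'\times\rho')$-wall embedded in a disk of $\Sigma$, with $\rho'$ polynomial in $\rho$.

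\textbf{Step 3: attach the handles and crosscaps.} This is the step I expect to be the main obstacle. I would proceed by induction on the genus. Take a shortest non-contractible noose $\gamma$, which has length at least the representativity. Cut $\Sigma$ along $\gamma$; the resulting surface of smaller genus still carries an embedding whose representativity is only polynomially smaller. This works because any short non-contractible noose in the cut surface would combine with $\gamma$ into a short non-contractible noose in $\Sigma$, contradicting the hypothesis.

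By induction, the cut surface yields a smaller $\Sigma'$-grid minor. We then reconnect across $\gamma$. Because $\gamma$ is long, Menger's theorem supplies many disjoint paths crossing it, and these paths realize the missing handle or crosscap gadget.

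The delicate part is keeping track of the orientation and ordering of these crossing paths so that they attach to the boundary of the wall in the pattern the gadget requires. I would first try to handle this by routing the paths through a large nest of cycles around the cut and rerouting along the nest, in the spirit of the combing argument of \cref{linkage_combing}. The losses in representativity accumulate over the $g$ induction steps, and controlling them is what makes the final bound polynomial.
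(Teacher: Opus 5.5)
The paper does not prove this proposition; it imports it from Gorsky, Seweryn, and Wiederrecht. Your sketch therefore has to stand on its own, and Step~3 does not.

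\textbf{The induction hypothesis is too weak.} You apply the proposition itself to the capped surface $\widetilde\Sigma$, and this fails in several ways.
\begin{itemize}
\item \emph{Genus $0$.} Every spherical embedding has infinite representativity, as the paper itself notes. So the statement as transcribed is false for the sphere (take $G=K_1$). Your torus case cuts down to a sphere with two caps, where representativity carries no information. A large cylinder does exist there, but it comes from the face-width of the original embedding. Concretely, it comes from nooses that pass through or separate the two caps, and the capped sphere treats all of these as contractible.
\item \emph{Genus $\geq 1$.} The hypothesis gives you a $\Sigma'$-grid model somewhere in the cut graph, with no control over its position relative to the caps and $V(\gamma)$. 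The Menger paths across $\gamma$ may run through its branch sets. Nothing forces the caps into the exceptional face of the model, or the crossing paths onto its exceptional boundary in the cyclic pattern a handle or crosscap gadget needs. Combing through a nest around the cut fixes the order only locally; it cannot relocate the model.
\item \emph{Vertices of $\gamma$.} If you split them, a minor of the cut graph need not be a minor of $G$. If you delete them instead, the disjointness problem above becomes sharper.
\item \emph{Separating $\gamma$.} Here you must glue two grids found on two different surfaces, not attach a single gadget. Your sketch does not address this case.
\end{itemize}
The missing idea is a rooted inductive statement for surfaces with cuffs. Face-width should be measured with nooses allowed to pass through the cuffs. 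The conclusion should give a $\Sigma'$-grid whose exceptional face contains all cuffs, together with disjoint linkages from each cuff to its exceptional boundary in a prescribed order.

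\textbf{The quantitative bookkeeping also fails.} A short noose of $\widetilde\Sigma$ through a cap does not ``combine with $\gamma$'' into a short non-contractible noose of $\Sigma$. Adding all of $\gamma$ costs up to $\rho$ vertices and yields no contradiction. What does work is closing each passage through a cap with an arc of $\gamma$ of roughly $\rho/2$ vertices. That only shows the cut embedding has representativity about $\rho/2$. Iterating over up to $g$ cuts then forces $\rho\geq 2^{g}\cdot\mathrm{poly}(t)$, which is not polynomial in $g$. In general, a per-step loss that is merely polynomial, such as halving, compounds to something exponential in $g$. Polynomiality needs additive losses or a non-iterative argument, and you supply neither. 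Exponential dependence on $g$ would be harmless for how the paper uses the proposition, with $\Sigma$ fixed, but it does not establish the statement as posed.
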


\subsection{The Representativity Lemma}

We also need a tool that will allow us to reduce our proof to embeddings of large representativity, at the cost of introducing some apex vertices.

\begin{lemma}[Representativity Lemma]\label{lemma:representativity} Let $g,r\in \N$ with $r\geq 1$ and
    suppose $G$ is a connected graph embedded on a surface of genus $g.$
    Then there exists a set of vertices $A \subseteq V(G)$ with $|A| \leq (2g - 1)(r - 1)$ (or $A=\emptyset$, if $g=0$) and a partition $\Pp$ of $V(G) \setminus A$ into non-empty vertex subsets (or $\Pp = \emptyset$ if $V(G) = A$) such that
    \begin{itemize}
        \item $\bigcup_{H \in \Pp} G[H \cup A] = G$; and
        \item for every $H \in \Pp,$ there exists a graph $H^\star$ and a vertex subset $A^\star \subseteq V(H^\star)$ with $|A^\star| \leq 2g$ satisfying the following two conditions:
        \begin{enumerate}
            \item $H^\star - A^\star = G[H]$ and $N_{H^\star}(A^\star) = N_{G}(A) \cap H$; and
            \item $H^\star$ has an embedding on a surface of genus at most $g$ with representativity at least $r.$
        \end{enumerate}
    \end{itemize}
\end{lemma}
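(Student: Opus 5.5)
We argue by induction on the genus $g$. If $g=0$, or more generally if the given embedding of $G$ already has representativity at least $r$, we take $A=\emptyset$ and $\Pp=\{V(G)\}$, with $H^\star=G$ and $A^\star=\emptyset$. All the conditions then hold trivially.

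\textbf{Cutting step.} Otherwise there is a non-contractible noose $\gamma$ meeting $G$ in a set $A_0$ of at most $r-1$ vertices. We cut $\Sigma$ along $\gamma$ and obtain one or two boundaried surfaces. Gluing disks to the new cuffs yields closed surfaces $\Sigma_1$ (and possibly $\Sigma_2$). Each has genus strictly less than $g$, and in the separating case their genera sum to $g$. The graph $G-A_0$ embeds in $\Sigma_1\cup\Sigma_2$. For each component $C$ of $G-A_0$, consider the surface $\Sigma_i$ containing it, and let $\Sigma_i$ be of genus $g_i<g$.

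We record how $C$ attaches to $A_0$. In the capped disk bounded by the cuff we place one new vertex $a_C$ adjacent to all vertices of $C$ that are neighbours of $A_0$ in $G$. This vertex can be drawn inside the glued disk, because those attachments reach the cuff through the faces crossed by $\gamma$. Adding $a_C$ may destroy connectivity or embeddability only locally; if needed, we use one apex per side of $\gamma$, so at most two apices per cut.

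We then apply induction to the connected graph $C+a_C$ embedded in $\Sigma_i$. This gives $A_C$ with $|A_C|\le (2g_i-1)(r-1)$, and we ensure $a_C$ itself is placed into $A_C$ or kept as a designated apex. It also gives parts $H$ with graphs $H^\star$ on at most $2g_i$ apices. We set
\[A \coloneqq A_0\cup\bigcup_C (A_C\setminus\{a_C\}).\]
For a part $H$ we define $H^\star$ by identifying all apices that stand for original vertices of $A$ with the $\le 2g_i$ inherited apices plus the new one. Thus each cut adds at most one or two apices, and over at most $g$ nested cuts the total is at most $2g$.

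\textbf{Counting.} The apex bound follows from the recursion $f(g)\le (r-1)+f(g_1)+f(g_2)$ with $g_1+g_2\le g$ and $g_i\ge1$, or $f(g)\le (r-1)+f(g-1)$ in the non-separating case, and base $f(0)=0$. Checking that the separating case gives $(2g-1)(r-1)$ needs care with $f(0)=0$. When some $g_i=0$, that piece is planar and contributes nothing, but then $\gamma$ would have been contractible. Hence both $g_i\ge1$, and $(r-1)+(2g_1-1)(r-1)+(2g_2-1)(r-1)\le(2g-1)(r-1)$.

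\textbf{Main obstacle.} The delicate step is the apex-gadget: showing that contracting the attachments to $A_0$ into apex vertices keeps $N_{H^\star}(A^\star)=N_G(A)\cap H$ exactly and still yields an embedding on the capped surface. I would handle this by letting the apex of each side of $\gamma$ be a single point inside the capping disk, joined to the vertices on $\gamma$'s side. The representativity guarantee is then only needed for $H^\star$, which is what the induction provides.
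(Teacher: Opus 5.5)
Your skeleton matches the paper's. You cut along a non-contractible noose meeting fewer than $r$ vertices, put those vertices into $A$, and record the lost attachments by new vertices in the capping disks. You then recurse on smaller genus and use the ``$-1$'' in $(2g-1)(r-1)$ to pay for the cut. The counting step, however, has a genuine gap. You recurse separately on every component $C$ of $G-A_0$ and bound $|A_C|\le(2g_i-1)(r-1)$ by the genus $g_i$ of the whole side $\Sigma_i$. But $G-A_0$ may have arbitrarily many components on one side, so your construction only yields $|A|\le(r-1)+\sum_C(2g_{i(C)}-1)(r-1)$, which is not bounded in terms of $g$ and $r$. Your recursion $f(g)\le(r-1)+f(g_1)+f(g_2)$ tacitly assumes one recursive call per side.

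The paper avoids this by using one new vertex per cuff, shared by everything on that side. All copies $v_1^\pm,\dots,v_{r'}^\pm$ of the noose vertices on a cuff are replaced by a single vertex drawn inside the capping disk. Since the graph is connected, every nontrivial component of the result contains one of these new vertices, so there are at most two pieces. The paper then uses three facts:
\begin{enumerate}
\item each piece is bounded by its own genus $\hat g_j$;
\item additivity of genus gives $\sum_j\hat g_j\le g-1$ in the non-separating case, and $\le g_1$, $\le g_2$ on the two sides otherwise;
\item planar pieces cost nothing, while $\sum_{\hat g_j\ge1}(2\hat g_j-1)\le 2\sum_j\hat g_j-1$.
\end{enumerate}
If you keep private apices $a_C$, you must likewise replace $g_i$ by the Euler genus of $C+a_C$ and show these sum to at most $g_i$. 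This does not follow from a disjoint embedding: the attachments of different components can interleave along the cuff, so the graphs $C+a_C$ need not embed simultaneously. You would need additivity of Euler genus over blocks of the graph with one shared apex.

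A smaller point concerns how $a_C$ is handled in the recursive call. Used as a black box, the induction hypothesis treats $a_C$ as an ordinary vertex, so nothing forces it into $A_C$. It can land inside a part $H$, and then $H^\star-A^\star=G[H]$ fails, so ``we ensure $a_C$ is placed into $A_C$ or kept as a designated apex'' needs a mechanism. The paper strengthens the inductive claim to start from a pair $(H^\star,A^\star)$ with a prescribed apex set. That set is kept out of both $X$ and the partition, and the claim proves $|X^\star|\le|A^\star|+2g$. Old apices lying on a later noose are simply absorbed into the new cuff vertex. With the black-box version, you can instead remove $a_C$ from its part and declare it an extra apex of that part's $H^\star$. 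Then $N_{H^\star}(A^\star)=N_G(A)\cap H$ still holds and the apex count is at most $2g_i+2\le 2g$, but this has to be said.
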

\begin{proof} We prove the following stronger claim by induction on the genus $g$ of the surface on which $G$ is embedded.

    \smallskip
    \noindent\textbf{Inductive claim:} Let $G$ be a graph, $A \subseteq V(G)$ be a vertex subset, and let $H \coloneqq G - A.$
    Suppose there exists a connected graph $H^\star$ and a vertex subset $A^\star \subseteq V(H^\star)$ satisfying the following two conditions:
    \begin{itemize}
        \item $H^\star - A^\star = H$ and $N_{H^\star}(A^\star) = N_{G}(A)$; and
        \item $H^\star$ has an embedding $\Gamma_{H^\star}$ on a surface of genus $g.$
    \end{itemize}
    Then, there exists $X \subseteq V(H)$ with $|X| \leq (2g - 1)(r - 1)$ (or $X = \emptyset,$ if $g = 0$) and a partition $\Pp$ of $V(H) \setminus X$ into non-empty vertex subsets (or $\Pp = \emptyset$ if $V(H) = X$) such that
    \begin{itemize}
        \item $\bigcup_{K \in \Pp} H[K \cup X] = H$; and
        \item for every $K \in \Pp,$ there exists a connected graph $K^\star$ and a vertex subset $X^\star \subseteq V(K^\star)$ with $|X^\star| \leq |A^\star| + 2g$ satisfying the following two conditions:
    \begin{enumerate}
        \item $K^\star \setminus X^\star = H[K]$ and $N_{K^\star}(X^\star) = N_{G}(A \cup X) \cap K$; and
        \item $K^\star$ has an embedding on a surface of genus at most $g$ with representativity at least $r.$
    \end{enumerate}
    \end{itemize}

	\smallskip
	Observe that the lemma statement follows by applying the inductive claim to $G$ with $A = \emptyset$ and $H^\star = G.$
    Therefore, from now on we focus on proving the inductive claim.

    \smallskip
    \noindent\textbf{Base case, $g = 0$:} By assumption, if $\Sigma$ is a sphere, then any embedding on $\Sigma$ has infinite representativity and therefore we may conclude with $X = \emptyset$ and $\Pp = \{ V(H) \}$ (or $\Pp = \emptyset$ if $V(H) = \emptyset$).

    \smallskip
    \noindent\textbf{Inductive step, $g \geq 1$:} Assume the claim holds for all genera $g' < g$ and suppose that $\Sigma$ is a surface of genus $g.$
    In case the embedding $\Gamma_{H^\star}$ of $H^\star$ on $\Sigma$ already has representativity at least $r$, we may conclude with $X = \emptyset$ and $\Pp = \{ V(H) \}.$
    Therefore, we may assume otherwise, which implies that there exists a non-contractible noose $\gamma$ on $\Sigma,$ that intersects the embedding of $H^\star$ only in vertices, and in strictly less than $r$ many.
    Let $X_{0}$ be this set of vertices.

    Now, let $\widehat{\Sigma}$ be the (possibly disconnected) surface with boundary, obtained from $\Sigma$ by cutting it along~$\gamma$; i.e., $\widehat{\Sigma}$ is the compactification of $\Sigma \setminus \gamma.$
    Next, let $\widetilde{\Sigma}$ be the closing of $\Sigma$, obtained from $\widehat{\Sigma}$ by gluing a disk $\Delta_B$ to every cuff $B$ of $\widehat{\Sigma}$.
    Note that thus $\widetilde{\Sigma}$ is the union of either one or two connected surface without a boundary, depending on whether $\gamma$ was separating or not.
    In case $\widetilde{\Sigma}$ is connected, it has genus at most $g - 1$ and $\widehat{\Sigma}$ has exactly one cuff when $\gamma$ is one-sided, or exactly two cuffs when $\gamma$ is two-sided.
    In case $\widetilde{\Sigma}$ is not connected, $\gamma$ is always two-sided, and then $\widehat{\Sigma}$ consists of exactly two connected components: $\widetilde{\Sigma}_{1}$ of genus $g_{1} \geq 1$ and $\widetilde{\Sigma}_{2}$ of genus $g_{2} \geq 1,$ each with exactly one cuff, and such that $g_{1} + g_{2} = g.$

    Let $v_{1}, \ldots, v_{r'}$ ($r' < r$) be the cyclic ordering of the vertices in $X_{0},$ as traversed by $\gamma$ in an arbitrarily chosen direction.
    Note that for every vertex $v_{i},$ $i \in [r'],$ there exists a partition $E^{-}_{i}, E^{+}_{i}$ of the edges incident to $v_{i}$ so that if we define $H'$ to be the graph where each vertex $v_{i}$ is replaced by two vertices $v^{-}_{i}$ and $v^{+}_{i}$, with $v^-_i$ becoming an endpoint of the edges of $E^-_i$ and $v^+_i$ becoming an endpoint of the edges of $E^+_i$, then $H'$ has an embedding $\Gamma_{H'}$ on $\widetilde{\Sigma}$ such that the following holds.
    We distinguish two cases.

    \smallskip
    \noindent\textbf{Case 1, $\gamma$ is one-sided:} Then, the vertices $v^{-}_{1}, \ldots, v^{-}_{r'}, v^{+}_{1}, \ldots, v^{+}_{r'}$ are embedded along the boundary of $\Delta_{B}$ in this cyclic ordering, where $B$ is the unique cuff of $\widehat{\Sigma}.$
    In this case, we define the graph $H''$ by replacing the vertices $v^{-}_{1}, \ldots, v^{-}_{r'}, v^{+}_{1}, \ldots, v^{+}_{r'}$ by a single vertex $v$ with $N_{H''}(v) = N_{H'}(\{ v^{-}_{1}, \ldots, v^{-}_{r'}, v^{+}_{1}, \ldots, v^{+}_{r'} \}).$
    Clearly, $\Gamma_{H'}$ yields an embedding $\Gamma_{H''}$ of $H'$ on $\widetilde{\Sigma},$ where $v$ is drawn in the interior of $\Delta_{B}.$
    We let~$X^\star_{0} \coloneqq \{ v \}.$

    \smallskip
    \noindent\textbf{Case 2, $\gamma$ is two-sided:} Then, the vertices $v^{-}_{1}, \ldots, v^{-}_{r'}$ are embedded along the boundary of $\Delta_{B^{-}}$ in this cyclic ordering, while the vertices $v^{+}_{1}, \ldots, v^{+}_{r'}$ are embedded along the boundary of $\Delta_{B^{+}}$ in this cyclic ordering, where $B^{-}$ and $B^{+}$ are the two cuffs of $\widehat{\Sigma}.$
    In this case, we define the graph $H''$ by replacing the vertices $v^{-}_{1}, \ldots, v^{-}_{r'}, v^{+}_{1}, \ldots, v^{+}_{r'}$ by a pair of vertices $v^{-}$ and $v^{+}$ with $N_{H''}(v^{-}) = N_{H'}(\{ v^{-}_{1}, \ldots, v^{-}_{r'} \})$ and $N_{H''}(v^{+}) = N_{H'}(\{ v^{+}_{1}, \ldots, v^{+}_{r'} \}).$
    Clearly, $\Gamma_{H'}$ induces an embedding $\Gamma_{H''}$ of $H'$ on $\widetilde{\Sigma},$ where $v^{-}$ is drawn in the interior of $\Delta_{B^{-}}$ and $v^{+}$ is drawn in the interior of $\Delta_{B^{+}}$ respectively.
    We let $X^\star_{0} \coloneqq \{ v^{-}, v^{+} \}.$

    \smallskip
    Define $X'_{0} \coloneqq X_{0} \setminus A^\star$ and $A' \coloneqq A^\star \setminus X_{0}.$
    By construction we have $H'' - (A' \cup X^\star_{0}) = G - (A \cup X'_{0})$ and $N_{H''}(A' \cup X^\star_{0}) = N_{G}(A \cup X'_{0}).$
    Now, note that if $V(H'') = A' \cup X^\star_{0}$ then we may conclude with $X = X'_{0}$ and $\Pp = \emptyset.$
    Therefore, we may assume that $V(H'') \supsetneq A' \cup X^\star_{0}.$

    Let $H^\star_{1}, \ldots, H^\star_{q}$ be the connected components of $H'',$ which are not subgraphs of $H''[A' \cup X^\star_{0}],$ and for each $j \in [q],$ define $A^\star_{j} \coloneqq (A' \cup X^\star_{0}) \cap V(H^\star_{j}).$
    Now, define $\Pp'$ to be the partition of $V(H) \setminus X'_{0}$ into the following non-empty subsets: $$\Pp' \coloneqq \{ V(H^\star_j) \setminus A^\star_{j} \mid i \in [q] \}.$$
    Next, for each $j \in [q],$ define $H_{j} \coloneqq H^\star_{j} - A^\star_{j}$ and observe that, since $H'' - (A' \cup X^\star_{0}) = G - (A \cup X'_{0})$, we have that $\bigcup_{j \in [q]} H[V(H_{j}) \cup X'_{0}] = H.$

    Our goal is to apply the inductive claim on $G_{j} \coloneqq G[V(H_{j}) \cup (A \cup X'_{0})]$ together with the set $A \cup X'_{0}$, and $H^\star_{j}$ together with the set $A^\star_{j}.$
    To this end, we have to verify whether they satisfy its assumptions.
    First note that by construction, $N_{H^\star_{j}}(A^\star_{j}) = N_{G_{j}}(A \cup X'_{0})$, as desired.
    Moreover, for every $j \in [q],$ $\Gamma_{H''}$ induces an embedding $\Gamma_{H^\star_{j}}$ of $H^\star_{j}$ on a connected component of $\widetilde{\Sigma}$ which has genus $\hat{g}_{j} < g.$
    In fact, by the additivity of genus, we have that either $\sum_{j \in [q]} \hat{g}_{j} < g,$ in case $\gamma$ was non-separating, or that there is a partition of $[q]$ into $I_{1}$ and $I_{2}$ according to which of the two connected components of $\widetilde{\Sigma}$ each $H^\star_{j}$ is embedded on, such that $\sum_{j \in I_{1}} \hat{g}_{j} \leq g_{1}$ and $\sum_{j \in I_{2}} \hat{g}_{j} \leq g_{2}.$
    This reflects the case where $\gamma$ was separating.

    Therefore, by the induction hypothesis, for each $j \in [q]$ there is a set $X_{j} \subseteq V(H_{j})$ with $|X_{j}| \leq (2\hat{g}_{j} - 1)(r - 1)$ (or $X_{j} = \emptyset,$ if $\hat{g}_{j} = 0$) and a partition $\Pp_{j}$ of $V(H_{j}) - X_{j}$ into non-empty vertex subsets (or $\Pp_{j} = \emptyset$ if $V(H_{j}) = X_{j}$) such that
    \begin{itemize}
        \item $\bigcup_{K_{j} \in \Pp_{j}} H_{j}[K_{j} \cup X_{j}] = H_{j}$; and
        \item for every $K_{j} \in \Pp_{j},$ there exists a connected graph $K^\star_{j}$ and a vertex subset $X^\star_{j} \subseteq V(K^\star_{j})$ with $|X^\star_{j}| \leq |A^\star_{j}| + 2\hat{g}_{j}$ satisfying the following two conditions:
    \begin{itemize}
        \item $K^\star_{j} \setminus X^\star_{j} = H_{j}[K_j]$ and $N_{K^\star_{j}}(X^\star_{j}) = N_{G}(A \cup X'_{0} \cup X_{j}) \cap K_{j}$; and
        \item $K^\star_{j}$ has an embedding on a surface of genus at most $\hat{g}_{j}$ with representativity at least $r.$
    \end{itemize}
    \end{itemize}
    Set $X \coloneqq X'_{0} \cup \bigcup_{j \in [q]} X_{j}.$
    Now observe that, since $\bigcup_{j \in [q]} H[V(H_{j}) \cup X'_{0}] = H,$ if $\Pp_{j} = \emptyset$ for all $j \in [q],$ then we have that $V(H) = X$ and we may conclude with $X$ and $\Pp = \emptyset,$ subject to verifying the bound on $|X|.$
    So, we may assume that for some $j \in [p],$ $\Pp_{j} \neq \emptyset.$
    In this case, we define $\Pp$ as the refinement of $\Pp',$ obtained by taking the union of all $\Pp_{j}$ which are non-empty, and since $\bigcup_{K_{j} \in \Pp_{j}} H_{j}[K_{j} \cup X_{j}] = H_{j},$ for all $j \in [q],$ it follows that $\bigcup_{K \in \Pp} H[K \cup X] = H$ as desired.
    Therefore, subject to verifying the bound on each $|X^\star_j|$ as well, our inductive claim follows.
    
    Observe that
    \[|X|\leq |X'_0|+\sum_{j \in [q]} |X_j|\leq  (r-1)+\sum_{j \in [q], \hat{g}_j\geq 1} (2 \hat{g}_{j} - 1)(r - 1).\]
    The following claim will be useful in bounding the last summand.

    \begin{claim}\label{claim:rep_1} If $\sum_{j \in [q]} \hat{g}_{j} \leq h$, then $$\sum_{j \in [q], \hat{g}_j\geq 1} (2 \hat{g}_{j} - 1)(r - 1) \leq (2h -1)(r - 1).$$
    \end{claim}
    \begin{claimproof} $$\sum_{j \in [q], \hat{g}_{j} \geq 1} (2 \hat{g}_{j} - 1)(r - 1) \leq \bigg(2 \sum_{j \in [q], \hat{g}_{j} \geq 1} \hat{g}_{j} - 1 \bigg)(r - 1) \leq (2h - 1)(r - 1).\qedhere$$
    \end{claimproof}

    We now distinguish two cases.
    
    \smallskip
    \noindent\textbf{$\gamma$ is non-separating:} In this case, $\widetilde{\Sigma}$ is connected and from our previous observation we have that $\sum_{j \in [q]} \hat{g}_{j} \leq g - 1.$
    By \cref{claim:rep_1}, $$|X| \leq (r - 1) + (2 (g - 1) - 1)(r - 1) = (r - 1) + (2g - 3)(r - 1) = (2g - 2)(r - 1) < (2g - 1)(r - 1).$$

    \smallskip
    \noindent\textbf{$\gamma$ is separating:} In this case, $\widetilde{\Sigma}$ consists of two connected components of genus $g_{1}, g_{2} \geq 1$ respectively, such that $g_{1} + g_{2} = g.$
    Therefore, from our previous observation we have that $\sum_{j \in I_{1}} \hat{g}_{j} \leq g_{1}$ and $\sum_{j \in I_{2}} \hat{g}_{j} \leq g_{2}.$
    By applying \cref{claim:rep_1} to each side, we obtain $$|X| \leq (r - 1) + (2 g_{1} - 1)(r - 1) + (2 g_{2} - 1)(r - 1) + (r - 1)(1 + (2 g_{1} - 1) + (2 g_{2} - 1)) = (2 g - 1)(r - 1).$$

    Finally, note that for every $j \in [q],$ we have $$|X^\star_{j}| \leq |A^\star_{j}| + 2 \hat{g}_{j} \leq |A^\star| + 2 + 2(g - 1) = |A^\star| + 2g.$$
    This completes the proof.
\end{proof}

\subsection{Star replacements}

Let $G$ be a graph, $A \subseteq V(G)$ be a vertex subset, $H$ be a connected component of $G - A,$ and $H' \coloneqq G[V(H) \cup A].$
Let $H^\star$ be a graph with a vertex subset $A^\star \subseteq V(H^\star)$ satisfying the following two conditions:
\begin{itemize}
    \item $H^\star - A^\star = H$; and
    \item $N_{H^\star}(A^\star) = N_{H'}(A).$
\end{itemize}
We call the pair $(H^\star,A^\star)$ as above a \emph{replacement} of $(H',A)$. We shall refer to $A^\star$ as to the \emph{star vertices} of the replacement.

\medskip
For the remainder of this subsection, fix in the context a non-negative integer $d$, a graph $G$, and a vertex subset $A \subseteq V(G).$
Moreover, we fix a connected component $H$ of $G - A$ and let $H' \coloneqq G[V(H) \cup A].$ Also, we fix a replacement $(H^\star,A^\star)$ of $(H',A)$. Let us make a few observations about distances.

\begin{observation}\label{obs:packA} For any two vertices $u, v \in V(H)$ such that $\dist_{H^\star}(u, A^\star) > \lfloor \nicefrac{d}{2} \rfloor,$ $\dist_{H^\star}(v, A^\star) > \lfloor \nicefrac{d}{2} \rfloor,$ and $\dist_{H^\star}(u, v) > d,$ we have $$\dist_{G}(u, v) > d.$$
\end{observation}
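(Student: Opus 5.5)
We argue by contradiction. Suppose there is a $u$-$v$-path $R$ in $G$ of length at most $d$. The plan is to convert $R$ into a walk in $H^\star$ of length at most $d$, or else to find a short path from $u$ or $v$ to $A^\star$. Either outcome contradicts one of the hypotheses.

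\textbf{Case 1: $R$ avoids $A$.} Since $u,v\in V(H)$ and $H$ is a connected component of $G-A$, the whole path $R$ lies in $H$. Since $H=H^\star-A^\star$ is a subgraph of $H^\star$, we get $\dist_{H^\star}(u,v)\leq d$, a contradiction.

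\textbf{Case 2: $R$ meets $A$.} Let $a$ be the first vertex of $R$ in $A$ when traversing from $u$, and let $a'$ be the last such vertex. Let $x$ be the vertex of $R$ immediately preceding $a$, and let $y$ be the vertex immediately following $a'$.

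First we check that these vertices behave as expected. The prefix $R[u,x]$ avoids $A$ and starts in $H$, so it is contained in $H$; hence $x\in V(H)$ and $x\in N_{H'}(A)$. Symmetrically, $R[y,v]$ is contained in $H$ and $y\in N_{H'}(A)$.

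Next we use the replacement property $N_{H^\star}(A^\star)=N_{H'}(A)$. It gives that $x$ and $y$ are both adjacent in $H^\star$ to some vertices of $A^\star$. Consequently
\[\dist_{H^\star}(u,A^\star)\leq |R[u,x]|+1=|R[u,a]|\qquad\textrm{and}\qquad\dist_{H^\star}(v,A^\star)\leq |R[a',v]|.\]

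Now we compare lengths. The subpaths $R[u,a]$ and $R[a',v]$ are edge-disjoint pieces of $R$, so their lengths sum to at most $|R|\leq d$. Therefore one of them has length at most $\lfloor \nicefrac{d}{2}\rfloor$. This yields $\dist_{H^\star}(u,A^\star)\leq\lfloor \nicefrac{d}{2}\rfloor$ or $\dist_{H^\star}(v,A^\star)\leq\lfloor \nicefrac{d}{2}\rfloor$, contradicting the hypotheses.

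The point needing most care is the first/last-vertex bookkeeping in Case 2. It also handles the degenerate situation $a=a'$: there the two pieces share the vertex $a$ but no edge, so the length sum is still at most $d$. Note also that the neighbor $x$ of $a$ could be $u$ itself, which is harmless.

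Everything else is immediate from the two defining conditions of a replacement.
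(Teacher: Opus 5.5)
Your proof is correct and follows essentially the same route as the paper: you split on whether the short $u$-$v$-path meets $A$, and in the second case you extend the maximal $A$-avoiding prefix and suffix by one edge into $A^\star$ using $N_{H^\star}(A^\star)=N_{H'}(A)$. The only differences are cosmetic: you argue by contradiction and fold the edges entering $A$ into the prefix/suffix lengths, whereas the paper bounds the length of a shortest path directly by $2\lfloor \nicefrac{d}{2}\rfloor+2$.
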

\begin{proof} Let $P$ be a shortest $u$-$v$-path in $G.$ We prove that the length of $P$ is larger than $d$.
    There are two~cases.

    \smallskip
    \noindent\textbf{Case 1, $P$ avoids $A$:} In this case $P$ is a path in $H$ and therefore also a path in $H^\star.$
    Since $\dist_{H^\star}(u, v) > d,$ the length of $P$ must be larger than $d$.

    \smallskip
    \noindent\textbf{Case 2, $P$ intersects $A$:} Let $P_u$ be the longest prefix of $P$ that starts at $u$ and avoids $A$. Note that $P_u$ is a path in $H$ and its other endpoint belongs to $N_{H'}(A)$. Since $N_{H^\star}(A^\star)=N_{H'}(A)$, $P_u$ can be extended by one edge to a $u$-$A^\star$-path in $H^\star$. As $\dist_{H^\star}(u, A^\star) > \lfloor \nicefrac{d}{2} \rfloor$, it follows that the length of $P_u$ is at least $\lfloor \nicefrac{d}{2} \rfloor$. Similarly, if $P_v$ is the longest suffix of $P$ that ends at $v$ and avoids $A$, then the length of $P_v$ is at least $\lfloor \nicefrac{d}{2} \rfloor$. Note that $P_u$ and $P_v$ are disjoint subpaths of $P$ and there are at least two edges of $P$ --- the ones incident to the vertices of $A$ --- that belong to neither $P_u$ nor $P_v$. It follows that the length of $P$ is at least $\lfloor \nicefrac{d}{2} \rfloor + \lfloor \nicefrac{d}{2} \rfloor + 2 > d$.
\end{proof}

\begin{observation}\label{obs:hitA} For every vertex subset $X \subseteq V(H)$ and every vertex $v \in V(H)$, we have $$\dist_{H^\star}(v, X \cup A^\star) = \dist_{G}(v, X \cup A).$$
\end{observation}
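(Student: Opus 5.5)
We prove the two inequalities $\dist_{H^\star}(v, X \cup A^\star) \leq \dist_{G}(v, X \cup A)$ and $\dist_{G}(v, X \cup A) \leq \dist_{H^\star}(v, X \cup A^\star)$ separately. Both follow by shortening a shortest path at its first entry into the target set, and then transferring this shortened path between $G$ and $H^\star$ using the two defining properties of a replacement: $H^\star - A^\star = H$ and $N_{H^\star}(A^\star) = N_{H'}(A)$. Throughout we use the convention $\dist(\cdot,\emptyset)=\infty$. Note that if $v\in X$ both sides are $0$, so we may assume $v \notin X$. Also $v\notin A\cup A^\star$, because $v\in V(H)$ while $H$ is disjoint from $A$ and $H = H^\star - A^\star$.

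For $\dist_{H^\star}(v, X \cup A^\star) \leq \dist_{G}(v, X \cup A)$, take a shortest path $P$ in $G$ from $v$ to $X\cup A$, and cut it at its first vertex $w$ lying in $X\cup A$. The prefix $P[v,w]$ has all internal vertices outside $A$, and it starts in $H$. Since $H$ is a connected component of $G-A$, all its vertices other than possibly $w$ lie in $V(H)$. We then split into two cases.
\begin{itemize}
\item If $w\in X$, then $w\in V(H)$, so $P[v,w]$ is a path in $H\subseteq H^\star$ ending in $X$.
\item If $w\in A$, let $w'$ be the predecessor of $w$ on $P$. Then $w'\in N_{H'}(A)=N_{H^\star}(A^\star)$, so $w'$ has a neighbour $a^\star\in A^\star$. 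The path $P[v,w']$ followed by the edge $w'a^\star$ lies in $H^\star$ and has the same length as $P[v,w]$.
\end{itemize}
In either case we obtain a path in $H^\star$ from $v$ to $X\cup A^\star$ that is no longer than $P$, which gives the inequality.

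The reverse inequality is symmetric. Take a shortest path in $H^\star$ from $v$ to $X\cup A^\star$ and truncate it at its first vertex in $X\cup A^\star$. Every vertex before that point avoids $A^\star$, so it lies in $H^\star - A^\star = H \subseteq G$. If the endpoint is in $X$, the whole path already lies in $H\subseteq G$. If the endpoint is in $A^\star$, its predecessor lies in $N_{H^\star}(A^\star)=N_{H'}(A)$, so it has a neighbour in $A$ in $G$, and we replace the last edge by an edge into $A$, keeping the length unchanged.

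There is no real obstacle here. The only point requiring care is the truncation at the \emph{first} vertex of the target set: it guarantees that the path stays within $H$, where $G$ and $H^\star$ coincide, up to its final edge. It also means we only ever use the neighbourhood correspondence for a single edge, even though vertices of $A$ and $A^\star$ need not correspond one-to-one.
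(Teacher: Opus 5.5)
Your proof is correct and follows essentially the same route as the paper: each inequality comes from taking a shortest path to the target set. By minimality (equivalently, truncating at the first hit) only its last vertex can lie in $A$ (resp.\ $A^\star$), and that last vertex is swapped using $N_{H^\star}(A^\star)=N_{H'}(A)$. Your write-up is, if anything, slightly more explicit than the paper's in justifying why the prefix stays inside $H$, namely because $H$ is a connected component of $G-A$.
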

\begin{proof} We first prove that $\dist_{H^\star}(v, X \cup A^\star) \leq \dist_{G}(v, X \cup A).$ Let $P$ be a shortest $v$-$(X\cup A)$-path in~$G$, say of length $\ell\coloneqq \dist_{G}(v, X \cup A)$. If $P$ avoids $A$, then $P$ is also a $u$-$X$-path in $H^\star$ that witnesses that $\dist_{H^\star}(v, X \cup A^\star)\leq \ell$. Otherwise, by minimality, the only vertex of $P$ that belongs to $A$ is the endpoint other then $v$. Since $N_{H^\star}(A^\star)=N_{H'}(A)$, we may replace this endpoint with a vertex of $A^\star$, thus obtaining a $u$-$A^\star$-path of length $\ell$ in $H^\star$. This path witnesses that $\dist_{H^\star}(v, X \cup A^\star)\leq \ell$.

We now prove that $\dist_{G}(v, X \cup A) \leq \dist_{H^\star}(v, X \cup A^\star).$ Let $P$ be a shortest $v$-$(X\cup A^\star)$-path in~$H^\star$, say of length $\ell\coloneqq \dist_{H^\star}(v, X \cup A^\star)$. Again, if $P$ avoids $A^\star$, then $P$ is also a $u$-$X$-path in $G$ that witnesses that $\dist_{G}(v, X \cup A)\leq \ell$. And otherwise, by minimality, the only vertex of $P$ that belongs to $A^\star$ is the endpoint other then $v$. Now, the same replacement argument as in the first case yields a $u$-$A$-path in $G$ that witnesses that $\dist_{G}(v, X \cup A)\leq \ell$.
\end{proof}

An immediate corollary of \cref{obs:hitA} is the following.

\begin{corollary}\label{cor:HitA} For every vertex subset $X \subseteq V(H)$ we have
    $$\Ball^{d}_{H^\star}(X \cup A^\star) \setminus A^\star = \Ball^{d}_{G}(X \cup A) \cap V(H).$$
\end{corollary}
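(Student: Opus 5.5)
We derive \cref{cor:HitA} directly from \cref{obs:hitA}. The idea is to compare the two sets vertex by vertex on $V(H)$, after first checking that both sides are subsets of $V(H)$. The right-hand side lies in $V(H)$ by definition. For the left-hand side, recall that $V(H^\star)=V(H)\cup A^\star$, because $H^\star - A^\star = H$. So removing $A^\star$ from $\Ball^{d}_{H^\star}(X\cup A^\star)$ leaves a subset of $V(H)$. It therefore suffices to show that, for every $v\in V(H)$,
\[ v\in \Ball^{d}_{H^\star}(X\cup A^\star) \iff v\in \Ball^{d}_{G}(X\cup A). \]

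Next we rewrite both memberships as distance conditions. By the definition of balls of vertex subsets, $v\in \Ball^{d}_{H^\star}(X\cup A^\star)$ holds exactly when $\dist_{H^\star}(v, X\cup A^\star)\leq d$. Likewise, $v\in\Ball^{d}_{G}(X\cup A)$ holds exactly when $\dist_{G}(v,X\cup A)\leq d$. Here we use the convention that $\dist(v,\cdot)$ to a set is the minimum distance to its elements, and that it is $+\infty$ when the set is unreachable.

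With this reformulation, \cref{obs:hitA} gives $\dist_{H^\star}(v, X\cup A^\star)=\dist_G(v,X\cup A)$ for every $v\in V(H)$, so the two conditions are equivalent. The only point needing care is the degenerate case where a distance is infinite, for example when $X\cup A^\star$ is empty or unreachable. \cref{obs:hitA}, read with the $+\infty$ convention, covers this case as well, since its proof transforms paths in both directions. There is no genuine obstacle: the statement is a direct reformulation of \cref{obs:hitA}.
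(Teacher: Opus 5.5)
Your proof is correct and matches the paper, which states \cref{cor:HitA} as an immediate corollary of \cref{obs:hitA} without giving a separate proof. Your argument is exactly the intended unpacking: you reduce to a vertex-by-vertex comparison on $V(H)$, using $V(H^\star)\setminus A^\star=V(H)$ and the definition of balls around vertex sets.
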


\subsection{Classifying terminals on surfaces}

We now work towards establishing the analogue of \cref{structure_relative_terminals} for graphs embeddable in surfaces richer than the sphere. Generally speaking, we need to (i) introduce a possibility of having a bounded number of apex vertices that can be adjacent only to the vortices, and (ii) add features to the walloids that will represent the surface into which the graph is embedded.

\paragraph{Handle segments.}

Let $r, t \in \N_{\geq 4}$ and let $p = \min\{r, \lceil \nicefrac{t}{2} \rceil\} - 3.$
An \emph{elementary $(r \times t)$-handle segment} $W$ is obtained from an elementary $(r \times 4t)$-wall segment $W_{1}$ (called the \emph{base}) with top boundary vertices $$v_{1}, \ldots, v_{t}, v'_{1}, \ldots, v'_{t}, u_{1}, \ldots, u_{t}, u'_{1}, \ldots, u'_{t}$$ in left to right order by adding all the edges $$\{ v_{i} u_{t - i + 1}, v'_{i} u'_{t - i + 1} \colon i \in [t] \},$$ see~\cref{fig:handle_segment}.
An \emph{$(r \times t)$-handle segment} is a subdivision of an elementary $(r \times t)$-handle segment.

\begin{figure}[ht]
    \centering
    \includegraphics[page=2, scale=1.1]{genus_segments_2.pdf}
    \caption{
        A $(6 \times 4)$-handle ssegment with its (first) exceptional path highlighted.
    }%
    \label{fig:handle_segment}
\end{figure}

Let $x,$ respectively $y,$ be the first left, respectively first right, boundary vertex of $W_{1}.$
We call the $x$-$y$-path $P$ in $W$ on the following set of edges $$\{ xv_{1}, v_{1}u_{t}, u_{t}u'_{1}, u'_{1}v'_{t}, v'_{t}u_{1}, u_{1}v_{t}, v_{t}v'_{1}, v'_{1}u'_{t}, u'_{t}y \}$$
the \emph{exceptional path} of $W.$
Moreover, notice that, assuming $r \geq 5$ and $t \geq 6,$ $W - V(P)$ contains an elementary $(r - 1, t - 2)$-handle segment $W'$ defined by taking the union of all horizontal paths and handle edges of $W$ which are disjoint from the exceptional path of $W,$ along with the vertical paths of $W$ that extend to those handle edges.

Now, for every $i \in [p],$ define $P_{i}$ to be the exceptional path of $W_{i - 1},$ where $W_{i} \coloneqq W'_{i - 1},$ concatenated with the two (possibly trivial) subpaths of the $i$-th horizontal path of $W,$ one connecting the $i$-th left boundary vertex of $W$ with the first left boundary vertex of $W',$ and the other connecting the $i$-th right boundary vertex of $W$ with the first right boundary vertex of $W'$ respectively.
We shall refer to $P_{i}$ as the \emph{$i$-th exceptional path} of $W$ (see \cref{fig:handle_segment}).

The ($i$-th) exceptional path of a non-elementary handle segment is defined in the obvious way.

\paragraph{Crosscap segments.}

Let $r, t \in \N_{\geq 4}$ and $p = \min\{r, \lceil \nicefrac{t}{2} \rceil \} - 3.$
An \emph{elementary $(r \times t)$-crosscap segment} $W$ is obtained from an elementary $(r \times 4t)$-wall segment $W_{1}$ (called the \emph{base}) with top boundary vertices $$v_{1}, \ldots, v_{2t}, u_{1}, \ldots, u_{2t}$$ in left to right order by adding all the edges $$\{ v_{i} u_{2t + i} \colon i \in [2t] \},$$ see~\cref{fig:crosscap_segment}.
An \emph{$(r \times t)$-crosscap segment} is a subdivision of an elementary $(r \times t)$-crosscap segment.

\begin{figure}[ht]
    \centering
    \includegraphics[page=1, scale=1.1]{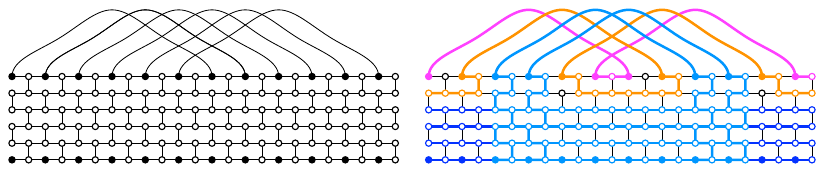}
    \caption{
        A $(6 \times 6)$-crosscap segment with its first and second exceptional path; the remaining $(4 \times 2)$-crosscap segment along with the subpaths connecting its left/right boundary vertices to the original ones; all highlighted by different colors.
    }%
    \label{fig:crosscap_segment}
\end{figure}

Let $x,$ respectively $y,$ be the first left, respectively first right, boundary vertex of $W_{1}.$
We call the $x$-$y$-path $P$ in $W$ on the following set of edges $$\{ xv_{1}, v_{1}u_{1}, u_{1}v_{2t}, v_{2t}u_{2t}, u_{2t}y \}$$
the \emph{exceptional path} of $W.$

Same as for handle segments, if $r, t \geq 5,$ $W - V(P)$ contains an elementary $(r - 1, t - 2)$-crosscap segment $W'$ defined by taking the union of all horizontal paths and crosscap edges of $W$ which are disjoint from the exceptional path of $W,$ along with the vertical paths of $W$ that extend to those crosscap edges.

Now, for every $i \in [p],$ define $P_{i}$ to be the exceptional path of $W_{i - 1},$ where $W_{i} \coloneqq W'_{i - 1},$ concatenated with the two (possibly trivial) subpaths of the $i$-th horizontal path of $W,$ one connecting the $i$-th left boundary vertex of $W$ with the first left boundary vertex of $W',$ and the other connecting the $i$-th right boundary vertex of $W$ with the first right boundary vertex of $W'$ respectively.
We shall refer to $P_{i}$ as the \emph{$i$-th exceptional path} of $W$ (see \cref{fig:crosscap_segment}).

The ($i$-th) exceptional path of a non-elementary crosscap segment is defined in the obvious way.

\paragraph{Surface walls.}

Let $r, t \in \N_{\geq 4}$ and $\sfh, \sfc \in \N.$
An \emph{elementary $(r, t, \sfh, \sfc)$-surface wall} $W$ is the concatenation of $1+\sfh + \sfc$ many segments:
\begin{itemize}
	\item one elementary $(r \times r)$-wall segment,
	\item $\sfh$ elementary $(r \times t)$-handle segments, and
	\item $\sfc$ elementary $(r \times t)$-crosscap segments.
\end{itemize} 
An \emph{$(r, t, \sfh, \sfc)$-surface wall} is a subdivision of an elementary $(r, t, \sfh, \sfc)$-surface wall.

\paragraph{$\Sigma$-walloids.}

Let $r, t \in \N_{\geq 4},$ $\sfh, \sfc, a, b \in \N,$ and $\Sigma=\Sigma^{(\sfh,\sfc)}$.
An \emph{(elementary) $(r, t, a, b)$-$\Sigma$-walloid} $W$ is defined in the same way as an (elementary) $(r, t, a, b)$-walloid, except that we replace the elementary $(r \times r)$-wall segment in the cylindrical concatenation defining $W,$ with an elementary $(r, t, \sfh, \sfc)$-surface wall.
In case $a = b = 0,$ we call $W$ an \emph{$(r, t)$-$\Sigma$-annulus wall}.
In case $r = t,$ we call $W$ a \emph{$t$-$\Sigma$-annulus wall}. As usual, the \emph{base annulus} of a $\Sigma$-walloid $W$ is the cylindrical concatenation of the bases of its segments.
Note that, by definition, $W$ is subcubic and a subdivision of a $3$-connected graph.

Also, for each $i \in [p],$ we define the \emph{$i$-th exceptional cycle} of $W$ as the cycle obtained by concatenating the following paths:
\begin{itemize}
	\item the $i$-th horizontal path of each segment of $W$ that is not a handle or a crosscap segment; and
	\item the $i$-th exceptional path of each handle and crosscap segment of $W.$
\end{itemize}

\medskip
It is known that $\Sigma^{(\sfh, \sfc)}$-annulus walls are universal graphs with respect to minors, for graphs embeddable on $\Sigma,$ for any given surface $\Sigma$ homeomorphic to $\Sigma^{(\sfh, \sfc)}.$
The following essentially follows from the results of Gavoille and Hilaire \cite{GavoilleH2023Minor}, and was first observed by Thilikos and Wiederrecht in~\cite{ThilikosW2024Excluding} with exponential bounds.
The following statement with polynomial bounds is from the work of Gorsky, Seweryn, and Wiederrecht~\cite{GorskySW2025Polynomial}.

\begin{proposition}[\cite{GorskySW2025Polynomial}]\label{prop:surface_walls_universal} There is a universal constant $a_{\ref{prop:surface_walls_universal}}$ such that for all integers $\sfh, \sfc\in \N$, every graph $H$ that embeds on $\Sigma^{(\sfh, \sfc)}$ is a minor of the $t$-$\Sigma^{(\sfh, \sfc)}$-annulus wall, where $t = a_{\ref{prop:surface_walls_universal}}\cdot  g^{4} (|H| + g)^{2}$ and $g = 2\sfh + \sfc.$
\end{proposition}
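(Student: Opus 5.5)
The plan is to show that every graph $H$ embeddable on $\Sigma^{(\sfh,\sfc)}$ is a minor of a sufficiently large $\Sigma^{(\sfh,\sfc)}$-annulus wall. I will do this by composing two reductions: first embed $H$ into a fixed ``universal'' model for $\Sigma$ whose size is polynomial in $|H|$ and $g$, and then realize that model as a minor of the annulus wall. The first step is essentially the Gavoille--Hilaire result. A graph $H$ embedded on $\Sigma$ with genus $g$ can be cut along a system of at most $\Oh(g)$ non-contractible curves into a planar piece. The number of curves needed in such a cut system is bounded linearly in $g$, and each curve crosses the drawing in at most $\Oh(|H|+g)$ points after a suitable re-embedding or subdivision. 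This yields a planar graph $H'$ of size $\Oh(g(|H|+g))$, drawn in a disk. On the boundary of the disk, $\Oh(g)$ arcs appear in pairs, and each pair is identified according to a handle pattern ($aba^{-1}b^{-1}$) or a crosscap pattern ($aa$). In this decomposition, $H$ is recovered from $H'$ by contracting edges across the identified boundary copies.

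The second step is to take the standard fact that a planar graph on $n$ vertices drawn in a disk with boundary vertices in prescribed cyclic order is a minor of an $\Oh(n)\times\Oh(n)$ grid. The boundary vertices are routed to the top row in that order, which is possible because grid minors of planar graphs can be taken with the outer face mapped to the grid boundary. I will then build the $\Sigma$-annulus wall so that the wall segment and the cylindrical structure host this grid model of $H'$. The $\sfh$ handle segments and $\sfc$ crosscap segments provide, through their rainbow-like edge families ($v_iu_{t-i+1}$, $v_i'u'_{t-i+1}$ for handles and $v_iu_{2t+i}$ for crosscaps), exactly the boundary identifications needed. Each pair of identified arcs, carrying at most $\Oh(|H|+g)$ crossing points, is routed through the top boundary vertices of the corresponding segment. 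The orientation pattern of the segment's edges matches the pattern of the gluing.

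The main obstacle is routing: the identified arcs must be connected to the correct segments using disjoint paths inside the wall without destroying planarity of the model of $H'$. My first attempt will be to shrink $H'$ into a single brick-region of size $\Oh(g(|H|+g))$ and then fan out. The $\Oh(g(|H|+g))$ boundary vertices travel along disjoint rows and columns of the base annulus toward the top boundaries of the $\Oh(g)$ special segments, preserving their cyclic order. This is possible because all segments are concatenated in a cycle consistent with the boundary word of the cut system. This costs a factor $g$ in width for each of the $\Oh(g)$ segment crossings, together with the squared size from the grid embedding. Together these give the bound $t=a\cdot g^4(|H|+g)^2$. Since the statement is quoted from \cite{GorskySW2025Polynomial}, I would cite their bookkeeping for the exact polynomial, and verify only that the composition of the two steps gives a bound of this form.
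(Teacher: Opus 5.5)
The paper gives no proof of this statement. It is imported from \cite{GorskySW2025Polynomial}, with the remark that it essentially follows from \cite{GavoilleH2023Minor}. Your architecture is the natural route of this kind. You cut the surface open along a system of loops, model the resulting plane graph $H'$ in a subwall of the wall segment with its boundary copies on the top row in cyclic order, fan these out through the base annulus to the top boundary vertices of the handle and crosscap segments, and recover $H$ by contracting the handle and crosscap edges. Your gluing check is right: the blocks $v,v',u,u'$ with edges $v_iu_{t-i+1}$ and $v'_iu'_{t-i+1}$ realise $aba^{-1}b^{-1}$, and the crosscap edges realise $cc$. (Those edges should read $v_iu_i$, as the exceptional path shows.) A brick is a face of the wall, so the brick-region you mention should be a subwall.

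What is missing is a justification of Step~1, which carries essentially all of the content. Attributing it loosely to Gavoille and Hilaire is not enough, because you use a specific statement: there are $g$ loops through one base point in a face of $H$ whose cut-open boundary word is \emph{exactly} the word of the wall ($\sfh$ commutators followed by $\sfc$ squares), and which together cross $H$ only polynomially often. A tree--cotree cut graph gives a disk with boundary of size $\Oh(|H|+g)$, but with an arbitrary gluing word. Normalising that word by cut-and-paste is the canonical-polygonal-schema problem, and the $\Oh(gn)$ total-complexity construction of Lazarus, Pocchiola, Vegter and Verroust is for orientable triangulated surfaces. For $\sfc\geq 1$ you additionally need the mixed normal form rather than the usual all-crosscap one. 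A non-cellular embedding of $H$ must also first be completed to a cellular one. As written, the claim that each curve crosses the drawing in $\Oh(|H|+g)$ points is unsupported; either prove such a bound or cite one precisely.

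Your bookkeeping is also off, though harmlessly. An $n'$-vertex plane graph, with prescribed boundary vertices reaching the top row in order, is a minor of a grid of \emph{side} $\Oh(n')$; the square is its number of vertices. Fanning out $M$ boundary copies costs $\Oh(M)$ rows. So under your own crossing bound the construction yields $t=\Oh(g(|H|+g))$. That is stronger than the statement, but your account of where $g^4(|H|+g)^2$ comes from does not follow from your steps.
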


Moreover, it is fairly easy to observe that for every surface $\Sigma^{(\sfh, \sfc)},$ every large enough surface wall admits an embedding on $\Sigma^{(\sfh, \sfc)}$ of high representativity, which is moreover unique and satisfies a number of desired properties.

\begin{lemma}\label{lemma:surface_wall_repr}
    There is a universal constant $a_{\ref{lemma:surface_wall_repr}}$ such that for all $\sfh, \sfc\in \N$ and $t\in \N_{\geq 4},$ every $(a_{\ref{lemma:surface_wall_repr}}g + 2t)$-$\Sigma^{(\sfh, \sfc)}$-annulus wall $W$ has an embedding $\Gamma$ on $\Sigma^{(\sfh, \sfc)}$ such that the following conditions hold:
    \begin{itemize}
        \item $\Gamma$ has representativity $a_{\ref{lemma:surface_wall_repr}}g + 2t$;
        \item every embedding of $W$ on $\Sigma^{(\sfh, \sfc)}$ is homeomorphic to $\Gamma$; and
        \item for all $i \in [t],$ $\Gamma$ maps the $i$-th exceptional cycle $C_i$ of $W$ to a contractible curve $\gamma_i$ on $\Sigma^{(\sfh, \sfc)}.$ Moreover, there is a disk component of $\Sigma^{(\sfh, \sfc)}-\gamma_i$ that contains the drawing of the $i'$-th exceptional cycle of $W$ in~$\Gamma$, for all $i'<i$.
    \end{itemize}
\end{lemma}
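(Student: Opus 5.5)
The plan is to write down $\Gamma$ explicitly and then verify the three conditions, relying on \cref{prop:representativity_implies_uniqueness} for uniqueness.

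\textbf{Constructing $\Gamma$.} Start with the cylindrical concatenation of the segment bases. This is an annulus wall, and we embed it in a disk with one hole, so that the top paths of the bases all lie on one boundary circle (the side of the exceptional face).

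Each handle segment is drawn by attaching a handle, whose two feet sit near the two interleaved rainbows $v_iu_{t-i+1}$ and $v'_iu'_{t-i+1}$. Concretely, the two families of parallel chords cross each other in the planar picture; we reroute one family over a handle to remove the crossings. Each crosscap segment is drawn by attaching a crosscap, and its chords $v_iu_{2t+i}$ (parallel, orientation-reversing) pass through the crosscap. The outer face of the annulus gets a disk glued in.

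The result is an embedding of $W$ in $\Sigma^{(\sfh,\sfc)}$. Since $W$ is a subdivision of a 3-connected graph, it is cellular.

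\textbf{Representativity.} The main work is showing every non-contractible noose meets at least $a g+2t$ vertices.

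First, a noose that avoids the top region $\bigcup$ of the handle/crosscap regions is contained in a planar annulus part. There it can only be non-contractible if it is the core of the annulus; but the disk glued to the outer face makes that core contractible.

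So a non-contractible noose must pass through some handle or crosscap region. Within a handle or crosscap region, any non-contractible curve has to cross one of two things:
\begin{itemize}
\item the $\geq t$ parallel chords of that region (traversing the handle or crosscap), or
\item the $r$ concentric base cycles, in order to leave and return around the hole.
\end{itemize}

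To make this precise we exhibit, for each generator of the fundamental group, families of disjoint cycles in $W$ that any essential noose must cross. These are the $t$ rainbow-type chord cycles and the $r$ base cycles. Since $r = t' = a g+2t$ and the chord families have size $t' \geq a g+2t$, the representativity bound follows. We absorb the loss of a constant per segment (the boundary rows and columns) into $a\,g$, using that there are at most $g$ special segments.

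This counting step, ruling out cheap nooses that weave between several handle segments, is where I expect the main obstacle. The tool I would use is the standard face-width criterion: exhibit a set of pairwise disjoint essential cycles, in number $\geq \rho$, dual to every homotopy class. For a sanity check, one can alternatively invoke \cref{prop:representativity_gives_minor} in reverse.

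\textbf{Uniqueness.} Once representativity is $\geq 100\log g/\log\log g$ (guaranteed by choosing $a$ large), \cref{prop:representativity_implies_uniqueness} gives uniqueness up to homeomorphism. It applies to the 3-connected graph of which $W$ is a subdivision, and subdividing edges does not change embeddings.

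\textbf{Exceptional cycles.} By construction, the $i$-th exceptional cycle $C_i$ goes along horizontal path $i$ in planar segments. In each handle or crosscap segment it follows the exceptional path, which enters and exits the handle/crosscap along chords in a way that "goes around" the attachment. We check homologically that $C_i$ bounds the disk consisting of:
\begin{itemize}
\item the exceptional face,
\item the top $i-1$ rows of all segments, and
\item the inner parts of the handle/crosscap attachments lying strictly inside the nested exceptional paths.
\end{itemize}
Thus $\gamma_i$ is contractible.

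The nesting follows because $W_i = W'_{i-1}$ is built from strictly inner paths. Hence the disk bounded by $\gamma_{i}$ contains $\gamma_{i'}$ for $i'<i$. We verify this by induction on $i$, peeling one exceptional path at a time.
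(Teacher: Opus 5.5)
Your construction of $\Gamma$ and the uniqueness step via \cref{prop:representativity_implies_uniqueness} match the paper. For the third bullet you take a different, valid route.

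\textbf{Third bullet.} The paper argues by genus. The component $D$ of $W-V(C_i)$ containing the outer base cycles still contains a $100g$-$\Sigma^{(\sfh,\sfc)}$-annulus wall, which cannot embed in a surface of smaller genus. So $\gamma_i$ must be contractible, since otherwise $D$ would embed in $\Sigma^{(\sfh,\sfc)}$ cut along $\gamma_i$. Nesting then follows because paths from $C_{i'}$ and from $D$ reach $C_i$ from opposite sides.

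You instead name the disk. In each handle or crosscap segment, the first exceptional path is exactly the boundary walk of the exceptional face along the extreme chords, so $C_1$ bounds the exceptional face. Then $C_i$ bounds that face together with a collar: the bricks between rows $1$ and $i$, plus the strips of each band between its extreme chords and the chords used by the $i$-th exceptional path. Note these are strips along the extreme chords, not ``inner parts'' of the handles. Your route is more elementary and gives nesting for free. The paper's avoids face-by-face bookkeeping at the price of a second appeal to Seymour--Thomas.

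Two justifications in your version need fixing.
\begin{itemize}
\item ``Homologically'' is the wrong notion: a separating essential curve is null-homologous but not contractible. The explicit union of faces is the real argument.
\item Cellularity does not follow from $3$-connectivity; $K_4$ drawn inside a disk on the torus is a counterexample. It holds by construction: removing the band strips from $\Delta_{\mathsf{in}}$, with its handles and crosscaps, leaves a disk, namely the exceptional face.
\end{itemize}

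\textbf{Representativity.} The paper asserts this as evident from the construction, but your sketch would fail as written. Disjoint cycles ``dual to every homotopy class'' give no lower bound for separating essential nooses, such as one cutting off a single handle, because those have zero intersection number with every cycle. Also, \cref{prop:representativity_gives_minor} yields minors from representativity, not the converse.

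Here is an argument with no loss to absorb into $ag$, giving exactly $t'=ag+2t$. Suppose a noose meets fewer than $t'$ vertices.
\begin{itemize}
\item Some base cycle $R$ avoids them.
\item For each of the $g$ bands, some chord together with its two column pieces down to $R$ avoids them, since these $t'$ paths per band are pairwise disjoint.
\item The union $K$ of $R$ and these $g$ paths is connected and cellularly embedded. Indeed, the side of $R$ carrying the handles and crosscaps becomes a disk once cut along one arc through each band.
\item The noose is disjoint from $K$, so it lies in a face of $K$ and is therefore contractible.
\end{itemize}
Equality is witnessed by the noose through the feet $v_1,\dots,v_{t'}$ of one chord family, closed through the exceptional face. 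It crosses a cycle through a single chord exactly once, so it is non-contractible.
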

\begin{proof}
	We choose $a_{\ref{lemma:surface_wall_repr}}\coloneqq 100$. For brevity, let us denote $t'\coloneqq 100g + 2t$.
	Since the statement of the lemma is preserved under subdividing edges,
	we may focus on the case when $W$ is the elementary $t'$-$\Sigma^{(\sfh, \sfc)}$-annulus wall. We also assume that $g\geq 1$, for the statement is trivial when $g=0$ (i.e., $\Sigma^{(\sfh, \sfc)}$ is the sphere) due to $W$ being $3$-connected.
	
	Let us first construct the embedding $\Gamma$. Let $W_0$ be the subgraph of $W$ obtained by removing, for every handle segment and every crosscap segment of $W$, the edges added to the top boundary of the segment in the construction of the handle/crosscap segments. Thus, $W_0$ is an annulus wall with $t'$ rows and $(1+4\sfh+4\sfc)t'$ columns. We start by taking a sphere $\Sigma_0$ and embedding $W_0$ into $\Sigma_0$ naturally; call this embedding $\Gamma_0$. Note that the inner cycle of $W_0$ bounds a face $\Delta_{\mathsf{in}}$ in $\Gamma_0$.
	
	Now, amend $\Sigma_0$ into a surface homeomorphic to $\Sigma^{(\sfh,\sfc)}$ by adding $\sfh$ handles and $\sfc$ crosscaps inside the face $\Delta_{\mathsf{in}}$. Then, for every handle segment $W'$ of $W$, use one of those handles to embed the additional edges of $W'$ that are not present in $W_0$. Similarly, for every crosscap segment $W'$ of $W$, use one of the crosscaps to embed the edges of $W'$ that are not present in $W_0$.
    Thus, we obtain an embedding $\Gamma$ of $W$ in~$\Sigma^{(\sfh,\sfc)}$.
	
	It is straightforward to see from the construction that the representativity of $\Gamma$ is $t'\geq 100g$. Therefore, from \cref{prop:representativity_implies_uniqueness} we infer that every embedding of $W$ on $\Sigma^{(\sfh,\sfc)}$ is homeomorphic to $\Gamma$. This establishes the first two assertions from the lemma statement.
	
	For the last assertion, for every $j\in [t]$ let $C_j$ be the $j$-th exceptional cycle of $W$ and $\gamma_j$ be the closed curve on $\Sigma^{(\sfh,\sfc)}$ that is the image of $C_j$ in $\Gamma$. Fix some $i\in [t]$.
	Let $D$ be the connected component of $W-V(C_i)$ that contains the last $100g$ base cycles of $W$.
	Observe that $D$ still contains a $100g$-$\Sigma^{(\sfh,\sfc)}$-annulus wall, so by applying \cref{prop:representativity_implies_uniqueness} to this wall, we infer that $D$ cannot be embedded on a surface of genus smaller than $g$. However, $D$ clearly has an embedding on the surface $\Sigma-\gamma_i$. We conclude that $\gamma_i$ must be contractible, for otherwise the genus of every component of $\Sigma-\gamma_i$ would be strictly smaller than $g$. Therefore, $\gamma_i$ is two-sided and $\Sigma-\gamma_i$ consists of two components: the component containing $D$ that is homeomorphic to $\Sigma^{(\sfh,\sfc)}$ with a cuff, and a disk $\Delta$. We finally observe that every curve $\gamma_{i'}$, $i'<i$, must lie within $\Delta$, for within the base annulus of $W$ one can find paths $P,Q$ leading from $C_{i'}$ to $C_i$ and from $D$ to~$C_i$, respectively, and these paths reach $C_i$ on the opposite sides of $C_i$ in the embedding $\Gamma$.
\end{proof}

Based on the lemma above, we prove a follow-up lemma that will be instrumental in generalizing our arguments from the planar case to surfaces.

\begin{lemma}\label{lemma:walloid_in_disk}
    There is a function $f_{\ref{lemma:walloid_in_disk}} \colon \N^{3} \to \N$ such that for all $\sfh, \sfc\in \N$ and $t \in \N_{\geq 4}$ the following holds.
    Suppose $W^\star$ is an $(f_{\ref{lemma:walloid_in_disk}}(\sfh, \sfc, t), f_{\ref{lemma:walloid_in_disk}}(\sfh, \sfc, t), a, b)$-$\Sigma^{(\sfh, \sfc)}$-walloid $W,$ for some $a, b \in \N.$
    Then, there is a $(t, t, a, b)$-walloid $W \subseteq W^\star$ such that
    \begin{itemize}
        \item the base cycles of $W$ are the first $t$ exceptional cycles of $W^\star$;
        \item every segment $W_{1}$ of $W$ is a subgraph of a segment $W_{2}$ of $W^\star$ and $W_{2} - W_{1}$ is a subgraph of the base annulus of $W^\star$; and
        \item the outermost cycle of $W$ bounds a closed disk $\Delta \subseteq \Sigma^{(\sfh, \sfc)}$ such that the drawing of $W$ induced by $\Gamma_{W^\star}$ on $\Sigma^{(\sfh, \sfc)}$ is an embedding of $W$ in $\Delta.$
    \end{itemize}
    Moreover, we have that $f_{\ref{lemma:walloid_in_disk}}(\sfh, \sfc, t) = a_{\ref{lemma:walloid_in_disk}} g + t$ for some universal constant $a_{\ref{lemma:walloid_in_disk}}\in \N$, where $g=2\sfh+\sfc$.
\end{lemma}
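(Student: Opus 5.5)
The plan is to take $f_{\ref{lemma:walloid_in_disk}}(\sfh,\sfc,t)\coloneqq a_{\ref{lemma:surface_wall_repr}}g+2t$, so that $a_{\ref{lemma:walloid_in_disk}}\coloneqq a_{\ref{lemma:surface_wall_repr}}$ up to the harmless replacement of $t$ by $2t$. The walloid $W$ will be built by restricting $W^\star$ to its first $t$ exceptional cycles. For every segment $W_2$ of $W^\star$, I define the corresponding segment $W_1$ of $W$ as follows.
\begin{itemize}
	\item \textbf{Flap or vortex segment.} Take the top $t$ rows of its base, the top $t$ columns relevant to the rainbow or to the attachment of the nest, and the rainbow or the nest itself, unchanged.
	\item \textbf{Wall segment.} Take its top $t$ rows and the top portions of $t$ of its columns.
	\item \textbf{Handle or crosscap segment.} Take the union of its first $t$ exceptional paths, together with enough vertical subpaths between consecutive exceptional paths to form an $(t\times t)$-wall segment.
\end{itemize}
By construction, $W_2-W_1$ then consists only of rows and columns of the base of $W_2$, that is, of subgraphs of the base annulus of $W^\star$. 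The cylindrical concatenation of these pieces is a $(t,t,a,b)$-walloid whose base cycles are exactly the first $t$ exceptional cycles of $W^\star$. This settles the first two bullets, modulo checking that the exceptional paths inside a handle or crosscap segment can be joined by vertical paths into a genuine wall structure.

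For the first step of the third bullet, I restrict the embedding $\Gamma_{W^\star}$ to the subgraph of $W^\star$ formed by its surface-wall part. Its base annulus is then a $(a_{\ref{lemma:surface_wall_repr}}g+2t)$-$\Sigma^{(\sfh,\sfc)}$-annulus wall, to which I apply \cref{lemma:surface_wall_repr}. This gives that the image $\gamma_t$ of the $t$-th exceptional cycle $C_t$ is contractible, and that $\Sigma^{(\sfh,\sfc)}-\gamma_t$ has a disk component $\Delta$ containing all $\gamma_{i'}$ for $i'<t$. The uniqueness part of \cref{lemma:surface_wall_repr}, or directly \cref{prop:representativity_implies_uniqueness}, is what guarantees that the restriction of $\Gamma_{W^\star}$ agrees with the embedding $\Gamma$ described there, so these conclusions transfer to $\Gamma_{W^\star}$.

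The remaining step is to show that all of $W$, including the rainbows, nests and vortex pieces, is drawn inside $\Delta$. For a vertex of $W$ not on $C_t$, I will connect it by a path in $W-V(C_t)$ to $C_1$. Such a path exists inside each segment: move upward along a column, or within the flap or vortex attachment, which all lie "above" $C_t$ in the row order. Since $C_1$ lies in $\Delta$, so does the vertex. For the opposite side, the component $D$ of $W^\star-V(C_t)$ containing the last base cycles lies in the other component of $\Sigma^{(\sfh,\sfc)}-\gamma_t$. This is shown by the same opposite-sides argument as in the proof of \cref{lemma:surface_wall_repr}.

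The main obstacle is this last claim. I must ensure that the parts of $W^\star$ kept in $W$ are not connected to $D$ in $W^\star-V(C_t)$, so that no edge of $W$ escapes $\Delta$ through a handle or crosscap. This is exactly where the handle and crosscap edges matter. I will argue combinatorially that after deleting the $t$-th exceptional path, the handle and crosscap edges used by the first $t-1$ exceptional paths lie in a component separated from the deeper handle structure. The nested construction $W_i\coloneqq W'_{i-1}$ of exceptional paths is designed for this, and I will try to verify it directly on the elementary segments. Once every edge of $W$ lies in $\Delta$, the induced drawing is an embedding of $W$ in the closed disk bounded by $C_t$, which is the outermost cycle of $W$.
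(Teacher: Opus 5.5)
Your proposal is correct and follows the paper's proof: restrict $W^\star$ to its first $t$ exceptional cycles, keep the rainbows and nests, and apply \cref{lemma:surface_wall_repr} to $C_t$. Your observation that every vertex of $W$ off $C_t$ reaches $C_1$ inside $W-V(C_t)$, combined with $\gamma_1$ lying in the disk component of $\Sigma^{(\sfh,\sfc)}-\gamma_t$, already proves the third bullet, since such paths cannot cross $\gamma_t$. So the ``main obstacle'' about separation from $D$ is a consequence of that conclusion, not a prerequisite, and needs no segment-by-segment check. Also drop the extra vertical rungs in handle and crosscap segments: the paper keeps only the first $t$ exceptional paths there, as subdivided links between consecutive segments, and the first exceptional path has only constantly many vertices inside an elementary handle segment, so no $(t\times t)$ wall segment can be carved out there.
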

\begin{proof}
    Set $a_{\ref{lemma:walloid_in_disk}}\coloneqq a_{\ref{lemma:surface_wall_repr}}$. Thus $W^\star$ is a $(t',t',a,b)$-$\Sigma^{(\sfh, \sfc)}$-walloid, where $t'\coloneqq f_{\ref{lemma:walloid_in_disk}}(\sfh, \sfc, t)=a_{\ref{lemma:surface_wall_repr}}g+2t$.
    
    We construct $W$ from $W^\star$ by removing all but the first $t$ base cycles of $W^\star$ and, additionally in every handle or crosscap segment $W'$ of $W^\star$, removing all the edges that do not participate in the first $t$ exceptional paths of $W'$. It is straightforward to see that $W$ is thus a $(t, t, a, b)$-walloid, as promised. Moreover, the base cycles of $W$ are the first $t$ exceptional cycles of $W^\star$, and every segment $W_1$ of $W$ is a subgraph of a segment $W_2$ of $W^\star$ with $W_2-W_1$ being a subgraph of the base annulus of $W^\star$. This settles the first two assertions from the lemma statement. The last assertion follows immediately from the last assertion of \cref{lemma:walloid_in_disk}, applied to the $t$-th exceptional cycle of $W^\star$.
\end{proof}

\paragraph{(Tight) $\Sigma$-renditions.}

We extend the notion of $\Sigma$-renditions to graphs admitting an embedding on a arbitrary surface $\Sigma$ in the obvious way.
All previously defined notions like ground vertices, cells, aligned disks, and grounded subgraphs lift to this setting directly.
So does the notion of tight renditions.

Below we extend \cref{obs:tight_rendition} to surfaces.

\begin{observation}\label{obs:tight_rendition_surfaces} There is an algorithm that, given a graph $G$ embedded on a surface $\Sigma^{(\sfh, \sfc)}$ with a $t$-$\Sigma^{(\sfh, \sfc)}$-annulus wall, for some $\sfh, \sfc \in \N$ and $t \in \N_{\geq a_{\ref{lemma:surface_wall_repr}}g + 4},$ computes a tight $\Sigma$-rendition $\rend$ of $G$ with $W$ being $\rend$-grounded in time $\Oh(|G| + \|G\|).$
\end{observation}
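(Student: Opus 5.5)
The proof will mirror that of \cref{obs:tight_rendition}, with the uniqueness of highly representative embeddings from \cref{lemma:surface_wall_repr} standing in for the uniqueness of planar embeddings of $3$-connected graphs. First, let $C_W$ be the connected component of $G$ containing $W$. I start from the trivial $\Sigma$-rendition $\rend$: every vertex of $C_W$ is a ground vertex, and every edge of $C_W$ forms its own cell with periphery of size $2$. The remaining components of $G$ are not attached to $W$. I would place them in a disk $\Delta$ inside some face of the embedding of $C_W$, with $\bd(\Delta)\cap G=\emptyset$, and make $\Delta$ a cell with empty periphery. I may re-embed these components, because they are disconnected from $C_W$; since they are planar relative to the rendition, they can be redrawn inside a disk. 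This yields a rendition that satisfies the first tightness condition and in which $W$ is $\rend$-grounded.

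Next I would make the $\rend$-torso $T$ $2$-connected. Since $W$ is $\rend$-grounded, $T$ contains a $t$-$\Sigma^{(\sfh,\sfc)}$-annulus wall $W'$ of which $W$ is a subdivision. $W'$ is a subdivision of a $3$-connected graph, so it lies in a unique block $B$ of $T$; I compute the blocks by Tarjan's linear-time algorithm. Now let $u\in V(B)$ be a cut-vertex, and let $(X,Y)$ be the separation with $X\cap Y=\{u\}$ in which $X$ contains the component of $T-u$ that meets $B$. I need a closed curve $J$ meeting $T$ only at $u$ that bounds a \emph{disk} $\Delta_J$ containing all of $Y$ and internally disjoint from $X$. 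Given such curves, I replace all cells inside each $\Delta_J$ by the single cell $\Delta_J$, whose periphery is $\{u\}$. Then every torso vertex lies in $B$, so the torso is $2$-connected, and the first condition is preserved because each new cell has a periphery of size one.

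The main obstacle is showing that $J$ can be chosen contractible, bounding a disk, rather than a non-contractible curve; on the sphere this step is automatic. My plan is to use $t\geq a_{\ref{lemma:surface_wall_repr}}g+4$. By \cref{lemma:surface_wall_repr}, the embedding of $W'$ induced from $G$ has representativity larger than $1$ and is homeomorphic to the standard one. All faces of $W'$ are therefore open disks, and $Y\setminus\{u\}$, being connected in $T-u$ and disjoint from $W'$ (since $W'\subseteq B$ and $Y$ meets $B$ only in $u$), lies in a single face $f$ of the embedding of $W'$, which is a disk. Inside $f$, the piece $Y$ touches the rest of the graph only at $u$. Hence a curve that follows closely around the drawing of $G[Y]$ and passes through $u$ meets $T$ only at $u$, and it bounds a disk inside $f$, which supplies the required $\Delta_J$. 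One subtlety to check is that different cut-vertices give disks that are either nested or have disjoint interiors; this holds by the block–cut tree structure, and I take only the maximal ones. Every step runs in linear time.
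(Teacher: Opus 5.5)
Your proposal is correct and follows the paper's route. Both rerun the planar argument and use the representativity from \cref{lemma:surface_wall_repr} to guarantee that $J$ bounds a disk: the paper notes that a closed curve meeting the torso only at $u$ meets $W$ in at most one vertex and is therefore contractible, while you equivalently build $J$ inside a face of $W'$, which is a disk by cellularity.

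One minor fix is needed. $Y\setminus\{u\}$ need not be connected, and if $u\in V(W')$ its components may lie in different faces of $W'$. So you should take one disk per component of $T-u$ rather than a single $\Delta_J$ containing all of $Y$. The same cellularity also justifies your claim that the components other than $C_W$ are planar, since each lies in a disk face of $C_W$.
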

\begin{proof}[Proof sketch]
	The proof proceeds exactly like the proof of \cref{obs:tight_rendition}.
    The only caveat is that we are no longer in the planar setting and therefore we cannot argue that minimal separators can be traced by contractible curves on the surface in general.
    However, since $t \geq a_{\ref{lemma:surface_wall_repr}}g + 4,$ by \cref{lemma:surface_wall_repr}, we may assume to be working with the unique embedding of $W$ that has representativity $t \geq 4.$
    Therefore, any closed curve $\gamma$ intersecting the embedding of the $\rend$-torso $T$ of $G$ only at a single cut-vertex of $T,$ has to be contractible, for one of the components of $\Sigma-\gamma$ must embed the entirety of $W$.
    Therefore, we may argue that the closed curve $J$ that bounds a disk as in the proof of \cref{obs:tight_rendition} exists and conclude in the same way.
\end{proof}

\paragraph{Representing terminals.}

We first need the statement for surface-embeddable graphs corresponding to \cref{lst_fi}. This, again, was proved by Paul, Protopapas, Thilikos, and Wiederrecht~\cite{PaulPTS2025LocalIndex}.

\begin{proposition}[{\cite[Lemma 8.1]{PaulPTS2025LocalIndex}}]\label{lst_fi_genus} There exist functions $\mathsf{annulus}_{\ref{lst_fi_genus}} \colon \N^{6} \to \N$, $\mathsf{breadth}_{\ref{lst_fi_genus}} \colon \N^{2} \to \N$, and $\depth_{\ref{lst_fi_genus}} \colon \N^{3} \to \N$ such that for all $r,t \in \N_{\geq 4},$ all $\sfh, \sfc \in \N,$ and all $\ell, q \in \N_{\geq 1},$ the following holds.
	
Suppose $G$ is a graph embeddable on a surface $\Sigma$ of genus $2\sfh + \sfc$ with a tight $\Sigma$-rendition $\rend$, $W \subseteq G$ is a $\rend$-grounded $\mathsf{annulus}_{\ref{lst_fi_genus}}(r, t, \sfh, \sfc, \ell, q)$-$\Sigma$-annulus wall, and $\chi$ is a cell-coloring of $\rend$ of capacity at most $\ell.$

Then there is a $\rend$-well-grounded $(r, t, a, b)$-$\Sigma$-walloid $W' \subseteq G$ for some $a \in \N$ and $b \leq \mathsf{breadth}_{\ref{lst_fi_genus}}(\ell, q),$ with a sandwich sequence $\langle (\FDisk^{\insf}_{j},\FDisk^{\outsf}_{j})\colon j\in [b] \rangle$, such that the following conditions hold:
	\begin{enumerate}
		\item $W'$ is controlled by $W$;
		\item $W'$ has depth at most $\depth_{\ref{lst_fi_genus}}(t, \ell, q)$ with respect to $\rend$; and
		\item For every $\alpha \in \chi(\rend)$, if $\rend_{\alpha} = \{ \Delta \in \rend \mid \chi(\Delta) = \alpha\}$, then either
		\begin{itemize}
			\item $\bigcup \rend_{\alpha} \subseteq \bigcup_{j \in [b]} \inte(\FDisk^{\insf}_{j})$; or
			\item there is a sequence of $q$ consecutive flap segments of $W'$, each hosting a cell from $\rend_{\alpha}$.
		\end{itemize}
	\end{enumerate}
	Moreover, it holds that
	\begin{align*}
		\mathsf{annulus}_{\ref{lst_fi_genus}}(r, t, \sfh, \sfc, \ell, q) \ &\in \ (r + (\ell q + 1)^{q \cdot 2^{\Oh(\ell)}} \cdot t)^{2^{\Oh(\ell)}} \cdot (\sfh + \sfc + 1)^{2^{\Oh(\ell)}}\\
		\mathsf{breadth}_{\ref{lst_fi_genus}}(\ell, q) \ &\in \ q \cdot 2^{\Oh(\ell)}\text{, and}\\
		\depth_{\ref{lst_fi_genus}}(t, \ell, q) \ &\in \ (\ell q + 1)^{q \cdot 2^{\Oh(\ell)}} \cdot t.
	\end{align*}
\end{proposition}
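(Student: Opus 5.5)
The statement is \cref{lst_fi_genus}, which the paper imports from \cite[Lemma 8.1]{PaulPTS2025LocalIndex}. The plan is therefore not to re-derive it, but to check that the hypotheses and conclusions in our terminology match those of the cited lemma, mirroring the discussion after \cref{lst_fi}.

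\textbf{Step 1: translate the objects.} A $\Sigma$-rendition in our sense, with cells having periphery of size at most $2$ and vortex structure only emerging later, is a special case of the $\Sigma$-decompositions of \cite{PaulPTS2025LocalIndex}. One has to check that tightness of $\rend$ corresponds to their requirement that the torsoid be $3$-connected. In the planar case this was asserted directly. On a surface I would argue via \cref{obs:tight_rendition_surfaces}, whose precondition is guaranteed because the annulus function can be taken at least $a_{\ref{lemma:surface_wall_repr}}g+4$. By \cref{lemma:surface_wall_repr}, the wall $W$ then has a unique highly representative embedding. Hence every $1$- or $2$-separation of the torso is traced by a contractible noose, exactly as in the sphere case, and the two connectivity notions coincide.

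\textbf{Step 2: match the outcome.} Their Lemma 8.1 produces a $\Sigma$-walloid, namely a cylindrical concatenation with a surface wall in place of the plain wall segment, together with flap and vortex segments. It also produces a coarser decomposition in which each inner vortex disk is collapsed into a single vortex cell. As in the planar remark, we discard this cell-replacement step and keep $\rend$. We then read off the three guarantees:
\begin{itemize}
\item control by $W$;
\item depth of the outer vortex disks, coming from the linear decompositions of their vortices, whose adhesion is bounded by $\depth_{\ref{lst_fi_genus}}$;
\item the dichotomy for every color class, i.e., it either lies inside the interiors of the inner vortex disks or is hosted by $q$ consecutive flap segments.
\end{itemize}
Here the sandwich sequence is built from the inner and outer cycles of each vortex segment's nest.

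\textbf{Step 3: bounds.} The functions $\mathsf{breadth}_{\ref{lst_fi_genus}}$ and $\depth_{\ref{lst_fi_genus}}$ are inherited unchanged. The extra factor $(\sfh+\sfc+1)^{2^{\Oh(\ell)}}$ in $\mathsf{annulus}_{\ref{lst_fi_genus}}$ comes from the size of the surface wall needed to accommodate the handles and crosscaps. This factor dominates the additive $a_{\ref{lemma:surface_wall_repr}}g+4$ used in Step 1.

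\textbf{Main obstacle.} The delicate point is Step 1, justifying on a non-planar surface that our tightness condition is the right specialization of their torsoid connectivity. Representativity is what rules out non-contractible low-order separations. I would first try to derive this directly from the uniqueness statement in \cref{prop:representativity_implies_uniqueness}, applied to $W$.
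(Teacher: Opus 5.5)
Your proposal matches the paper, which gives no proof of this statement but imports it from \cite[Lemma 8.1]{PaulPTS2025LocalIndex} using the same translation as in the remark after \cref{lst_fi}: renditions as special $\Sigma$-decompositions, tightness in place of the torsoid condition, and dropping the cell-replacement step; your representativity argument in Step~1 is extra justification that the paper leaves implicit. Two small corrections: since tightness is already a hypothesis, \cref{obs:tight_rendition_surfaces} itself plays no role (only the representativity reasoning from its proof sketch does), and the sandwich sequence is not built from the nest's own cycles---the inner vortex disks are the regions the cited lemma collapses into vortex cells, and the two vortex disks have their peripheries on the inner and the outer cycle of the vortex segment, which are precisely the two extremal cycles excluded from the nest, as is needed for the nest to be sandwiched.
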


From this we derive the following statement corresponding to \cref{structure_relative_terminals}.

\begin{lemma}\label{structure_relative_terminals_genus} There exist functions $\mathsf{annulus}_{\ref{structure_relative_terminals_genus}} \colon \N^{5} \to \N,$ $\depth_{\ref{structure_relative_terminals_genus}} \colon \N^{2} \to \N,$ and $\mathsf{breadth}_{\ref{structure_relative_terminals_genus}} \colon \N \to \N$ such that, for all $r,t \in \N_{\geq 4},$ all $\sfh, \sfc \in \N,$ and all $q \in \N_{\geq 1}$ the following holds.

	Suppose $G$ is a graph embeddable on a surface $\Sigma$ of genus $2\sfh + \sfc$ with an $\mathsf{annulus}_{\ref{structure_relative_terminals_genus}}(r, t, \sfh, \sfc, q)$-$\Sigma$-annulus wall $W \subseteq G,$ and $S, T \subseteq V(G)$ is a pair of vertex subsets.
    Also, suppose $F \subseteq V(G)$ is a vertex subset of size strictly less than $q.$
	
	Then, there is a $\Sigma$-rendition $\rend$ of $G$ and a $\rend$-well-grounded $(r, t, a, b)$-$\Sigma$-walloid $W' \subseteq G$ for some $a \in \N$ and $b \leq \mathsf{breadth}_{\ref{structure_relative_terminals_genus}}(q),$ with a sandwich sequence $\langle (\FDisk^{\insf}_{j},\FDisk^{\outsf}_{j})\colon j\in [b] \rangle$, such that the following conditions~hold:
	\begin{enumerate}
		\item $W'$ is controlled by $W$;
		\item $W'$ has depth at most $\depth_{\ref{lst_fi_genus}}(t, q)$ with respect to $\rend$;
		\item $F \subseteq \bigcup_{j \in [b]} \inte(\Theta^{\insf}_{j})$; and
		\item for each $\mathfrak{R} \in \{ \rend_{S} \cap \rend_{T}, \rend_{S} \setminus \rend_{T}, \rend_{T} \setminus \rend_{S} \}$, either $\bigcup \mathfrak{R} \subseteq \bigcup_{j \in [b]} \inte(\FDisk^{\insf}_{j})$ or $W'$ $q$-represents $\mathfrak{R}.$
	\end{enumerate}
	Moreover, it holds that
	\begin{align*}
		\mathsf{annulus}_{\ref{structure_relative_terminals_genus}}(r, t, q) &\in (r + 2^{\Oh(q \log q)} \cdot t)^{\Oh(1)} \cdot (\sfh + \sfc + 1)^{\Oh(1)},\\
		\mathsf{breadth}_{\ref{structure_relative_terminals_genus}}(q) &\in \Oh(q), \quad \text{and}\\
		\depth_{\ref{structure_relative_terminals_genus}}(t, q) &\in 2^{\Oh(q \log q)} \cdot t.
	\end{align*}
\end{lemma}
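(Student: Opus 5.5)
The proof mirrors that of \cref{structure_relative_terminals}, with \cref{lst_fi_genus} in place of \cref{lst_fi}. First we need a tight $\Sigma$-rendition. We choose $\mathsf{annulus}_{\ref{structure_relative_terminals_genus}}(r,t,\sfh,\sfc,q)$ to be at least $\mathsf{annulus}_{\ref{lst_fi_genus}}(r,t,\sfh,\sfc,5,q)$, and also at least $a_{\ref{lemma:surface_wall_repr}}g+4$ with $g=2\sfh+\sfc$. Then \cref{obs:tight_rendition_surfaces} applies to $G$ and the $\Sigma$-annulus wall $W$. It yields a tight $\Sigma$-rendition $\rend$ of $G$ in which $W$ is $\rend$-grounded. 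Here we use \cref{lemma:surface_wall_repr} to fix the unique, highly representative embedding of $W$, so that $\rend$ refers to a fixed embedding of $G$ on $\Sigma$.

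Next we define the cell-coloring $\chi$ of capacity $5$ exactly as before:
\begin{itemize}
\item color $1$ for cells $\Delta$ with $G\cap\Delta$ containing a vertex of $F$;
\item color $2$ for cells (without $F$) containing both an $S$-$\per(\Delta)$-path and a $T$-$\per(\Delta)$-path;
\item color $3$ for cells containing only an $S$-$\per(\Delta)$-path;
\item color $4$ for cells containing only a $T$-$\per(\Delta)$-path;
\item color $5$ otherwise.
\end{itemize}
Color classes $2,3,4$ are precisely $\rend_S\cap\rend_T$, $\rend_S\setminus\rend_T$ and $\rend_T\setminus\rend_S$, restricted to cells avoiding $F$. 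We then apply \cref{lst_fi_genus} with $\ell=5$ and parameter $q$. This gives a $\rend$-well-grounded $(r,t,a,b)$-$\Sigma$-walloid $W'$ that is controlled by $W$, with $b\le\mathsf{breadth}_{\ref{lst_fi_genus}}(5,q)\in\Oh(q)$ and depth at most $\depth_{\ref{lst_fi_genus}}(t,5,q)\in 2^{\Oh(q\log q)}\cdot t$. The last bound holds because $(5q+1)^{q\cdot 2^{\Oh(1)}}=2^{\Oh(q\log q)}$. The annulus bound follows in the same way, with the extra factor $(\sfh+\sfc+1)^{\Oh(1)}$.

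Finally we read off the conditions. Conditions i) and ii) are immediate. For iii), each vertex of $F$ lies in some cell of color $1$. Hosting cells in $q$ consecutive flap segments would require $q$ distinct cells of color $1$. Those cells have pairwise disjoint interiors and each contains a vertex of $F$, which is impossible since $|F|<q$. Hence the color-$1$ class lies in $\bigcup_j\inte(\FDisk^\insf_j)$, and so $F\subseteq\bigcup_j \inte(\FDisk^\insf_j)$.

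For iv), we would like each $\mathfrak R$ to coincide with its color class. The only catch is that a cell of $\mathfrak R$ may contain a vertex of $F$, and then it carries color $1$ instead. By the argument for iii), such cells are already inside $\bigcup_j\inte(\FDisk^\insf_j)$. So if the relevant color class (2, 3 or 4) lies in the inner vortex disks, then all of $\mathfrak R$ does. Otherwise the color class is $q$-represented, and so $\mathfrak R$ is $q$-represented. This is the one point I expect to need care: checking that the $F$-cells do not break the dichotomy. The bookkeeping for the rendition on a nonplanar surface is the other subtle step, but it is delegated to \cref{obs:tight_rendition_surfaces}.
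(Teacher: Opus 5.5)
Your proposal is correct and is exactly the paper's proof: the argument for \cref{structure_relative_terminals} with \cref{obs:tight_rendition_surfaces} and \cref{lst_fi_genus} substituted, using the same five-colour cell-colouring. Your explicit lower bound $a_{\ref{lemma:surface_wall_repr}}g+4$ on the annulus function and your handling of $F$-containing cells of $\mathfrak{R}$ in iv) make precise points the paper leaves implicit.

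One small repair is needed in iii). Pairwise disjoint interiors alone do not force the $q$ hosted colour-$1$ cells to contain distinct vertices of $F$, since a ground vertex of $F$ may lie on several peripheries. Distinctness does follow, however, because these cells are drawn in the pockets of distinct flap segments, and those pockets are pairwise disjoint closed disks.
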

\begin{proof}
	Same as that of \cref{structure_relative_terminals}, with \cref{lst_fi} replaced with \cref{lst_fi_genus} and \cref{obs:tight_rendition} replaced with \cref{obs:tight_rendition_surfaces}.
\end{proof}

\subsection{A ``local'' structure theorem for surfaces}

Similarly to the planar case, below we prove a ``local'' statement akin to \cref{local_structure_2}.
In fact, to prove \cref{local_structure_genus}, after employing some arguments particular to surface-embeddable graphs, the bulk of the work relies on recycling the arguments from the proof of \cref{local_structure_2}.

\begin{theorem}\label{local_structure_genus} There exist functions $\repr_{\ref{local_structure_genus}} \colon \N^{4} \to \N,$ and $\sep_{\ref{local_structure_genus}}, \cov_{\ref{local_structure_genus}} \colon \N^{3} \to \N$ such that for all integers $g,d,k \geq 0$ and $q \geq 1$ the following holds.

Suppose $G$ is a connected graph that has an embedding on a surface of genus $g$ with representativity at least $\repr_{\ref{local_structure_genus}}(g, d, k, q).$
Suppose further that $F \subseteq V(G)$ is a vertex subset of size strictly less than $q$ and $S, T \subseteq V(G).$
Then one of the following conditions holds:
	\begin{enumerate}
		\item $G$ contains a $d$-scattered $S$-$T$-linkage of order $k$ that avoids $\Ball^{\lfloor \nicefrac{d}{2} \rfloor}_{G}(F)$; or
		\item there exists a non-empty set $\{ (A_{i}, B_{i}) \colon i \in [n] \}$  of separations of $G,$ for some $n \in \N_{\geq 1},$ such that
		\begin{itemize}
			\item for every $i \in [n],$ $|A_{i} \cap B_{i}| \leq \sep_{\ref{local_structure_genus}}(d, k, q)$;
			\item for every $i \in [n],$ the graph $G[A_{i}]$ is planar; and
			\item for all distinct $i,j \in [n],$ we have $(A_{i} \setminus B_{i}) \cap (A_{j} \setminus B_{j}) = \emptyset.$
		\end{itemize}
		Further, assuming that $$A \coloneqq \bigcup_{i \in [n]} A_{i} \quad\text{and}\quad B \coloneqq V(G) \setminus \bigcup_{i \in [n]} (A_{i} \setminus B_{i}),$$ there exists a set $Y \subseteq A \cap B$ of size at most $\cov_{\ref{local_structure_genus}}(d, k, q)$ such that every $S$-$T$-path in $G$ that avoids $\Ball^{d}_{G}(F)$ and intersects $B,$ also intersects $\Ball^{d}_{G[A]}(Y).$
	\end{enumerate}
    Moreover, it holds that
    \begin{align*}
    &\repr_{\ref{local_structure_genus}}(g, d, k, q) \in g^{\Oh(1)} \cdot d^{\Oh(1)} \cdot 2^{\Oh(\max\{k, q \} \log \max \{k, q\})} \quad\text{and}\\
    &\sep_{\ref{local_structure_genus}}(d, k, q), \cov_{\ref{local_structure_genus}}(d, k, q) \in d^{3} \cdot 2^{\Oh(\max\{k, q\} \log \max\{k, q\})}.
    \end{align*}
\end{theorem}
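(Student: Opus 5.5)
The plan is to reduce to the planar machinery of \cref{local_structure_2} by locating a large $\Sigma$-walloid and then restricting attention to a disk containing all of its relevant features. Since the embedding $\Gamma$ of $G$ on $\Sigma=\Sigma^{(\sfh,\sfc)}$ has representativity at least $\repr_{\ref{local_structure_genus}}(g,d,k,q)$, I would apply \cref{prop:representativity_gives_minor} together with \cref{prop:surface_walls_universal}. This gives, as a minor, a $t_0$-$\Sigma$-annulus wall, and hence a subdivision of it in $G$, because the wall is subcubic. Here I take
\[
t_0\coloneqq\mathsf{annulus}_{\ref{structure_relative_terminals_genus}}\bigl(f_{\ref{lemma:walloid_in_disk}}(\sfh,\sfc,\mathsf{r}),f_{\ref{lemma:walloid_in_disk}}(\sfh,\sfc,\mathsf{t}),\sfh,\sfc,k'\bigr),
\]
with $\mathsf{r},\mathsf{t},k'$ chosen exactly as in the proof of \cref{local_structure_2}. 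Because this bound is polynomial in $g$ and $t_0$, the claimed form of $\repr_{\ref{local_structure_genus}}$ follows. By \cref{lemma:surface_wall_repr} and \cref{prop:representativity_implies_uniqueness}, the embedding of this wall induced by $\Gamma$ is the canonical one. I then feed it to \cref{structure_relative_terminals_genus} with $F$ and the terminals $S,T$. This yields a $\rend$-well-grounded $\Sigma$-walloid $W^\star$ of bounded breadth and depth, with $F$ contained in the inner vortex disks and each of the three classes of cells either hidden in vortices or $k$-represented.

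Next I apply \cref{lemma:walloid_in_disk} to $W^\star$. This produces a planar $(\mathsf{r},\mathsf{t},a,b)$-walloid $W\subseteq W^\star$, whose base cycles are the first exceptional cycles of $W^\star$ and whose outer cycle bounds a disk $\Delta\subseteq\Sigma$ containing $W$. The flap and vortex segments are kept, so representation of $\rend_S\cap\rend_T$, $\rend_S\setminus\rend_T$, $\rend_T\setminus\rend_S$ and the sandwich sequence all transfer to $W$. Every pocket and outer vortex disk lies in $\Delta$, so the rendition restricted to $\Delta$ is a planar $\Sigma$-rendition of $G\cap\Delta$.

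From here I would rerun the proof of \cref{local_structure_2} verbatim. \cref{claim:local_1} uses \cref{lemma:union_represented} and \cref{lem_st_both_represented}. \cref{claim:local_2} uses \cref{lem:harvesting_lemma} on each outer vortex disk, followed by \cref{combing_lemma} and \cref{lem_s_or_t_represented_and_vortex}. \cref{claim:local_3} then gives the hitting set $Y$, and the separations $(B^j_i, V(G)\setminus(B^j_i\setminus Y^j_i))$ come from the linear decompositions of the vortices. The required planarity of $G[A_i]$ holds because $A_i=B^j_i\subseteq V(G\cap\FDisk^\outsf_j)$ and $\FDisk^\outsf_j$ is a disk in $\Sigma$. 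The conditions on orders and disjointness transfer unchanged. Note that this statement drops the $X$-majority condition, so no well-linked set is needed, and $\sep_{\ref{local_structure_genus}}$ and $\cov_{\ref{local_structure_genus}}$ keep the planar bounds.

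The main obstacle is checking that the planar lemmas, which argue scatteredness via nests and hidden subgraphs (\cref{obs:union-hidden}, \cref{obs:nest-criterion}, \cref{obs:shaft}), remain valid when distances are measured in all of $G$ rather than in a planar graph. Each of those separation arguments says that a path must cross some cycles or rails. That is a topological statement inside the disk $\Delta$, because the separating cycles all lie in $\Delta$ and bound subdisks of $\Delta$. Paths leaving $\Delta$ must cross the outer cycle of $W$, which lies beyond the $d$ outermost base cycles excluded by the shaft definition. I would verify this observation by observation, and handle the one slightly delicate point that $\Delta_{W'}$ homes remain disks of $\Sigma$. I would also check that the outer region $\Sigma\setminus\Delta$ carries no terminals relevant to the argument. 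This is the point where the choice of the last exceptional cycle from \cref{lemma:surface_wall_repr} is used: handle and crosscap segments never host cells, so terminals there fall under neither outcome of representation and are forced into the vortices.
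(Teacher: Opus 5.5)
Your proposal is correct and follows the paper's proof: representativity yields a $\Sigma$-annulus wall, \cref{structure_relative_terminals_genus} and \cref{lemma:walloid_in_disk} produce a walloid of bounded breadth and depth lying in a disk $\Delta$, and the argument of \cref{local_structure_2} is rerun there. The only difference is how scatteredness reaches $G$. The paper applies that argument to the planar graph $G\cap\Delta$, using a walloid obtained by peeling off the $d$ outermost base cycles, and those cycles then serve as an insulating nest to lift $d$-scatteredness from $G\cap\Delta$ to $G$. You instead re-verify directly in $G$ that every separation argument only uses cycles bounding subdisks of $\Delta$, which also works.

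Your closing remark about terminals near the handles is unnecessary and not quite accurate, since a $k$-represented class may well contain cells outside $\Delta$. All the argument needs is that a non-represented terminal set lies in the inner vortex disks, which is condition iv) of \cref{structure_relative_terminals_genus}, used exactly as in \cref{claim:local_1}.
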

\begin{proof}[Proof sketch.] Let $\Sigma$ be the surface of genus $g$ on which $G$ embeds with the assumed representativity.
    Also, let $\sfh, \sfc \geq 0$ be integers such that $\Sigma$ is homeomorphic to $\Sigma^{(\sfh, \sfc)}.$

    The proof exploits the highly representative embedding of $G$ on $\Sigma$, combined with \cref{prop:representativity_gives_minor}, \cref{structure_relative_terminals_genus} and \cref{lemma:walloid_in_disk} to produce a large walloid, embedded in a disk of $\Sigma,$ that already captures all the neccesary information regarding the terminals sets $S$ and $T.$
    This will allow us to localize the proof to this disk, and consequently to a planar subgraph of $G.$
    Then,  we may recycle the arguments from the proof of \cref{local_structure_2} to conclude.

    Let us first discuss the numbers involved.
    Similarly to the proof of \cref{local_structure_2}, we define an auxiliary function $$\mathsf{t}(d, k) \coloneqq \max \{ f_{\ref{combing_lemma}}(d, 2k), g_{\ref{combing_lemma}}(d, 2k), (k + 2)(d + 1) \} + d \in d^{3} \cdot 2^{\Oh(k)},$$
    which determines the order of the final walloid we will need.

    Also, in order to force the vertices of $F$ to be confined within the inner vortex disks of the vortex segments we will apply \cref{structure_relative_terminals_genus} with the representation parameter $q$ being $k' \coloneqq \max \{ k, q\}.$

    This affects the adhesion of the linear decompositions, which is set to $$\depth_{\ref{structure_relative_terminals_genus}}(\mathsf{t}(d, k), k') \in d^{3} \cdot 2^{\Oh(k' \log k')}.$$
Consequently, the order of separation required to separate each bag of the linear decompositions is $$\sep_{\ref{local_structure_genus}}(d, k, q) \coloneqq 2 \cdot \depth_{\ref{structure_relative_terminals_genus}}(\mathsf{t}(d, k), k') + 1 \in d^{3} \cdot 2^{\Oh(k' \log k')}.$$
    For these conditions to hold, we need to set our requirement about the representativity to be
    $$\repr_{\ref{local_structure_genus}}(g, d, k, q) \coloneqq \repr_{\ref{prop:representativity_gives_minor}}(g, \mathsf{n}(\mathsf{annulus}_{\ref{structure_relative_terminals_genus}}(t^\star, t^\star, \sfh, \sfc, k'))) \in g^{\Oh(1)} \cdot d^{\Oh(1)} \cdot 2^{\Oh(k' \log k')},$$
    where $t^\star \coloneqq f_{\ref{lemma:walloid_in_disk}}(\sfh, \sfc, \mathsf{t}(d, k)),$ and given an elementary $x$-$\Sigma$-annulus wall, $x \in \N_{\geq 4},$ $\mathsf{n}(x)$ returns its total number of vertices, which is roughly $\Oh(x^{2}).$

    Finally, the size of the hitting set that we will obtain follows from the bounds in \cref{lem:harvesting_lemma}, yielding $$\cov_{\ref{local_structure_genus}}(d, k, q) \coloneqq 2k (3 \cdot \depth_{\ref{structure_relative_terminals_genus}}(\mathsf{t}(d, k), k') + 1) \cdot \mathsf{breadth}_{\ref{structure_relative_terminals_genus}}(k') \in d^{3} \cdot 2^{\Oh(k' \log k')},$$ where the depth of the walloid is determined by the requirements described above.

    The claimed asymptotic bounds follow directly from the bounds on the corresponding functions.

    \medskip
    We proceed with a sketch of the proof.
    First apply \cref{prop:representativity_gives_minor} on $G$ and $\Sigma,$ asking for an $\mathsf{annulus}_{\ref{structure_relative_terminals_genus}}(t^\star, t^\star, \sfh, \sfc, k')$-$\Sigma$-annulus wall $W^1$ as a minor of $G.$
    Note that since $W^1$ is subcubic, we may assume that (a subdivision of) $W^1$ is in fact a subgraph of $G.$

	With $W^1$ in hand, we may invoke \cref{structure_relative_terminals_genus}, and obtain a $\Sigma$-rendition $\rend$ of $G$ and a $\rend$-well-grounded $(t^\star, t^\star, a, b)$-walloid $W^\star \subseteq G$ for some $a \in \N$ and $b \in [\max \{1, \mathsf{breadth}_{\ref{structure_relative_terminals_genus}}(k') \}],$ along with a sandwich sequence $\Zz \coloneqq \langle\FDisk^{\insf}_{j},\FDisk^{\outsf}_{j}\colon j\in [b]\rangle$ such that the following conditions hold:
	\begin{enumerate}
		\item $W^\star$ has depth at most $\depth_{\ref{structure_relative_terminals_genus}}(\mathsf{t}(d, k), k')$ with respect to $\rend;$
		\item $F \subseteq \bigcup_{j \in [b]} \inte(\Theta^\insf_j)$; and
		\item for each $\mathfrak{R} \in \{ \rend_{S} \cap \rend_{T}, \rend_{S} \setminus \rend_{T}, \rend_{T} \setminus \rend_{S} \},$ either $$\bigcup \mathfrak{R} \subseteq \bigcup_{j \in [b]} \mathsf{int}(\FDisk^{\mathsf{in}}_{j})$$ or $W^\star$ $k$-represents $\mathfrak{R}$ (since $k' \geq k$).
	\end{enumerate}

    From here, consider the embedding $\Gamma_{W^\star}$ of $W^\star$ on $\Sigma$ induced by the embedding of $G$ on $\Sigma.$
    Applying \cref{lemma:walloid_in_disk} on $W^\star,$ first tells us that up to homeomorphism $\Gamma_{W^\star}$ is unique, and also yields a $(\mathsf{t}(d, k), \mathsf{t}(d, k), a, b)$-walloid $W \subseteq W^\star$ such that
    \begin{itemize}
        \item the base cycles of $W$ are the first $t$ exceptional cycles of $W^\star$;
        \item every segment $W_{1}$ of $W$ is a subgraph of a segment $W_{2}$ of $W^\star$ and $W_{2} - W_{1}$ is a subgraph of the base annulus of $W^\star$; and
        \item the outermost cycle of $W$ bounds a closed disk $\Delta \subseteq \Sigma$ such that the drawing of $W$ induced by $\Gamma_{W^\star}$ on $\Sigma^{(\sfh, \sfc)}$ is an embedding of $W$ in $\Delta.$
    \end{itemize}
    Now note that by the second and third condition above, we have that for every $j \in [b],$ $\Theta^\insf_j \subseteq \Theta^\outsf_j \subsetneq \Delta,$ and that $\Zz$ is a valid sandwich sequence for $W$ as well.

    Finally, let $t \coloneqq \mathsf{t}(d, k) - d,$ let $\Cc \coloneqq \{ C_{t - d}, \ldots, C_{t} \}$ denote the nest consisting of the $d$ outermost base cycles of $W,$ and let $W'$ be the $(t, t, a, b)$-walloid $W' \subseteq W$ obtained from $W$ by removing from its base cycles the cycles of $\Cc$ and appropriatelly resizing all its segments by removing some vertical paths.
    In particular, we still have that $\Zz$ is a valid sandwich sequence for $W'.$

    From this point forward we may apply the arguments of \cref{local_structure_2} from \cref{claim:local_1} onwards on the graph $G' \coloneqq G \cap \Delta$ with the vertex subset $F$ (note that by the guarantees of \cref{structure_relative_terminals_genus} and \cref{obs:special_vertices_hidden}, $F \subseteq V(G')$ and that $\Ball^{d}_{G}(F) = \Ball^{d}_{G'}(F)$), equipped with the walloid $W',$ and obtain one of the following two outcomes:
    \begin{enumerate}
        \item a $d$-scattered $S$-$T$-linkage $\Qq$ of order $k$ in $G'$ that avoids $\Ball^{\lfloor \nicefrac{d}{2} \rfloor}_{G'}(F)$; or
        \item a non-empty set $\{ (A_{i}, B_{i}) \colon i \in [n] \}$ of separations of $G',$ for some $n \in \N_{\geq 1},$ such that
		\begin{itemize}
			\item for every $i \in [n],$ $|A_{i} \cap B_{i}| \leq \sep_{\ref{local_structure_genus}}(d, k, q)$;
			\item for every $i \in [n],$ $A_{i} \subseteq \inte(\Delta)$ (since $\Theta^\outsf_{j} \subsetneq \Delta$ for all $j \in [b]$); and
			\item for all distinct $i,j \in [n],$ we have $(A_{i} \setminus B_{i}) \cap (A_{j} \setminus B_{j}) = \emptyset.$
		\end{itemize}
		Further, assuming that $$A \coloneqq \bigcup_{i \in [n]} A_{i} \quad\text{and}\quad B \coloneqq V(G') \setminus \bigcup_{i \in [n]} (A_{i} \setminus B_{i}),$$ there exists a set $Y \subseteq A \cap B$ of size at most $\cov_{\ref{local_structure_genus}}(d, k, q)$ such that every $S$-$T$-path in $G'$ that avoids $\Ball^{d}_{G'}(F)$ and intersects $B$ also intersects $\Ball^{d}_{G'[A]}(Y).$
    \end{enumerate}
    To conclude the proof it remains to show that both outcomes can be lifted back to the original graph $G.$

    \smallskip
    \noindent\textbf{Outcome i).} A careful examination of the proof of \cref{local_structure_2} shows that every path of $\Qq$ is in fact found within the graph drawn in the disk $\Delta'$ bounded by the simple cycle of $W'$ which contains the drawing of~$W'.$
    This implies that the nest $\Cc$ acts an an insulation layer of $d$ many cycles between the exterior of $\Delta$ and the interior of $\Delta'$; any path in $G,$ say $R,$ that is not a path in $G \cap \Delta,$ and connects a vertex of a path of $\Qq$ with a vertex of another path of $\Qq,$ must intersect every cycle of $\Cc$ at an internal vertex.
    This implies that the length of $R$ is larger than $d,$ which allows us to conclude that $\Qq$ is in fact $d$-scattered in $G.$

    \smallskip
    \noindent\textbf{Outcome ii).} First note that, since $A_{i} \subseteq \inte(\Delta),$ each separation $(A_{i}, B_{i})$ can be lifted to a separation $(A_{i}, B'_{i})$ of $G$ where $$B'_{i} \coloneqq B_i \cup V(G - V(G \cap \Delta))).$$
    Moreover, we have $A_{i} \cap B_{i} = A_{i} \cap B'_{i}$ and $G[A_{i}]$ is planar as desired.
    Also, let $$B' \coloneqq V(G) \setminus \bigcup_{i \in [n]} (A_{i} \setminus B'_{i}).$$
    Now, a careful look at \cref{claim:local_2} and \cref{claim:local_3} from the proof of \cref{local_structure_2} allows us to conclude that $\Ball^{d}_{G[A]}(Y)$ is still a valid hitting set for every $S$-$T$-path in $G$ that avoids $\Ball^{d}_{G}(F)$ and intersects $B'.$
\end{proof}

\subsection{A ``global'' structure theorem for surfaces}

We now turn \cref{local_structure_genus} into a global theorem akin \cref{global_structure}.

\begin{theorem}\label{global_structure_genus}
    There exist functions $\vig_{\ref{global_structure_genus}},$ $\adh_{\ref{global_structure_genus}} \colon \N^{3} \to \N$ such that for all integers $g,d,k \geq 0$ and $q \geq 1$ the following holds.
	Suppose $G$ is a connected graph that has an embedding on a surface of genus $g$ with representativity at least $\repr_{\ref{local_structure_genus}}(g, d, k, q).$
    Suppose further that $F \subseteq V(G)$ is a vertex subset of size strictly less than $q$ and $S, T \subseteq V(G).$
    Then, either
	\begin{itemize}
		\item $G$ contains a $d$-scattered $S$-$T$-linkage of order $k$ that avoids $\Ball^{\lfloor \nicefrac{d}{2} \rfloor}_{G}(F)$; or
		\item $G$ admits a guarded tree-decomposition $(T, \beta, \gamma)$ of adhesion at most $\adh_{\ref{global_structure_genus}}(d, k, q)$ and vigilance at most $\vig_{\ref{global_structure_genus}}(d, k, q)$ that distance-$d$ guards all $S$-$T$-paths in $G$ that avoid $\Ball^{d}_{G}(F).$
	\end{itemize}
	Moreover, it holds that
	$$\adh_{\ref{global_structure_genus}}(d, k, q), \vig_{\ref{global_structure_genus}}(d, k, q) \in 2^{d^{\Oh(1)} \cdot 2^{\Oh(\max\{ k, q \} \log \max \{ k, q \})}}.$$
\end{theorem}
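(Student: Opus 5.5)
We will prove \cref{global_structure_genus} by the same induction as \cref{global_structure_induction}, with \cref{local_structure_genus} playing the role of \cref{local_structure_2}. The bound for $\adh_{\ref{global_structure_genus}}$ and $\vig_{\ref{global_structure_genus}}$ is doubly exponential, which suggests the intended route. The pieces cut off by the local separations are planar, so on each of them we invoke the planar \cref{global_structure} as a black box rather than recursing with the surface theorem.

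Concretely, we assume $G$ has no $d$-scattered $S$-$T$-linkage of order $k$ avoiding $\Ball^{\lfloor d/2\rfloor}_G(F)$ and apply \cref{local_structure_genus}. This yields separations $(A_i,B_i)$, $i\in[n]$, of order at most $s\coloneqq\sep_{\ref{local_structure_genus}}(d,k,q)$ with $G[A_i]$ planar and the sets $A_i\setminus B_i$ pairwise disjoint. It also yields $Y\subseteq A\cap B$ of bounded size such that $\Ball^d_{G[A]}(Y)$ hits every $S$-$T$-path avoiding $\Ball^d_G(F)$ that meets $B$. The root bag will be $\beta(r)\coloneqq B$ with guard $\gamma(r)\coloneqq Y$; this guard property is exactly what the local theorem gives, since $\Ball^d_{G[A]}(Y)\subseteq\Ball^d_G(Y)$. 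For each $i$, a child subtree must decompose $G[A_i]$, and its root guard must contain the adhesion $A_i\cap B_i$.

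The child subtree is where the real difficulty lies. Paths relevant inside $G[A_i]$ may leave $A_i$ through $A_i\cap B_i$, so distances and paths in $G[A_i]$ alone do not capture the behaviour in $G$. To handle this, we add $A_i\cap B_i$ (at most $s$ vertices) to the forbidden set: we set $F_i\coloneqq (F\cap A_i)\cup(A_i\cap B_i)$, which has size less than $q_i\coloneqq q+s$. We then apply \cref{global_structure} to $G[A_i]$ with $F_i$ and parameter $q_i$, and with $d$ enlarged suitably to absorb the metric distortion between $G[A_i]$ and $G$.

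The first outcome of \cref{global_structure} on $G[A_i]$ gives a linkage avoiding $\Ball^{\lfloor d/2\rfloor}_{G[A_i]}(A_i\cap B_i)$. A shortest path in $G$ between two of its paths that leaves $A_i$ must pass through $A_i\cap B_i$ twice. By an argument analogous to \cref{obs:packA}, such a path has length more than $d$, so the linkage is $d$-scattered in $G$. This contradicts our assumption.

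The second outcome gives a guarded decomposition of $G[A_i]$ that guards the $S$-$T$-paths avoiding $\Ball^d(F_i)$. Any $S$-$T$-path of $G$ meeting a bag and coming within distance $d$ of $A_i\cap B_i$ is guarded once $A_i\cap B_i$ is added to every guard. Any other such path stays inside $G[A_i]$, where distances agree with those in $G$ near it. We attach these decompositions as children of $r$, adding $A_i\cap B_i$ to all guards.

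Adhesion and vigilance are then bounded by $\vig_{\ref{global_structure}}(d,k,q+s)+s$, where $s\in d^3 2^{\Oh(k'\log k')}$. Substituting into the bound $d^{\Oh(1)}2^{\Oh(q\log q)}$ of \cref{global_structure} gives the claimed $2^{d^{\Oh(1)}2^{\Oh(k'\log k')}}$.

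The main obstacle I expect is the metric bookkeeping: showing that balls and scatteredness computed in $G[A_i]$ faithfully reflect those in $G$ once the adhesion set is declared forbidden. This also requires reconciling the radius $\lfloor d/2\rfloor$ in the linkage outcome with the radius $d$ in the guarding outcome. If that mismatch breaks the argument, I would first try running the planar theorem with distance parameter $2d$.
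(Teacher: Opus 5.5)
Your concrete plan is the paper's proof. It applies \cref{local_structure_genus} once; despite your opening sentence, no induction is needed. It then applies \cref{global_structure} to each planar $G[A_i]$ with forbidden set $(F\cap A_i)\cup(A_i\cap B_i)$ and parameter $q+s$, uses root bag $B$ with guard $Y$, and adds $A_i\cap B_i$ to every bag and guard of the $i$-th subtree.

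Keep the distance parameter equal to $d$ rather than enlarging it. Running the planar theorem with $2d$ would only give distance-$2d$ guards, which breaks the conclusion. Enlarging is also unnecessary:
\begin{itemize}
\item For scatteredness, your prefix/suffix argument already works: the $\lfloor d/2\rfloor$-buffer around $A_i\cap B_i$ gives $d$-scatteredness in $G$.
\item The same argument shows the linkage avoids $\Ball^{\lfloor d/2\rfloor}_G(F)$. You still need to state this, since it is what contradicts your standing assumption.
\item The guarding direction only uses $\Ball^d_{G[A_i]}(Z)\subseteq\Ball^d_G(Z)$.
\end{itemize}
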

\begin{proof}We first define the functions involved.
    Let $x \coloneqq \sep_{\ref{local_structure_genus}}(d, k, q).$
    Then, we define:
    \begin{align*}
        \adh_{\ref{global_structure_genus}}(d, k, q) &\coloneqq \adh_{\ref{global_structure}}(d, k, q + x) + x\quad\text{and}\\
        \vig_{\ref{global_structure_genus}}(d, k, q) &\coloneqq \max\{ \cov_{\ref{local_structure_genus}}(d, k, q), \vig_{\ref{global_structure}}(d, k, q + x) \} + x.
    \end{align*}
    The claimed asymptotic bounds follow from the corresponding asymptotic bounds on the functions involved in the right hand sides above.

    We commence with an application of \cref{local_structure_genus} on $G.$
    This yields one of two outcomes:
	\begin{enumerate}
		\item a $d$-scattered $S$-$T$-linkage of order $k$ that avoids $\Ball^{\lfloor \nicefrac{d}{2} \rfloor}_{G}(F)$; or
		\item a non-empty set $\{ (A_{i}, B_{i}) \colon i \in [n] \}$  of separations of $G,$ for some $n \in \N_{\geq 1},$ such that
		\begin{itemize}
			\item for every $i \in [n],$ $|A_{i} \cap B_{i}| \leq x$;
			\item for every $i \in [n],$ the graph $G[A_{i}]$ is planar; and
			\item for all distinct $i,j \in [n],$ we have $(A_{i} \setminus B_{i}) \cap (A_{j} \setminus B_{j}) = \emptyset.$
		\end{itemize}
		Further, assuming that $$A \coloneqq \bigcup_{i \in [n]} A_{i} \quad\text{and}\quad B \coloneqq V(G) \setminus \bigcup_{i \in [n]} (A_{i} \setminus B_{i}),$$ there exists a set $Y \subseteq A \cap B$ of size at most $y$ such that every $S$-$T$-path in $G$ that avoids $\Ball^{d}_{G}(F)$ and intersects $B,$ also intersects $\Ball^{d}_{G[A]}(Y).$
	\end{enumerate}
    Note that with outcome i) we immediately conclude.
    So we may assume we have obtained outcome~ii).

    Fix an index $i \in [n].$
    Let $F_{i} \coloneqq F \cap A_{i}$ and $F^\star_{i} \coloneqq F_{i} \cup (A_{i} \cap B_{i}).$
    Note that $|F^\star_{i}| \leq |F| + |A_{i} \cap B_{i}| < q + x.$
    Also, let $G_{i} \coloneqq G[A_{i}],$ $S_{i} \coloneqq S \cap A_{i},$ and $T_{i} \coloneqq T \cap A_{i}.$
    We now apply \cref{global_structure} on $G_{i}$ with vertex subset $F^\star_{i},$ terminal sets $S_{i}, T_{i},$ and parameters $d, k$ and $q + x.$
    We obtain one of two outcomes:
    \begin{enumerate}
        \item a $d$-scattered $S_{i}$-$T_{i}$-linkage of order $k$ in $G_i$ that avoids $\Ball^{\lfloor \nicefrac{d}{2} \rfloor}_{G_{i}}(F^\star_{i})$; or
		\item a guarded tree-decomposition of $G_{i}$ of adhesion at most $\adh_{\ref{global_structure}}(d, k, q + x)$ and vigilance at most $\vig_{\ref{global_structure}}(d, k, q + x)$ that distance-$d$ guards all $S_{i}$-$T_{i}$-paths in $G_{i}$ that avoid $\Ball^{d}_{G_{i}}(F^\star_{i}).$
    \end{enumerate}

    We treat these two outcomes seperately and we argue that if for any $i \in [n],$ we obtain the first outcome, then we conclude with the first outcome of the lemma; otherwise we obtain the second.
    
    Let $\Qq_{i}$ be a $d$-scattered $S_{i}$-$T_{i}$-linkage of order $k$ in $G_i$ that avoids $\Ball^{\lfloor \nicefrac{d}{2} \rfloor}_{G_{i}}(F^\star_i),$ for some $i \in [n].$
    In the two claims that follow we argue that $\Qq_{i}$ is $d$-scattered in $G$ and that it avoids $\Ball^{\lfloor \nicefrac{d}{2} \rfloor}_{G}(F)$ as well.

    \begin{claim} Every $Q \in \Qq_{i}$ avoids $\Ball^{\lfloor \nicefrac{d}{2} \rfloor}_{G}(F).$
    \end{claim}
    \begin{claimproof} Assume towards contradiction that $Q$ intersects $\Ball^{\lfloor \nicefrac{d}{2} \rfloor}_{G}(F),$ i.e. there exists a vertex $u \in F$ such that $\dist_{G}(u,Q) \leq \lfloor \nicefrac{d}{2} \rfloor.$
    There are two cases to analyze.
    If $u \in (B_{i} \setminus A_{i}) \cap F,$ as $(A_{i}, B_{i})$ is a separation of $G$ and $V(Q) \subseteq A_{i},$ it must be that $\dist_{G_{i}}(Q, A_{i} \cap B_{i}) \leq \lfloor \nicefrac{d}{2} \rfloor.$
    This is clearly a contradiction with the fact that $Q$ avoids $\Ball^{\lceil \nicefrac{d}{2} \rceil}_{G_{i}}(A_{i} \cap B_{i}).$

    Then, it must be that $v \in A_{i} \cap F.$
    Let $R$ be a shortest $u$-$v$-path in $G.$
    By our assumption, the length of $R$ must be at most $\lfloor \nicefrac{d}{2} \rfloor.$
    Clearly, $R$ cannot be a path in $G_{i}$ since $\dist_{G_{i}}(u, v) > \lfloor \nicefrac{d}{2} \rfloor$ by the assumption that $Q$ avoids $\Ball^{\lfloor \nicefrac{d}{2} \rfloor}_{G_{i}}(F_{i}).$
    Therefore, $R$ must intersect $A_{i} \cap B_{i}$ and we conclude as in the previous case.
    \end{claimproof}

    \begin{claim} For every pair $Q, Q' \in \Qq_{i}$ we have $\dist_{G}(Q, Q') > d.$
    \end{claim}
    \begin{claimproof} The proof proceeds similarly to the previous claim.
        Assume towards contradiction that this is not the case.
        Let $R$ be a shortest $(u \in V(Q))$-$(v \in V(Q'))$-path in $G.$
        The length of $R$ by assumption is at most $d.$
        Clearly, $R$ cannot be a path in $G_{i}$ since $\Qq_{i}$ is $d$-scattered in $G_{i}.$
        Therefore, $R$ intersects $A_{i} \cap B_{i}.$
        This implies that either the prefix of $R$ terminating at the first intersection with $A_{i} \cap B_{i},$ or the suffix of $R$ starting from the last intersection with $A_{i} \cap B_{i},$ has length at most $\lfloor \nicefrac{d}{2} \rfloor,$ and both of these are paths in~$G_{i}.$
        However, this clearly contradicts the fact that $\Qq_{i}$ avoids $\Ball^{\lceil \nicefrac{d}{2} \rceil}_{G_{i}}(A_{i} \cap B_{i}).$
    \end{claimproof}
    
    Therefore, we may assume that for every $i \in [n],$ we have a guarded tree-decomposition $(T_{i}, \beta_{i}, \gamma_{i})$ as in outcome ii) above.
    First observe that by increasing both the adhesion and vigilance of $(T_{i}, \beta_{i}, \gamma_{i})$ by at most $x,$ we may define the guarded tree-decomposition $(T_i, \beta'_i, \gamma'_{i})$ of $G_{i}$ obtained from $(T_{i}, \beta_{i}, \gamma_{i})$ such that for every node $t \in V(T),$ $\beta'_{i}(t) \coloneqq \beta_{i}(t) \cup (A_{i} \cap B_{i})$ and $\gamma'_{i}(t) \coloneqq \gamma_{i}(t) \cup (A_{i} \cap B_{i}).$
    Clearly by definition, $(T_{i}, \beta'_{i}, \gamma'_{i})$ distance-$d$ guards all $S_{i}$-$T_{i}$-paths in $G_{i}$ that avoid $\Ball^{d}_{G_{i}}(F_{i}).$

    We now define a guarded tree-decomposition $(T, \beta, \gamma)$ for $G$ as follows.
    We first define $T$ by taking the disjoint union of all $T_{i}$ and adding a new root node $r$ which we make adjacent to the root node $r_{i}$ of~$T_{i},$ for all $i \in [n].$
    Now, for each node $t_{i}$ of $T_{i},$ we define $\beta(t_{i}) \coloneqq \beta'_{i}(t_{i})$ and $\gamma_{i} \coloneqq \gamma'_{i}(t_{i}).$
    Finally, for the root node~$r,$ we define $\beta(r) \coloneqq B$ and $\gamma(r) \coloneqq Y.$
    Since for each root node $r_{i}$ we have $\beta(r_{i}) \supseteq A_{i} \cap B_{i},$ it follows that $(T, \beta, \gamma)$ is a valid guarded tree-decomposition of $G.$

    What remains to argue is that $(T, \beta, \gamma)$ does in fact distance-$d$ guard every $S$-$T$-path in $G$ that avoids $\Ball^{d}_{G}(F),$ as desired.
    We do this in the claim that follows.

    \begin{claim} For every $S$-$T$ path $P$ in $G$ that avoids $\Ball^{d}_{G}(F)$ and every node $t \in V(T),$ if $P$ intersects $\beta(t),$ then $P$ also intersects $\Ball^{d}_{G}(\gamma(t)).$
    \end{claim}
    \begin{claimproof}
    There are three cases to distinguish.

    \smallskip
    \noindent\textbf{Case 1, $t = r$:} In this case we conclude directly by the guarantees of \cref{local_structure_genus}.

    \smallskip
    \noindent\textbf{Case 2, $t = t_{i} \in V(T_{i})$ and $P$ is a path in $G_{i}$:} Since $\Ball^{d}_{G}(F) \supseteq \Ball^{d}_{G_{i}}(F_{i}),$ $P$ also avoids $\Ball^{d}_{G_{i}}(F_{i})$ and therefore by the assumptions on $(T'_{i}, \beta'_{i}, \gamma'_{i}),$ $\Ball^{d}_{G_{i}}(\gamma(t_{i}))$ intersects $P.$
    Since $\Ball^{d}_{G_{i}}(\gamma(t_{i})) \subseteq \Ball^{d}_{G}(\gamma(t_{i})),$ so does $\Ball^{d}_{G}(\gamma(t_{i})).$

    \smallskip
    \noindent\textbf{Case 3, $t = t_{i} \in V(T_{i})$ and $P$ is not a path in $G_{i}$:} In this case, since $(A_{i}, B_{i})$ is a separation of $G,$ it must be that $P$ intersects $A_{i} \cap B_{i}$ which by definition is included in $\gamma(t_{i}).$
    \end{claimproof}

    With this the proof concludes.
\end{proof}

\subsection{The final proof}

We are finally in the position to prove Weak Coarse Menger's Conjecture for surface-embeddable graphs. The following statement is a more detailed variant of \cref{thm:main}.

\begin{theorem}\label{menger_genus}
    There exists a function $f_{\ref{menger_genus}}\colon \N^3\to \N$ such that for all $g, d, k\in \N$ the following holds.
    Let $G$ be a graph embeddable in a surface of genus $g$ and let $S,T\subseteq V(G)$ be vertex subsets.
    Suppose that one cannot find $k$ $S$-$T$-paths in $G$ that are pairwise at distance more than $d$ apart.
    Then there is a vertex subset $X\subseteq V(G)$ with $|X|\leq f_{\ref{menger_genus}}(g,k,d)$ such that every $S$-$T$-path in $G$ is at distance at most $d$ from some vertex of $X$.

    Moreover, we have that
    \begin{align*}
        f_{\ref{menger_genus}}(g, d, k) \coloneqq~&\big((2g - 1)(\rep_{\ref{local_structure_genus}}(g, d, k, 2g + 1) - 1) + \vig_{\ref{global_structure_genus}}(d, k, 2g + 1)(k - 1)^{2} \big)(k - 1)\\[3pt]
        \in~&2^{d^{\Oh(1)} \cdot 2^{\Oh((k+g) \log (k+g)))}}.
    \end{align*}
\end{theorem}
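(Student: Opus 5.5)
We may assume $G$ is connected: the hypothesis passes to each component that contains both $S$- and $T$-vertices, and distances between components are infinite. Given that, one scattered path per component combines across components, and the number of such components is at most $k-1$, so the total bound picks up a factor of $(k-1)$. This matches the outer factor in $f_{\ref{menger_genus}}$.

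For a connected component $G'$ embedded on a surface of genus $g$, I apply the Representativity Lemma (\cref{lemma:representativity}) with $r\coloneqq \repr_{\ref{local_structure_genus}}(g,d,k,2g+1)$. This yields an apex set $A$ with $|A|\le (2g-1)(r-1)$ and a partition $\Pp$ of $V(G')\setminus A$. Each part $H\in\Pp$ comes with a replacement $(H^\star,A^\star)$ with $|A^\star|\le 2g$, and $H^\star$ is embedded on a surface of genus at most $g$ with representativity at least $r$. I put all of $A$ into $X$. Every $S$-$T$-path avoiding $\Ball^d_G(A)$ then lies inside a single part $H$ (parts only touch through $A$) and stays at distance more than $d$ from $A$. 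By \cref{cor:HitA}, such a path also avoids $\Ball^d_{H^\star}(A^\star)$, and distances in $H^\star$ to $X\cup A^\star$ agree with distances in $G$ to $X\cup A$.

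For each part, apply \cref{global_structure_genus} to $H^\star$ with $F\coloneqq A^\star$, so $|F|<2g+1=q$, and with terminals $S\cap V(H)$, $T\cap V(H)$. If it returns a $d$-scattered linkage of order $k$ avoiding $\Ball^{\lfloor d/2\rfloor}_{H^\star}(A^\star)$, then \cref{obs:packA} shows that this linkage is $d$-scattered in $G$, contradicting the hypothesis. So we get a guarded tree-decomposition of $H^\star$ that distance-$d$ guards every $S$-$T$-path avoiding $\Ball^d(A^\star)$. Restricting to $H$ and applying \cref{thm:tree-EP} to the family of $S$-$T$-paths in $H$ that avoid $\Ball^d_G(A)$ produces one of two things: $k$ such paths that are $d$-scattered in $H^\star$, hence in $G$ by \cref{obs:packA} since they lie far from $A^\star$, which is excluded; or a hitting set of size $\vig(k-1)$.

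The main obstacle is that $\Pp$ may have unboundedly many parts, so summing the per-part hitting sets does not give a bounded $X$. I would handle this by packing across parts. Paths living in distinct parts and avoiding $\Ball^d(A)$ are pairwise at distance more than $d$, because any short connection between them has to pass through $A$. Call a part relevant if it contains an $S$-$T$-path avoiding $\Ball^d_G(A)$. Then there are at most $k-1$ relevant parts, since one path from each relevant part would already give a $d$-scattered linkage. Only relevant parts contribute to $X$, which gives at most $(k-1)\cdot\vig(k-1)$ vertices per component and explains the $(k-1)^2$ term. A secondary point to check is that \cref{obs:packA} needs distance more than $\lfloor d/2\rfloor$ from $A^\star$, whereas the linkage from \cref{global_structure_genus} avoids exactly $\Ball^{\lfloor d/2\rfloor}$, so the inequality is strict as required. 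Finally, the asymptotic bound follows by substituting the bounds on $\repr_{\ref{local_structure_genus}}$ and $\vig_{\ref{global_structure_genus}}$ with $q=2g+1$.
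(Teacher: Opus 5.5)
Your proposal is correct and follows the paper's proof essentially step for step. You reduce to connected $G$ and apply \cref{lemma:representativity} with $r=\repr_{\ref{local_structure_genus}}(g,d,k,2g+1)$. You observe that at most $k-1$ parts $H$ carry an $S$-$T$-path avoiding $\Ball^d_G(A)$. For each such part you run \cref{global_structure_genus} and then \cref{thm:tree-EP} on $H^\star$ with $F=A^\star$, using \cref{obs:packA} to exclude the packing outcomes and \cref{cor:HitA} to transfer the hitting set back to $G$.

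One detail is worth making explicit. The hitting set produced in $H^\star$ may contain star vertices, so you should take $X_{H^\star}\setminus A^\star$, as the paper does, before adding $A$; \cref{obs:hitA} then gives exactly the transfer you describe.
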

\begin{proof} First observe that if $G$ is not connected, there can be at most $k - 1$ connected components of $G$ that contain an $S$-$T$-path.
    Now, each of these $k - 1$ connected components we may treat independently and in the end take the union of the hitting sets we obtain for each of them, in order to distance-$d$ hit all $S$-$T$-paths in $G.$
    This allows us to assume that $G$ is connected.
    
    We first apply the Representativity Lemma (\cref{lemma:representativity}) to $G$ with $r \coloneqq \repr_{\ref{local_structure_genus}}(g, d, k, 2g + 1).$
    This yields a set of vertices $A \subseteq V(G)$ with $|A| \leq (2g - 1)(r - 1)$ (or $A=\emptyset$, if $g=0$) and a partition $\Pp$ of $V(G) \setminus A$ into non-empty vertex subsets (or $\Pp = \emptyset$ if $V(G) = A$) such that
    \begin{itemize}
        \item $\bigcup_{H \in \Pp} G[H \cup A] = G$; and
        \item for every $H \in \Pp,$ there exists a graph $H^\star$ and a vertex subset $A^\star \subseteq V(H^\star)$ with $|A^\star| \leq 2g$ satisfying the following two conditions:
        \begin{enumerate}
            \item $H^\star - A^\star = G[H]$ and $N_{H^\star}(A^\star) = N_{G}(A) \cap H$; and
            \item $H^\star$ has an embedding on a surface of genus at most $g$ with representativity at least $r.$
        \end{enumerate}
    \end{itemize}
    First observe that there can be at most $k - 1$ distinct sets $H \in \Pp$ such that $H^\star - \Ball^{d}_{H^\star}(A^\star)$ contains an $S$-$T$-path.
    Indeed, since $H^\star - A^\star = G[H],$ any $S$-$T$-path in $H^\star - A^\star$ is an $S$-$T$-path in $G.$
    In fact, since $\bigcup_{H \in \Pp} G[H \cup A] = G,$ any two such paths belong to different connected components of $G - A.$
    Then, by \cref{obs:hitA}, any two such paths are disjoint from $\Ball^{d}_{G}(A),$ and hence at distance more than $d$ in $G.$

    Let $\Pp'$ be the subset of $\Pp$ that contains only those vertex subsets $H$ for which $H^\star - \Ball^{d}_{H^\star}(A^\star)$ contains an $S$-$T$-path.
    By the previous observation $|\Pp'| \leq k - 1.$
    Now, for each $H \in \Pp',$ apply \cref{global_structure_genus} on $H^\star$ with $F$ being $A^\star$ and $q$ being $2g + 1.$
    This yields one of two possible outcomes:
    	\begin{itemize}
		\item $H^\star$ contains a $d$-scattered $S$-$T$-linkage $\Qq_{H^\star}$ of order $k$ that avoids $\Ball^{\lfloor \nicefrac{d}{2} \rfloor}_{H^\star}(A^\star)$; or
		\item $H^\star$ has a guarded tree-decomposition $(T_{H^\star}, \beta_{H^\star}, \gamma_{H^\star})$ of adhesion at most $\adh_{\ref{global_structure}}(d, k, 2g + 1)$ and vigilance at most $\vig_{\ref{global_structure}}(d, k, 2g + 1)$ that distance-$d$ guards all $S$-$T$-paths in $H^\star$ that avoid $\Ball^{d}_{H^\star}(A^\star).$
	\end{itemize}
    In case we get the first outcome above, \cref{obs:packA} implies that $\Qq_{H^\star}$ is in fact a $d$-scattered linkage of order $k$ in $G$, which contradicts our assumptions.
    Therefore, we get the second outcome.
    Now, we apply \cref{thm:tree-EP} on $H^\star$ and $(T_{H^\star}, \beta_{H^\star}, \gamma_{H^\star})$ with $\Hh$ being the family of all $S$-$T$-paths in $H^\star$ that avoid $\Ball^{d}_{H^\star}(A^\star).$
    This call cannot yield a $d$-scattered subfamily of size $k,$ since as before, \cref{obs:packA} would yield a contradiction.
    Therefore, we may assume that for every $H \in \Pp',$ we get a set $X_{H^\star} \subseteq V(H^\star)$ of size at most $\vig_{\ref{global_structure}}(d, k, 2g + 1) \cdot (k - 1)$ such that $\Ball^d_{H^\star}(X_{H^\star})$ intersects every $S$-$T$-path in $H^\star$ that avoids $\Ball^d_{H^\star}(A^\star).$
    This means that the only $S$-$T$-paths in $H^\star$ that may not be hit by $\Ball^d_{H^\star}(X_{H^\star})$ are those that intersect $\Ball^d_{H^\star}(A^\star).$
    Therefore, if we set $X'_{H^\star} \coloneqq X_{H^\star} \setminus A^\star,$ we get that the desired hitting set at distance-$d$ for $H^\star$ is $X'_{H^\star} \cup A^\star.$

    Now, let $$X \coloneqq \bigcup_{H \in \Pp'} (X_{H^\star} \setminus A^\star).$$
    Note that $|X| \leq \vig_{\ref{global_structure}}(d, k, 2g + 1) \cdot (k - 1)^{2}.$
    We argue that $X \cup A$ is the desired distance-$d$ hitting set for $G.$
    Indeed, \cref{cor:HitA} now implies that for every $H\in \Pp'$, we have $H^\star - \Ball^{d}_{H^\star}(X'_{H^\star} \cup A^\star) = G[H \cup A] - \Ball^d_{G}(X \cup A),$ and therefore, $G[H \cup A] - \Ball^d_{G}(X \cup A)$ cannot contain any $S$-$T$-path.
    Since $\bigcup_{H \in \Pp} G[H \cup A] = G$ and $|X\cup A|$ is bounded as promised, we conclude.
\end{proof}


\phantomsection
\addcontentsline{toc}{section}{References}
\bibliographystyle{abbrv}
\bibliography{arxiv_v1_literature}

\end{document}